\newtheorem{theorem}{Theorem} 	      	      	                              
\newtheorem{corollary}[theorem]{Corollary}     	      	      	      	      
\newtheorem{lemma}[theorem]{Lemma}     	       	      	      	      	      
\newtheorem{proposition}[theorem]{Proposition} 	      	      	      	      
\newtheorem{definition}[theorem]{Definition}                                  
\newtheorem*{remark}{Remark}                                                  
\newtheorem*{ackn}{Acknowledgments}                                           
\numberwithin{equation}{section}                                              
\numberwithin{theorem}{section}                                               
\newcommand{\ul}[1]{\underline{#1}}                                           
\newcommand{\mf}[1]{\mathfrak{#1}}                                            
\newcommand{\mc}[1]{\mathcal{#1}}                                             
\newcommand{\ms}[1]{\mathsf{#1}}                                              
\newcommand{\Z}{\mathbb{Z}}                                                   
\newcommand{\R}{\mathbb{R}}                                                   
\newcommand{\C}{\mathbb{C}}                                                   
\newcommand{\Sph}{\mathbb{S}}                                                 
\newcommand{\paren}[1]{\left(#1\right)}                                       
\newcommand{\brak}[1]{\left[#1\right]}                                        
\newcommand{\norm}[1]{\left\|#1\right\|}                                      
\DeclareMathOperator{\real}{Re}                                               
\DeclareMathOperator{\imag}{Im}                                               
\DeclareMathOperator{\trace}{tr}                                              
\newcommand{\lapl}{\Delta}                                                    
\newcommand{\cint}{{\begingroup\textstyle\smallint\endgroup}}                 
\newcommand{\mind}{/\mspace{-9mu}\gamma}                                      
\newcommand{\vind}{/\mspace{-9mu}\epsilon}                                    
\newcommand{\nasla}{/\mspace{-11mu}\nabla}                                    
\newcommand{\lasl}{/\mspace{-12mu}\lapl}                                      
\newcommand{\sauss}{/\mspace{-11mu}\mc{K}}                                    
\newcommand{\sodge}{/\mspace{-11mu}\mc{D}}                                    
\newcommand{\trase}{/\mspace{-14mu}\trace}                                    
\newcommand{\sorm}[1]{/\mspace{-11mu}#1}                                      
\newcommand{\mbar}{\bar{\gamma}}                                              
\newcommand{\vbar}{\bar{\epsilon}}                                            
\newcommand{\ass}[2]{{\bf (#1)}${}_{#2}$}                                     
\begin{document}

\title[Null Cones to Infinity]{On the Geometry of Null Cones to Infinity Under Curvature Flux Bounds}
\author{Spyros Alexakis, Arick Shao}

\address{Department of Mathematics, University of Toronto, Toronto, ON, Canada M5S 2E4}
\email{alexakis@math.utoronto.ca, ashao@math.utoronto.ca}

\subjclass[2010]{35Q75 (Primary) 83C05, 35R01, 58J37 (Secondary)}

\begin{abstract}
The main objective of this paper is to control the geometry of a future outgoing truncated null cone extending smoothly toward infinity in an Einstein-vacuum spacetime.
In particular, we wish to do this under minimal regularity assumptions, namely, at the (weighted) $L^2$-curvature level.
We show that if the curvature flux and the data on an initial sphere of the cone are sufficiently close to the corresponding values in a standard Minkowski or Schwarzschild null cone, then we can obtain quantitative bounds on the geometry of the entire infinite cone.
The same bounds also imply the existence of limits at infinity, along the null cone, of naturally scaled geometric quantities.
In \cite{alex_shao:bondi}, we will apply these results in order to control various physical quantities---e.g., the Bondi energy and (linear and angular) momenta---associated with such infinite null cones in vacuum spacetimes.
\end{abstract}

\maketitle

\section{Introduction} \label{sec.intro}

This is the first of two papers on infinite null cones in vacuum spacetimes. 
The immediate aim of our project is to obtain control for physical quantities (the Bondi energy, the linear and angular momenta, and the rate of energy loss) associated to the intersection of our cone with future null infinity, under very weak assumptions, that is, at the level of the $L^2$-norm of the spacetime curvature.
This is achieved in \cite{alex_shao:bondi}, which relies on the present paper in an essential way.
In this paper, we complete the first part of our program---we quantitatively control the geometry of infinite null cones, under these same assumptions.

A key component of our argument is a renormalization of the null structure equations governing the geometry of infinite null cones.
In particular, this renormalization is carried out in a manner that preserves the essential covariant characteristics of our equations.
This will be vital due to the low regularity of our setting.

\vspace{8pt}

Consider a $4$-dimensional Einstein-vacuum spacetime $(M, g)$, and let $\mc{N}$ be a smooth null hypersurface in $M$.
The \emph{curvature flux} of $\mc{N}$ generally refers to an $L^2$-norm along $\mc{N}$ of certain components of the spacetime curvature tensor.
Such quantities arise as fluxes for an energy current built out of the Bel-Robinson tensor (via contraction against three causal vector fields), and hence are fundamental for dealing with local energy estimates involving the curvature.

A number of results in general relativity have relied heavily on this flux of curvature or its variations.
\footnote{For example, one can take $L^2$-norms of derivatives of the spacetime curvature.}
Examples include the stability of Minkowski spacetime \cite{bie_zip:stb_mink, chr_kl:stb_mink, kl_nic:stb_mink}, breakdown criteria for the Einstein equations \cite{kl_rod:bdc, parl:bdc, shao:bdc_nv, shao:bdc_nvp, wang:ibdc}, the formation of trapped surfaces \cite{chr:gr_bh, kl_rod:fbh}, and the recent resolution of the $L^2$-curvature conjecture \cite{kl_rod_szf:blc, szf:par1, szf:par2, szf:par3, szf:par4}.
Regarding the breakdown criteria results, a major component of their proofs is precisely that of controlling the geometry of null cones (beginning from a single point) by its curvature flux.
Similarly, for the $L^2$-curvature conjecture, an important step is to control the geometry of null hyperplanes by a curvature flux (although this is not the central issue in this problem).

Here, we consider instead \emph{infinite} future-directed truncated null cones $\mc{N}$.
The corresponding objective is to control the geometry of $\mc{N}$ by a corresponding \emph{weighted} curvature flux on $\mc{N}$.
The main result of this paper accomplishes this task for a certain class of null cones and choice of weights; see further details below.

For such infinite null cones, one can additionally inquire whether the control one has on this geometry is sufficient to bound quantities of physical interest defined at infinity along our cone, such as the Bondi energy and the rate of energy loss.
The control of such quantities at null infinity is a point of interest in global problems; for example, it was obtained by Christodoulou and Klainerman in the stability of Minkowski spacetime in \cite{chr_kl:stb_mink}.
It turns out that we \emph{can} control these quantities of physical interest on one cone, purely in terms of this weighted curvature flux (as opposed to quantities requiring higher regularity).
This is the primary focus of \cite{alex_shao:bondi}, which relies on the present paper in an essential way. 

\vspace{8pt}

Our result (Theorem \ref{thm.nc_phys} below) is perturbative in nature; it applies to any infinite null cone $\mc{N}$ that is sufficiently close to the rotationally symmetric null cones in the Schwarzschild (and hence Minkowski) spacetimes.
More specifically, we assume:
\begin{itemize} 
\item Certain components of the spacetime curvature on $\mc{N}$ (with respect to a geodesic foliation) are $L^2$-close, with suitable weights, to their corresponding values in a Schwarzschild spacetime.

\item On the initial sphere of $\mc{N}$, the connection coefficients for the cone (with respect to the same foliation) are likewise sufficiently close, in the appropriate norms, to the expected values in the same Schwarzschild spacetime.
\end{itemize}
The objective is to prove the following results:
\begin{itemize}
\item The connection coefficients will remain uniformly close, in the appropriate norms, to their Schwarzschild values on all of $\mc{N}$.

\item The same connection coefficients, with suitable weights, have limits at infinity in the appropriate spaces that can also be controlled.
\end{itemize}
In near-Minkowski settings, these $L^2$-curvature quantities arose as weighted fluxes of curvature from the Bel-Robinson tensor and were used extensively in \cite{bie_zip:stb_mink, chr_kl:stb_mink, kl_nic:stb_mink}.
\footnote{More specifically, these fluxes can be obtained by contracting the Bel-Robinson tensor with the vector fields $\mathbf{T}$, $\mathbf{T}$, and $\mathbf{K}$, where $\mathbf{T}$ and $\mathbf{K}$ are suitable adaptations of the time translation vector field and the Morawetz vector field, respectively, in Minkowski spacetime.}

The analytical framework of this paper is analogous to that of \cite{kl_rod:cg}, but now considered for an infinite null cone.
In particular, our starting point is the system of structure equations on a null cone for a geodesic foliation, under the Einstein-vacuum equations.
For our argument, we apply a renormalization of our system of equations, in which we introduce an appropriate reweighting of each geometric quantity within the system.
\footnote{In the near-Minkowski case, these weights are consistent with those of \cite{kl_nic:stb_mink}.}
In this renormalized setting, we obtain a corresponding set of structure equations that is formally similar to the structure equations for the finite null cone studied in \cite{kl_rod:cg}.
From a geometric perspective, the analysis, as well as the resulting estimates and limits, are more naturally expressed in terms of this new system.
Moreover, these results will readily translate to corresponding results for the original ``physical" system.

This renormalization is well-suited for studying the structure equations at the low regularities we deal with here.
One of the main difficulties in working at the $L^2$-curvature level is the lack of regularity of natural coordinate systems and their associated frames.
As a result of this, one often must work covariantly;
in particular, the analysis on the null structure equations is at the level of tensors.
An essential feature of the renormalization that we introduce is that this covariant 
formulation of the equations \emph{transforms naturally} into the renormalized setting.
\footnote{The precise formulation of this covariant structure of the equations is given in Section \ref{sec.fol}.}

Furthermore, although we work only with geodesic foliations in this paper, the renormalization scheme used here, as well as the framework and the estimates from \cite{shao:stt}, should be readily applicable to other similar situations.
\footnote{It is interesting to remark that one can recover physical quantities such as the Bondi energy by working exclusively with geodesic foliations; this follows from this paper and \cite{alex_shao:bondi}.}
These include time-foliated null hypersurfaces, as in \cite{chr_kl:stb_mink, kl_rod_szf:blc}, or double-null-foliated settings, such as \cite{kl_nic:stb_mink}.
For these reasons, we hope that the ideas introduced here may prove useful in other more global problems at the $L^2$-curvature level.

For instance, as mentioned before, one of the conclusions of the global stability of Minkowski spacetime, \cite{chr_kl:stb_mink}, is control on the Bondi mass at future null infinity.
Our results in \cite{alex_shao:bondi} and in this paper could be applied to a similar effect if such a global stability result were to be extended to the $L^2$-curvature level.

\vspace{8pt}

The techniques for controlling the geometry of null cones by the curvature flux were first developed in \cite{kl_rod:cg}, which dealt with geodesically foliated finite null cones in a vacuum spacetime.
There are several variations of this result addressing similar situations, including null cones beginning from a point \cite{wang:cg, wang:cgp}, time-foliated null cones \cite{parl:bdc, wang:ibdc}, and non-vacuum spacetimes \cite{shao:bdc_nv}.
\footnote{Many of the technical aspects of these analyses were more recently unified in \cite{shao:stt}.}

Because of the low regularity of the system, in proving the results of \cite{kl_rod:cg} (as well as its variants), one required many technical developments; see \cite{kl_rod:glp, kl_rod:stt}.
Significant simplifications and generalizations of these developments were obtained more recently by the second author in \cite{shao:stt}.
We will take full advantage of this in our work; see Section \ref{sec.intro_tech} for additional discussions.
Furthermore, our analytical framework, in particular the precise formulation of our renormalized system, will be expressed in the language developed in \cite{shao:stt}; see Sections \ref{sec.geom} and \ref{sec.fol} for details.

\subsection{The Main Results} \label{sec.intro_thm}

Throughout, we let $(M, g)$ denote a $4$-dimensional vacuum spacetime, and we let $\mc{N}$ denote a future-directed, smooth, and geodesically foliated null cone in $M$, emanating from a $2$-sphere $\mc{S}$.

For our analysis, we will consider $\mc{N}$ as a smoothly parametrized foliation, $\mc{N} \simeq [s_0, \infty) \times \Sph^2$.
The first parameter in this product refers to the chosen affine parameter for the null geodesic generators of $\mc{N}$, while the copies of $\Sph^2$ are the level sets of the affine parameter.
As each level sphere of $\mc{N}$ is spacelike, we can consider the Riemannian metric $\mind$ on the spheres induced from the spacetime metric $g$.
Furthermore, we normalize our affine parameter $s$ on $\mc{N}$ so that the initial sphere $\mc{S}$ corresponds to the level set $s = s_0$ and has area $4 \pi s_0^2$.

The next step is to describe the objects of our analysis:
\begin{itemize}
\item \emph{Connection coefficients}: These are, as usual, quantities that correspond to one derivative of the metric.
More accurately, these are spacetime covariant derivatives of certain adapted null frames on $\mc{N}$.

\item \emph{Curvature components}: These refer to the spacetime curvature of $(M, g)$, decomposed in terms of the same null frames on $\mc{N}$ as above.
\end{itemize}
The main idea is, as in \cite{chr:gr_bh, chr_kl:stb_mink, kl_rod:cg}, to reinterpret the aforementioned connection and curvature quantities as ``horizontal" tensorial quantities, i.e., tensor fields on $\mc{N}$ which are everywhere tangent to the level spheres of $\mc{N}$.
Consequently, we can treat each of these connection and curvature quantities as a smoothly varying family of tensor fields on $\Sph^2$, parametrized by the affine parameter $s$.

One main feature of this system is that the geometries of the of level spheres of $\mc{N}$ also evolve.
In other words, the horizontal tensor field $\mind$ on $\mc{N}$ constructed from the metrics induced from $g$ also evolves as a function of $s$.
A consequence of this is that the norms, the elliptic operators, and the evolutionary operators that we will consider will also evolve depending on the affine parameter.
Throughout this paper, we will analyze such horizontal tensorial quantities on this evolving geometric setting using the formalisms developed in \cite{shao:stt}.

\subsubsection{Connection and Curvature Decompositions}

We now discuss in further detail the horizontal decompositions for the connection and curvature quantities.
We begin with the connection coefficients, defined with respect to our geodesic foliation.

The most important connection quantity is the \emph{intrinsic null second fundamental form}, $\chi$, which is defined as the second fundamental form of the level spheres of $\mc{N}$, in the future null direction tangent to $\mc{N}$.
Intuitively, $\chi$ determines how $\mind$, and hence the geometry of $\mc{N}$, evolves as one moves along $\mc{N}$ in this future null direction.
$\chi$ can be further decomposed into its trace and traceless parts, i.e., the \emph{expansion} $\trase \chi$ and the \emph{shear} $\hat{\chi}$.
In particular, $\trase \chi$ describes how the area element of the level spheres of $\mc{N}$ evolve, and is related to the formation of null conjugate points.

Similarly, the \emph{extrinsic null second fundamental form}, $\ul{\chi}$, represents the second fundamental forms of the level spheres of $\mc{N}$ in the transverse future null direction orthogonal to these level spheres.
Like for $\chi$, one can also decompose $\ul{\chi}$ into its trace and traceless parts.
As our analysis is concerned only with $\mc{N}$ itself, $\ul{\chi}$ lacks the same intrinsic significance as $\chi$ in our setting.
However, $\ul{\chi}$ will be shown to decay less than the other connection coefficients.
In fact, that the level spheres of $\mc{N}$ fail to be asymptotically round as one approaches infinity is due to this lack of decay for $\trase \ul{\chi}$.
This will play a central role in \cite{alex_shao:bondi}.

The final connection coefficient in the geodesic foliation is the \emph{torsion}, $\zeta$.
In the geodesic foliation, this quantity can be roughly interpreted as the failure of the transverse null direction orthogonal to the level spheres of $\mc{N}$ to evolve in a parallel fashion along the null direction that is tangent to $\mc{N}$.

Next are the curvature quantities, which represent the various components of the spacetime curvature $R$ on $\mc{N}$.
To define these, one first takes two future null vector fields, $L$ and $\ul{L}$, with the former representing the future tangent null direction in $\mc{N}$, and with the latter representing the future transverse null direction normal to the level spheres of $\mc{N}$.
The curvature components are then defined by contracting $R$ with one or more instances of $L$ and $\ul{L}$, and by requiring that the remaining components are horizontal, i.e., tangent to the level spheres.
In keeping with notational traditions, we denote the resulting components by $\alpha$, $\beta$, $\rho$, $\sigma$, $\ul{\beta}$, and $\ul{\alpha}$.
Since Einstein-vacuum spacetimes are by definition Ricci-flat, these curvature quantities comprise all the independent components of $R$.

Finally, in our analysis, we will also require an additional scalar quantity $\mu$ on $\mc{N}$, called the \emph{mass aspect function}.
$\mu$ is defined directly from the connection coeffients and the curvature component $\rho$.
In particular, this quantity is closely related to the Hawking masses of the level spheres; see \cite{chr_kl:stb_mink, kl_rod:cg}.

For precise definitions of all the above quantities, see Section \ref{sec.nc_rc}.

\subsubsection{A Rough Theorem Statement}

Recall the main result of this paper roughly states that if the weighted curvature flux of $\mc{N}$ and initial data on $\mc{S}$ are sufficiently close to their corresponding values in a Schwarzschild spacetime, then the connection coefficients on $\mc{N}$ remain close to their Schwarzschild values.
Below, we further clarify the meanings of ``weighted curvature flux" and ``initial data".

With respect to the geodesic foliation, the weighted curvature flux of $\mc{N}$ is
\begin{align*}
\mc{F} &= s_0^{-\frac{3}{2}} \| s^2 \alpha \|_{ L^2 (\mc{N}) } + s_0^{-\frac{3}{2}} \| s^2 \beta \|_{ L^2 (\mc{N}) } + s_0^{-\frac{1}{2}} \| s \rho \|_{ L^2 (\mc{N}) } \\
&\qquad + s_0^{-\frac{1}{2}} \| s \sigma \|_{ L^2 (\mc{N}) } + s_0^\frac{1}{2} \| \ul{\beta} \|_{ L^2 (\mc{N}) } \text{,}
\end{align*}
Note this formula includes all the spacetime curvature quantities except for $\ul{\alpha}$.
In particular, the excluded component $\ul{\alpha}$ is the only component of $R$ which does not contain any $L$-components.
In \cite{chr_kl:stb_mink, kl_nic:stb_mink}, for example, such quantities were intimately tied to energy estimates involving the spacetime curvature.

\begin{remark}
The main heuristic for the weights within $\mc{F}$ is that the affine parameter $s$ will remain comparable to the radii of the level spheres of $\mc{N}$.
In contrast with similar developments in \cite{bie_zip:stb_mink, chr_kl:stb_mink, kl_nic:stb_mink}, we use $s$ here rather than the actual radius, as it is an easier quantity to manipulate algebraically.
\end{remark}

In our setting, however, we are interested not in the curvature flux itself, but rather in its deviation from the Schwarzschild values, i.e., the quantity
\begin{align*}
\delta \mc{F} &= s_0^{-\frac{3}{2}} \| s^2 ( \alpha - \alpha^S ) \|_{ L^2 (\mc{N}) } + s_0^{-\frac{3}{2}} \| s^2 ( \beta - \beta^S ) \|_{ L^2 (\mc{N}) } + s_0^{-\frac{1}{2}} \| s ( \rho - \rho^S ) \|_{ L^2 (\mc{N}) } \\
&\qquad + s_0^{-\frac{1}{2}} \| s ( \sigma - \sigma^S ) \|_{ L^2 (\mc{N}) } + s_0^\frac{1}{2} \| \ul{\beta} - \ul{\beta}^S \|_{ L^2 (\mc{N}) } \text{,}
\end{align*}
where $\alpha^S$, $\beta^S$, $\rho^S$, $\sigma^S$, and $\ul{\beta}^S$ denote the values of the curvature components in a Schwarzschild spacetime, with mass $0 \leq m < s_0 / 2$ (the latter bound ensures $\mc{S}$ represents a sphere in the outer region).
Standard computations show that the only nonvanishing Schwarzschild component here is $\rho^S$; see Section \ref{sec.nc_ms} for details.

Finally, the initial data for $\mc{S}$ reflects the deviation of $\chi$, $\ul{\chi}$, $\zeta$, and $\mu$ from their Schwarzschild values (see Section \ref{sec.nc_ms}), in the appropriate norms.
More specifically,
\begin{align*}
\delta \mc{I} &= s_0 \| \trase \chi - ( \trase \chi )^S \|_{ L^\infty (\mc{S}) } + s_0^\frac{1}{2} \| \chi - \chi^S \|_{ \ms{H} (\mc{S}) } + s_0^\frac{1}{2} \| \zeta - \zeta^S \|_{ \ms{H} (\mc{S}) } \\
\notag &\qquad + \| \ul{\chi} - \ul{\chi}^S \|_{ \ms{B} (\mc{S}) } + s_0 \| \nasla ( \trase \chi ) - [ \nasla ( \trase \chi ) ]_S \|_{ \ms{B} (\mc{S}) } + s_0 \| \mu - \mu^S \|_{ \ms{B} (\mc{S}) } \text{.}
\end{align*}
In the above, $\ms{H}$ is a (geometric tensorial) $H^{1/2}$-type norm on $\mc{S}$, while $\ms{B}$ is a similar zero-derivative Besov-type norm.
An unfortunate by-product of working at the curvature flux level is that such Besov norms are required in the analysis.

We now give a very rough statement of the main theorem.

\begin{theorem} \label{thm.rough_est}
Assume that
\[ \delta \mc{F} + \delta \mc{I} \leq \Gamma \text{.} \]
Suppose $\Gamma$ is sufficiently small with respect to the geometry of $\mc{S}$, that is, the weighted curvature and initial data of $\mc{N}$ remain close to their Schwarzschild values.
Then, the geometry of $\mc{N}$ remains close to that of the corresponding Schwarzschild null cone.
To be more specific, the connection deviations
\[ \chi - \chi^S \text{,} \qquad \ul{\chi} - \ul{\chi}^S \text{,} \qquad \zeta - \zeta^S \text{,} \qquad \mu - \mu^S \]
will also be bounded by $\Gamma$, in the appropriate norms.
Furthermore, up to a rescaling, the geometries of the level spheres of $\mc{N}$ remain close to that of $\mc{S}$.
\end{theorem}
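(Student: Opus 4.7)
The overall approach would be a continuous-induction/bootstrap argument in the affine parameter $s$. One introduces a maximal $s_* \in (s_0, \infty]$ on which the bootstrap assumptions hold --- namely, that each of the deviations
\[
\chi - \chi^S, \qquad \ul{\chi} - \ul{\chi}^S, \qquad \zeta - \zeta^S, \qquad \mu - \mu^S
\]
is bounded by $C \Gamma$ in the weighted norms appearing in the statement --- and attempts to close them with an improved constant, say $2\Gamma$. Once this is achieved, continuity in $s$ forces $s_* = \infty$ whenever $\Gamma$ is small relative to the geometry of $\mc{S}$. All of the target bounds then propagate uniformly to $\infty$, and the existence of limits of the rescaled connection coefficients follows from a Cauchy-in-$s$ argument fed by the same weighted curvature flux.

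The core machinery is the system of null structure equations, written for the differences against the Schwarzschild background. The Raychaudhuri equation
\[
L(\trase \chi) + \tfrac{1}{2} (\trase \chi)^2 = -|\hat{\chi}|^2
\]
together with its Schwarzschild analogue yields a Riccati-type equation for $\trase \chi - (\trase \chi)^S$ that integrates along the null generators, propagating $L^\infty$-type control from $\mc{S}$ and absorbing the quadratic term by the bootstrap. The shear, torsion and mass aspect differences $\hat{\chi} - \hat{\chi}^S$, $\zeta - \zeta^S$, $\mu - \mu^S$ satisfy linear transport equations along $L$ sourced respectively by $\alpha - \alpha^S$, $\beta - \beta^S$ and $\rho - \rho^S$ (plus quadratic connection terms). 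These are combined with elliptic Hodge systems on the spheres --- Codazzi for $\hat{\chi}$, a Hodge system for $\zeta$, the defining equation for $\mu$ --- so that angular regularity is traded against weak curvature input from $\delta \mc{F}$. Finally, $\ul{\chi} - \ul{\chi}^S$ is treated via its transport equation with $\ul{\beta}$ in the source, which explains the heavier $s_0^{1/2}$ weighting in $\mc{F}$ for this component --- it is the slowest-decaying deviation and is precisely what obstructs asymptotic roundness of the level spheres. Matching, for each quantity, its $s$-weight in the transport estimate to the corresponding weight in $\delta \mc{F}$ guarantees that every integrated contribution remains finite uniformly in $s$.

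The principal obstacle is that the curvature flux sits exactly at the $L^2$ level, so no direct pointwise control on the spheres is available; yet pointwise-type control is needed to handle the nonlinear terms, such as $|\hat{\chi}|^2$ in Raychaudhuri and the $\chi \cdot \zeta$ or $\chi \cdot \mu$ couplings that appear when closing the estimates for $\zeta - \zeta^S$ and $\mu - \mu^S$. This is what dictates the use of the fractional $H^{1/2}$-type space $\ms{H}$ and the Besov-type space $\ms{B}$ in the initial-data norm $\delta \mc{I}$: both embed into $L^\infty(\mc{S})$ and are stable under the relevant products and elliptic inversions. The technical heart of the argument is therefore to propagate these geometric Besov-type norms along the null direction on the evolving sphere geometry, using the formalism developed in the earlier works cited above, and to verify that the nonlinear products and the angular Hodge inversions remain bounded in $\ms{H}$ and $\ms{B}$ uniformly all the way to null infinity.
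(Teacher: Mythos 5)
Your outline correctly identifies the two broad ingredients---a bootstrap/continuous-induction argument and the coupled transport-plus-elliptic null structure system---but it misses the central organizing idea of the paper's proof and contains a factual error that undermines the proposed mechanism for closing the nonlinear estimates.

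The paper does not run the bootstrap on the infinite cone in the physical affine parameter $s$ with hand-matched weights. Instead it first \emph{renormalizes} the entire system: it rescales the horizontal metric $\gamma = s^{-2}\mind$ so that the level spheres have nearly constant unit area, changes the evolution variable to $t = 1 - s_0/s$ so that $[s_0,\infty)$ becomes $[0,1)$, and correspondingly renormalizes the Ricci and curvature coefficients ($H$, $Z$, $\ul{H}$, $A$, $B$, $R$, $\ul{B}$, $M$). The bootstrap is then run entirely in this finite-cylinder picture, where all norms are unweighted. This is not a cosmetic reformulation: the bilinear product estimates, sharp trace estimates, and Besov-elliptic estimates of \cite{shao:stt} that drive the argument are established for foliations whose leaves satisfy uniform regularity conditions (\ass{r0}{}--\ass{r2}{}, \ass{k}{}), and verifying those conditions uniformly as $s\nearrow\infty$ on the physical spheres $\mc{S}_v$ (whose areas grow like $v^2$) is exactly what the renormalization packages away. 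Your proposal to ``match, for each quantity, its $s$-weight in the transport estimate to the corresponding weight in $\delta\mc{F}$'' and thereby keep everything ``finite uniformly in $s$'' glosses over precisely this difficulty: it would require redeveloping every Sobolev, product, and elliptic constant with explicit dependence on the sphere radius, which is the work the renormalization is designed to eliminate. Related to this, you have not addressed the paper's other key technical device---the sum norm $N^{0\star}_{t,x} + B^{2,0}_{t,x}$ for $\nabla H$ and $\nabla Z$---which encodes the derivative-trading structure of the Bianchi equations and replaces the infinite decomposition procedure of earlier works; without something of this kind, the Besov estimates required to control $\hat H$, $\zeta$, and the nonlinear coupling $H\otimes\ul H$ do not close at the curvature-flux regularity level.

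Separately, the claim that the $H^{1/2}$-type space $\ms{H}$ and the $B^0$-type space $\ms{B}$ ``both embed into $L^\infty(\mc{S})$'' is false in dimension $2$: one has $H^{1/2}\hookrightarrow L^4$ but not $L^\infty$, and likewise $B^0$ gives no pointwise control. The paper's actual mechanism for pointwise and $L^{\infty,2}_{x,t}$ control is different---it uses the fractional Sobolev embedding into $L^4$, the sharp trace estimate of Theorem~\ref{thm.sharp_trace}, and the embedding $\|\Psi\|_{L^{p,\infty}_{t,x}} \lesssim \|\nabla\Psi\|_{B^{p,0}_{t,x}} + \|\Psi\|_{L^{p,2}_{t,x}}$---none of which rest on an $L^\infty$-embedding of the initial-data spaces themselves. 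This misattribution matters because it is precisely the absence of such an embedding that forces the full apparatus of bilinear and trace estimates.
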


For the precise version of the theorem, see Theorem \ref{thm.nc_phys}, in Section \ref{sec.main_pthm}.
Next, one can use the estimates within Theorem \ref{thm.rough_est} to generate asymptotic limits at infinity, of both the geometries of the level spheres of $\mc{N}$ and the connection coefficients.
A rough statement of this result is stated in the subsequent theorem.
For the precise statements, see Corollary \ref{thm.nc_renorm_lim}, in Section \ref{sec.main_rthm}.

\begin{theorem} \label{thm.rough_lim}
Assume the hypotheses, and hence the conclusions, of Theorem \ref{thm.rough_est}.
Then, as $s \nearrow \infty$, the geometries of the level spheres of $\mc{N}$ converge, after an appropriate rescaling, to a rough limiting geometry on $\Sph^2$.
Furthermore, certain appropriate rescalings of $\chi - \chi^S$, $\ul{\chi} - \ul{\chi}^S$, $\zeta - \zeta^S$, and $\mu - \mu^S$ will have limits as $s \nearrow \infty$, with respect to the appropriate normed spaces.
In particular, the Hawking masses of the level spheres of $\mc{N}$ have a limit as $s \nearrow \infty$.
\end{theorem}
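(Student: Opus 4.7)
The approach will be to upgrade the uniform bounds of Theorem \ref{thm.rough_est} to convergence statements by showing that the relevant rescaled deviations are Cauchy as $s \nearrow \infty$. The engine is the family of null transport (structure) equations: each of $\chi$, $\zeta$, $\ul{\chi}$, and $\mu$ satisfies an evolution equation along $L$ whose right-hand side splits schematically as a curvature component plus a quadratic expression in connection coefficients. Subtracting the Schwarzschild values (which themselves satisfy the same transport equations on the reference cone) yields analogous equations for $\chi - \chi^S$, $\zeta - \zeta^S$, $\ul{\chi} - \ul{\chi}^S$, and $\mu - \mu^S$, whose right-hand sides now involve only curvature deviations (the very quantities making up $\delta \mc{F}$) and products of connection deviations with connection coefficients.

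For each deviation I would attach the $s$-weight matching its expected Schwarzschild decay, so that the rescaled quantity is uniformly bounded in its appropriate spatial norm by Theorem \ref{thm.rough_est}. Differentiating in $s$, the (rescaled) transport equation then expresses the $s$-derivative as (i) a weighted curvature flux integrand, which lies in $L^2_s$ since $\delta \mc{F} \leq \Gamma$, plus (ii) quadratic terms whose $s$-integrability is ensured by the uniform bounds together with the decay built into the Schwarzschild weights. Applying Cauchy--Schwarz in $s$ to the first term, and using the second term as an $L^1_s$ contribution, one obtains that for $s_1 \leq s_2$ the difference of the rescaled deviation at the two parameter values, pulled back along the geodesic flow generated by $L$ to a common copy of $\Sph^2$, is bounded by the tail of an integrable function in $s$, which vanishes as $s_1,s_2 \to \infty$. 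This yields a limit in the appropriate Banach space on $\Sph^2$ for each rescaled connection deviation.

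The rescaled metric is handled by the same mechanism applied to its transport equation: since $\partial_s \mind = 2\chi$, one has $\partial_s ( s^{-2} \mind ) = 2 s^{-2} \hat{\chi} + s^{-2}( \trase \chi - 2 s^{-1} ) \mind$, whose right-hand side is $L^1_s$ into the appropriate norm once one combines the control on $\hat\chi$ with the control on $\trase \chi - (\trase \chi)^S$ and the explicit expression for $(\trase \chi)^S$; this produces the limiting rough geometry on $\Sph^2$. The Hawking mass of the $s$-level sphere is a continuous function of its area radius and of $\int \trase \chi \cdot \trase \ul{\chi}$, where the latter integrand may be rewritten in terms of $\mu$, $\rho^S$, and lower-order connection quantities whose limits have just been extracted; its convergence then follows.

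The main obstacle will be in the quantities controlled only at the Besov level $\ms{B}$: these norms do not provide pointwise control and do not commute cleanly with truncation in $s$, so the Cauchy estimates have to be carried out entirely within the geometric Besov calculus, relying on the product, commutator, and transport estimates developed earlier in the paper. A related subtlety is that identifications between different level spheres of $\mc{N}$ are effected by the geodesic flow of $L$, whose tensor pullback depends on $s$ through the evolving foliation; showing that these comparison maps themselves converge as $s \to \infty$ requires the $L^\infty$-control on $\trase \chi - (\trase \chi)^S$ together with the established decay of $\hat{\chi}$, and will be where the sharpness of the weights is most delicate.
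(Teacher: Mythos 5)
Your proposal follows essentially the same mechanism as the paper's proof of Corollary \ref{thm.nc_renorm_lim} (Section \ref{sec.main_lim}): a Cauchy argument in the evolutionary parameter, driven by the integrability of the transport right-hand sides, which in turn is furnished by the uniform estimates of Theorem \ref{thm.rough_est}. The paper simply runs this in the renormalized $(\gamma, t)$-picture on $[0,1) \times \Sph^2$, where each limit is read off by bounding $\| \Psi[\tau'] - \Psi[\tau] \|$ by a tail of $\| \nabla_t \Psi \|_{ L^{2,1}_{x,t} }$, of $\| \nabla_t (\trace H) \|_{ L^{\infty,1}_{x,t} }$, or by $(\tau' - \tau)^{1/2}$ times an $L^{2,2}_{t,x}$-norm, all supplied directly by \eqref{eq.renorm_est} rather than re-derived from the raw structure equations.

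Both of the ``subtleties'' you flag at the end dissolve in that framework. First, the identification maps $\Xi_\tau : \mc{S}_\tau \to \mc{S}$ are fixed diffeomorphisms given once and for all by the geodesic flow; they do not themselves vary or converge. The dependence of the comparison on the evolving geometry is entirely absorbed into the covariant $\nabla_t$-operator and its commutator with $\nabla$, so the only thing that needs to converge is the pulled-back metric $\gamma[\tau]$ --- and that is exactly the first limit established, via the bound on $\| H \|_{ L^{\infty,2}_{x,t} }$. Second, no Besov-level Cauchy calculus is required: Corollary \ref{thm.nc_renorm_lim} asserts only $L^2$-limits for the tensorial quantities and a uniform limit for $\trace H$; the paper explicitly defers $\ms{B}$- or $H^{1/2}$-level limits to a remark as requiring additional work beyond the theorem's claims. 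Once these two distractions are removed, your argument coincides with the one in the paper.
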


From Theorem \ref{thm.rough_est}, we can in particular control the \emph{Hawking masses} of the level spheres of $\mc{N}$.
In \cite[Ch. 17]{chr_kl:stb_mink}, for spacetimes that were sufficiently near-Minkowski, and for certain (time-foliated) null cones extending to infinity, it was shown that:
\begin{itemize}
\item The Hawking masses of the level spheres of such a null cone converge to some finite nonnegative limit at null infinity.

\item Moreover, this limit is the Bondi mass associated with this cone.
\end{itemize}
In the minimal setting of this paper, we can also show that the corresponding Hawking masses of $\mc{N}$ are bounded and have a controlled limit at infinity.

In our case, it is not clear a priori that this limit of the Hawking masses corresponds to any notion of Bondi energy or mass.
As discussed in \cite{sauter:penrose}, for instance, this connection between Hawking and Bondi masses depends closely on the spheres foliating $\mc{N}$ becoming asymptotically round at infinity.
In the companion paper \cite{alex_shao:bondi}, however, we will construct, under the same assumptions, such an asymptotically round family of spheres in $\mc{N}$, in order to control the Bondi energy.

We discuss the background and some main ideas of the proof in the remainder of the introduction.
For now, some remarks are in order.
The first concerns the significance and role of the low regularity regime in which we work. 

\begin{remark}
A ``poor-man's version" of our main theorems (and also of the sequel \cite{alex_shao:bondi}) would be to assume higher regularity for the curvature coefficients on $\mc{N}$.
For instance, suppose one has control for up to two derivatives of $\alpha$, $\beta$, $\rho$, $\sigma$, and $\beta$ (in directions tangent to $\mc{N}$, and with the appropriate weights) in $L^2 (\mc{N})$, along with corresponding control for the initial data on $\mc{S}$.
Then, using elementary methods, one can fairly easily derive (weighted) pointwise bounds for the connection coefficients on $\mc{N}$, and hence arrive at the conclusions of Theorems \ref{thm.rough_est} and \ref{thm.rough_lim}.
\footnote{This is in fact part of the argument for the global stability of Minkowski spacetime, \cite{chr_kl:stb_mink, kl_nic:stb_mink}.}

This derivation can be briefly summarized as follows.
Under uniform regularity bounds on the induced metrics $\mind_s$ on the level spheres of $s$, one can obtain from Sobolev estimates pointwise decay (in inverse powers of $s$) for the curvature components.
Then, treating the structure equations relating the curvature and connection coefficients (in particular, \eqref{eq.structure_ev} and \eqref{eq.structure_evd}) as ODEs in the null parameter $s$, we can derive the desired bounds on the connection coefficients. 
\footnote{For the uniform regularity on the metrics which initiates this derivation, one also requires a bootstrap argument analogous to that of the present paper.}
In particular, one avoids at this regularity all the technical issues required at the $L^2$-curvature level, in particular Besov spaces and geometric estimates under low regularity.

In working purely at the $L^2$-curvature level here, we establish a stronger result: the geometry of infinite null cones can break down only if the \emph{undifferentiated} curvature itself (and not just the derivatives of the curvature) deviates from the corresponding Schwarzschild values.
Unfortunately, this greatly complicates the proof, due not only to the necessity of Besov spaces and estimates, but also because one must rely more carefully on all the structure hidden within the Einstein equations.
As mentioned before, one motivation for advancing in this direction is that our result would be closely connected to any potential extension of the global stability of Minkowski spacetime to the $L^2$-curvature level.
\end{remark}

\begin{remark}
In our main theorems (and also in \cite{alex_shao:bondi}), we deal with null hypersurfaces that are close to the shear-free truncated null cones in the Schwarzschild spacetimes.
This is partly out of convenience, since the connection and curvature quantities are explicitly given and well-known on these canonical Schwarzschild surfaces.
In particular, this makes the tasks of formulating and measuring closeness easier.

It is likely that analogous results can be proved for null cones near Kerr spacetimes.
(However, our result already captures surfaces that are close to certain cones in Kerr spacetimes with small angular momentum, since the latter are themselves close to the shear-free cones in Schwarzschild). 
Furthermore, it is conceivable that the perturbative result derived here could be true for smooth, infinite null hypersurfaces which are close (at the $L^2$-curvature level) to a fixed null hypersurface in a general asymptotically flat spacetime.
While the methods of analysis would likely be analogous to that of this paper, the differential equations governing the null geometry would tend to be unwieldy in this general setting.
\end{remark}

\begin{remark}
In view of recent works by Luk and Rodnianski, \cite{luk_rod:grav_wave, luk_rod:grav_wave_inter}, on gravitational waves, one may ask whether Theorems \ref{thm.rough_est} and \ref{thm.rough_lim} hold even without assumptions on the curvature component $\alpha$.
As far as we know, the smallness of $\alpha$ seems to be necessary, as the low regularity of our setting forces us to utilize \emph{all} of the structure equations for the connection and curvature components.
\end{remark}

\subsubsection{The Renormalization Procedure} \label{sec.intro_outline}

We now give a brief outline of how the proof of Theorem \ref{thm.rough_est} proceeds.
Primarily, we wish to convert our setting to one which can be treated by methods analogous to \cite{kl_rod:cg, kl_rod:glp, kl_rod:stt}.
Moreover, we want the general results developed in \cite{shao:stt} to be applicable to our new setting.
Both of these objectives are accomplished by adopting certain \emph{renormalizations} to our system.

This first step is to convert $\mc{N}$ from an \emph{infinite cone} into a \emph{finite cylinder}.
To do this, we rescale the metrics $\mind$ on the level spheres of $\mc{N}$, so that they have almost constant area.
In practice, this allows us to analyze all the level spheres of $\mc{N}$ in a uniform manner.
Next, we adopt a change of the evolutionary variable to convert the infinite interval $[s_0, \infty)$ to a finite interval $[0, 1)$.
We also make corresponding renormalizations for both the curvature and the connection coefficients on $\mc{N}$.
For details behind the specific rescalings and transformations, see Section \ref{sec.nc_renorm}.

\begin{remark}
Note we have the freedom to choose weights for each curvature and connection quantity.
Though the chosen weights correspond to \cite{kl_nic:stb_mink}, some do not reflect those one would obtain from the usual conformal compactification of spacetime.
\end{remark}

Another key component of this process is the construction of a \emph{covariant system} (in the sense of \cite{shao:stt}) with respect to our finite null cylinder.
For this, we introduce connections on $\mc{N}$, compatible with the \emph{rescaled} metrics and adapted to the \emph{finite} evolutionary variable $t \in [0, 1)$.
This defines a notion of covariant differentiation on $\mc{N}$ that is adapted to our renormalized system.

Combining all these steps results in a new equivalent system in terms of the renormalized geometry, connection coefficients, and curvature components.
Moreover, this new system is formally similar to the original physical system for a \emph{finite} null cone.
Thus, the analysis we perform can in large part reduce to the ideas developed in \cite{kl_rod:cg}, along with some modifications and simplifications from \cite{shao:stt}.

As in \cite{kl_rod:cg}, we establish our desired estimates through an elaborate bootstrap argument.
We take as bootstrap assumptions some of the estimates on the connection coefficients that we wish to prove.
This is an important step, as these assumptions are required in order to validate many of the tools of analysis that are used.
\footnote{These assumptions imply that the geometries of the renormalized level spheres of $\mc{N}$ change very little.  As a result of this, various Sobolev, product, and elliptic estimates from \cite{shao:stt} can be applied in a uniform manner to all the level spheres of $\mc{N}$.  In particular, the aforementioned uniformity implies that $s$ remains comparable to the radii of the unrenormalized level spheres.}
From these assumptions, we can eventually derive all the desired estimates on the connection coefficients.
To close the bootstrap argument, we also obtain strictly improved versions of the bootstrap assumptions.
This implies that the estimates we have obtained in fact hold without our bootstrap assumptions.

\begin{remark}
For further discussions on the intricacies of the bootstrap argument itself, the reader is referred to the introductions of \cite{kl_rod:cg, wang:cg}.
\end{remark}

From this analysis, we obtain \emph{unweighted} estimates for the \emph{renormalized} connection coefficients on $\mc{N}$, in terms of the renormalized geometry.
The precise results of this procedure are stated as Theorem \ref{thm.nc_renorm}, i.e., the ``renormalized main theorem".
By inverting our renormalization procedure, we can restate these results in terms of the \emph{physical} geometry and connection coefficients.
This results precisely in the desired \emph{weighted} estimates, thereby proving Theorems \ref{thm.rough_est} and \ref{thm.nc_phys}.

We also remark that the limits obtained at infinity are most easily stated in terms of the renormalized system, since the geometries of the renormalized level spheres remain close to that of the initial sphere.
In particular, Corollary \ref{thm.nc_renorm_lim} (the precise version of Theorem \ref{thm.rough_lim}) is expressed entirely in the renormalized picture.
Furthermore, in the sequel \cite{alex_shao:bondi}, in which we control the Bondi energy, much of the analysis is once again performed in this renormalized setting.

Aside from the immediate problem, we propose that this renormalization process also provides a template for analyzing other geometric situations.
In general, this process transforms a foliation so that the evolutionary parameter has finite length and the geometries of the leaves of the foliation change very little.
In this paper, the upshot is that the renormalized system satisfies abstract assumptions which validate a wide range of estimates established in \cite{shao:stt}.

Finally, while the analysis here applies to only a single null cone, it is hoped that adaptations of this renormalization argument could also be used for studying regions of spacetimes (for example, the double-null foliations used in \cite{chr:gr_bh, kl_nic:stb_mink}).
In particular, because of these considerations, this procedure perhaps may also be applied toward analyzing existence theorems, again possibly at the $L^2$-curvature level, for the Einstein equations extending up to (a part of) null infinity.

\subsection{Technical Improvements} \label{sec.intro_tech}

Although we use the same template in our argument as in previous works (\cite{kl_rod:cg, parl:bdc, shao:bdc_nv, wang:cg, wang:cgp, wang:tbdc}), we also improve upon some of the techniques used in the above works.
Below, we briefly discuss the technical innovations employed in this paper and in \cite{shao:stt}.

\subsubsection{Bilinear Product Estimates}

In \cite{kl_rod:stt}, various bilinear product estimates, essential for the main argument in \cite{kl_rod:cg}, were established.
Due to the lack of geometric regularity, the proofs required the construction and application of a geometric tensorial Littlewood-Paley theory based on the heat flow; see \cite{kl_rod:glp} for this development.
This made the proofs in \cite{kl_rod:stt} both lengthy and highly technical.
Furthermore, the proofs of these estimates relied heavily upon the specific setting (geodesic foliation, finite null cones, vacuum spacetimes, etc.).
As a result, although these techniques can be adapted to other related settings, one must in principle redo these arguments for each separate setting.
For example, this was the case in \cite{wang:cg, wang:cgp}, which dealt instead with null cones beginning from a point.

In \cite{shao:stt}, a simpler and more systematic method for deriving these bilinear product and sharp trace estimates was presented.
In contrast to \cite{kl_rod:stt}, which resorted to the geometric Littlewood-Paley theory of \cite{kl_rod:glp}, these estimates were obtained in \cite{shao:stt} by reducing them to their (much simpler) Euclidean analogues.
That this reduction is possible is a consequence of two new observations:
\begin{itemize}
\item From the Codazzi equations, applied to the level spheres of $\mc{N}$, the curl of $\chi$ has slightly better properties than other derivatives of $\chi$.

\item This improved regularity for the curl of $\chi$ yields strictly better regularity for certain parallel-transported frames.
\end{itemize}
That these special frames are more regular than the usual coordinate frames allows us to reduce these geometric and tensorial product estimates to their Euclidean and scalar counterparts, which can be proved using \emph{classical} Littlewood-Paley theory.

Furthermore, in \cite{shao:stt}, the estimates are stated in terms of abstract foliations satisfying certain regularity assumptions.
The main advantage of this presentation is that these assumptions apply not only to the setting of this paper, but also to the settings of \cite{kl_rod:cg, parl:bdc, shao:bdc_nv, wang:cg, wang:cgp, wang:tbdc}.
Consequently, for any reasonable variation of the ``null cone with bounded curvature flux" problem, one can very quickly and easily validate a whole family of tensorial product estimates using \cite{shao:stt}.

\subsubsection{Elliptic Estimates}

In \cite{kl_rod:cg, parl:bdc, shao:bdc_nv, wang:cg, wang:cgp, wang:tbdc}, a major difficulty arose from the fact that the Gauss curvatures of the level spheres of $\mc{N}$ were highly irregular.
One derived only an $H^{-1/2}$-type bound on these Gauss curvatures; moreover, this was achieved only after a highly nontrivial argument containing, for instance, a technical commutator estimate involving heat flows.
Thus, many elliptic estimates that were straightforward in more regular settings became both technical and lengthy.
The most difficult examples were Besov elliptic estimates for the symmetric Hodge operators, which now required a much more delicate analysis.
In particular, the estimates contained additional error terms, which then added considerable length and complexity to the overall argument; see \cite{wang:cg} for details and discussions.

In \cite{shao:stt}, it was shown that these additional error terms were in fact unnecessary.
Furthermore, these estimates, without the error terms, could be proved using a far shorter argument than before.
The main new observation in the null cone setting is that the only part of the Gauss curvature that is not $L^2$-controlled can be expressed as a divergence (of $\zeta$).
This allowed for a conformal transformation into a different metric for which the Gauss curvature is entirely $L^2$-controlled.
In effect, one absorbs the low-regularity term into the chosen conformal factor.
\footnote{This technique also has similar applications in \cite{alex_shao:bondi}.}

The advantage gained from this transformation is that all the desired Besov-elliptic estimates can be derived far more easily with respect to this regularized metric.
Furthermore, the Hodge operators under consideration are conformally invariant, and a brief but careful analysis shows that these estimates can in fact be transferred back to the original metric.
Moreover, similar to the bilinear product estimates, the elliptic estimates in \cite{shao:stt} were stated in an abstract setting that applies not only to this paper, but also to the settings of previous works.

\subsubsection{Infinite Decompositions}

In previous arguments, an elaborate infinite decomposition of tensor fields was required within the main bootstrap argument in order to apply the necessary bilinear and sharp trace estimates.
In this process, such tensor fields (e.g., the gradients of $\hat{\chi}$ and $\zeta$) were decomposed into ``good" and ``bad" parts.
While the ``good" parts can be properly controlled, the ``bad" parts must again be decomposed into ``good" and ``bad" parts.
This led to infinite iterations, which converged in suitable norms to the final desired decompositions.

In this paper, we demonstrate that this cumbersome process can be avoided.
This simplification revolves around the following basic ideas:
\begin{itemize}
\item The exact infinite decompositions of these fields were irrelevant.
The main point is that some decomposition with the necessary estimates exists.

\item One can construct norms to quantify the \emph{existence} of the necessary decompositions, without needing to explicitly specify the decompositions.
\end{itemize}
The natural norm to capture such decompositions is the ``sum norm", described in Section \ref{sec.fol_norms}.
\footnote{This is, in fact, a general construction for arbitrary normed spaces.}
By using these sums norms in the main bootstrap assumptions and argument, one can avoid this process of obtaining explicit infinite decompositions.

More specifically, for argument, we decompose tensor fields as follows:
\begin{itemize}
\item The first part contains terms which can be controlled in an $L^2$-norm along all of $\mc{N}$, with the additional caveat that one can trade a null derivative for a spherical derivative.
A quantitative version of this property can be captured using a special norm (the $N^{0\star}_{t, x}$-norm in Section \ref{sec.fol_norms}).

\item The remaining terms will lack this derivative trading property.
These terms will be controlled by an infinitesimally stronger Besov-type norm on $\mc{N}$.
\end{itemize}
The main idea is to impose an additional bootstrap assumption in the main argument (via the sum norm) stating that such a decomposition exists, with sufficient bounds by the above two norms.
From explicit decompositions, one obtains as before the ``good" terms, which can be controlled in these norms.
However, one no longer needs to decompose again the remaining ``bad" terms, as these can now be handled using the bootstrap assumptions.
From this process, we can immediately recover a strictly improved version of this new bootstrap assumption.

\subsection{Notations} \label{sec.intro_not}

Here, we list basic notational conventions we will use, many of them borrowed from \cite{shao:stt}.
First, given nonnegative real numbers $X, Y, c_1, \dots, c_m$:
\begin{itemize}
\item $X \lesssim_{c_1, \dots, c_m} Y$ means that $X \leq c Y$ for some constant $c > 0$ depending on $c_1, \dots, c_m$.
If no $c_i$'s are given, then the constant $c$ is universal.

\item Similarly, we write $X \simeq_{c_1, \dots, c_m} Y$ to mean that both of the following statements hold: $X \lesssim_{c_1, \dots, c_m} Y$ and $Y \lesssim_{c_1, \dots, c_m} X$.
\end{itemize}
To shorten notations, \emph{we will generally omit the dependence of constants (i.e., the $c_i$'s in the above) in inequalities within proofs of statements.}

Next, we will use the following symbols to denote various constants.
These constants will be used throughout the paper to represent ranks of tensor fields as well as parameters in various regularity conditions.
\begin{itemize}
\item Let $r, r_1, r_2$ and $l, l_1, l_2$ denote non-negative integers.

\item Let $C > 1$ and $B > 0$ denote real constants.

\item Let $N > 0$ denote an integer constant.
\end{itemize}

Finally, for a general manifold $M$:
\begin{itemize}
\item Let $\mc{C}^\infty M$ denote the space of all smooth real-valued functions on $M$.

\item For a vector bundle $\mc{V}$ over $M$ and $z \in M$, we let $\mc{V}_z$ denote the fiber of $\mc{V}$ at $z$.
Moreover, we let $\mc{C}^\infty \mc{V}$ be the space of all smooth sections of $\mc{V}$.

\item Let $T^r_l M$ denote the \emph{tensor bundle} over $M$ of rank $(r, l)$, for which the fiber $( T^r_l M )_z$ at any $z \in M$ is the space of tensors of rank $(r, l)$ at $z$.
\footnote{Here, $r$ is the contravariant rank, and $l$ is the covariant rank.}
Thus, $\mc{C}^\infty T^r_l M$ is the space of smooth tensor fields of rank $(r, l)$ on $M$.
\end{itemize}
We will often use standard index notation to describe tensor and tensor fields on $M$.
Indices, denoted using lowercase Latin letters, will be with respect to fixed frames and coframes.
In accordance with Einstein summation notation, repeated indices indicate summations over all allowable index values.

\begin{ackn}
The first author was supported by NSERC grants 488916 and 489103, as well as a Sloan Fellowship.
The authors also wish to thank Sergiu Klainerman for interesting and helpful conversations that contributed to this report.
\end{ackn}

\section{Geometric Preliminaries} \label{sec.geom}

In this section, we review some background involving the analysis of tensor fields on a $2$-dimensional Riemannian manifold.
The contents here briefly summarize many of the basic developments detailed in \cite{shao:stt}.
Throughout, we let $\mc{S}$ be a surface diffeomorphic to $\Sph^2$, with $h$ a Riemannian metric on $\mc{S}$.

\subsection{Riemannian Structures} \label{sec.geom_riem}

By the conventions in Section \ref{sec.intro_not}, we can think of $h$ as an element of $\mc{C}^\infty T^0_2 \mc{S}$.
Let $h^{-1} \in \mc{C}^\infty T^2_0 \mc{S}$ denote the metric dual of $h$.
As usual, within index notation, $h^{-1}$ is written as simply $h$, but with superscript indices.
Since $\mc{S}$ is compact, we can fix an orientation for $\mc{S}$.
As a result, $h$ and this orientation induce a volume form $\omega \in \mc{C}^\infty T^0_2 \mc{S}$ on $\mc{S}$.

Recall $h$ and $h^{-1}$ define pointwise tensorial inner products and norms on $\mc{S}$.
More specifically, for any $F, G \in \mc{C}^\infty T^r_l \mc{S}$, we define
\[ \langle F, G \rangle = h_{a_1 b_1} \dots h_{a_r b_r} h^{c_1 d_1} \dots h^{c_l d_l} F^{a_1 \dots a_r}{}_{c_1 \dots c_l} G^{b_1 \dots b_r}{}_{d_1 \dots d_l} \in C^\infty \mc{S} \text{,} \]
i.e., the bundle metric on $T^r_l \mc{S}$ induced by $h$.
We also define the pointwise norm:
\footnote{In the scalar case $r = l = 0$, the inner product $\langle \cdot, \cdot \rangle$ is simply multiplication of functions, and the norm $| \cdot |$ is the absolute value.
In particular, these are independent of $h$.}
\[ | F | = \langle F, F \rangle^\frac{1}{2} \text{.} \]
We can now use $h$ and $\omega$ to define standard integral norms:
\[ \| F \|_{ L^q_x }^q = \int_\mc{S} | F |^q d \omega \text{,} \qquad \| F \|_{ L^\infty_x } = \sup_{ \mc{S} } | F | \text{,} \qquad q \in [1, \infty) \text{.} \]

Following standard conventions, we let $\nabla$ and $\lapl$ denote the Levi-Civita connection and the Bochner Laplacian with respect to $h$, respectively.
Higher-order differentials are defined iteratively: $\nabla^{k+1} = \nabla \nabla^k$ for any positive integer $k$.
Furthermore, we let $\mc{K} \in \mc{C}^\infty \mc{S}$ denote the Gauss curvature of $(\mc{S}, h)$.

Next, we recall the symmetric Hodge operators on spherical surfaces, as defined in \cite{chr_kl:stb_mink, kl_rod:cg}.
We begin by defining the vector bundles on which these operators act.
The rank-$0$ and rank-$1$ bundles are defined as
\[ H_0 \mc{S} = \mc{C}^\infty \mc{S} \otimes \C \text{,} \qquad H_1 \mc{S} = \mc{C}^\infty T^0_1 \mc{S} \text{.} \]
Note the sections of $H_0 \mc{S}$ are precisely the complex-valued smooth scalar functions on $\mc{S}$.
In addition, we define $H_2 \mc{S}$ to be the vector bundle over $\mc{S}$ of all covariant symmetric $h$-traceless horizontal $2$-tensors on $\mc{S}$.
\footnote{In index notation, $A \in \mc{C}^\infty T^0_2 \mc{S}$ is in $\mc{C}^\infty H_2 \mc{S}$ iff $A_{ba} = A_{ab}$ and $h^{ab} A_{ab} \equiv 0$.}

\begin{remark}
Note that $H_0 \mc{S}$ and $H_1 \mc{S}$ are independent of $h$, while $H_2 \mc{S}$ is not.
\end{remark}

The symmetric Hodge operators are defined as follows:
\begin{alignat*}{3}
\mc{D}_1 &: \mc{C}^\infty H_1 \mc{S} \rightarrow \mc{C}^\infty H_0 \mc{S} \text{,} &\qquad \mc{D}_1 X &= h^{ab} \nabla_a X_b - i \cdot \omega^{ab} \nabla_a X_b \text{,} \\
\mc{D}_2 &: \mc{C}^\infty H_2 \mc{S} \rightarrow \mc{C}^\infty H_1 \mc{S} \text{,} &\qquad ( \mc{D}_2 X )_a &= h^{bc} \nabla_b X_{ac} \text{,} \\
\mc{D}_1^\ast &: \mc{C}^\infty H_0 \mc{S} \rightarrow \mc{C}^\infty H_1 \mc{S} \text{,} &\qquad ( \mc{D}_1^\ast X )_a &= - \nabla_a ( \real X ) - \omega_a{}^c \nabla_c ( \imag X ) \text{,} \\
\mc{D}_2^\ast &: \mc{C}^\infty H_1 \mc{S} \rightarrow \mc{C}^\infty H_2 \mc{S} \text{,} &\qquad -2 ( \mc{D}_2^\ast X )_{ab} &= \nabla_a X_b + \nabla_b X_a - h_{ab} h^{cd} \nabla_c X_d \text{.}
\end{alignat*}
Direct computations show that the $\mc{D}_i^\ast$'s are the $L^2$-adjoints of the $\mc{D}_i$'s (with respect to $h$).
In addition, we can compute the following identities:
\begin{alignat}{3}
\label{eq.hodge_sq} \mc{D}_1 \mc{D}_1^\ast &= - \lapl \text{,} &\qquad \mc{D}_1^\ast \mc{D}_1 &= - \lapl + \mc{K} \text{,} \\
\notag \mc{D}_2 \mc{D}_2^\ast &= - \frac{1}{2} \lapl - \frac{1}{2} \mc{K} \text{,} &\qquad \mc{D}_2^\ast \mc{D}_2 &= - \frac{1}{2} \lapl + \mc{K} \text{.}
\end{alignat}

Finally, we review some basic formulas for rescaling $h$.
Fix $\lambda \in \R$, and consider the Riemannian metric $\bar{h} = e^{2 \lambda} h \in \mc{C}^\infty T^0_2 \mc{S}$.
We will use the following notational conventions: geometric objects and norms defined with respect to $\bar{h}$ will be denoted with a ``bar" over the symbol.
For example, $\bar{\omega} = e^{2 \lambda} \omega$ denotes the volume form associated with $\bar{h}$ (with the same orientation).
In terms of index notations,
\[ \bar{h}_{ab} = e^{2 \lambda} h_{ab} \text{,} \qquad \bar{\omega}_{ab} = e^{2 \lambda} \omega_{ab} \text{,} \qquad \bar{h}^{ab} = e^{-2 \lambda} h^{ab} \text{,} \qquad \bar{\omega}^{ab} = e^{-2 \lambda} \omega^{ab} \text{.} \]
Moreover, such rescalings leave the Levi-Civita connection unchanged,
\[ \bar{\nabla} F = \nabla F \text{,} \qquad F \in \mc{C}^\infty T^r_l \mc{S} \text{,} \]
while it rescales the curvature by a constant factor,
\[ \bar{\mc{K}} = e^{-2 \lambda} \mc{K} \text{.} \]
The Hodge operators for $h$ and $\bar{h}$ also obey similar formulas:
\[ \bar{\mc{D}}_1 = e^{-2 \lambda} \mc{D}_1 \text{,} \qquad \bar{\mc{D}}_2 = e^{-2 \lambda} \mc{D}_2 \text{,} \qquad \bar{\mc{D}}_1^\ast = \mc{D}_1^\ast \text{.} \]

\subsection{Geometric Littlewood-Paley Theory} \label{sec.geom_glp}

We next review the geometric invariant Littlewood-Paley (abbreviated L-P) theory, based on spectral decompositions of the (Bochner) Laplacian.
For additional discussions, see \cite{shao:stt}.

\begin{remark}
An alternative approach is to use the geometric L-P theory of \cite{kl_rod:glp}, based on the heat flow.
This was done in previous works involving null cones with bounded curvature flux, cf. \cite{kl_rod:cg, parl:bdc, shao:bdc_nv, wang:cg, wang:cgp}.
However, the spectral version, whenever applicable, is much easier to rigorously construct and utilize.
\end{remark}

For technical purposes, we consider the Hilbert space $L^2 T^r_l \mc{S}$, defined as the completion of $\mc{C}^\infty T^r_l \mc{S}$ with respect to the $L^2$-norm on $(\mc{S}, h)$.
Consider $-\lapl$ as a positive self-adjoint unbounded operator on $L^2 T^r_l \mc{S}$, which has a spectral decomposition
\[ -\lapl = \int_0^\infty \lambda \cdot d E_\lambda \text{.} \]

As in \cite{shao:stt}, the spectral L-P operators can be constructed as follows:
\begin{itemize}
\item Fix a function $\varsigma \in \mc{C}^\infty \R$, supported in the region $1/2 \leq | \xi | \leq 2$, satisfying
\[ \sum_{k \in \Z} \varsigma ( 2^{-2k} \xi ) = 1 \text{,} \qquad \xi \in \R \setminus \{ 0 \} \text{.} \]

\item For each $k \in \Z$, we define the L-P operators on $L^2 T^r_l \mc{S}$ by
\[ P_k = \varsigma ( - 2^{-2k} \lapl ) \text{,} \qquad P_- = \chi_{ \{ 0 \} } ( -\lapl ) \text{,} \]
where $\chi_{ \{0\} }$ is the characteristic function for the set containing only $0$.
In particular, $P_-$ is precisely the $L^2$-projection onto the kernel of $\lapl$.

\item Given any $k \in \Z$, we can define (in the strong operator topology)
\[ P_{< k} = P_- + \sum_{l < k} P_l \text{.} \]
In addition, letting $I$ denote the identity operator on $L^2 T^r_l \mc{S}$, we have
\[ I = P_{< 0} + \sum_{k \geq 0} P_k \text{.} \]
\end{itemize}
These L-P operators are fully invariant and tensorial, and they satisfy many of the same properties as the classical L-P operators on Euclidean spaces (at least with respect to the $L^2$-norm).
For details, see \cite[Sect. 2.2]{shao:stt}.

In this paper, we will not need to deal directly with these L-P operators.
Instead, we need them in order to define geometric, tensorial Besov norms.
Given $a \in [1, \infty)$ and $s \in \R$, we define for each $F \in \mc{C}^\infty T^r_l \mc{S}$ the norms
\begin{align*}
\| F \|_{ B^{a, s}_{\ell, x} }^a &= \sum_{k \geq 0} 2^{ask} \| P_k F \|_{ L^2_x }^a + \| P_{< 0} F \|_{ L^2_x }^a \text{,} \\
\| F \|_{ B^{\infty, s}_{\ell, x} } &= \max \left( \sup_{k \geq 0} 2^{sk} \| P_k F \|_{ L^2_x }, \| P_{< 0} F \|_{ L^2_x } \right) \text{.}
\end{align*}
These are the direct analogues of the standard $B^s_{2, a}$-norms in Euclidean space.
As we are mainly interested in the case $a = 1$, we define the shorthand
\[ \| F \|_{ B^s_x } = \| F \|_{ B^{1, s}_{\ell, x} } \text{.} \]

Next, given $s \in \R$, we can define the standard fractional Sobolev norms
\[ \| F \|_{ H^s_x } = \| \Lambda^s F \|_{ L^2_x } \text{,} \qquad F \in \mc{C}^\infty T^r_l \mc{S} \text{,} \]
where $\Lambda^s = ( I - \lapl )^\frac{s}{2}$.
We can relate these to the aforementioned Besov norms.

\begin{proposition} \label{thm.sobolev}
If $s \in \R$ and $F \in \mc{C}^\infty T^r_l \mc{N}$, then
\begin{equation} \label{eq.sobolev} \| F \|_{ H^s_x } \simeq_s \| F \|_{ B^{2, s}_{\ell, x} } \text{.} \end{equation}
\end{proposition}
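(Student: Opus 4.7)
The plan is to reduce everything to the spectral calculus for $-\lapl$, exploiting the fact that both $\Lambda^s$ and the L-P operators $P_k, P_{<0}$ are, by construction, spectral multipliers for the same self-adjoint operator. Consequently they all commute, and statements about their $L^2$-norms become pointwise statements about the corresponding symbols against the spectral measure $d\langle E_\lambda F, F\rangle$.

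First I would note, via functional calculus, the identity
\[ \| \Lambda^s G \|_{L^2_x}^2 = \int_0^\infty (1+\lambda)^s \, d\langle E_\lambda G, G\rangle \]
for any $G \in \mc{C}^\infty T^r_l \mc{S}$. Applying this to $G = P_k F$ and using that the symbol $\varsigma(-2^{-2k}\lambda)$ is supported in $2^{2k-1} \le \lambda \le 2^{2k+1}$, one sees that on this support $(1+\lambda)^{s/2} \simeq_s 2^{sk}$ for $k \ge 0$. Therefore
\[ \| \Lambda^s P_k F \|_{L^2_x} \simeq_s 2^{sk} \| P_k F \|_{L^2_x}, \qquad k \ge 0, \]
and, since $-\lapl$ restricted to the range of $P_{<0}$ has spectrum in $[0,2]$, also $\| \Lambda^s P_{<0} F \|_{L^2_x} \simeq_s \| P_{<0} F \|_{L^2_x}$.

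Next I would combine these estimates by almost-orthogonality. Because the cutoffs $\varsigma(2^{-2k}\cdot)$ are supported in dyadic annuli that intersect at most three consecutive neighbours, the operators $P_k$ satisfy finite overlap: $P_k P_{k'} = 0$ whenever $|k - k'| \ge 3$, with $P_{<0}$ orthogonal to all $P_k$ for $k \ge 3$. Together with the resolution of the identity $I = P_{<0} + \sum_{k \ge 0} P_k$, this yields, by Pythagoras up to a constant,
\[ \| \Lambda^s F \|_{L^2_x}^2 \simeq \| \Lambda^s P_{<0} F \|_{L^2_x}^2 + \sum_{k \ge 0} \| \Lambda^s P_k F \|_{L^2_x}^2. \]
Plugging in the symbol comparison from the previous step gives
\[ \| F \|_{H^s_x}^2 \simeq_s \| P_{<0} F \|_{L^2_x}^2 + \sum_{k \ge 0} 2^{2sk} \| P_k F \|_{L^2_x}^2 = \| F \|_{B^{2,s}_{\ell,x}}^2, \]
which is \eqref{eq.sobolev}.

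I do not expect a serious obstacle here: the argument is entirely a functional-calculus translation of the standard Euclidean Littlewood-Paley characterization of $H^s$, with the only mildly delicate point being a clean statement of the finite-overlap almost-orthogonality for the $P_k$'s. Since that is already spelled out in \cite{shao:stt} and follows directly from the support condition on $\varsigma$, one could also simply cite the relevant lemmas from \cite[Sect.\ 2.2]{shao:stt} to conclude, rather than redo the almost-orthogonality by hand.
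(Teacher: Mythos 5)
Your argument is correct and is exactly the intended one: the paper dismisses the proposition with the remark "This follows from the spectral properties of $\Lambda^s$," and what you have written out — functional calculus for $-\lapl$, the symbol comparison $(1+\lambda)^{s/2}\simeq_s 2^{sk}$ on the dyadic support of $\varsigma(2^{-2k}\cdot)$, and finite-overlap almost-orthogonality of the $P_k$'s — is precisely the content of that remark. The only minor inaccuracies (the spectral support of $P_{<0}$ is contained in $[0,1/2]$ rather than $[0,2]$, and $P_kP_{k'}=0$ already for $|k-k'|\ge 2$) are harmless and do not affect the argument.
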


\begin{proof}
This follows from the spectral properties of $\Lambda^s$.
\end{proof}

\subsection{Regularity Conditions} \label{sec.geom_reg}

We now discuss the regularity conditions that we will impose on $(\mc{S}, h)$.
The point is that all such $(\mc{S}, h)$ satisfying these properties can be controlled in a uniform way.
One example is Sobolev-type estimates, which can be applied with a common Sobolev constant for all such $(\mc{S}, h)$.

We will use the same conditions that were defined in \cite[Sect. 2.4]{shao:stt}.

\begin{definition} \label{def.ass_r0}
$(\mc{S}, h)$ satisfies \ass{r0}{C, N}, with data $\{ U_i, \varphi_i, \eta_i \}_{i = 1}^N$, iff:
\begin{itemize}
\item The area $| \mc{S} |$ of $(\mc{S}, h)$ satisfies
\[ C^{-1} \leq | \mc{S} | \leq C \text{.} \]

\item The $(U_i, \varphi_i)$'s, where $1 \leq i \leq N$, are local coordinate systems on $\mc{S}$ that cover $\mc{S}$.
Moreover, each $\varphi_i (U_i)$ is a bounded neighborhood in $\R^2$.

\item The $\eta_i$'s form a partition of unity of $\mc{S}$, subordinate to the $U_i$'s, such that
\[ 0 \leq \eta_i \leq 1 \text{,} \qquad | \partial^i_a \eta_i | \leq C \text{,} \qquad a, b \in \{ 1, 2 \} \text{,} \]
for each $1 \leq i \leq N$, where $\partial^i_1, \partial^i_2$ denote the $\varphi_i$-coordinate vector fields.

\item For each $1 \leq i \leq N$, we have on $U_i$ the uniform positivity property
\[ C^{-1} | \xi |^2 \leq \sum_{a, b = 1}^2 h_{ab} \xi^a \xi^b \leq C | \xi |^2 \text{,} \qquad \xi \in \R^2 \text{,} \]
where we have indexed with respect to the $\varphi_i$-coordinate system on $U_i$.
\end{itemize}
\end{definition}

\begin{definition} \label{def.ass_r1}
$(\mc{S}, h)$ satisfies \ass{r1}{C, N}, with data $\{ U_i, \varphi_i, \eta_i, \tilde{\eta}_i, e^i \}_{i = 1}^N$, iff:
\begin{itemize}
\item $(\mc{S}, h)$ satisfies \ass{r0}{C, N}, with data $\{ U_i, \varphi_i, \eta_i \}_{i = 1}^N$.

\item For any $1 \leq i \leq N$, we have that $e^i = ( e^i_1, e^i_2 ) \in \mc{C}^\infty T^1_0 \mc{S} \times \mc{C}^\infty T^1_0 \mc{S}$ forms an orthonormal frame on $U_i$ and satisfies the estimates
\[ \| \nabla e^i_a \|_{ L^4_x } \leq C \text{,} \qquad a \in \{ 1, 2 \} \text{.} \]

\item For any $1 \leq i \leq N$, we have that $\tilde{\eta}_i \in \mc{C}^\infty \mc{S}$ is supported within $U_i$, is identically $1$ on the support of $\eta_i$, and satisfies the estimates
\[ 0 \leq \tilde{\eta}_i \leq 1 \text{,} \qquad | \partial^i_a \tilde{\eta}_i | \leq C \text{,} \qquad a \in \{ 1, 2 \} \text{.} \]

\item For each $1 \leq i \leq N$, the area density
\[ \vartheta_i = \sqrt{ h_{11} h_{22} - h_{12}^2 } \in \mc{C}^\infty U_i \text{,} \]
where we have indexed with respect to the $\varphi_i$-coordinates, satisfies
\[ \| \nabla \vartheta_i \|_{ L^2_x } \leq C \text{.} \]
\end{itemize}
\end{definition}

\begin{definition} \label{def.ass_r2}
$(\mc{S}, h)$ satisfies \ass{r2}{C, N}, with data $\{ U_i, \varphi_i, \eta_i, \tilde{\eta}_i, e^i \}_{i = 1}^N$, iff:
\begin{itemize}
\item $(\mc{S}, h)$ satisfies \ass{r1}{C, N}, with data $\{ U_i, \varphi_i, \eta_i, \tilde{\eta}_i, e^i \}_{i = 1}^N$.

\item For each $1 \leq i \leq N$, the $\varphi_i$-coordinate vector fields $\partial^i_1, \partial^i_2$ satisfy
\[ \| \nabla \partial^i_a \|_{ L^2_x } \leq C \text{,} \qquad a \in \{ 1, 2 \} \text{.} \]

\item For each $1 \leq i \leq N$, the second \emph{coordinate} derivatives of $\eta_i$ satisfy
\[ | \partial^i_a \partial^i_b \eta_i | \leq C \text{,} \qquad a, b \in \{ 1, 2 \} \text{.} \]
\end{itemize}
\end{definition}

The conditions in the \ass{r0}{}, \ass{r1}{}, and \ass{r2}{} assumptions were required explicitly in \cite{shao:stt} for various estimates that we will need here.
On the other hand, here we will not encounter most of these conditions directly.
We include their precise statements in order to provide a similarly precise statement of the main results of this paper.

\begin{remark}
Since $\mc{S}$ is compact, $(\mc{S}, h)$ trivially satisfies \ass{r2}{C, N} for some $C$, $N$.
\end{remark}

We now list some estimates resulting from these conditions.
The first batch involves tensorial Sobolev-type estimates, which were proved in \cite[Sect. 2.5]{shao:stt}. 
\footnote{See also \cite[Cor. 2.4]{kl_rod:glp}, on which the proofs in \cite{shao:stt} were based.}

\begin{proposition} \label{thm.gns_ineq}
Suppose $(\mc{S}, h)$ satisfies \ass{r0}{C, N}, and let $F \in \mc{C}^\infty T^r_l \mc{S}$.
\begin{itemize}
\item If $q \in (2, \infty)$, then
\begin{align}
\label{eq.gns_1} \| F \|_{ L^q_x } &\lesssim_{C, N, q} \| \nabla F \|_{ L^2_x }^{ 1 - \frac{2}{q} } \| F \|_{ L^2_x }^\frac{2}{q} + \| F \|_{ L^2_x } \text{,} \\
\label{eq.gns_1s} \| F \|_{ L^\infty_x } &\lesssim_{C, N, q} \| \nabla F \|_{ L^q_x }^\frac{2}{q} \| F \|_{ L^q_x }^{ 1 - \frac{2}{q} } + \| F \|_{ L^q_x } \text{.}
\end{align}

\item Moreover, the following estimate holds:
\begin{equation} \label{eq.gns_2} \| F \|_{ L^\infty_x } \lesssim_{C, N} \| \nabla^2 F \|_{ L^2_x }^\frac{1}{2} \| F \|_{ L^2_x }^\frac{1}{2} + \| F \|_{ L^2_x } \text{.} \end{equation}
\end{itemize}
\end{proposition}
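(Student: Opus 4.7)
The plan is to reduce the tensorial inequalities on $(\mc{S}, h)$ to the classical Gagliardo-Nirenberg-Sobolev inequalities on $\R^2$ by combining Kato's pointwise inequality with the localization data provided by \ass{r0}{C, N}.

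First I would handle the tensorial-to-scalar reduction. Kato's inequality gives the pointwise bound $|\nabla |F|| \leq |\nabla F|$, together with the trivial identity $\||F|\|_{L^q_x} = \|F\|_{L^q_x}$ for every $q$. Hence it suffices to prove \eqref{eq.gns_1} and \eqref{eq.gns_1s} for an arbitrary smooth scalar $f \in \mc{C}^\infty \mc{S}$ in place of $F$, with $\nabla f$ in place of $\nabla F$.

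Next I would localize using the partition-of-unity data in \ass{r0}{C, N}. Writing $f = \sum_{i=1}^N \eta_i f$, each $\eta_i f$ is compactly supported in the chart $U_i$ and pulls back under $\varphi_i^{-1}$ to a compactly supported function $g_i$ on the bounded set $\varphi_i(U_i) \subset \R^2$. On $\R^2$ the classical inequalities
\[ \|g\|_{L^q(\R^2)} \lesssim_q \|\partial g\|_{L^2(\R^2)}^{1-\frac{2}{q}} \|g\|_{L^2(\R^2)}^{\frac{2}{q}} \text{,} \qquad \|g\|_{L^\infty(\R^2)} \lesssim_q \|\partial g\|_{L^q(\R^2)}^{\frac{2}{q}} \|g\|_{L^q(\R^2)}^{1-\frac{2}{q}} \]
hold for compactly supported $g$ and $q \in (2, \infty)$; I would apply these to each $g_i$. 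The uniform positivity bound for $h$ in \ass{r0}{C, N} makes the $L^q$ norm of $g_i$ on $\varphi_i(U_i)$ comparable, with constants depending only on $C, N$, to the geometric $L^q_x$-norm on $\mc{S}$, and since for scalars $|\nabla f|_h^2 = h^{ab} \partial_a f \partial_b f$, the same bound also controls the geometric $|\nabla f|$ by the coordinate gradient. The commutator terms $f \partial^i_a \eta_i$ that arise from Leibniz get absorbed by the $\|f\|_{L^2_x}$ term using the uniform bound $|\partial^i_a \eta_i| \leq C$. Summing over $i = 1, \dots, N$ yields \eqref{eq.gns_1} and \eqref{eq.gns_1s}.

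Finally, \eqref{eq.gns_2} is obtained by interpolation from \eqref{eq.gns_1} and \eqref{eq.gns_1s}. Applying \eqref{eq.gns_1s} with $q = 4$ gives $\|F\|_{L^\infty_x} \lesssim \|\nabla F\|_{L^4_x}^{1/2} \|F\|_{L^4_x}^{1/2} + \|F\|_{L^4_x}$; applying \eqref{eq.gns_1} with $q = 4$ to $\nabla F$ and to $F$ separately gives $\|\nabla F\|_{L^4_x} \lesssim \|\nabla^2 F\|_{L^2_x}^{1/2} \|\nabla F\|_{L^2_x}^{1/2} + \|\nabla F\|_{L^2_x}$ and $\|F\|_{L^4_x} \lesssim \|\nabla F\|_{L^2_x}^{1/2} \|F\|_{L^2_x}^{1/2} + \|F\|_{L^2_x}$. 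The remaining step is to interpolate $\|\nabla F\|_{L^2_x}$ between $\|\nabla^2 F\|_{L^2_x}$ and $\|F\|_{L^2_x}$, which comes from the Bochner-type identity $\tfrac{1}{2} \lapl |F|^2 = |\nabla F|^2 + \langle F, \lapl F \rangle$: integrating over $\mc{S}$ yields $\|\nabla F\|_{L^2_x}^2 \leq \|F\|_{L^2_x} \|\lapl F\|_{L^2_x} \leq \|F\|_{L^2_x} \|\nabla^2 F\|_{L^2_x}$ (since $\lapl$ is a contraction of $\nabla^2$). Substituting and simplifying exponents gives \eqref{eq.gns_2}.

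The main technical obstacle is the tensorial reduction at the $\nabla^2$-level: a naive application of Kato to $|F|$ would generate singular terms of the form $|\nabla F|^2 / |F|$. The route above bypasses this entirely by deriving \eqref{eq.gns_2} from the first-order estimates \eqref{eq.gns_1} and \eqref{eq.gns_1s} applied to $\nabla F$ (a tensor field of one higher rank), combined with the integration-by-parts identity above, so that Kato is only ever invoked at the first-derivative level where it is safe.
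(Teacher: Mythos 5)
Your proof is correct, and the approach---Kato's inequality to pass from tensors to scalars, then localization via the \ass{r0}{C, N} chart data to reduce to the classical Gagliardo--Nirenberg inequalities on $\R^2$---is exactly the natural one given how the \ass{r0}{} condition is formulated, and is in the spirit of the cited reference \cite[Sect.~2.5]{shao:stt} (the paper itself only gives a citation, not an inline proof). The tensorial-to-scalar reduction, the absorption of the commutator terms $f\,\partial^i_a\eta_i$ into the $\| F \|_{L^2_x}$ tail, and the comparability of coordinate and geometric $L^q$-norms via the uniform positivity of $h$ and the resulting pointwise bounds $C^{-1} \leq \vartheta_i \leq C$ on the area density are all handled correctly.

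Two small points worth tightening. First, for the second-order estimate \eqref{eq.gns_2}, the final step ``substituting and simplifying exponents'' hides a genuine Young-inequality argument: after inserting $\|\nabla F\|_{L^2}^2 \lesssim \|F\|_{L^2}\|\nabla^2 F\|_{L^2}$ into the products, one produces cross terms such as $a^{3/8}c^{5/8}$ and $a^{1/4}c^{3/4}$ (with $a = \|\nabla^2 F\|_{L^2}$, $c = \|F\|_{L^2}$) which are not monomials of the target form; each of these must be separately dominated via Young's inequality applied to $(a^{1/2}c^{1/2})^\theta c^{1-\theta}$ for suitable $\theta\in(0,1)$ to obtain $\lesssim a^{1/2}c^{1/2} + c$. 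This works, but it is more than a cosmetic simplification. Second, the Kato reduction should be phrased for $W^{1,q}$ scalars rather than smooth ones, since $|F|$ is in general only Lipschitz at zeros of $F$; the classical Euclidean GN inequalities hold at that regularity, so this is only a matter of wording. Aside from these, the argument is complete and gives constants that are rank-independent (Kato has constant $1$), consistent with the stated dependence on $C$, $N$, $q$ only.
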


With the \ass{r1}{} condition, we can also establish certain sharp Sobolev embeddings involving fractional derivatives.
For details, see \cite[Sect. 3.5]{shao:stt}.

\begin{proposition} \label{thm.sob_frac_sh}
Assume $(\mc{S}, h)$ satisfies \ass{r1}{C, N}.
If $F \in \mc{C}^\infty T^r_l \mc{S}$, then
\begin{equation} \label{eq.sob_frac_sh} \| F \|_{ L^4_x } \lesssim_{C, N, r, l} \| F \|_{ H^{1/2}_x } \text{.} \end{equation}
\end{proposition}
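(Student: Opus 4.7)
The strategy is to combine a geometric Littlewood--Paley decomposition with a bilinear expansion of $|F|^2$, reducing the sharp Sobolev embedding to a Bernstein-type estimate for the spectral L-P projectors $P_k$. The key observation is that $\|F\|_{L^4_x}^2 = \| \, |F|^2 \,\|_{L^2_x}$ converts the fractional Sobolev embedding into an $L^2$-estimate on a bilinear expression of the dyadic pieces of $F$.

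First, I would establish the Bernstein inequality
\[ \|P_k F\|_{L^\infty_x} \lesssim_{C, N, r, l} 2^k \|P_k F\|_{L^2_x}, \qquad k \geq 0, \]
together with $\|P_{<0} F\|_{L^\infty_x} \lesssim_{C, N, r, l} \|P_{<0} F\|_{L^2_x}$. These follow by applying the Gagliardo--Nirenberg estimate \eqref{eq.gns_2} to $P_k F$ and to $P_{<0} F$, then controlling $\|\nabla^2 P_k F\|_{L^2_x}$ through a Bochner--Weitzenb\"ock identity (whose curvature error terms are absorbable under \ass{r1}{C,N}), and finally invoking the spectral localization $\|\lapl P_k F\|_{L^2_x} \simeq 2^{2k} \|P_k F\|_{L^2_x}$ for $k\geq 0$.

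With the Bernstein estimate in hand, I expand $|F|^2 = \langle F, F\rangle = \sum_{j,k} \langle P'_j F, P'_k F\rangle$, where $P'_k$ ranges over $\{P_{<0}\} \cup \{P_k\}_{k \geq 0}$, and bound each cross-term by placing the lower-frequency factor in $L^\infty_x$ and the higher-frequency factor in $L^2_x$:
\[ \|\langle P'_j F, P'_k F\rangle\|_{L^2_x} \leq \|P'_{\min(j,k)} F\|_{L^\infty_x} \|P'_{\max(j,k)} F\|_{L^2_x} \lesssim 2^{\min(j,k)^+} \|P'_j F\|_{L^2_x} \|P'_k F\|_{L^2_x}, \]
with the convention that $2^{-\infty}=1$ for the $P_{<0}$ slot. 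Setting $a_k = 2^{k/2} \|P_k F\|_{L^2_x}$ for $k\geq 0$ and $a_{-\infty} = \|P_{<0} F\|_{L^2_x}$, the resulting dyadic kernel decays like $2^{-|j-k|/2}$, so Schur's test gives
\[ \|F\|_{L^4_x}^2 \leq \sum_{j,k} \|\langle P'_j F, P'_k F\rangle\|_{L^2_x} \lesssim \sum_k a_k^2 \simeq \|F\|_{H^{1/2}_x}^2, \]
the final equivalence being Proposition \ref{thm.sobolev}. Taking square roots yields the claim.

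The main obstacle is the Bernstein inequality of the first step: \eqref{eq.gns_2} controls $L^\infty_x$ by the full covariant Hessian $\nabla^2$, whereas the spectral theorem directly controls only powers of the Bochner Laplacian $-\lapl$ acting on tensor fields. Bridging these requires a Weitzenb\"ock-type identity expressing $\|\nabla^2\cdot\|_{L^2_x}$ in terms of $\|\lapl\cdot\|_{L^2_x}$ plus curvature commutators, whose error terms must be absorbed into the main term using the regularity provided by \ass{r1}{C,N}. Once this estimate is in place, the remainder of the proof is a routine Littlewood--Paley and Schur computation.
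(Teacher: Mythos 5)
The paper proves this by citing \cite[Sect.\ 3.5]{shao:stt}, where the argument (as the introduction of the present paper explains) proceeds by reducing the tensorial estimate to its scalar, \emph{Euclidean} counterpart using the coordinate charts and the regular orthonormal frames $e^i$ supplied by \ass{r1}{C,N}, and then invoking \emph{classical} Littlewood--Paley theory. Your approach is genuinely different: you work with the geometric spectral L-P operators $P_k$ directly and try to prove an intrinsic Bernstein inequality $\|P_k F\|_{L^\infty_x} \lesssim 2^k \|P_k F\|_{L^2_x}$, after which the bilinear expansion of $|F|^2$ and Schur's test are standard and would indeed finish the argument. Note, however, that the paper pointedly says the spectral $P_k$'s behave like the Euclidean L-P operators ``at least with respect to the $L^2$-norm''---the $L^2 \to L^\infty$ (Bernstein) mapping property is precisely what is not part of the framework being set up, and your proposal needs it.

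The gap is in the step you yourself flag as the main obstacle, and it is more serious than you suggest. The Weitzenb\"ock identity comparing $\|\nabla^2 G\|_{L^2_x}$ with $\|\lapl G\|_{L^2_x}$ is benign for scalar $G$, where integration by parts gives $\|\nabla^2 G\|_{L^2_x}^2 = \|\lapl G\|_{L^2_x}^2 - \int \mc{K}|\nabla G|^2\,d\omega$, and the single error term can be handled by writing $\mc{K}$ locally as the exterior derivative of the connection $1$-form $\omega^i$ of the frame $e^i$ (which \ass{r1}{C,N} bounds in $L^4$), integrating by parts once more, and absorbing via Gagliardo--Nirenberg. But for tensorial $F$ of nonzero rank the commutator $[\lapl, \nabla]$ acting on $F$ contains, in addition to $\mc{K}\ast\nabla F$, a term of the schematic form $(\nabla\mc{K})\ast F$. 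Moving the derivative off $\mc{K}$ produces the error integral $\int \mc{K}\ast F\ast\nabla^2 F\,d\omega$, and replacing the distributional $\mc{K}$ by $d\omega^i$ and integrating by parts once more produces a term containing $\nabla^3 F$, which lies outside the scheme. Absorbing $\int \mc{K}\ast F\ast\nabla^2 F$ without this further integration by parts would require $\mc{K}\in L^2_x$, but \ass{r1}{C,N} controls $\mc{K}$ only at the level of one derivative of an $L^4$ quantity (morally $W^{-1,4}_x$); in this paper $L^2$-type bounds on $\mc{K}$ appear only as part of the \ass{k}{} condition, which Proposition~\ref{thm.sob_frac_sh} does not assume. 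So the claim that the Weitzenb\"ock curvature errors ``are absorbable under \ass{r1}{C,N}'' is, for general tensor fields, not justified as stated and appears to fail. This is exactly the issue that the paper's route (reduction to Euclidean scalar estimates via the regular frames) is designed to sidestep.
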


\subsection{Curvature Regularity} \label{sec.geom_curv}

Besides \ass{r0}{}, \ass{r1}{}, and \ass{r2}{}, we require one more regularity assumption, introduced in \cite[Sect. 6.1]{shao:stt}, related to the curvature of $\mc{S}$.

\begin{definition} \label{def.ass_k}
$(\mc{S}, h)$ satisfies \ass{k}{C, D}, with data $(f, W, V)$, iff:
\begin{itemize}
\item $f \in \mc{C}^\infty \mc{S}$ satisfies, for any $x \in \mc{S}$, the bounds
\[ C^{-1} \leq f |_x \leq C \text{.} \]

\item $V \in \mc{C}^\infty T^0_1 \mc{S}$ and $W \in \mc{C}^\infty \mc{S}$ satisfy
\[ \| V \|_{ H^{1/2}_x } \leq D \text{,} \qquad \| W \|_{ L^2_x } \leq D \text{.} \]

\item $\mc{K}$ can be decomposed in the form
\[ \mc{K} - f = h^{ab} \nabla_a V_b + W \text{.} \]
\end{itemize}
Moreover, we will only consider the case in which $D$ is very small.
\end{definition}

\begin{remark}
In other words, $\mc{K}$ is comparable to $1$, except for a ``good" error term $W$ that is $L^2$-bounded, and a ``bad" error term, which is not $L^2$-bounded but can be expressed as a divergence of an $H^{1/2}$-controlled $1$-form $V$.
\end{remark}

Although the \ass{k}{} condition places only very weak restrictions on the curvature, it is sufficient to establish several elliptic estimates.

\begin{proposition} \label{thm.div_curl_est}
Assume $(\mc{S}, h)$ satisfies \ass{r1}{C, N} and \ass{k}{C, D}, with $D \ll 1$ sufficiently small.
Then, for any $F \in \mc{C}^\infty T^r_{l+1} \mc{S}$,
\footnote{Here, $h^{ab} \nabla_a F_b \in \mc{C}^\infty T^r_l \mc{S}$ refers to the metric contraction of $\nabla F$ in the derivative component and a fixed covariant component of $F$.  The expression $\omega^{ab} \nabla_a F_b$ is defined similarly.}
\begin{equation} \label{eq.div_curl_est} \| \nabla F \|_{ L^2_x } \lesssim_{C, N, r, l} \| h^{ab} \nabla_a F_b \|_{ L^2_x } + \| \omega^{ab} \nabla_a F_b \|_{ L^2_x } + \| F \|_{ L^2_x } \text{,} \end{equation}
\end{proposition}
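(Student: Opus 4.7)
The plan is to derive the estimate from a Bochner--Weitzenb\"ock type identity, then control the resulting curvature term using the structural decomposition of $\mc{K}$ provided by the \textbf{(k)} assumption together with the sharp Sobolev embedding \eqref{eq.sob_frac_sh}. Specifically, integrating by parts the expression $\int_\mc{S} |\nabla F|^2\, d\omega$ against the div and curl components of $F$ (in the distinguished covariant slot), I expect to obtain an identity schematically of the form
\begin{equation*}
\|\nabla F\|_{L^2_x}^2 \;=\; \|h^{ab}\nabla_a F_b\|_{L^2_x}^2 \,+\, \|\omega^{ab}\nabla_a F_b\|_{L^2_x}^2 \,+\, \int_\mc{S} \mc{K}\,|F|^2\, d\omega \,+\, (\text{lower order}),
\end{equation*}
where the ``lower order'' terms are quadratic in $F$ (no derivatives) and independent of the curvature, arising from commutators on the higher-rank components. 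This is the standard 2D div-curl identity and follows from $[\nabla_a,\nabla_b]$ producing a factor of $\mc{K}$ on the 2-sphere together with integration by parts.

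Next, I would split the curvature integrand using the decomposition $\mc{K} = f + h^{ab}\nabla_a V_b + W$ provided by \textbf{(k)}$_{C,D}$. The $\int f|F|^2$ piece is immediately bounded by $C\|F\|_{L^2_x}^2$ since $f \le C$. The $\int W|F|^2$ piece is estimated by H\"older and \eqref{eq.sob_frac_sh}:
\begin{equation*}
\left| \int_\mc{S} W |F|^2\, d\omega \right| \;\le\; \|W\|_{L^2_x}\, \|F\|_{L^4_x}^2 \;\lesssim\; D\, \|F\|_{H^{1/2}_x}^2.
\end{equation*}
The hardest piece is $\int (h^{ab}\nabla_a V_b)|F|^2$; here I would integrate by parts and estimate
\begin{equation*}
\left| \int_\mc{S} (h^{ab}\nabla_a V_b)\, |F|^2\, d\omega \right| \;\lesssim\; \|V\|_{L^4_x}\, \|F\|_{L^4_x}\, \|\nabla F\|_{L^2_x} \;\lesssim\; D\, \|F\|_{H^{1/2}_x}\, \|\nabla F\|_{L^2_x},
\end{equation*}
where I have again used \eqref{eq.sob_frac_sh} to convert $\|V\|_{L^4_x}$ to $\|V\|_{H^{1/2}_x} \le D$.

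Finally, to close, I would interpolate $\|F\|_{H^{1/2}_x}^2 \lesssim \|\nabla F\|_{L^2_x}\,\|F\|_{L^2_x} + \|F\|_{L^2_x}^2$ (essentially Gagliardo--Nirenberg, available from Proposition \ref{thm.gns_ineq} and the spectral characterization in Proposition \ref{thm.sobolev}), so that every occurrence of the curvature error is dominated by
\begin{equation*}
D \cdot \bigl( \|\nabla F\|_{L^2_x}^2 + \|F\|_{L^2_x}^2 \bigr).
\end{equation*}
Because $D \ll 1$, Young's inequality then permits absorbing the $\|\nabla F\|_{L^2_x}^2$ contributions into the left-hand side, yielding \eqref{eq.div_curl_est}. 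The main obstacle is the term $\int(\text{div}\,V)|F|^2$: one cannot afford to pay an $L^2$ norm of $\mc{K}$ (which is unavailable), and the $H^{1/2}$ regularity of $V$ is used sharply via \eqref{eq.sob_frac_sh}. The smallness of $D$ is essential precisely here, to allow the interpolation-and-absorption step to close.
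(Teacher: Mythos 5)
Your proposal is correct and matches the approach underlying the paper's proof: the paper gives no argument of its own here, citing \cite[Sect.~6.1]{shao:stt}, where \eqref{eq.div_curl_est} is obtained in essentially the way you describe --- an integrated div--curl (Bochner) identity, splitting the resulting curvature term via the \ass{k}{C,D} decomposition $\mc{K} - f = h^{ab}\nabla_a V_b + W$, integrating by parts on the divergence piece, invoking the sharp embedding \eqref{eq.sob_frac_sh} together with the interpolation $\| F \|_{H^{1/2}_x}^2 \lesssim \| F \|_{H^1_x} \| F \|_{L^2_x}$ (which follows by spectral Cauchy--Schwarz and $\langle (I-\lapl)F, F\rangle = \|F\|_{L^2_x}^2 + \|\nabla F\|_{L^2_x}^2$), and absorbing with $D \ll 1$. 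The only inaccuracy is cosmetic: for higher-rank $F$ the zeroth-order commutator terms are all themselves proportional to $\mc{K}$ (on a surface the Riemann tensor is $\mc{K}$ times an algebraic expression in $h$), i.e. they are of the form $\int \mc{K}\, Q(F,F)$ with $|Q(F,F)| \lesssim_{r,l} |F|^2$ rather than curvature-independent, but they are treated exactly like your $\int \mc{K} |F|^2$ term, so the argument closes as you describe.
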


\begin{proof}
See \cite[Sect. 6.1]{shao:stt}.
\end{proof}

Next, we derive similar elliptic estimates for the symmetric Hodge operators.

\begin{proposition} \label{thm.hodge_est}
Assume $(\mc{S}, h)$ satisfies \ass{r1}{C, N} and \ass{k}{C, D}, with $D \ll 1$ sufficiently small.
Then, the following Hodge-elliptic estimates hold:
\begin{itemize}
\item If $X \in \mc{C}^\infty H_1 \mc{S}$, then
\begin{equation} \label{eq.hodge_est_D1} \| \nabla X \|_{ L^2_x } + \| X \|_{ L^2_x } \lesssim_{C, N} \| \mc{D}_1 X \|_{ L^2_x } \text{.} \end{equation}

\item If $X \in \mc{C}^\infty H_2 \mc{S}$, then
\begin{equation} \label{eq.hodge_est_D2} \| \nabla X \|_{ L^2_x } + \| X \|_{ L^2_x } \lesssim_{C, N} \| \mc{D}_2 X \|_{ L^2_x } \text{.} \end{equation}

\item If $X \in \mc{C}^\infty H_0 \mc{S}$, then
\begin{equation} \label{eq.hodge_est_D1a} \| \nabla X \|_{ L^2_x } \simeq \| \mc{D}_1^\ast X \|_{ L^2_x } \text{.} \end{equation}

\item If $X \in \mc{C}^\infty H_1 \mc{S}$, then
\begin{equation} \label{eq.hodge_est_D2a} \| \nabla X \|_{ L^2_x } \lesssim_{C, N} \| \mc{D}_2^\ast X \|_{ L^2_x } + \| X \|_{ L^2_x } \text{.} \end{equation}
\end{itemize}
\end{proposition}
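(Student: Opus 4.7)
The plan is to reduce each of the four estimates to a single Bochner-type identity coming from \eqref{eq.hodge_sq}, and then to absorb the low-regularity contributions of the Gauss curvature via the decomposition $\mc{K} = f + h^{ab}\nabla_a V_b + W$ from \ass{k}{C,D}. Using the $L^2$-adjointness of $\mc{D}_i$ and $\mc{D}_i^\ast$, together with integration by parts applied to $-\lapl$, pairing each factorization in \eqref{eq.hodge_sq} with $X$ in $L^2_x$ yields
\[ \|\mc{D}_1^\ast X\|_{L^2_x}^2 = \|\nabla X\|_{L^2_x}^2, \qquad \|\mc{D}_1 X\|_{L^2_x}^2 = \|\nabla X\|_{L^2_x}^2 + \int_\mc{S} \mc{K}\,|X|^2\,d\omega, \]
\[ \|\mc{D}_2 X\|_{L^2_x}^2 = \tfrac{1}{2}\|\nabla X\|_{L^2_x}^2 + \int_\mc{S} \mc{K}\,|X|^2\,d\omega, \qquad \|\mc{D}_2^\ast X\|_{L^2_x}^2 = \tfrac{1}{2}\|\nabla X\|_{L^2_x}^2 - \tfrac{1}{2}\int_\mc{S} \mc{K}\,|X|^2\,d\omega. \]
The first identity is already \eqref{eq.hodge_est_D1a} as an equality, with no smallness needed. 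The remaining three reduce to showing that $\int \mc{K}|X|^2$ behaves like $\int f|X|^2$ modulo absorbable errors.

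Substituting the decomposition of $\mc{K}$, the leading piece $\int f|X|^2$ is controlled two-sidedly by $C^{\pm 1}\|X\|_{L^2_x}^2$. This supplies the $\|X\|_{L^2_x}$-coercivity needed in \eqref{eq.hodge_est_D1} and \eqref{eq.hodge_est_D2} (on top of the $\|\nabla X\|_{L^2_x}^2$ already present in the identities), and it supplies the benign right-hand $\|X\|_{L^2_x}^2$ term in \eqref{eq.hodge_est_D2a}. The ``$W$-error'' $\int W|X|^2$ is bounded by $D\|X\|_{L^4_x}^2$ via H\"older. The ``$V$-error'' must first be integrated by parts,
\[ \int_\mc{S} h^{ab}(\nabla_a V_b)\,|X|^2\,d\omega = -2 \int_\mc{S} V^a \langle X, \nabla_a X\rangle\,d\omega, \]
and then estimated by $\|V\|_{L^4_x}\|X\|_{L^4_x}\|\nabla X\|_{L^2_x}$. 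Applying the sharp Sobolev embedding $\|\cdot\|_{L^4_x}\lesssim\|\cdot\|_{H^{1/2}_x}$ from \eqref{eq.sob_frac_sh} to both $V$ and $X$, together with the interpolation $\|X\|_{H^{1/2}_x}\lesssim \|\nabla X\|_{L^2_x}^{1/2}\|X\|_{L^2_x}^{1/2}+\|X\|_{L^2_x}$, bounds the $V$- and $W$-errors by a constant multiple of $D\bigl(\|\nabla X\|_{L^2_x}^{3/2}\|X\|_{L^2_x}^{1/2}+\|\nabla X\|_{L^2_x}\|X\|_{L^2_x}+\|X\|_{L^2_x}^2\bigr)$. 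For $D\ll 1$, Young's inequality absorbs these into the good quadratic terms on the left, yielding \eqref{eq.hodge_est_D1}, \eqref{eq.hodge_est_D2}, and \eqref{eq.hodge_est_D2a}.

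Note that the div-curl bound \eqref{eq.div_curl_est} plays no direct role in this approach; it would offer an alternative derivation of the $\|\nabla X\|_{L^2_x}$ portion of \eqref{eq.hodge_est_D1} via the pointwise identity $|\mc{D}_1 X|^2 = (h^{ab}\nabla_a X_b)^2 + (\omega^{ab}\nabla_a X_b)^2$, but the Bochner route handles all four cases uniformly. The main obstacle is the treatment of the ``$V$-error'': because $h^{ab}\nabla_a V_b$ lives only in $H^{-1/2}_x$, there is no direct $L^2_x$ bound on this divergence, and a naive H\"older estimate against $\|X\|_{L^4_x}^2$ fails. Shifting the derivative from $V$ onto $X$ via integration by parts converts the troublesome term into a trilinear expression in $(V,X,\nabla X)$ amenable to the $L^4_x \times L^4_x \times L^2_x$ split, and it is precisely at this step that the \ass{r1}{C,N} hypothesis (through the sharp embedding \eqref{eq.sob_frac_sh}) and the smallness of $D$ become indispensable.
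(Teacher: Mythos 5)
Your proof is correct. The four Bochner-type identities obtained from \eqref{eq.hodge_sq} via the $L^2$-adjointness and integration by parts on the compact surface are exactly right, the integration by parts on the divergence of $V$ is valid, and the $L^4\times L^4\times L^2$ H\"older split, combined with the sharp embedding \eqref{eq.sob_frac_sh} and the interpolation $\|X\|_{H^{1/2}_x}^2 \leq \|X\|_{H^1_x}\|X\|_{L^2_x}$ (an immediate consequence of the spectral definition of $\Lambda^s$ and Cauchy--Schwarz), lets Young's inequality absorb the error for $D\ll 1$. Your observation that \eqref{eq.hodge_est_D1a} holds as an exact equality with no smallness required is also right.

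The route, however, appears to differ from that of the cited source \cite{shao:stt}, which is what the paper invokes for this proposition. As the introduction of the present paper indicates, the key device in \cite{shao:stt} is a \emph{conformal change}: writing $\mc{K}-f = \operatorname{div} V + W$, one solves for a conformal factor $u$ with $\lapl u \approx \operatorname{div} V$, so that $\bar h = e^{2u}h$ has a fully $L^2$-controlled Gauss curvature $\bar{\mc{K}}$, proves the elliptic estimates for $\bar h$ by standard means, and transfers them back using the conformal covariance of the operators $\mc{D}_i$ (cf.\ the rescaling formulas in Section \ref{sec.geom_riem}). Your argument avoids the need to construct and estimate $u$, and is arguably shorter and more self-contained at the $L^2$ level. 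The payoff of the conformal approach is that it scales without extra work to the Besov-elliptic estimates of Theorem \ref{thm.besov_impr}, where a direct trilinear absorption of the $V$-error becomes much more delicate in fractional Besov norms; your method would need non-trivial modification there, while the conformal route applies verbatim once the conformal factor is controlled.
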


\begin{proof}
See \cite[Sect. 6.2]{shao:stt}.
\end{proof}

Assuming for the moment the setting of Proposition \ref{thm.hodge_est}, then \eqref{eq.hodge_est_D1} and \eqref{eq.hodge_est_D2} imply that $\mc{D}_1$ and $\mc{D}_2$ are one-to-one, and that both operators have $L^2$-bounded inverses.
Furthermore, we can extend these inverses to $L^2$-bounded operators
\[ \mc{D}_i^{-1} : \mc{C}^\infty H_{i-1} \mc{S} \rightarrow \mc{C}^\infty H_i \mc{S} \text{,} \qquad i \in \{ 1, 2 \} \text{,} \]
by defining $\mc{D}_i^{-1} X$ to be the (actual) inverse of $\mc{D}_i$ acting on the $L^2$-orthogonal projection of $X$ onto the (closed) range of $\mc{D}_i$.
If we let $\mc{P}_i$ denote this $L^2$-projection onto the range of $\mc{D}_i$, then by the above definitions,
\[ \mc{D}_i^{-1} \mc{D}_i = I \text{,} \qquad \mc{D}_i \mc{D}_i^{-1} = \mc{P}_i \text{.} \]

One can also use the above to partially invert the $\mc{D}_i^\ast$'s.
Since $\mc{D}_i$ is injective, then $\mc{D}_i^\ast$ is surjective, and its inverse image of any element of $\mc{C}^\infty H_i \mc{S}$ is a coset of the nullspace of $\mc{D}_i^\ast$.
\footnote{To be fully rigorous, we must invoke some functional analytic technicalities and consider the $\mc{D}_i$'s and $\mc{D}_i^\ast$'s as densely defined unbounded operators on the appropriate $L^2$-spaces.}
Since the nullspace of $\mc{D}_i^\ast$ is the orthogonal complement of the range of $\mc{D}_i$, then we can define $\mc{D}_i^{\ast -1} X$ to be the unique element of the corresponding inverse image that is in the range of $\mc{D}_i$.
In summary, we have
\[ \mc{D}_i^{\ast -1} \mc{D}_i^\ast = \mc{P}_i \text{,} \qquad \mc{D}_i^\ast \mc{D}_i^{\ast -1} = I \text{.} \]

For further details on the inverse Hodge operators, see \cite[Sect. 6.2]{shao:stt}.
For our purposes, we will need the following estimates, which follow from Proposition \ref{thm.hodge_est}.

\begin{proposition} \label{thm.hodge_inv_est}
Assume $(\mc{S}, h)$ satisfies \ass{r1}{C, N} and \ass{k}{C, D}, with $D \ll 1$ sufficiently small.
If $\mc{D}$ denotes any one of the operators $\mc{D}_1$, $\mc{D}_2$, $\mc{D}_1^\ast$, $\mc{D}_2^\ast$, and if $X$ is a smooth section of the appropriate Hodge bundle on $\mc{S}$, then
\begin{equation} \label{eq.hodge_inv_est} \| \nabla \mc{D}^{-1} X \|_{ L^2_x } + \| \mc{D}^{-1} X \|_{ L^2_x } \lesssim_{C, N} \| X \|_{ L^2_x } \text{.} \end{equation}
\end{proposition}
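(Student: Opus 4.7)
The plan is to apply the Hodge-elliptic estimates of Proposition \ref{thm.hodge_est} directly to $\mc{D}^{-1} X$, treating the forward operators and the adjoint operators separately.

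First I would handle the forward case $\mc{D} = \mc{D}_i$ for $i \in \{1, 2\}$. Let $Y = \mc{D}_i^{-1} X$. By the definition of the pseudoinverse, $\mc{D}_i Y = \mc{P}_i X$, and since $\mc{P}_i$ is an $L^2$-orthogonal projection, $\|\mc{D}_i Y\|_{L^2_x} \leq \|X\|_{L^2_x}$. Applying the Hodge-elliptic estimate \eqref{eq.hodge_est_D1} (if $i = 1$) or \eqref{eq.hodge_est_D2} (if $i = 2$) to $Y$ then immediately yields $\|\nabla Y\|_{L^2_x} + \|Y\|_{L^2_x} \lesssim_{C, N} \|X\|_{L^2_x}$, which is exactly the desired bound.

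Next I would treat the adjoint case $\mc{D} = \mc{D}_i^*$ for $i \in \{1, 2\}$. Set $Y = \mc{D}_i^{*-1} X$, so that $\mc{D}_i^* Y = X$ and $Y$ lies in the range of $\mc{D}_i$. Writing $Y = \mc{D}_i Z$ with $Z = \mc{D}_i^{-1} Y \in \mc{C}^\infty H_i \mc{S}$, the adjointness of $\mc{D}_i$ and $\mc{D}_i^*$ gives
\[
\|Y\|_{L^2_x}^2 = \langle \mc{D}_i Z, Y \rangle = \langle Z, \mc{D}_i^* Y \rangle = \langle Z, X \rangle \leq \|Z\|_{L^2_x} \, \|X\|_{L^2_x}.
\]
Applying \eqref{eq.hodge_est_D1} or \eqref{eq.hodge_est_D2} to $Z$ yields $\|Z\|_{L^2_x} \lesssim_{C, N} \|\mc{D}_i Z\|_{L^2_x} = \|Y\|_{L^2_x}$, and dividing through produces the zero-derivative bound $\|Y\|_{L^2_x} \lesssim_{C, N} \|X\|_{L^2_x}$. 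The gradient bound $\|\nabla Y\|_{L^2_x} \lesssim_{C, N} \|X\|_{L^2_x}$ then follows from \eqref{eq.hodge_est_D1a} (for $i = 1$) or \eqref{eq.hodge_est_D2a} (for $i = 2$), the latter of which carries an extra $\|Y\|_{L^2_x}$ term that has just been controlled.

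The conceptually subtle point---and the reason one cannot simply ``invert'' Proposition \ref{thm.hodge_est} algebraically---is the zero-derivative term in the adjoint cases, since $\mc{D}_1^*$ and $\mc{D}_2^*$ have nontrivial kernels (constants and conformal Killing fields, respectively). The duality argument above cleanly bypasses the need to separately establish a Poincar\'e-type inequality modulo these kernels, reducing the missing $L^2$-control directly to the already-available elliptic estimate for the forward operator $\mc{D}_i$. Beyond this bookkeeping, I do not expect any substantial obstacles, as all the functional-analytic ingredients (closed range of the $\mc{D}_i$'s, boundedness of the $L^2$-projections $\mc{P}_i$, self-adjointness of the $\mc{D}_i^*$'s) have been set up in the preceding discussion.
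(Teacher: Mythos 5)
Your proof is correct, and it follows exactly the route the paper indicates (the paper defers the details to \cite[Sect. 6.2]{shao:stt}, noting only that the estimates follow from Proposition \ref{thm.hodge_est}): the forward cases are immediate from \eqref{eq.hodge_est_D1}--\eqref{eq.hodge_est_D2} together with $\mc{D}_i\mc{D}_i^{-1} = \mc{P}_i$ and the contractivity of the projection, and your duality argument for $\mc{D}_i^{\ast-1}$, using that $\mc{D}_i^{\ast-1}X$ lies in the range of $\mc{D}_i$, correctly supplies the missing $L^2$-bound before invoking \eqref{eq.hodge_est_D1a} or \eqref{eq.hodge_est_D2a}. The only caveat is the standard functional-analytic one the paper itself flags (interpreting the operators as densely defined on $L^2$ so that the smooth-section estimates extend), which does not affect the substance of your argument.
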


\begin{proof}
See \cite[Sect. 6.2]{shao:stt}.
\end{proof}

\section{Spherical Foliations} \label{sec.fol}

In Section \ref{sec.geom}, our discussions were restricted to a single manifold $\mc{S}$ that was diffeomorphic to $\Sph^2$.
Here, we will discuss \emph{one-parameter foliations} of such $2$-spheres.
More specifically, our background setting will be the product
\[ \mc{N} = [0, \delta] \times \mc{S} \text{,} \qquad \delta > 0 \text{.} \]
Let $t$ be the natural projection onto the first component:
\[ t: \mc{N} \rightarrow [0, \delta] \text{,} \qquad t (\tau, x) = \tau \text{.} \]
Throughout this section, we will let $\tau$ denote an arbitrary element of $[0, \delta]$.
Given such a $\tau$, we let $\mc{S}_\tau$ denote the level set $\mc{S}_\tau = \{ \tau \} \times \mc{S}$ of $t$:

Although we will work only on the $3$-manifold with boundary $\mc{N}$, we will always implicitly assume that all our objects can be smoothly extended beyond the boundaries $\mc{S}_0$ and $\mc{S}_\delta$.
Thus, our full setting, on which all our objects of analysis are defined, is the extended foliation $\mc{N}^\prime = (-\varepsilon, \delta + \varepsilon) \times \mc{S}$, for some $\varepsilon > 0$.

\begin{remark}
We consider only intervals of the form $[0, \delta]$ here in order to make some estimates easier to state.
However, the formalisms throughout this section can be easily adapted if $[0, \delta]$ is replaced by any other closed, open, or half-open interval.
We will consider such adaptations throughout Section \ref{sec.nc}.
\end{remark}

We note that most of the upcoming formalisms of this section will be special cases of those discussed in \cite{shao:stt}.
For further details on this formalism, see the more detailed development of the material in \cite{shao:stt}.

\subsection{Horizontal Structures} \label{sec.fol_hor}

Given our foliation $\mc{N}$, the first task is to construct objects on $\mc{N}$ which represent smoothly varying aggregations of objects on the $\mc{S}_\tau$'s.
For this, we begin by defining the diffeomorphisms
\[ \Xi_\tau: \mc{S}_\tau \leftrightarrow \mc{S} \text{,} \qquad \Xi_\tau (\tau, x) = x \text{,} \]
which identify $\mc{S}_\tau$ with $\mc{S}$.
From $\Xi_\tau$, we can construct natural identifications
\[ \Xi_\tau^\ast: \mc{C}^\infty T^r_l \mc{S}_\tau \leftrightarrow \mc{C}^\infty T^r_l \mc{S} \text{.} \]
We will use these identifications repeatedly in our basic constructions.

As in \cite{shao:stt}, we let $\ul{T}^r_l \mc{N}$ denote the \emph{horizontal tensor bundle} of rank $(r, l)$, i.e., the vector bundle over $\mc{N}$ for which the fiber at each $(\tau, x) \in \mc{N}$ is
\[ ( \ul{T}^r_l \mc{N} )_{ (\tau, x) } = ( T^r_l \mc{S}_\tau )_{ (\tau, x) } \text{.} \]
Note in particular that $\ul{T}^0_0 \mc{N}$ can be identified with $\mc{C}^\infty \mc{N}$.
A section $A \in \mc{C}^\infty \ul{T}^r_l \mc{N}$ is called a \emph{horizontal tensor field}.
Given $\tau$, we will let
\[ A [\tau] = \Xi_\tau^\ast ( A | \mc{S}_\tau ) \in \mc{C}^\infty T^r_l \mc{S} \]
denote the tensor field on $\mc{S}$ corresponding to the restriction of $A$ to $\mc{S}_\tau$.

We impose on $\mc{N}$ a \emph{horizontal metric} $\gamma \in \mc{C}^\infty \ul{T}^0_2 \mc{N}$, such that each $\gamma [\tau]$ defines a Riemannian metric on $\mc{S}$.
We also define $\gamma^{-1} \in \mc{C}^\infty \ul{T}^2_0 \mc{N}$ so that each $\gamma^{-1} [\tau]$ is the dual $( \gamma [\tau] )^{-1}$ to $\gamma [\tau]$.
A fixed orientation on $\mc{S}$ induces an orientation on each $\mc{S}_\tau$.
From this, we can generate a \emph{horizontal volume form} $\epsilon \in \mc{C}^\infty \ul{T}^0_2 \mc{N}$, defined such that each $\epsilon [\tau]$ represents the volume form on $\mc{S}$ associated with $\gamma [\tau]$.

Families of objects and operators on $\mc{S}$ parametrized by $\tau$ can be aggregated into corresponding horizontal objects on $\mc{N}$.
Relevant examples include the following:
\begin{itemize}
\item The fields $\gamma$, $\gamma^{-1}$, and $\epsilon$ above are the most basic examples.

\item Tensor products can be similarly aggregated into an analogous product of horizontal fields.
Given $\Psi_i \in \mc{C}^\infty \ul{T}^{r_i}_{l_i} \mc{N}$, where $i \in \{ 1, 2 \}$, we define
\[ \Psi_1 \otimes \Psi_2 \in \mc{C}^\infty \ul{T}^{r_1 + r_2}_{l_1 + l_2} \mc{N} \text{,} \qquad ( \Psi_1 \otimes \Psi_2 ) [\tau] = \Psi_1 [\tau] \otimes \Psi_2 [\tau] \text{.} \]

\item The pointwise tensor norms $| \cdot |$ with respect to the $\gamma [\tau]$'s lift to a corresponding pointwise norm for horizontal tensors with respect to $\gamma$:
\[ | \cdot | : \mc{C}^\infty \ul{T}^r_l \mc{N} \rightarrow \mc{C}^\infty \mc{N} \text{,} \qquad | \Psi | [\tau] = | \Psi [\tau] | \text{.} \]
The pointwise inner products also lift analogously.

\item The Levi-Civita connections $\nabla$ with respect to the $\gamma [\tau]$'s can be similarly aggregated into a single \emph{horizontal covariant differential} operator
\[ \nabla : \mc{C}^\infty \ul{T}^r_l \mc{N} \rightarrow \mc{C}^\infty \ul{T}^r_{l+1} \mc{N} \text{.} \]
The operators $\nabla^k$, $k > 1$, are defined similarly, as is the Laplacian:
\[ \lapl : \mc{C}^\infty \ul{T}^r_l \mc{N} \rightarrow \mc{C}^\infty \ul{T}^r_l \mc{N} \text{.} \]

\item The horizontal \emph{Gauss curvature} is the function $\mc{K} \in \mc{C}^\infty \mc{N}$ such that each $\mc{K} [\tau]$ is precisely the Gauss curvature associated with $\gamma [\tau]$.

\item We can also aggregate the geometric L-P operators $P_k$, $P_{< k}$, $P_-$, so that they act on $\mc{C}^\infty \ul{T}^r_l \mc{N}$, with respect to $\gamma [\tau]$ on each $\mc{S}_\tau$.
\end{itemize}

Relations involving horizontal tensors are sometimes more easily described using index notation.
We will use the same indexing conventions as one would use for tensors on $\mc{S}$ or the $\mc{S}_\tau$'s.
We will use lowercase Latin indices to denote components of a horizontal tensor field, with repeated indices indicating summations.

Finally, the Hodge bundles defined in Section \ref{sec.geom_riem} can be lifted to horizontal objects on $\mc{N}$.
Let $\ul{H}_i \mc{N}$, where $i \in \{ 0, 1, 2 \}$, denote the natural vector bundle over $\mc{N}$, for which the fiber at each $(\tau, x) \in \mc{N}$ is $( \ul{H}_i \mc{N} )_{ (\tau, x) } = ( H_i \mc{S}_\tau )_x$.
Furthermore, for $j \in \{ 1, 2 \}$, we define the aggregated Hodge operators
\[ \mc{D}_j : \mc{C}^\infty \ul{H}_j \mc{N} \rightarrow \mc{C}^\infty \ul{H}_{j-1} \mc{N} \text{,} \qquad \mc{D}_j^\ast : \mc{C}^\infty \ul{H}_{j-1} \mc{N} \rightarrow \mc{C}^\infty \ul{H}_j \mc{N} \text{,} \]
to behave like the corresponding Hodge operators on each $( \mc{S}, \gamma [\tau] )$.

\subsection{Evolution} \label{sec.fol_ev}

Given $A \in \mc{C}^\infty \ul{T}^r_l \mc{N}$, we define its \emph{vertical Lie derivative} as
\footnote{Note that the operator $\mf{L}_t$ is independent of $\gamma$ and $\epsilon$.}
\[ \mf{L}_t A \in \mc{C}^\infty \ul{T}^r_l \mc{N} \text{,} \qquad \mf{L}_t A [\tau] = \lim_{ \tau^\prime \rightarrow \tau } \frac{ A [\tau^\prime] - A [\tau] }{ \tau^\prime - \tau } \text{.} \]
In particular, we define the \emph{second fundamental form}
\[ k = \frac{1}{2} \mf{L}_t \gamma \in \mc{C}^\infty \ul{T}^0_2 \mc{N} \text{,} \]
which describes the evolution of the geometries of the $\mc{S}_\tau$'s.
Of particular importance is the \emph{mean curvature}, or \emph{expansion}, of $k$:
\[ \trace k = \gamma^{ab} k_{ab} \in C^\infty \mc{N} \text{.} \]

\begin{remark}
$\mf{L}_t$ can alternately be defined as the (standard) Lie derivative with respect to the lift of the vector field $d/dt$ on $[0, \delta]$ to $\mc{N}$.
\end{remark}

Using $\mf{L}_t$ and $k$, we define a corresponding $\gamma$-covariant derivative along the $t$-direction.
Given $\Psi \in \mc{C}^\infty \ul{T}^r_l \mc{N}$, we define $\nabla_t \Psi \in \mc{C}^\infty \ul{T}^r_l \mc{N}$ by
\begin{equation} \label{eq.vderiv_cov} \nabla_t \Psi_{u_1 \dots u_l}^{v_1 \dots v_r} = \mf{L}_t \Psi_{u_1 \dots u_l}^{v_1 \dots v_r} - \sum_{i = 1}^l \gamma^{cd} k_{u_i c} \Psi_{u_1 \hat{d}_i u_l}^{v_1 \dots v_r} + \sum_{j = 1}^r \gamma^{c v_j} k_{cd} \Psi_{u_1 \dots u_l}^{v_1 \hat{d}_j v_r} \text{.} \end{equation}
The notation $u_1 \hat{d}_i u_l$ means $u_1 \dots u_l$, but with $u_i$ replaced by $d$; similar conventions apply for $v_1 \hat{d}_j v_r$.
Note $\nabla_t$ and $\mf{L}_t$ coincide for scalar fields, and $\nabla_t \gamma$, $\nabla_t \gamma^{-1}$, and $\nabla_t \epsilon$ vanish identically.
In addition, we define the following curl of $k$:
\[ \mf{C} \in \mc{C}^\infty \ul{T}^0_3 \mc{N} \text{,} \qquad \mf{C}_{abc} = \nabla_b k_{ac} - \nabla_c k_{ab} \text{,} \]

Next, for $\Psi \in \mc{C}^\infty \ul{T}^r_l \mc{N}$, we define its \emph{covariant integral} $\cint^t_\tau \Psi$ from $\tau$ to be the unique element of $\mc{C}^\infty \ul{T}^r_l \mc{N}$ satisfying both $\nabla_t \cint^t_\tau \Psi = \Psi$ and $( \cint^t_\tau \Psi ) [\tau] \equiv 0$.
In the scalar case $r = l = 0$, this is simply the standard integral with respect to $t$ from $t = \tau$.
These operators $\cint^t_\tau$ can also be defined more explicitly using frames and coframes which are ``$t$-parallel"; see \cite{shao:stt} for details.

\begin{remark}
The above is a slight generalization of concepts found in \cite[Sect. 4.2]{shao:stt}, in which only the case $\tau = 0$ was considered.
In this paper, we require the cases $\tau = 0$ and $\tau = \delta$.
The case of general $\tau$ reduces to the case $\tau = 0$ by a transformation of the $t$-variable (translation and a possible reflection).
Thus, properties that hold in the case $\tau = 0$ generally still hold for other $\tau$.
\end{remark}

Later, we will need the following construction.
Consider smooth functions
\[ \eta_+, \eta_-: [0, \delta] \rightarrow [0, 1] \text{,} \qquad \eta_+ (\tau) = \begin{cases} 0 & 0 \leq \tau \leq \frac{\delta}{3} \text{,} \\ 1 & \frac{2 \delta}{3} \leq \tau \leq \delta \text{,} \end{cases} \qquad \eta_- = 1 - \eta_+ \text{.} \]
In particular, we can define $\eta_+, \eta_-$ as rescalings of the case $\delta = 1$, so that
\[ | \eta_+^\prime (\tau) | \lesssim \delta^{-1} \text{,} \qquad | \eta_-^\prime (\tau) | \lesssim \delta^{-1} \text{.} \]
Now, given $\Psi \in \mc{C}^\infty \ul{T}^r_l \mc{N}$, we define the integral operator
\begin{equation} \label{eq.cint_ex} \cint^t_\star \Psi = \cint^t_0 ( \eta_+ \Psi ) - \cint^t_\delta ( \eta_- \Psi ) \text{.} \end{equation}
Note that we have the identities
\begin{equation} \label{eq.cint_ex_id} \nabla_t \cint^t_\star \Psi = \Psi \text{,} \qquad \cint^t_\star \nabla_t \Psi = \Psi - \cint_0^t ( \eta_+^\prime \Psi ) + \cint_\delta^t ( \eta_-^\prime \Psi ) \text{.} \end{equation}
This above construction defines a particular covariant $t$-antiderivative of $\Psi$ which we will need much later, in the proof of our main result.
More specifically, these integral operators cut off the end-times $\tau = 0$ and $\tau = \delta$ without introducing a factor that blows up as one approaches either end-time.

Finally, given $F \in \mc{C}^\infty T^r_l \mc{S}$, we define the $t$-parallel transport $\mf{p} F \in \mc{C}^\infty \ul{T}^r_l \mc{N}$ of $F$ (from $0$) to be the unique element of $\mc{C}^\infty T^r_l \mc{N}$ satisfying
\[ \nabla_t ( \mf{p} F ) \equiv 0 \text{,} \qquad \mf{p} F [0] = F \text{.} \]
Note in particular that $| \mf{p} F | |_{ (\tau, x) } = | F | |_x$ for any $x \in \mc{S}$.

Of particular importance are the following commutation formulas.

\begin{proposition} \label{thm.comm}
If $\Psi \in \mc{C}^\infty \ul{T}^r_l \mc{N}$ and $F \in \mc{C}^\infty T^r_l \mc{S}$, then
\begin{align}
\label{eq.comm} [ \mf{L}_t, \nabla_a ] \Psi_{u_1 \dots u_l}^{v_1 \dots v_r} &= - \sum_{i = 1}^l \gamma^{cd} ( \nabla_a k_{u_i c} + \nabla_{u_i} k_{ac} - \nabla_c k_{a u_i} ) \Psi_{u_1 \hat{d}_i u_l}^{v_1 \dots v_r} \\
\notag &\qquad + \sum_{j = 1}^r \gamma^{c v_j} ( \nabla_a k_{d c} + \nabla_d k_{ac} - \nabla_c k_{ad} ) \Psi_{u_1 \dots u_l}^{v_1 \hat{d}_j v_r} \text{,} \\
\notag [ \nabla_t, \nabla_a ] \Psi_{u_1 \dots u_l}^{v_1 \dots v_r} &= - \gamma^{cd} k_{ac} \nabla_d \Psi_{u_1 \dots u_l}^{v_1 \dots v_r} - \sum_{i = 1}^l \gamma^{cd} \mf{C}_{a u_i c} \Psi_{u_1 \hat{d}_i u_l}^{v_1 \dots v_r} \\
\notag &\qquad + \sum_{j = 1}^r \gamma^{c v_j} \mf{C}_{adc} \Psi_{u_1 \dots u_l}^{v_1 \hat{d}_j v_r} \text{,} \\
\notag [ \cint^t_0, \nabla_a ] \Psi_{u_1 \dots u_l}^{v_1 \dots v_r} &= \gamma^{cd} \cint^t_0 ( k_{ac} \nabla_d \cint^t_0 \Psi_{u_1 \dots u_l}^{v_1 \dots v_r} ) + \sum_{i = 1}^l \gamma^{cd} \cint^t_0 ( \mf{C}_{a u_i c} \cint^t_0 \Psi_{u_1 \hat{d}_i u_l}^{v_1 \dots v_r} ) \\
\notag &\qquad - \sum_{j = 1}^r \gamma^{c v_j} \cint^t_0 ( \mf{C}_{adc} \cint^t_0 \Psi_{u_1 \dots u_l}^{v_1 \hat{d}_j v_r} ) \text{.}
\end{align}
\end{proposition}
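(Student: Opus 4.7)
The plan is to establish the three commutator identities in sequence, each building on the previous one, with only the first requiring any genuinely new geometric content.

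For the first identity, I would appeal to the classical formula expressing $\mf{L}_X \nabla$ in terms of $\mf{L}_X \gamma$ for the (torsion-free, metric-compatible) Levi-Civita connection of a one-parameter family of metrics. In our setting $\mf{L}_t \gamma = 2 k$, which yields the ``Christoffel difference tensor''
\[ \Gamma_a{}^c{}_b = \gamma^{cd}\bigl(\nabla_a k_{bd} + \nabla_b k_{ad} - \nabla_d k_{ab}\bigr). \]
Applying $[\mf{L}_t, \nabla_a]$ to a horizontal tensor field $\Psi$ then amounts to contracting $\Gamma$ against each covariant and contravariant index of $\Psi$ with the appropriate sign; this immediately produces the stated formula. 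Alternatively, one can prove it directly in a local frame by expanding $\mf{L}_t \Psi$ and $\nabla_a \Psi$ via the Leibniz rule and using that $\mf{L}_t$ commutes with partial derivatives in the spatial directions.

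The second identity is then obtained algebraically from the first. The definition \eqref{eq.vderiv_cov} expresses $\nabla_t = \mf{L}_t + A$, where $A$ is a purely tensorial (degree-zero) operator built from contractions of $k$ against the indices of $\Psi$. Hence
\[ [\nabla_t, \nabla_a] \Psi = [\mf{L}_t, \nabla_a]\Psi + [A, \nabla_a] \Psi, \]
and the second commutator is easily computed via the Leibniz rule: when $\nabla_a$ passes through the $k$ factors in $A$, it produces $\nabla_a k$ terms that cancel against the $\nabla_a k_{u_i c}$ and $\nabla_a k_{dc}$ pieces in the first identity, leaving precisely the antisymmetric combinations $\mf{C}_{au_i c} = \nabla_{u_i} k_{ac} - \nabla_c k_{a u_i}$ and $\mf{C}_{adc}$. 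The residual term $-\gamma^{cd} k_{ac} \nabla_d \Psi$ arises because $\nabla_a \Psi$ carries the extra covariant index $a$, and when $\nabla_t$ acts on $\nabla_a \Psi$ via \eqref{eq.vderiv_cov} this extra index contributes an additional contraction of $k$ with $\nabla \Psi$ not present on the other side.

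The third identity follows by integrating the second against $\cint^t_0$. Writing $\Phi = \cint^t_0 \Psi$, so that $\nabla_t \Phi = \Psi$ and $\Phi[0] = 0$, and noting that $\cint^t_0 \nabla_t$ acts as the identity on any horizontal tensor vanishing at $t = 0$, we get
\[ \cint^t_0 \nabla_a \Psi - \nabla_a \cint^t_0 \Psi = \cint^t_0 \nabla_a \nabla_t \Phi - \nabla_a \Phi = -\cint^t_0 \, [\nabla_t, \nabla_a]\, \Phi. \]
Substituting the second identity with $\Psi$ replaced by $\cint^t_0 \Psi$ and distributing the outer $\cint^t_0$ over the resulting terms gives precisely the third formula, with the overall minus sign flipping the $k_{ac}$ and $\mf{C}_{au_i c}$ contributions and leaving the $\mf{C}_{adc}$ term with the opposite sign.

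There is no genuine analytic obstacle here; the proof is a matter of careful bookkeeping. The main difficulty, such as it is, is organizing the index notation so that the Leibniz-rule cancellations between the $[\mf{L}_t, \nabla_a]$ formula and the $[A, \nabla_a]$ contributions are transparent. I would likely streamline the presentation by first verifying the first identity for a $(0,1)$-tensor (where the combinatorics are minimal), then invoking tensoriality and the Leibniz rule to extend to general rank $(r,l)$, and only then deriving the second and third identities by the algebraic manipulations above.
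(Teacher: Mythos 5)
Your argument is correct: the first identity is the standard first-variation formula for the Levi-Civita connection of a one-parameter family of metrics with $\mf{L}_t \gamma = 2k$, the passage from $\mf{L}_t$ to $\nabla_t$ via the zeroth-order correction in \eqref{eq.vderiv_cov} cancels the symmetric $\nabla_a k$ pieces and leaves exactly the $\mf{C}$ terms plus the extra $-\gamma^{cd} k_{ac} \nabla_d \Psi$ from the new covariant index, and the third identity follows by applying the second to $\cint^t_0 \Psi$, using $( \nabla_a \cint^t_0 \Psi ) [0] = 0$ and $\nabla_t \gamma^{-1} = 0$ to pull contractions through the integral. The paper gives no proof of its own here---it defers entirely to \cite[Sect. 4.1--4.2]{shao:stt}---and your derivation is the standard self-contained one (the reference organizes the same computation with $t$-parallel frames), so there is nothing to flag.
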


\begin{proof}
See \cite[Sect. 4.1-4.2]{shao:stt}.
\end{proof}

Finally, we define the \emph{Jacobian} (of $\epsilon$ with respect to $\epsilon [0]$) as
\[ \mc{J} = \exp \cint^t_0 ( \trace k ) \in \mc{C}^\infty \mc{N} \text{.} \]
Note $\mc{J}$ acts as a change of measure quantity, as it satisfies (see \cite[Sect. 4.2]{shao:stt})
\begin{equation} \label{eq.jacobian} \epsilon [\tau] = \mc{J} [\tau] \cdot \epsilon [0] \text{,} \qquad \nabla_t \mc{J} = \trace k \cdot \mc{J} \text{.} \end{equation}

\subsection{Integral Norms} \label{sec.fol_norms}

Given $\Psi \in \mc{C}^\infty \ul{T}^r_l \mc{N}$, we assume any norm over $\Psi [\tau]$ to be with respect to $\gamma [\tau]$.
Define now the following iterated integral norms:
\begin{itemize}
\item If $p \in [1, \infty)$ and $q \in [1, \infty]$, then we define
\begin{align*}
\| \Psi \|_{ L^{p, q}_{t, x} } = \paren{ \int_0^\delta \| \Psi [\tau] \|_{ L^q_x }^p d \tau }^\frac{1}{p} \text{,} \qquad \| \Psi \|_{ L^{\infty, q}_{t, x} } = \sup_{ 0 \leq \tau \leq \delta } \| \Psi [\tau] \|_{ L^q_x } \text{.}
\end{align*}

\item We can also reverse the order of integration.
Given $p, q \in [1, \infty)$, we define
\begin{align*}
\| \Psi \|_{ L^{q, p}_{x, t} } &= \brak{ \int_{ \mc{S} } \paren{ \int_0^\delta \left. | \Psi |^p \mc{J}^\frac{p}{q} \right|_{ (\tau, x) } d \tau }^\frac{q}{p} d \epsilon [0]_x }^\frac{1}{q} \text{,} \\
\| \Psi \|_{ L^{q, \infty}_{x, t} } &= \brak{ \int_{ \mc{S} } \paren{ \left. \sup_{ 0 \leq \tau \leq \delta } | \Psi | \mc{J}^\frac{1}{q} \right|_{ (\tau, x) } }^q d \epsilon [0]_x }^\frac{1}{q} \text{.}
\end{align*}
Furthermore, when $q = \infty$, we define
\begin{align*}
\| \Psi \|_{ L^{\infty, p}_{x, t} } = \sup_{ x \in \mc{S} } \paren{ \int_0^\delta \left. | \Psi |^p \right|_{ (\tau, x) } d \tau }^\frac{1}{p} \text{,} \qquad \| \Psi \|_{ L^{\infty, \infty}_{x, t} } = \sup_{ x \in \mc{S} } \sup_{ 0 \leq \tau \leq \delta } | \Psi | |_{ (\tau, x) } \text{.}
\end{align*}

\item Given any $a \in [1, \infty)$, $s \in \R$, and $p \in [1, \infty]$, we define
\footnote{In this notation, the parameters $a$, $p$, $s$ refer to the summability of the L-P components, the integrability of the $t$-component, and the differentiability of the spatial components, respectively.
The order ``$\ell, t, x$" refers to the relative order of integration and summation.}
\begin{align*}
\| \Psi \|_{ B^{a, p, s}_{\ell, t, x} }^a &= \sum_{k \geq 0} 2^{ask} \| P_k \Psi \|_{ L^{p, 2}_{t, x} }^a + \| P_{< 0} \Psi \|_{ L^{p, 2}_{t, x} }^a \text{,} \\
\| \Psi \|_{ B^{\infty, p, s}_{\ell, t, x} } &= \max \left( \sup_{k \geq 0} 2^{sk} \| P_k \Psi \|_{ L^{p, 2}_{t, x} }, \| P_{< 0} \Psi \|_{ L^{p, 2}_{t, x} } \right) \text{.}
\end{align*}
We also define for convenience the shorthands
\[ \| \Psi \|_{ B^{p, s}_{t, x} } = \| \Psi \|_{ B^{1, p, s}_{k, t, x} } \text{,} \qquad \| \Psi \|_{ H^{p, s}_{t, x} } = \| \Psi \|_{ B^{2, p, s}_{k, t, x} } \text{.} \]
\end{itemize}
Note that all the above norms were used in \cite{shao:stt}.

Next, we consider first-order Sobolev norms on $\mc{N}$ containing both horizontal and $t$-derivatives.
We define the following, the first of which was used in \cite{shao:stt}:
\begin{align*}
\| \Psi \|_{ N^1_{t, x} } &= \| \nabla_t \Psi \|_{ L^{2, 2}_{t, x} } + \| \nabla \Psi \|_{ L^{2, 2}_{t, x} } + \| \Psi \|_{ L^{2, 2}_{t, x} } \text{,} \\
\| \Psi \|_{ N^{1i}_{t, x} } &= \| \Psi \|_{ N^1_{t, x} } + \| \Psi [0] \|_{ H^{1/2}_x } \text{.}
\end{align*}

In addition, we define the norm
\begin{align*}
\| \Psi \|_{ N^{0 \star}_{t, x} } = \inf \{ \| \Phi \|_{ N^{1i}_{t, x} } \mid \nabla_t \Phi = \Psi \} \text{.}
\end{align*}
This measures the smallest $N^{1i}_{t, x}$-norm of any $t$-antiderivative of $\Psi$.
More specifically, we take a $t$-antiderivative and then a spatial derivative of $\Psi$; note we have an additional degree of freedom, in that we can choose the optimal $t$-antiderivative.
In other words, we are essentially trading a $t$-derivative for a spatial derivative.
This norm is, in particular, well suited for taking advantage of the underlying structure behind the null Bianchi identities; see Propositions \ref{thm.structure_eq} and \ref{thm.structure_renorm}.

Finally, we review some methods for combining existing norms to produce new useful norms.
First, let $X$ and $Y$ denote vector spaces, with norms $\| \cdot \|_X$ and $\| \cdot \|_Y$.
\begin{itemize}
\item A natural norm on $X \cap Y$ is the following:
\[ \| v \|_{ X \cap Y } = \| v \|_X + \| v \|_Y \text{.} \]

\item Suppose $X$ and $Y$ are subspaces of a larger vector space $Z$.
Then, one defines a natural norm on $X + Y$ by the formula
\[ \| v \|_{ X + Y } = \inf \{ \| v_X \|_X + \| v_Y \|_Y \mid v = v_X + v_Y \text{, } v_X \in X \text{, } v_Y \in Y \} \text{.} \]
In other words, this norm indicates the smallest way one can decompose $v$ as a sum of two vectors, one in $X$ and one in $Y$.
\end{itemize}

For example, for a norm controlling both the $N^{1i}_{t, x}$- and $L^{\infty, 2}_{x, t}$-norms, we take
\[ \| \Psi \|_{ N^{1i}_{t, x} \cap L^{\infty, 2}_{x, t} } = \| \Psi \|_{ N^{1i}_{t, x} } + \| \Psi \|_{ L^{\infty, 2}_{x, t} } \text{.} \]
As for sum norms, we will often refer to the quantities
\[ \| \Psi \|_{ N^{0\star}_{t, x} + B^{2, 0}_{t, x} } \text{.} \]
This norm will serve an important purpose in the proof of our main results.
Like in \cite{kl_rod:cg, parl:bdc, shao:bdc_nv, wang:cg, wang:cgp}, the proof revolves around an elaborate bootstrap argument.
However, we can take advantage of this specific sum norm in order to simplify some technical portions of the argument, as this norm is well-adapted to the decompositions required for various Besov estimates.
In particular, by inserting additional bootstrap assumptions in terms of this norm, we can avoid the infinite decomposition process that was required in \cite{kl_rod:cg, parl:bdc, shao:bdc_nv, wang:cg, wang:cgp}; see Section \ref{sec.proof_outline}.

\subsection{Conformal Transformations} \label{sec.fol_conf}

We turn briefly to the topic of conformal transformations of $\gamma$.
Consider another horizontal metric $\mbar \in \mc{C}^\infty \ul{T}^0_2 \mc{N}$, defined
\[ \mbar = e^{2 u} \gamma \text{,} \qquad u \in \mc{C}^\infty \mc{N} \text{.} \]
We can view this as a family of conformal transformations for the $\gamma [\tau]$'s, such that the conformal factors $e^{2 u} [\tau]$ also vary smoothly with respect to $\tau$.
If we let $\vbar$ denote the volume form associated with $\mbar$, then we have
\[ \mbar_{ab} = e^{2 u} \gamma_{ab} \text{,} \qquad \vbar_{ab} = e^{2 u} \epsilon_{ab} \text{,} \qquad \mbar^{ab} = e^{-2 u} \gamma^{ab} \text{,} \qquad \vbar^{ab} = e^{-2 u} \epsilon^{ab} \text{.} \]
As before, we denote objects with respect to $\bar{\gamma}$ with a ``bar" over the symbol.

\begin{remark}
In this paper, we are interested only in the special case in which $u$ is constant on each $\mc{S}_\tau$.
Thus, on every $(\mc{S}, \gamma [\tau])$, we are simply performing the rescaling described in Section \ref{sec.geom_riem}, except that the scale depends on the $t$-variable.
\end{remark}

Since $\gamma$ and $\mbar$ are horizontal metrics, each has its own associated second fundamental form.
A direct computation yields the following relation between them:
\begin{align}
\label{eq.conf_sff} \bar{k} = e^{2 u} k + \nabla_t u \cdot \mbar \text{,} \qquad \bar{\trace} \bar{k} = \trace k + 2 \nabla_t u \text{.}
\end{align}

\begin{remark}
On the other hand, the traceless part of the second fundamental form is conformally invariant.
More specifically, the following formula holds:
\[ \bar{k} - \frac{1}{2} ( \bar{\trace} \bar{k} ) \mbar = e^{2 u} \left[ k - \frac{1}{2} ( \trace k ) \gamma \right] \text{.} \]
\end{remark}

The following proposition compares the $t$-covariant derivatives with respect to $\gamma$ and $\bar{\gamma}$, as well as the corresponding $t$-covariant integrals.

\begin{proposition} \label{thm.conf_ev}
If $\Psi \in \mc{C}^\infty \ul{T}^r_l \mc{N}$, then the following relations hold:
\begin{equation} \label{eq.conf_ev} \bar{\nabla}_t [ e^{(l - r) u} \Psi ] = e^{(l - r) u} \nabla_t \Psi \text{,} \qquad \bar{\cint}^t_0 [ e^{(l - r) u} \Psi ] = e^{(l - r) u} \cint^t_0 \Psi \text{.} \end{equation}
\end{proposition}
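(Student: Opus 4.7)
The plan is to verify the first identity by direct computation from the coordinate formula \eqref{eq.vderiv_cov} for $\nabla_t$, and then to deduce the second identity by uniqueness of the covariant antiderivative.

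For the first identity, I would begin by observing that $\mf{L}_t$ does not depend on the horizontal metric, so $\bar{\mf{L}}_t = \mf{L}_t$. Applying the Leibniz rule to the scalar factor,
\[ \mf{L}_t[e^{(l-r)u}\Psi] = (l-r)\,\nabla_t u \cdot e^{(l-r)u}\Psi + e^{(l-r)u}\,\mf{L}_t \Psi. \]
Next, I would compute the key contraction using \eqref{eq.conf_sff}:
\[ \bar{\gamma}^{cd}\bar{k}_{u_i c} = e^{-2u}\gamma^{cd}\bigl(e^{2u}k_{u_i c} + \nabla_t u \cdot \bar{\gamma}_{u_i c}\bigr) = \gamma^{cd}k_{u_i c} + \nabla_t u \cdot \delta^d_{u_i}, \]
and identically $\bar{\gamma}^{cv_j}\bar{k}_{cd} = \gamma^{cv_j}k_{cd} + \nabla_t u \cdot \delta^{v_j}_{d}$. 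Substituting into the defining formula \eqref{eq.vderiv_cov} applied to $\bar{\gamma}$ and $\bar{k}$, each of the $l$ covariant indices contributes an extra $-\nabla_t u$ and each of the $r$ contravariant indices contributes an extra $+\nabla_t u$, for a net correction of $(r-l)\nabla_t u \cdot e^{(l-r)u}\Psi$. This exactly cancels the $(l-r)\nabla_t u \cdot e^{(l-r)u}\Psi$ produced above by $\mf{L}_t$ on the scalar factor. What remains is precisely $e^{(l-r)u}$ times the right-hand side of \eqref{eq.vderiv_cov} for $\gamma$, $k$, and $\Psi$, yielding
\[ \bar{\nabla}_t\bigl[e^{(l-r)u}\Psi\bigr] = e^{(l-r)u}\nabla_t \Psi. \]

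For the second identity, set $\Phi = e^{(l-r)u}\cint^t_0 \Psi$. Since $\cint^t_0 \Psi$ vanishes at $t=0$, so does $\Phi$. By the first identity and the defining property of $\cint^t_0$,
\[ \bar{\nabla}_t \Phi = e^{(l-r)u}\nabla_t \cint^t_0 \Psi = e^{(l-r)u}\Psi. \]
Thus $\Phi$ is a covariant $t$-antiderivative of $e^{(l-r)u}\Psi$ with respect to $\bar{\gamma}$ that vanishes at $t=0$. By the uniqueness built into the definition of $\bar{\cint}^t_0$, we conclude $\Phi = \bar{\cint}^t_0\bigl[e^{(l-r)u}\Psi\bigr]$, which is the second claim.

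The calculation is essentially bookkeeping; the only subtlety is the sign of the conformal weight $(l-r)$, which is forced by requiring the $\nabla_t u$-corrections from $\mf{L}_t[e^{(l-r)u}\Psi]$ to exactly cancel the sum of the trace corrections arising from each covariant and contravariant slot under the transformation $k \mapsto \bar{k}$. This is the one step I would double-check carefully; once it is right, the rest follows immediately from the uniqueness of the covariant antiderivative.
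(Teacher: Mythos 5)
Your proof is correct and takes essentially the same approach as the paper: both verify the first identity by a direct index computation via \eqref{eq.vderiv_cov} and the conformal change of $k$ in \eqref{eq.conf_sff}, tracking how each covariant and contravariant slot contributes a $\mp\nabla_t u$ correction that is cancelled by the Leibniz term from the scalar weight; and both deduce the second identity by noting that $e^{(l-r)u}\cint^t_0\Psi$ vanishes at $t=0$ and satisfies the defining $\bar\nabla_t$-equation. The paper computes $\bar\nabla_t\Psi - \nabla_t\Psi = (r-l)\nabla_t u\cdot\Psi$ on an unweighted $\Psi$ first and then applies Leibniz separately, whereas you apply both steps to the weighted $e^{(l-r)u}\Psi$ at once, but this is only a cosmetic reorganization of the same bookkeeping.
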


\begin{proof}
We have from definition and \eqref{eq.conf_sff} that
\begin{align*}
\bar{\nabla}_t \Psi_{c_1 \dots c_l}^{d_1 \dots d_r} &= \nabla_t \Psi_{c_1 \dots c_l}^{d_1 \dots d_r} - \sum_{i = 1}^l ( \mbar^{ab} \bar{k}_{c_i a} - \gamma^{ab} k_{c_i a} ) \Psi_{c_1 \hat{b}_i c_l}^{d_1 \dots d_r} \\
&\qquad + \sum_{j = 1}^r ( \mbar^{a d_j} \bar{k}_{a b} - \gamma^{a d_j} k_{a b} ) \Psi_{c_1 \dots c_l}^{d_1 \hat{b}_j d_r} \\
&= \nabla_t \Psi_{c_1 \dots c_l}^{d_1 \dots d_r} + (r - l) \cdot \nabla_t u \cdot \Psi_{c_1 \dots c_l}^{d_1 \dots d_r} \text{.}
\end{align*}
The first identity in \eqref{eq.conf_ev} now follows immediately from the above.

For the second equality, we begin by applying the first equality to $\cint^t_0 \Psi$:
\[ \bar{\nabla}_t [ e^{(l - r) u} \cint^t_0 \Psi ] = e^{(l - r) u} \nabla_t \cint^t_0 \Psi = e^{(l - r) u} \Psi \text{.} \]
Applying $\bar{\cint}^t_0 \cdot$ to this yields the identity, since $e^{(l - r) u} \bar{\cint}^t_0 \Psi [0]$ vanishes.
\end{proof}

\subsection{Evolutionary Assumptions} \label{sec.fol_reg}

In Section \ref{sec.geom_reg}, we defined various regularity conditions on a single Riemannian surface in order to derive estimates.
The main point is not the estimates themselves, as much as the fact that we uniformly controlled the \emph{constants} of these estimates by various parameters.
Here, we define conditions on the evolution of $\gamma$ so that such regularity properties on $\mc{S}_0$ can be propagated to all the $\mc{S}_\tau$'s.
As a result of these, the estimates of Section \ref{sec.geom} hold on each $(\mc{S}, \gamma [\tau])$, \emph{with the same constants}.
This will play an important role in the bootstrap argument that forms the foundation of the proof of our main theorem.

Our evolutionary assumptions will be given as integral bounds on $k$ and its derivative.
The specific bounds we will reference are the following:
\begin{align}
\label{eqr.sff} \| k \|_{ L^{\infty, 1}_{x, t} } &\leq B \text{,} \\
\label{eqr.sffd_tr} \| \nabla (\trace k) \|_{ L^{2, 1}_{x, t} } &\leq B \text{,} \\
\label{eqr.sffd} \| \nabla k \|_{ L^{2, 1}_{x, t} } &\leq B \text{,} \\
\label{eqr.sffcurl} \inf \{ \| \Phi \|_{ L^{4, \infty}_{x, t} } \mid \Phi \in \mc{C}^\infty \ul{T}^0_3 \mc{N} \text{, } \nabla_t \Phi = \mf{C} \} &\leq B \text{.}
\end{align}
These are mostly the same bounds that were used in \cite{shao:stt}.
Recall that $\mf{C}$, in \eqref{eqr.sffcurl}, is the curl of $k$, which was defined in Section \ref{sec.fol_ev} and in \cite{shao:stt}.

We begin with some simple consequences of \eqref{eqr.sff}.

\begin{proposition} \label{thm.int_ineq}
Assume \eqref{eqr.sff}.
If $q, p_1, p_2 \in [1, \infty]$ and $q_1, q_2 \in [q, \infty]$ satisfy
\[ q_1^{-1} + q_2^{-1} = q^{-1} \text{,} \qquad p_1^{-1} + p_2^{-1} = 1 \text{,} \]
then the following integral estimates hold for any $\Psi, \Phi \in \mc{C}^\infty \ul{T}^r_l \mc{N}$.
\begin{align}
\label{eq.int_ineq} \| \cint^t_0 \Psi \|_{ L^{q, \infty}_{x, t} } &\lesssim_B \| \Psi \|_{ L^{q, 1}_{x, t} } \text{,} \\
\notag \| \cint^t_0 ( \Psi \otimes \Phi ) \|_{ L^{q, \infty}_{x, t} } &\lesssim_B \| \Psi \|_{ L^{q_1, p_1}_{x, t} } \| \Phi \|_{ L^{q_2, p_2}_{x, t} } \text{.}
\end{align}
\end{proposition}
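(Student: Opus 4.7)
The plan is to reduce both inequalities to a fiberwise ODE bound and then apply Hölder's inequality in $\tau$ and $x$. The starting point is the pointwise identity $\partial_\tau |\Phi|^2 = 2 \langle \Phi, \nabla_t \Phi \rangle$, valid because $\nabla_t \gamma = 0 = \nabla_t \gamma^{-1}$. Applying this with $\Phi = \cint^t_0 \Psi$ and using $\Phi [0] = 0$, a standard absolute-continuity argument yields the fundamental pointwise estimate
\[ | \cint^t_0 \Psi | (\tau, x) \leq \int_0^\tau | \Psi | (\sigma, x)\, d\sigma \text{,} \qquad (\tau, x) \in \mc{N} \text{.} \]
This is independent of the hypothesis \eqref{eqr.sff}; the latter enters only through the Jacobian weights. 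Since $\nabla_t \log \mc{J} = \trace k$ by \eqref{eq.jacobian} and $| \trace k | \lesssim | k |$, integrating along the $t$-fiber gives, for any $\tau, \sigma \in [0, \delta]$ and any $x \in \mc{S}$,
\[ \left| \log \frac{ \mc{J} (\tau, x) }{ \mc{J} (\sigma, x) } \right| \lesssim \int_0^\delta | k | (\mu, x)\, d\mu \leq \| k \|_{ L^{\infty, 1}_{x, t} } \leq B \text{,} \]
so that $\mc{J}^{1/q} (\tau, x) \simeq_B \mc{J}^{1/q} (\sigma, x)$ uniformly.

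Combining the pointwise bound with this weight comparison gives
\[ \mc{J}^\frac{1}{q} (\tau, x) | \cint^t_0 \Psi | (\tau, x) \lesssim_B \int_0^\delta | \Psi | (\sigma, x) \mc{J}^\frac{1}{q} (\sigma, x)\, d\sigma \text{.} \]
Taking the supremum over $\tau$ on the left-hand side, then the $L^q$-norm on $(\mc{S}, \epsilon [0])$, produces the first inequality in \eqref{eq.int_ineq} directly from the definitions.

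For the second estimate, I would apply the same pointwise bound to $\Psi \otimes \Phi$ via $| \Psi \otimes \Phi | \leq | \Psi | | \Phi |$, split the weight as $\mc{J}^{1/q} = \mc{J}^{1/q_1} \mc{J}^{1/q_2}$ (using $q^{-1} = q_1^{-1} + q_2^{-1}$), and apply Hölder's inequality in $\tau$ with conjugate exponents $(p_1, p_2)$ to get
\[ \int_0^\delta | \Psi | | \Phi | \mc{J}^\frac{1}{q}\, d\sigma \leq \paren{ \int_0^\delta | \Psi |^{p_1} \mc{J}^\frac{p_1}{q_1} d\sigma }^\frac{1}{p_1} \paren{ \int_0^\delta | \Phi |^{p_2} \mc{J}^\frac{p_2}{q_2} d\sigma }^\frac{1}{p_2} \text{.} \]
A final Hölder in $x$ on $(\mc{S}, \epsilon [0])$ with conjugate exponents $(q_1 / q, q_2 / q)$ then assembles the right-hand side into $\| \Psi \|_{ L^{q_1, p_1}_{x, t} } \| \Phi \|_{ L^{q_2, p_2}_{x, t} }$. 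The cases $p_i = \infty$ or $q_i = \infty$ are handled by the usual conventions for Hölder's inequality; this is the only real bookkeeping subtlety. There is no serious analytic obstacle—the entire argument is a uniform Jacobian comparison plus two applications of Hölder—so the main care required is tracking how the $\mc{J}^{1/q_i}$ factors land correctly inside each of the mixed norms.
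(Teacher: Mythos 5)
Your argument is correct, and the paper itself offers no proof here: Proposition~\ref{thm.int_ineq} is simply cited to \cite[Sect.\ 4.3]{shao:stt}, so there is no in-paper proof to compare against. Your reconstruction is the natural one and uses exactly the ingredients that the surrounding text implicitly relies on. The pointwise bound $|\cint^t_0 \Psi|(\tau, x) \leq \int_0^\tau |\Psi|(\sigma, x)\,d\sigma$ does follow from $\nabla_t$-compatibility of $\gamma$ (so $\partial_\tau |\cint^t_0 \Psi|^2 = 2\langle \cint^t_0 \Psi, \nabla_t \cint^t_0 \Psi\rangle = 2\langle \cint^t_0 \Psi, \Psi\rangle$) together with a standard absolute-continuity/Gr\"onwall step; your flag of that technicality is appropriate and it does not introduce a gap. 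The observation that the hypothesis \eqref{eqr.sff} enters \emph{only} through the uniform comparability $\mc{J}(\tau, x) \simeq_B \mc{J}(\sigma, x)$ along each fiber (via $|\trace k| \lesssim |k|$ and $\nabla_t \log \mc{J} = \trace k$) is exactly right and is what makes the constants depend solely on $B$. The splitting $\mc{J}^{1/q} = \mc{J}^{1/q_1}\mc{J}^{1/q_2}$ followed by H\"older in $\tau$ with exponents $(p_1, p_2)$ and then H\"older in $x$ over $(\mc{S}, \epsilon[0])$ with exponents $(q_1/q, q_2/q)$ correctly assembles the stated right-hand side, and you correctly read the intended $\|\Psi\|_{L^{q_1,p_1}_{x,t}}\|\Phi\|_{L^{q_2,p_2}_{x,t}}$ despite the typo ($\Psi$ repeated) in the statement. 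The only minor point worth making explicit for the endpoint $q = \infty$ (hence $q_1 = q_2 = \infty$) is that the $L^{\infty,p}_{x,t}$-norms carry \emph{no} Jacobian weight by definition, so there the argument degenerates to the pointwise bound plus a single H\"older in $\tau$, with no constant at all; you gesture at this under ``usual conventions,'' which is fine, but it is a genuinely different (and simpler) case rather than a limiting one of the weighted computation.
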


\begin{proof}
See \cite[Sect. 4.3]{shao:stt}.
\end{proof}

As in \cite[Sect. 4.5]{shao:stt}, we introduce evolutionary regularity assumptions for $(\mc{N}, \gamma)$, which combine the \ass{r0}{}, \ass{r1}{}, and \ass{r2}{} conditions in Section \ref{sec.geom_reg} with \eqref{eqr.sff}-\eqref{eqr.sffcurl}.

\begin{definition} \label{def.ass_F}
For convenience, we define the following conditions:
\begin{itemize}
\item $(\mc{N}, \gamma)$ satisfies \ass{F0}{C, N, B}, with data $\{ U_i, \varphi_i, \eta_i \}_{i = 1}^N$, iff $(\mc{S}, \gamma [0])$ satisfies \ass{r0}{C, N}, with the same data, and \eqref{eqr.sff} holds.

\item $(\mc{N}, \gamma)$ satisfies \ass{F1}{C, N, B}, with data $\{ U_i, \varphi_i, \eta_i, \tilde{\eta}_i, e^i \}_{i = 1}^N$, iff $(\mc{S}, \gamma [0])$ satisfies \ass{r1}{C, N} with the same data, and \eqref{eqr.sff}, \eqref{eqr.sffd_tr}, and \eqref{eqr.sffcurl} hold.

\item $(\mc{N}, \gamma)$ satisfies \ass{F2}{C, N, B}, with data $\{ U_i, \varphi_i, \eta_i, \tilde{\eta}_i, e^i \}_{i = 1}^N$, iff $(\mc{S}, \gamma [0])$ satisfies \ass{r2}{C, N} with the same data, and \eqref{eqr.sff}, \eqref{eqr.sffd}, and \eqref{eqr.sffcurl} hold.
\end{itemize}
\end{definition}

In this paper, we will not need to further mention the data ($U_i$, $\eta_i$, etc.) associated with these regularity conditions.
The precise data was required in \cite{shao:stt} for various technical constructions.
The important parameters for this paper are the constants $C$, $N$, $B$, associated with some existing data.
We list the full definitions here in order to maintain consistency with the development in \cite{shao:stt}.

\begin{remark}
In the proof of our main theorem, the above conditions will be derived as consequences of the bootstrap assumptions.
\end{remark}

The above conditions imply that the \ass{r0}{}, \ass{r1}{}, and \ass{r2}{} conditions can be propagated from $( \mc{S}, \gamma [0] )$ to all the $( \mc{S}, \gamma [\tau] )$'s.
To be more specific, we state an abridged version of the result proved in \cite[Sect. 4.5]{shao:stt}.

\begin{proposition} \label{thm.reg_prop}
The following statements hold:
\begin{itemize}
\item If $(\mc{N}, \gamma)$ satisfies \ass{F0}{C, N, B}, then every $(\mc{S}, \gamma [\tau])$, where $\tau \in [0, \delta]$, satisfies \ass{r0}{C^\prime, N}, for some constant $C^\prime$ depending on $C$, $N$, and $B$.

\item If $(\mc{N}, \gamma)$ satisfies \ass{F1}{C, N, B}, then every $(\mc{S}, \gamma [\tau])$, where $\tau \in [0, \delta]$, satisfies \ass{r1}{C^\prime, N}, for some constant $C^\prime$ depending on $C$, $N$, and $B$.

\item If $(\mc{N}, \gamma)$ satisfies \ass{F2}{C, N, B}, then every $(\mc{S}, \gamma [\tau])$, where $\tau \in [0, \delta]$, satisfies \ass{r2}{C^\prime, N}, for some constant $C^\prime$ depending on $C$, $N$, and $B$.
\end{itemize}
\end{proposition}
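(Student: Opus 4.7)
The plan is to prove the three statements in order, exploiting the fact that the coordinate data $\{U_i, \varphi_i, \eta_i, \tilde{\eta}_i\}$ is identified between $\mc{S}$ and each $\mc{S}_\tau$ via $\Xi_\tau$, so the charts, the partition of unity, and the bounds on its coordinate derivatives are manifestly preserved (they are statements about functions and sets, independent of $\gamma[\tau]$). What genuinely requires propagation is the metric-dependent information: the uniform positivity of $\gamma[\tau]$ in coordinates, the area of $(\mc{S},\gamma[\tau])$, the regularity of a chosen orthonormal frame, and the regularity of the coordinate area density.

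For the \ass{F0}{} case, the starting point is $\mf{L}_t \gamma = 2k$. Applied to a $t$-parallel tangent vector $X$ (i.e.\ one satisfying $\mf{L}_t X = 0$, which in the $\Xi_\tau$-identified coordinates is a constant vector), this gives
\[ \frac{d}{d\tau}\log\gamma[\tau](X,X) = \frac{2\,k[\tau](X,X)}{\gamma[\tau](X,X)}, \qquad \left|\frac{2\,k(X,X)}{\gamma(X,X)}\right| \leq 2|k|, \]
since the left-hand side is bounded pointwise by twice the largest eigenvalue of $k$ with respect to $\gamma$. Integrating in $\tau$ against $\|k\|_{L^{\infty,1}_{x,t}} \leq B$ yields the pointwise two-sided comparison $e^{-2B}\gamma[0] \leq \gamma[\tau] \leq e^{2B}\gamma[0]$, which immediately gives the uniform positivity of $\gamma[\tau]$ in each $\varphi_i$-chart with constant $C' = e^{2B}C$. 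The area bound follows from the Jacobian formula \eqref{eq.jacobian} and $|\trace k| \lesssim |k|$, which yields $e^{-cB}|\mc{S}| \leq |\mc{S}_\tau| \leq e^{cB}|\mc{S}|$.

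For \ass{F1}{}, one constructs $e^i[\tau]$ from $e^i[0]$ by $t$-parallel transport $\mf{p} e^i$ followed by a Gram-Schmidt correction against $\gamma[\tau]$. The correction matrix is a smooth function of the inner products of $\mf{p} e^i$, which evolve under $\nabla_t$ only through factors of $k$; the $L^\infty$-bound on $e^i[\tau]$ thus follows from the metric comparison above. To propagate $\|\nabla e^i_a[\tau]\|_{L^4_x} \leq C$, one writes $\nabla e^i[\tau] = \nabla e^i[0] + \cint^t_0 \nabla_t\nabla e^i$, and applies the commutator identity for $[\nabla_t, \nabla]$ in Proposition \ref{thm.comm}. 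This introduces contributions from $\nabla k$ and from $\mf{C}$. The trace of $\nabla k$ is controlled by \eqref{eqr.sffd_tr}; the traceless part must be recovered from \eqref{eqr.sffcurl} combined with the same trace bound, through a div-curl estimate of the form \eqref{eq.div_curl_est} applied leaf-by-leaf and then integrated in $t$ using Proposition \ref{thm.int_ineq}. The propagation of $\|\nabla \vartheta_i\|_{L^2_x}$ is essentially parallel, since $\mf{L}_t\vartheta_i = (\trace k)\vartheta_i$ reduces matters to a bound on $\nabla(\trace k)$.

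For \ass{F2}{}, the same commutator machinery applies to propagate $\|\nabla\partial^i_a\|_{L^2_x}$, but now with the direct bound \eqref{eqr.sffd} on $\nabla k$ in $L^{2,1}_{x,t}$, which obviates the decomposition step required in \ass{F1}{}. The bound on the second coordinate derivatives of $\eta_i$ transfers verbatim via $\Xi_\tau$. The genuine analytical obstacle is the \ass{F1}{} step: extracting a pointwise-in-$\tau$, spatially $L^4$ frame bound from information that includes the curl of $k$ only in the form of a $t$-antiderivative (in $L^{4,\infty}_{x,t}$) forces one to trade a $t$-derivative for a spatial derivative at just the right moment, and it is precisely this manipulation, together with the div-curl estimate, that carries the argument. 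The detailed execution of all three cases is carried out in \cite[Sect.~4.5]{shao:stt}, whose proofs apply here verbatim after translating the constants.
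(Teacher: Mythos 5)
At the level of what the paper actually records, your proposal coincides with it: the paper's entire proof of Proposition \ref{thm.reg_prop} is the citation to \cite[Sect. 4.5]{shao:stt}, and you defer to the same place. Your sketch of the \ass{F0}{} case is correct and is the standard argument: a Gronwall-type integration of $\mf{L}_t \gamma = 2k$ against $\| k \|_{ L^{\infty, 1}_{x, t} } \leq B$ gives the two-sided comparison of $\gamma [\tau]$ with $\gamma [0]$ (hence uniform positivity in the transported charts), and the Jacobian relation \eqref{eq.jacobian} handles the area.

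However, the mechanism you describe for the \ass{F1}{} frame propagation does not work as stated. You propose to recover the traceless part of $\nabla k$ from \eqref{eqr.sffcurl} through a leaf-by-leaf div-curl estimate of the form \eqref{eq.div_curl_est}. Two problems: first, \eqref{eqr.sffcurl} bounds only a $t$-antiderivative of $\mf{C}$ in $L^{4, \infty}_{x, t}$, not $\mf{C}$ itself in any fixed-$\tau$ or spacetime $L^p$ space, so there is nothing to insert into a pointwise-in-$\tau$ elliptic inequality; second, \eqref{eq.div_curl_est} presupposes \ass{r1}{} together with the curvature condition \ass{k}{} on each leaf---the latter is not among the hypotheses of the proposition, and the former is exactly what you are trying to propagate, so the step is circular. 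Under \ass{F1}{} one neither has nor needs control of the full $\nabla k$ (that is precisely the additional content of \eqref{eqr.sffd} at the \ass{F2}{} level). The $\mf{C}$-contribution arising from the commutator $[\nabla_t, \nabla]$ applied to the transported frame $\mf{p} e^i$ is absorbed by writing $\mf{C} = \nabla_t \Phi$ with $\| \Phi \|_{ L^{4, \infty}_{x, t} } \leq B$ and integrating by parts in $t$, using that $\nabla_t ( \mf{p} e^i ) \equiv 0$ so no derivative falls back on the frame; this is why the hypothesis is phrased in antiderivative form, and it is the derivative-trading you allude to---but it replaces, rather than supplements, the div-curl step. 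With that correction (and your remark that \eqref{eqr.sffd_tr} suffices for $\vartheta_i$, and \eqref{eqr.sffd} for the \ass{F2}{} case, both of which are fine), your outline matches the argument carried out in \cite[Sect. 4.5]{shao:stt}.
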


\begin{proof}
See \cite[Sect. 4.5]{shao:stt}.
\end{proof}

That the regularity properties of Section \ref{sec.geom_reg} can be propagated to all the $\mc{S}_\tau$'s implies that various integral norms on the $\mc{S}_\tau$'s are in fact comparable to each other.
Here, we state these results in a form that we will use later.

\begin{proposition} \label{thm.norm_comp}
Let $\Psi \in \mc{C}^\infty \ul{T}^r_l \mc{N}$ be $t$-parallel, i.e., that $\nabla_t \Psi \equiv 0$.
\begin{itemize}
\item If \eqref{eqr.sff} holds, then for any $q \in [1, \infty]$,
\begin{equation} \label{eq.norm_comp_int} \| \Psi \|_{ L^{q, \infty}_{x, t} } \lesssim_B \| \Psi [0] \|_{ L^q_x } \text{.} \end{equation}

\item If $(\mc{N}, \gamma)$ satisfies \ass{F1}{C, N, B}, then for any $a \in [1, \infty]$ and $s \in (-1, 1)$,
\begin{align}
\label{eq.norm_comp_besov} \| \Psi \|_{ B^{a, \infty, s}_{\ell, t, x} } &\lesssim_{C, N, B, s, r, l} \| \Psi [0] \|_{ B^{a, s}_{\ell, x} } \text{.}
\end{align}
\end{itemize}
\end{proposition}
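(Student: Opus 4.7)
The unifying principle for both estimates is that when $\Psi$ is $t$-parallel, metric-compatibility of $\nabla_t$ (i.e., $\nabla_t \gamma = 0 = \nabla_t \gamma^{-1}$) together with $\nabla_t \Psi \equiv 0$ gives, by Leibniz, $\nabla_t \langle \Psi, \Psi \rangle = 0$. Since $\langle \Psi, \Psi \rangle = |\Psi|^2$ is a scalar, on which $\nabla_t$ reduces to $\partial_\tau$, we conclude
\[ |\Psi| |_{(\tau, x)} = |\Psi [0]| |_x \text{,} \qquad (\tau, x) \in \mc{N} \text{.} \]
The plan is therefore to show that the relevant $\tau$-dependent spatial norm of $\Psi [\tau]$ is comparable, uniformly in $\tau$, to that of $\Psi [0]$.

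For \eqref{eq.norm_comp_int}, we combine this pointwise $\tau$-invariance with a uniform Jacobian bound. From \eqref{eq.jacobian} and \eqref{eqr.sff},
\[ | \log \mc{J} [\tau, x] | \leq \int_0^\tau | \trace k | |_{(s, x)} \, ds \lesssim \| k \|_{L^{\infty, 1}_{x, t}} \leq B \text{,} \]
so $\mc{J}^{1/q} \simeq_B 1$ uniformly. Substituting $|\Psi| = |\Psi[0]|$ and this bound into the definition of $\| \cdot \|_{L^{q, \infty}_{x, t}}$ directly gives the first estimate for $q \in [1, \infty)$; the case $q = \infty$ is immediate since the Jacobian does not enter its definition.

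For \eqref{eq.norm_comp_besov}, the plan is to reduce the slice norm at generic $\tau$ to that at $\tau = 0$. First, Proposition \ref{thm.reg_prop} ensures each $(\mc{S}, \gamma [\tau])$ satisfies \ass{r1}{C^\prime, N} with $C^\prime$ depending only on $C, N, B$, so the spectral L-P calculus of Section \ref{sec.geom_glp} applies with uniform constants on every slice. The base case $\| \Psi [\tau] \|_{L^2_x} \simeq_B \| \Psi [0] \|_{L^2_x}$ is just \eqref{eq.norm_comp_int} with $q = 2$. The core task is to compare the frequency-localized pieces $P_k \Psi [\tau]$ (spectral L-P with respect to $\lapl_{\gamma [\tau]}$) against the reference pieces $P_{k^\prime} \Psi [0]$. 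Using the commutator formula of Proposition \ref{thm.comm} together with the bounds on $k$ and $\mf{C}$ from \ass{F1}{C, N, B}, we plan to express $\lapl_{\gamma [\tau]} \Psi [\tau] - \lapl_{\gamma [0]} \Psi [0]$ (viewed via $t$-parallel transport) as a bounded correction involving $k$, $\mf{C}$, and at most one spatial derivative of $\Psi$. A functional-calculus representation of $\varsigma (-2^{-2k} \lapl_{\gamma [\tau]})$ via the heat or wave kernel, combined with resolvent identities, then converts these commutator bounds into almost-diagonal estimates of $P_k$ at $\tau$ against $\{P_{k^\prime}\}$ at $0$, which the L-P summation absorbs provided $s \in (-1, 1)$. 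The hard part will be precisely this spectral comparison across the evolving metric; the restriction $s \in (-1, 1)$ reflects the limited one-derivative regularity of $\gamma$ in $\tau$ available under \ass{F1}{C, N, B}. Much of the requisite machinery appears already in the proof of Proposition \ref{thm.reg_prop} in \cite{shao:stt}, which we expect to adapt directly.
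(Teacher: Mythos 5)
Your argument for \eqref{eq.norm_comp_int} is exactly the paper's: $|\Psi|$ is $\tau$-independent because $\nabla_t$ is $\gamma$-compatible and $\nabla_t\Psi\equiv 0$, and $\mc{J}\simeq_B 1$ follows from \eqref{eq.jacobian} and \eqref{eqr.sff}; substituting into the definition of $L^{q,\infty}_{x,t}$ closes the first estimate. (The paper's proof text is just as terse, deferring the Jacobian bound to \cite[Sect.\ 4.3]{shao:stt}.)

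For \eqref{eq.norm_comp_besov}, the paper supplies no argument at all: it simply cites \cite[Sect.\ 5.1]{shao:stt}. What you offer is a plan rather than a proof, and you rightly flag the core step --- comparing the frequency localizations $P_k$ built from $\lapl_{\gamma[\tau]}$ against those built from $\lapl_{\gamma[0]}$ --- as the hard part. Two remarks. First, the pointer to the proof of Proposition \ref{thm.reg_prop} is off; that proposition lives in \cite[Sect.\ 4.5]{shao:stt} and concerns propagation of the \ass{r1}{} charts, whereas the Besov comparison is in Section 5.1. Second, and more substantively, the method in \cite{shao:stt} is not a direct spectral/resolvent comparison of the evolving Laplacians. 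As the paper's Section \ref{sec.intro_tech} explains, the strategy there is to use the \ass{r1}{} charts and parallel-transported orthonormal frames to reduce the geometric tensorial Besov norms to coordinate-expressed scalar Besov norms on fixed Euclidean domains, and then run classical scalar L-P theory; the $s\in(-1,1)$ restriction and the \ass{F1}{} hypotheses (in particular the $\mf{C}$-bound \eqref{eqr.sffcurl}, which feeds the regularity of the transported frames) enter there. Your functional-calculus/almost-diagonal scheme is a plausible alternative road and would face the same regularity ceiling, but it is a genuinely different route, and you have not carried it out. So: part one matches the paper and is complete; part two is an open sketch where the paper itself outsources the argument, and your intended route diverges from the cited one.
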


\begin{proof}
\eqref{eq.norm_comp_int} is immediate, since $| \Psi |$ is independent of $t$ and $\mc{J} \simeq 1$ on $\mc{N}$ due to \eqref{eqr.sff}.
\footnote{On the last point, see \cite[Sect. 4.3]{shao:stt}.}
The proof of \eqref{eq.norm_comp_besov} can be found in \cite[Sect. 5.1]{shao:stt}.
\end{proof}

We now consider Sobolev-type estimates involving all of $\mc{N}$.

\begin{proposition} \label{thm.nsob_ineq}
Assume $(\mc{N}, \gamma)$ satisfies \ass{F0}{C, N, B}.
If $\Psi \in \mc{C}^\infty \ul{T}^r_l \mc{N}$, then
\begin{align}
\label{eq.nsob_ineq} \| \Psi \|_{ L^{2, \infty}_{x, t} } &\lesssim_B \| \Psi [0] \|_{ L^2_x } + \| \nabla_t \Psi \|_{ L^{2, 2}_{t, x} }^\frac{1}{2} \| \Psi \|_{ L^{2, 2}_{t, x} }^\frac{1}{2} \text{,} \\
\notag \| \Psi \|_{ L^{4, \infty}_{x, t} } &\lesssim_{C, N, B} \| \Psi [0] \|_{ L^4_x } + \| \nabla_t \Psi \|_{ L^{2, 2}_{t, x} }^\frac{1}{2} ( \| \nabla \Psi \|_{ L^{2, 2}_{t, x} } + \| \Psi \|_{ L^{2, 2}_{t, x} } )^\frac{1}{2} \text{.}
\end{align}
\end{proposition}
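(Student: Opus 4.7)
The plan is to derive both estimates from the fundamental theorem of calculus in the $t$-direction, combined with integration over $\mc{N}$. A preliminary observation is that $\mc{J} \simeq_B 1$ on $\mc{N}$: indeed, $\mc{J} = \exp \cint^t_0 (\trace k)$ by \eqref{eq.jacobian}, and this exponent is bounded in $L^\infty_x$ by \eqref{eqr.sff}. Consequently, the $\mc{J}^{1/q}$ weight inside the $L^{q, \infty}_{x, t}$-norm contributes only a factor comparable to $1$, and the measures $d\epsilon[0]$ and $d\epsilon[s]$ are interchangeable up to a $B$-dependent constant.

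For the first estimate, I would apply $\nabla_t$ to $|\Psi|^2$ to obtain, at each $x$, the pointwise bound
\[
\sup_{t \in [0, \delta]} |\Psi|^2(t, x) \leq |\Psi|^2(0, x) + 2 \int_0^\delta |\nabla_s \Psi| |\Psi| \, ds.
\]
Integrating against $d\epsilon[0]$ and using Fubini together with the change of measure yields $\|\Psi\|_{L^{2, \infty}_{x, t}}^2 \lesssim_B \|\Psi[0]\|_{L^2_x}^2 + \int_\mc{N} |\nabla_s \Psi| |\Psi|$. A single Cauchy-Schwarz over $\mc{N}$ bounds the last term by $\|\nabla_t \Psi\|_{L^{2, 2}_{t, x}} \|\Psi\|_{L^{2, 2}_{t, x}}$, and taking square roots gives the claim.

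For the second estimate, the same template applied to $|\Psi|^4$ yields $\|\Psi\|_{L^{4, \infty}_{x, t}}^4 \lesssim_B \|\Psi[0]\|_{L^4_x}^4 + \int_\mc{N} |\nabla_s \Psi| |\Psi|^3$, so the task reduces to controlling $\int_\mc{N} |\nabla_s \Psi| |\Psi|^3$ by the square of the second summand on the right side of the desired estimate. My plan is to apply Hölder on each slice $\mc{S}_\tau$, giving $\int_\mc{S} |\nabla_s \Psi| |\Psi|^3 \, d\epsilon \leq \|\nabla_s \Psi[s]\|_{L^2_x} \|\Psi[s]\|_{L^6_x}^3$, and then control $\|\Psi[s]\|_{L^6_x}^3$ via the spatial Gagliardo-Nirenberg-Sobolev estimate \eqref{eq.gns_1} with $q = 6$, producing the sum $\|\nabla \Psi[s]\|_{L^2_x}^2 \|\Psi[s]\|_{L^2_x} + \|\Psi[s]\|_{L^2_x}^3$. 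A final Cauchy-Schwarz in $t$ paired against $\|\nabla_s \Psi[s]\|_{L^2_x}$ should close the estimate. The main obstacle is that the trilinear expression in $s$ involves several different $L^2_x$-norms that must be integrated in $t$ without introducing $\sup_s$-type norms of $\|\nabla \Psi[s]\|_{L^2_x}$ or $\|\Psi[s]\|_{L^2_x}$, which we do not control directly; I expect the clean step requires a Young-type absorption into $\|\Psi\|_{L^{4, \infty}_{x, t}}$ appearing on the left side, taking advantage of the $\mc{J} \simeq_B 1$ reduction noted at the outset.
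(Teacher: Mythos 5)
Your argument for the first estimate is correct: from the pointwise bound $|\Psi|^2[\tau] \le |\Psi|^2[0] + 2\int_0^\delta |\nabla_t\Psi|\,|\Psi|\,d\tau'$, take the $\sup$ in $\tau$, integrate over $\mc{S}$, use $\mc{J}\simeq_B 1$ (a consequence of \eqref{eqr.sff}; see \cite[Sect.\ 4.3]{shao:stt}) to pass between $d\epsilon[0]$ and $d\epsilon[\tau]$, and close by Cauchy--Schwarz over $\mc{N}$.

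For the second estimate, your plan stalls precisely where you flag the obstacle, and the obstacle is genuine. After slice-wise H\"older and \eqref{eq.gns_1} with $q=6$, the quantity to control is
\begin{equation*}
\int_0^\delta \|\nabla_t\Psi[\tau]\|_{L^2_x}\paren{ \|\nabla\Psi[\tau]\|_{L^2_x}^2\|\Psi[\tau]\|_{L^2_x} + \|\Psi[\tau]\|_{L^2_x}^3 }\,d\tau \text{.}
\end{equation*}
Cauchy--Schwarz in $\tau$ extracts one copy of $\|\nabla_t\Psi\|_{L^{2,2}_{t,x}}$, but the remaining factor carries $\|\nabla\Psi[\tau]\|_{L^2_x}^2$ per slice. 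The target estimate affords only a single copy of $\|\nabla\Psi\|_{L^{2,2}_{t,x}}$, and no pointwise-in-$\tau$ control of $\|\nabla\Psi[\tau]\|_{L^2_x}$ is assumed. The Young-type absorption you anticipate would require the remaining factor to be bounded by $\|\Psi\|_{L^{4,\infty}_{x,t}}^2(\|\nabla\Psi\|_{L^{2,2}_{t,x}} + \|\Psi\|_{L^{2,2}_{t,x}})$, and \eqref{eq.gns_1} at $q=6$ simply does not produce the $L^4_x$ factor needed to make that absorption available.

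The fix is to replace \eqref{eq.gns_1} at $q=6$ by the sharper slice-wise interpolation
\begin{equation*}
\| \Psi[\tau] \|_{ L^6_x }^3 \lesssim_{C, N} \| \Psi[\tau] \|_{ L^4_x }^2 \paren{ \| \nabla \Psi[\tau] \|_{ L^2_x } + \| \Psi[\tau] \|_{ L^2_x } } \text{,}
\end{equation*}
which is not among the inequalities listed in Proposition \ref{thm.gns_ineq}. It follows by applying the two-dimensional embedding $\|G\|_{L^2_x} \lesssim_{C,N} \|\nabla G\|_{L^1_x} + \|G\|_{L^1_x}$ to the scalar $G = |\Psi|^3$, then Cauchy--Schwarz on the resulting $\int |\Psi|^2 |\nabla\Psi|$. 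With this in hand, one pulls $\sup_\tau \|\Psi[\tau]\|_{L^4_x}^2 \lesssim_B \|\Psi\|_{L^{4,\infty}_{x,t}}^2$ out of the $\tau$-integral to obtain
\begin{equation*}
\int_\mc{N} |\nabla_t\Psi|\,|\Psi|^3 \lesssim_{C,N,B} \|\Psi\|_{L^{4,\infty}_{x,t}}^2\,\|\nabla_t\Psi\|_{L^{2,2}_{t,x}} \paren{ \|\nabla\Psi\|_{L^{2,2}_{t,x}} + \|\Psi\|_{L^{2,2}_{t,x}} } \text{,}
\end{equation*}
after which Young's inequality absorbs the $\|\Psi\|_{L^{4,\infty}_{x,t}}^2$ factor into the quartic on the left and the estimate closes. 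So your structure is right, but the $q=6$ Gagliardo--Nirenberg estimate is too weak: the proof genuinely needs the $W^{1,1}\hookrightarrow L^2$-based refinement that produces an explicit $L^4_x$ factor on each slice.
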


\begin{proof}
See \cite[Sect. 4.6]{shao:stt}.
\end{proof}

\begin{proposition} \label{thm.nsob_trace}
Assume $(\mc{N}, \gamma)$ satisfies \ass{F1}{C, N, B}.
If $\Psi \in \mc{C}^\infty \ul{T}^r_l \mc{N}$, then
\begin{equation} \label{eq.nsob_trace} \| \Psi \|_{ H^{\infty, 1/2}_{t, x} } \lesssim_{C, N, B, r, l} \| \Psi [0] \|_{ H^{1/2}_x } + \| \nabla_t \Psi \|_{ L^{2, 2}_{t, x} }^\frac{1}{2} ( \| \nabla \Psi \|_{ L^{2, 2}_{t, x} } + \| \Psi \|_{ L^{2, 2}_{t, x} } )^\frac{1}{2} \text{.} \end{equation}
\end{proposition}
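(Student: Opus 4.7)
The plan is to reduce the estimate to the scalar-level trace inequality of Proposition~\ref{thm.nsob_ineq} by working Littlewood--Paley piece by piece. Since $H^{\infty,1/2}_{t,x} = B^{2,\infty,1/2}_{\ell,t,x}$, the target norm decomposes as
\[
\|\Psi\|_{H^{\infty,1/2}_{t,x}}^2 = \sum_{k \geq 0} 2^{k} \|P_k \Psi\|_{L^{\infty,2}_{t,x}}^2 + \|P_{<0}\Psi\|_{L^{\infty,2}_{t,x}}^2,
\]
and the structure of the right-hand side of \eqref{eq.nsob_trace} is naturally matched to applying the scalar estimate on each frequency slab and resumming with the $2^k$ weights.

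First I would apply Proposition~\ref{thm.nsob_ineq} to each piece $P_k \Psi$ and $P_{<0}\Psi$, using the trivial inequality $\|\cdot\|_{L^{\infty,2}_{t,x}} \leq \|\cdot\|_{L^{2,\infty}_{x,t}}$ (interchanging sup and integral) to convert the first estimate in \eqref{eq.nsob_ineq} into a bound on $\|P_k \Psi\|_{L^{\infty,2}_{t,x}}^2$. Squaring yields
\[
\|P_k \Psi\|_{L^{\infty,2}_{t,x}}^2 \lesssim_B \|P_k \Psi [0]\|_{L^2_x}^2 + \|\nabla_t P_k \Psi\|_{L^{2,2}_{t,x}} \|P_k \Psi\|_{L^{2,2}_{t,x}}.
\]
Multiplying by $2^k$, summing over $k \geq 0$ together with the low-frequency analogue, and invoking Proposition~\ref{thm.sobolev} to identify $\sum_k 2^k \|P_k \Psi [0]\|_{L^2_x}^2 + \|P_{<0}\Psi [0]\|_{L^2_x}^2 \simeq \|\Psi [0]\|_{H^{1/2}_x}^2$, I obtain
\[
\|\Psi\|_{H^{\infty,1/2}_{t,x}}^2 \lesssim_B \|\Psi [0]\|_{H^{1/2}_x}^2 + \sum_{k \geq 0} 2^k \|\nabla_t P_k \Psi\|_{L^{2,2}_{t,x}} \|P_k \Psi\|_{L^{2,2}_{t,x}}.
\]

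Next I would apply Cauchy--Schwarz in $k$ to the residual sum, pairing $a_k = \|\nabla_t P_k \Psi\|_{L^{2,2}_{t,x}}$ against $b_k = 2^k \|P_k \Psi\|_{L^{2,2}_{t,x}}$. The $\ell^2_k$-norm of $(b_k)$, namely $(\sum_k 4^k \|P_k \Psi\|_{L^{2,2}_{t,x}}^2)^{1/2}$, is controlled up to lower-order terms by $\|\nabla \Psi\|_{L^{2,2}_{t,x}} + \|\Psi\|_{L^{2,2}_{t,x}}$: this is Proposition~\ref{thm.sobolev} for $s = 1$ applied slicewise to $(\mc{S}, \gamma [\tau])$ and then integrated in $\tau$ (the slicewise constants are uniform by Proposition~\ref{thm.reg_prop}). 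The $\ell^2_k$-norm of $(a_k)$ I would bound via the decomposition $\nabla_t P_k = P_k \nabla_t + [\nabla_t, P_k]$: the contribution of $P_k \nabla_t$ is dominated by $\|\nabla_t \Psi\|_{L^{2,2}_{t,x}}$ through Bessel's inequality applied on each fixed slice.

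The main obstacle is to show that $\sum_k \|[\nabla_t, P_k] \Psi\|_{L^{2,2}_{t,x}}^2$ is controlled by a multiple of $\|\nabla_t \Psi\|_{L^{2,2}_{t,x}} (\|\nabla \Psi\|_{L^{2,2}_{t,x}} + \|\Psi\|_{L^{2,2}_{t,x}})$, with a constant depending only on $C$, $N$, $B$, $r$, $l$. Because $P_k = \varsigma(-2^{-2k} \lapl)$ is built from the time-varying Laplacian, this commutator is nontrivial and its leading contribution comes from $[\nabla_t, \lapl]$, which by the Leibniz rule and Proposition~\ref{thm.comm} is expressible in terms of $k$ and $\nabla k$. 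Under \ass{F1}{C, N, B}, the bounds \eqref{eqr.sff}--\eqref{eqr.sffcurl} combined with Proposition~\ref{thm.int_ineq} provide precisely the integrability needed to control this commutator sum through a frequency-localized analysis; analogous commutator estimates are carried out in \cite[Sect.~4.6]{shao:stt} in support of Proposition~\ref{thm.nsob_ineq}. Once this commutator bound is in hand, taking square roots of the resulting inequality yields \eqref{eq.nsob_trace}.
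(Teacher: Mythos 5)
Your framework---L--P decomposition of the $H^{\infty,1/2}_{t,x}$ norm, reduction of each frequency piece to Proposition~\ref{thm.nsob_ineq} via the pointwise inequality $\|\cdot\|_{L^{\infty,2}_{t,x}} \leq \|\cdot\|_{L^{2,\infty}_{x,t}}$, resummation with the $2^k$ weights, Cauchy--Schwarz in $k$, and control of $\sum_k 4^k \|P_k\Psi\|_{L^{2,2}_{t,x}}^2$ by slicewise $H^1_x$ theory---is coherent up to exactly the point you single out. But the commutator step is a genuine gap, and it cannot be bridged by the means you invoke.

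The citation is incorrect: \cite[Sect.~4.6]{shao:stt} proves Proposition~\ref{thm.nsob_ineq}, whose argument is a fundamental-theorem-of-calculus plus Gagliardo--Nirenberg computation and involves no Littlewood--Paley decomposition at all; there is no commutator estimate of the form $[\nabla_t, P_k]$ to be found there. More substantively, such a commutator estimate is not available under \ass{F1}{C,N,B}. The operators $P_k = \varsigma(-2^{-2k}\lapl)$ are spectral multipliers of the $t$-dependent Laplacian, and any attempt to control $[\nabla_t, P_k]$ leads back to $[\nabla_t, \lapl]$, which by Proposition~\ref{thm.comm} and the Leibniz rule produces terms proportional to $\nabla k$ (and to $\nabla \mf{C}$). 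Assumption \ass{F1}{} controls only $\|\nabla(\trace k)\|_{L^{2,1}_{x,t}}$ and a weak $L^{4,\infty}_{x,t}$-type bound on a $t$-antiderivative of $\mf{C}$; the full $\|\nabla k\|_{L^{2,1}_{x,t}}$ bound is reserved for \ass{F2}{}, which the present proposition deliberately does not require. This is precisely why \cite{shao:stt} avoids commuting $\nabla_t$ through the spectral projections: as sketched in Section~\ref{sec.intro_tech}, the proof in \cite[Sect.~5.1]{shao:stt} instead exploits the Codazzi structure of $\mf{C}$ to construct parallel-transported frames of improved regularity, in which the estimate reduces to a Euclidean, scalar sharp-trace estimate treatable by classical L--P theory. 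Finally, even granting the commutator bound you posit, namely $\sum_k\|[\nabla_t, P_k]\Psi\|_{L^{2,2}_{t,x}}^2 \lesssim \|\nabla_t\Psi\|_{L^{2,2}_{t,x}}(\|\nabla\Psi\|_{L^{2,2}_{t,x}}+\|\Psi\|_{L^{2,2}_{t,x}})$, the algebra does not close: after Cauchy--Schwarz you are left with an uncancelled term of size $\|\nabla_t\Psi\|_{L^{2,2}_{t,x}}^{1/2}(\|\nabla\Psi\|_{L^{2,2}_{t,x}}+\|\Psi\|_{L^{2,2}_{t,x}})^{3/2}$, which is not dominated by the square of the right-hand side of \eqref{eq.nsob_trace}.
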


\begin{proof}
See \cite[Sect. 5.1]{shao:stt}.
\end{proof}

\subsection{Bilinear Product Estimates} \label{sec.fol_prod}

The following bilinear product estimates, all proved in \cite{shao:stt}, will be essential to the proof of our main results.

\begin{theorem} \label{thm.est_prod}
Assume that $(\mc{N}, \gamma)$ satisfies \ass{F1}{C, N, B}.
Furthermore, fix horizontal tensor fields $\Psi \in \mc{C}^\infty \ul{T}^{r_1}_{l_1} \mc{N}$ and $\Phi \in \mc{C}^\infty \ul{T}^{r_2}_{l_2} \mc{N}$.
\begin{itemize}
\item If $a \in [1, \infty]$, $s \in (-1, 1)$, and $\Psi$ is $t$-parallel (i.e., $\nabla_t \Psi \equiv 0$), then
\begin{align}
\label{eq.est_prod_imp} \| \Phi \otimes \Psi \|_{ B^{a, 2, s}_{\ell, t, x} } &\lesssim_{ C, N, B, s, r_1, l_1, r_2, l_2 } ( \| \nabla \Phi \|_{ L^{2, 2}_{t, x} } + \| \Phi \|_{ L^{\infty, 2}_{x, t} } ) \| \Psi [0] \|_{ B^{a, s}_{\ell, x} } \text{.}
\end{align}

\item If $a \in [1, \infty]$ and $s \in (-1, 1)$, then
\begin{align}
\label{eq.est_trace_sh} \| \cint^t_0 ( \Phi \otimes \Psi ) \|_{ B^{a, \infty, s}_{\ell, t, x} } &\lesssim_{ C, N, B, s, r_1, l_1, r_2, l_2 } ( \| \nabla \Phi \|_{ L^{2, 2}_{t, x} } + \| \Phi \|_{ L^{\infty, 2}_{x, t} } ) \| \Psi \|_{ B^{a, 2, s}_{\ell, t, x} } \text{,} \\
\label{eq.est_trace_shp} \| \Phi \otimes \cint^t_0 \Psi \|_{ B^{a, 2, s}_{\ell, t, x} } &\lesssim_{ C, N, B, s, r_1, l_1, r_2, l_2 } ( \| \nabla \Phi \|_{ L^{2, 2}_{t, x} } + \| \Phi \|_{ L^{\infty, 2}_{x, t} } ) \| \Psi \|_{ B^{a, 1, s}_{\ell, t, x} } \text{.}
\end{align}

\item The following estimates hold:
\begin{align}
\label{eq.est_prod_ex} \| \Phi \otimes \Psi \|_{ B^{\infty, 0}_{t, x} } &\lesssim_{ C, N, B, r_1, l_1, r_2, l_2 } \| \Phi \|_{ N^{1i}_{t, x} } \| \Psi \|_{ N^{1i}_{t, x} } \text{,} \\
\label{eq.est_trace_ex} \| \cint^t_0 ( \nabla_t \Phi \otimes \Psi ) \|_{ B^{\infty, 0}_{t, x} } &\lesssim_{ C, N, B, r_1, l_1, r_2, l_2 } \| \Phi \|_{ N^{1i}_{t, x} } \| \Psi \|_{ N^{1i}_{t, x} } \text{.}
\end{align}
\end{itemize}
\end{theorem}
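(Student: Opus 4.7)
The overall strategy is to reduce these geometric, tensorial bilinear estimates to their scalar Euclidean analogues, exploiting the uniform regularity of the leaves $(\mathcal{S},\gamma[\tau])$ guaranteed by \textbf{(F1)}${}_{C,N,B}$ and Proposition \ref{thm.reg_prop}. Following the philosophy described in Section \ref{sec.intro_tech}, I would first decompose horizontal tensor fields with respect to a $t$-parallel orthonormal frame (whose existence follows from the regularity data); parallel transport along $t$ commutes with $\nabla_t$ and preserves pointwise norms by the definition of $\mathfrak{p}$, so $\Psi$ and $\Phi$ become finite collections of scalar coefficients to which we can apply classical Euclidean Littlewood--Paley bilinear theory (after partitioning with $\{\eta_i\}$ and localizing to each $U_i$). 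The norm comparison \eqref{eq.norm_comp_besov} will be essential for transferring the spatial Besov estimates on $(\mathcal{S},\gamma[0])$ uniformly to all $(\mathcal{S},\gamma[\tau])$.

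For \eqref{eq.est_prod_imp}, since $\Psi$ is $t$-parallel, I would apply the L-P decomposition $\Psi = P_{<0}\Psi + \sum_{k\geq 0}P_k\Psi$, noting that each $P_k\Psi$ is again $t$-parallel. For each dyadic block, combine a H\"older split $\|\Phi\otimes P_k\Psi\|_{L^{2,2}_{t,x}}\lesssim \|\Phi\|_{L^{2,\infty}_{t,x}}\|P_k\Psi\|_{L^{\infty,2}_{t,x}}$ with \eqref{eq.nsob_ineq} to absorb $\|\Phi\|_{L^{2,\infty}_{t,x}}$ into the right-hand side, and apply \eqref{eq.norm_comp_besov} with $q=2$ to $P_k\Psi$ to identify $\|P_k\Psi\|_{L^{\infty,2}_{t,x}}$ with $\|P_k\Psi[0]\|_{L^2_x}$. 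Multiplying by $2^{ask}$ and summing (or taking the supremum when $a=\infty$) yields the claim.

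The sharp trace estimate \eqref{eq.est_trace_sh} is the technical heart. For each $k$, I would write $P_k\bigl(\cint^t_0(\Phi\otimes\Psi)\bigr)$ by pulling $P_k$ past $\cint^t_0$ using the commutator formula from Proposition \ref{thm.comm}; the commutator terms involve $k$ and $\mathfrak{C}$, which are absorbed using \eqref{eqr.sff} and \eqref{eqr.sffcurl} together with \eqref{eq.int_ineq}. The principal term is then estimated by a scalar sharp trace inequality of the form $\sup_\tau\|f(\tau,\cdot)\|_{L^2}\lesssim \|f(0,\cdot)\|_{L^2}+\|\partial_tf\|^{1/2}_{L^2}\|f\|^{1/2}_{L^2}$ applied in the parallel frame, combined with the Bernstein-type bound $\|\nabla P_k\Psi\|_{L^{2,2}_{t,x}}\lesssim 2^k\|P_k\Psi\|_{L^{2,2}_{t,x}}$; the final summation in $k$ collects the dyadic pieces into $\|\Psi\|_{B^{a,2,s}_{\ell,t,x}}$. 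Estimate \eqref{eq.est_trace_shp} is dual in spirit, obtained by the same L-P localization but placing the integral on the $\Psi$-factor.

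The endpoint estimates \eqref{eq.est_prod_ex}, \eqref{eq.est_trace_ex} should follow by interpolating the first two parts with the sharp half-derivative trace embedding \eqref{eq.nsob_trace}: $\|\Phi\|_{N^{1i}_{t,x}}$ controls both $\|\Phi[0]\|_{H^{1/2}_x}$ and $\|\nabla\Phi\|_{L^{2,2}_{t,x}}+\|\Phi\|_{L^{\infty,2}_{x,t}}$, which is exactly the input on the right side of \eqref{eq.est_prod_imp} and \eqref{eq.est_trace_sh} at the critical exponent $s=1/2$. For \eqref{eq.est_trace_ex}, substitute $\nabla_t\Phi$ into \eqref{eq.est_trace_sh}; integration by parts in $t$ on $\cint^t_0$ trades $\nabla_t\Phi$ for $\Phi$, at the cost of a boundary term at $\tau=0$ that is controlled by $\|\Phi[0]\|_{H^{1/2}_x}\|\Psi[0]\|_{H^{1/2}_x}$, which is contained in the product of $N^{1i}_{t,x}$ norms.

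The main obstacle I anticipate is the rigorous justification of commuting $P_k$ with $\cint^t_0$ and with the parallel-transport change of frame in the tensorial setting: the induced commutators involve the curvature $\mathcal{K}$ (which only enjoys the weak \textbf{(k)}${}_{C,D}$ control) and the torsion $\mathfrak{C}$ of $k$, so one must verify that each such error term is absorbable in the desired norm using \eqref{eqr.sff}--\eqref{eqr.sffcurl} together with \eqref{eq.int_ineq} and the elliptic estimates of Proposition \ref{thm.div_curl_est}. The careful bookkeeping of these low-regularity error terms, and showing the summability in the dyadic index $k$ at the endpoint $s\in\{-1,1\}$ approached from the open interval, is what justifies the delegation to \cite{shao:stt}.
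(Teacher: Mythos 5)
The paper gives no internal proof of this theorem; it cites \cite[Sect.\ 5.2--5.3]{shao:stt}, and Section \ref{sec.intro_tech} describes the method used there: build especially regular $t$-parallel orthonormal frames (the extra regularity coming from the Codazzi-improved control on the curl of $k$), expand tensor fields in those frames and in the $\{(U_i,\varphi_i,\eta_i)\}$ coordinate data, and reduce to \emph{classical} scalar Littlewood--Paley product and trace estimates in $\R^2$ --- precisely so that one never has to commute the \emph{geometric} LP projections $P_k$ through the $t$-evolution. Your opening paragraph states this philosophy correctly, but the sketch that follows does not implement it: it manipulates the geometric $P_k$'s directly, which is the route the method is designed to avoid.

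The concrete steps contain genuine errors. (i) The claim that each $P_k\Psi$ is again $t$-parallel is false: $P_k$ is defined slice-by-slice from the spectral decomposition of $\lapl$ with respect to the evolving $\gamma[\tau]$, so $P_k$ does not commute with $\nabla_t$, and parallel transport of $\Psi$ does not parallel-transport its spectral pieces. (ii) Proposition \ref{thm.comm} gives commutators of $\cint^t_0$, $\nabla_t$, $\mf{L}_t$ with the differential $\nabla_a$ only; it says nothing about commuting with the nonlocal operators $P_k$, and controlling that commutator is exactly the hard technical core of the heat-flow approach in \cite{kl_rod:glp, kl_rod:stt} that \cite{shao:stt} bypasses. (iii) The H\"older split places $\Phi$ in $L^{2,\infty}_{t,x}=L^2_tL^\infty_x$, but \eqref{eq.nsob_ineq} controls $L^{2,\infty}_{x,t}$ and $L^{4,\infty}_{x,t}$ (opposite order of integration), and in two spatial dimensions the right-hand side $\|\nabla\Phi\|_{L^{2,2}_{t,x}}+\|\Phi\|_{L^{\infty,2}_{x,t}}$ does not dominate any $L^\infty_x$ quantity. (iv) The endpoints \eqref{eq.est_prod_ex}, \eqref{eq.est_trace_ex} are not interpolates of the first two: both factors sit at critical regularity, requiring a genuine paraproduct split, and for \eqref{eq.est_trace_ex} the integration by parts replaces $\cint^t_0(\nabla_t\Phi\otimes\Psi)$ by $\Phi\otimes\Psi$ minus $\cint^t_0(\Phi\otimes\nabla_t\Psi)$, the last term being structurally identical to the original, so the IBP alone does not close. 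The correct proof must follow through on the frame-and-chart reduction you announced, rather than attempt direct spectral commutations.
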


\begin{proof}
See \cite[Sect. 5.2-5.3]{shao:stt}.
\end{proof}

By aggregating norms, we can combine some of the above estimates:

\begin{corollary} \label{thm.est_prod_cor}
Assume that $(\mc{N}, \gamma)$ satisfies \ass{F1}{C, N, B}.
Furthermore, fix horizontal tensor fields $\Psi \in \mc{C}^\infty \ul{T}^{r_1}_{l_1} \mc{N}$ and $\Phi \in \mc{C}^\infty \ul{T}^{r_2}_{l_2} \mc{N}$.
\begin{align}
\label{eq.est_trace_sh_ex} \| \cint^t_0 ( \Phi \otimes \Psi ) \|_{ B^{\infty, 0}_{t, x} } &\lesssim_{ C, N, B, r_1, r_2, l_1, l_2 } \| \Phi \|_{ N^{0 \star}_{t, x} + B^{2, 0}_{t, x} } \| \Psi \|_{ N^{1i}_{t, x} \cap L^{\infty, 2}_{x, t} } \text{,} \\
\label{eq.est_trace_shp_ex} \| \Phi \otimes \cint^t_0 \Psi \|_{ B^{2, 0}_{t, x} } &\lesssim_{ C, N, B, r_1, r_2, l_1, l_2 } ( 1 + \delta^\frac{1}{2} ) \| \Phi \|_{ N^{1i}_{t, x} \cap L^{\infty, 2}_{x, t} } \| \Psi \|_{ N^{0 \star}_{t, x} + B^{2, 0}_{t, x} } \text{.}
\end{align}
\end{corollary}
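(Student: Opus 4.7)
The plan is to exploit the definitions of the sum norm $N^{0\star}_{t,x} + B^{2,0}_{t,x}$ and the intersection norm $N^{1i}_{t,x} \cap L^{\infty,2}_{x,t}$ to reduce both estimates in the corollary to specific cases of Theorem \ref{thm.est_prod}. Concretely, any $\Phi$ (resp.\ $\Psi$) appearing in the sum-norm slot admits a splitting $\Phi = \Phi_1 + \Phi_2$ with $\Phi_1 = \nabla_t \Phi_1'$ for some $\Phi_1' \in N^{1i}_{t,x}$ and $\Phi_2 \in B^{2,0}_{t,x}$, and the goal is to bound each piece of the resulting sum separately and then pass to the infimum. In each case, the factor living in the intersection norm will play the role of the ``$\Phi$'' in Theorem \ref{thm.est_prod} (providing the $\|\nabla\cdot\|_{L^{2,2}_{t,x}} + \|\cdot\|_{L^{\infty,2}_{x,t}}$ input), while the factor living in the sum norm will play the role of the ``$\Psi$''.

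For \eqref{eq.est_trace_sh_ex}, decompose $\Phi = \nabla_t \Phi_1' + \Phi_2$. The first piece is handled directly by \eqref{eq.est_trace_ex}: $\|\cint^t_0(\nabla_t \Phi_1' \otimes \Psi)\|_{B^{\infty,0}_{t,x}} \lesssim \|\Phi_1'\|_{N^{1i}_{t,x}} \|\Psi\|_{N^{1i}_{t,x}}$. For the second piece, apply \eqref{eq.est_trace_sh} with $a = 1$, $s = 0$ and with the roles of the two factors interchanged, so that $\Psi$ occupies the $\nabla$-derivative slot and $\Phi_2$ occupies the Besov slot: this yields $\|\cint^t_0(\Phi_2 \otimes \Psi)\|_{B^{\infty,0}_{t,x}} \lesssim \|\Psi\|_{N^{1i}_{t,x} \cap L^{\infty,2}_{x,t}} \|\Phi_2\|_{B^{2,0}_{t,x}}$. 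Summing and taking the infimum over decompositions of $\Phi$ produces the desired estimate.

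For \eqref{eq.est_trace_shp_ex}, decompose $\Psi = \nabla_t \Psi_1' + \Psi_2$. For the $\Psi_2$ contribution, \eqref{eq.est_trace_shp} with $a = 1$, $s = 0$ gives a bound by $\|\Phi\|_{N^{1i} \cap L^{\infty,2}} \|\Psi_2\|_{B^{1,1,0}_{\ell,t,x}}$, and a Cauchy--Schwarz in $t$ against the time interval $[0,\delta]$ converts $\|\cdot\|_{L^{1,2}_{t,x}}$ to $\delta^{1/2} \|\cdot\|_{L^{2,2}_{t,x}}$ term by L-P term, producing the factor $\delta^{1/2}$. For the $\nabla_t \Psi_1'$ contribution, the identity $\cint^t_0 \nabla_t \Psi_1' = \Psi_1' - \mf{p}(\Psi_1'[0])$ splits things further: the free term $\Phi \otimes \Psi_1'$ is controlled through \eqref{eq.est_prod_ex} combined with Cauchy--Schwarz in $t$ (yielding the $\delta^{1/2}$ factor), while the boundary term $\Phi \otimes \mf{p}(\Psi_1'[0])$ is controlled by \eqref{eq.est_prod_imp} (with parameters $a = 1$, $s = 0$) applied to the $t$-parallel tensor $\mf{p}(\Psi_1'[0])$, using the embedding $\|\cdot\|_{B^{1,0}_{\ell,x}} \lesssim \|\cdot\|_{H^{1/2}_x}$ (a trivial Cauchy--Schwarz in the L-P index) together with the trace-type bound $\|\Psi_1'[0]\|_{H^{1/2}_x} \leq \|\Psi_1'\|_{N^{1i}_{t,x}}$ built into the very definition of $N^{1i}_{t,x}$. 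Gathering these contributions and taking the infimum over decompositions of $\Psi$ yields the claimed bound with the $(1 + \delta^{1/2})$ prefactor.

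The argument is essentially bookkeeping once one has the toolkit of Theorem \ref{thm.est_prod}; the only genuinely delicate point is the treatment of the boundary correction $\mf{p}(\Psi_1'[0])$ in the second estimate, which is precisely what forces us to use the \emph{initial-data-augmented} norm $N^{1i}_{t,x}$ rather than $N^1_{t,x}$ in the definition of $N^{0\star}_{t,x}$. Without this term in the definition, the boundary contribution from $\cint^t_0 \nabla_t$ could not be absorbed, and the sum-norm formulation of the bootstrap assumptions described in Section \ref{sec.intro_tech} would fail to close.
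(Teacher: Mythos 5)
Your proposal is correct and matches the paper's own proof essentially step for step: the same decomposition of the sum-norm factor, the same three estimates from Theorem \ref{thm.est_prod}, the same Cauchy--Schwarz in $t$ producing the $\delta^{1/2}$ factor, and the same handling of the $t$-parallel boundary term via the $H^{1/2}_x$ control built into $N^{1i}_{t,x}$. The only cosmetic difference is that you fix the $t$-parallel correction to be $\mf{p}(\Psi_1'[0])$ from the outset, whereas the paper keeps $\Theta$ arbitrary and identifies $\Theta[0]$ with the antiderivative's initial value afterward; these are equivalent formulations.
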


\begin{proof}
We begin with \eqref{eq.est_trace_sh_ex}.
Given a decomposition $\Phi = \Phi_1 + \Phi_2$,
\begin{align*}
\| \cint^t_0 ( \Phi \otimes \Psi ) \|_{ B^{\infty, 0}_{t, x} } &\lesssim \| \cint^t_0 ( \Phi_1 \otimes \Psi ) \|_{ B^{\infty, 0}_{t, x} } + \| \cint^t_0 ( \Phi_2 \otimes \Psi ) \|_{ B^{\infty, 0}_{t, x} } \\
&\lesssim \| \cint^t_0 ( \Phi_1 \otimes \Psi ) \|_{ B^{\infty, 0}_{t, x} } + \| \Phi_2 \|_{ B^{2, 0}_{t, x} } \| \Psi \|_{ N^{1i}_{t, x} \cap L^{\infty, 2}_{t, x} } \text{,}
\end{align*}
where we applied \eqref{eq.est_trace_sh}.
If $\Phi_0 \in \mc{C}^\infty \ul{T}^{r_2}_{l_2} \mc{N}$ satisfies $\nabla_t \Phi_0 = \Phi_1$, then by \eqref{eq.est_trace_ex},
\begin{align*}
\| \cint^t_0 ( \Phi_1 \otimes \Psi ) \|_{ B^{\infty, 0}_{t, x} } &= \| \cint^t_0 ( \nabla_t \Phi_0 \otimes \Psi ) \|_{ B^{\infty, 0}_{t, x} } \lesssim \| \Phi_0 \|_{ N^{1i}_{t, x} } \| \Psi \|_{ N^{1i}_{t, x} } \text{.}
\end{align*}
Varying over all possible $\Phi_0$'s, the above implies
\begin{align*}
\| \cint^t_0 ( \Phi_1 \otimes \Psi ) \|_{ B^{\infty, 0}_{t, x} } &\lesssim \| \Phi_1 \|_{ N^{0 \star}_{t, x} } \| \Psi \|_{ N^{1i}_{t, x} \cap L^{\infty, 2}_{x, t} } \text{,} \\
\| \cint^t_0 ( \Phi \otimes \Psi ) \|_{ B^{\infty, 0}_{t, x} } &\lesssim ( \| \Phi_1 \|_{ N^{0 \star}_{t, x} } + \| \Phi_2 \|_{ B^{2, 0}_{t, x} } ) \| \Psi \|_{ N^{1i}_{t, x} \cap L^{\infty, 2}_{x, t} } \text{.}
\end{align*}
Varying over all decompositions $\Phi = \Phi_1 + \Phi_2$ yields \eqref{eq.est_trace_sh_ex}.

For \eqref{eq.est_trace_shp_ex}, we begin by assuming a similar decomposition $\Psi = \Psi_1 + \Psi_2$, so
\begin{align*}
\| \Phi \otimes \cint^t_0 \Psi \|_{ B^{2, 0}_{t, x} } &\lesssim \| \Phi \otimes \cint^t_0 \Psi_1 \|_{ B^{2, 0}_{t, x} } + \| \Phi \otimes \cint^t_0 \Psi_2 \|_{ B^{2, 0}_{t, x} } \\
&\lesssim \| \Phi \otimes \cint^t_0 \Psi_1 \|_{ B^{2, 0}_{t, x} } + \delta^\frac{1}{2} \| \Phi \|_{ N^{1i}_{t, x} \cap L^{\infty, 2}_{x, t} } \| \Psi_2 \|_{ B^{2, 0}_{t, x} } \text{,}
\end{align*}
where we also applied \eqref{eq.est_trace_shp}.
For any $t$-parallel $\Theta \in \mc{C}^\infty \ul{T}^r_l \mc{N}$, we can bound
\begin{align*}
\| \Phi \otimes \cint^t_0 \Psi_1 \|_{ B^{2, 0}_{t, x} } \lesssim \| \Phi \otimes ( \cint^t_0 \Psi_1 + \Theta ) \|_{ B^{2, 0}_{t, x} } + \| \Phi \otimes \Theta \|_{ B^{2, 0}_{t, x} } \text{.}
\end{align*}
Applying \eqref{eq.est_prod_imp} and \eqref{eq.est_prod_ex} to the terms on the right-hand side yields
\begin{align*}
\| \Phi \otimes \cint^t_0 \Psi_1 \|_{ B^{2, 0}_{t, x} } &\lesssim \| \Phi \|_{ N^{1i}_{t, x} \cap L^{\infty, 2}_{x, t} } ( \delta^\frac{1}{2} \| \cint^t_0 \Psi_1 + \Theta \|_{ N^{1i}_{t, x} } + \| \Theta [0] \|_{ B^0_x } ) 
\end{align*}
Moreover, since $\Theta [0] = ( \cint^t_0 \Psi + \Theta ) [0]$, then
\[ \| \Theta [0] \|_{ B^0_x } \lesssim \| \Theta [0] \|_{ H^{1/2}_x } \lesssim \| \cint^t_0 \Psi_1 + \Theta \|_{ N^{1i}_{t, x} } \text{.} \]
Note any (covariant) $t$-antiderivative of $\Psi_1$ can be written as $\cint^t_0 \Psi_1 + \Theta$ for some $t$-parallel $\Theta \in \mc{C}^\infty \ul{T}^r_l \mc{N}$.
Thus, combining the above developments, we have
\begin{align*}
\| \Phi \otimes \cint^t_0 \Psi_1 \|_{ B^{2, 0}_{t, x} } &\lesssim ( 1 + \delta^\frac{1}{2} ) \| \Phi \|_{ N^{1i}_{t, x} \cap L^{\infty, 2}_{x, t} } \| \Psi_1 \|_{ N^{0 \star}_{t, x} } \text{,} \\
\| \Phi \otimes \cint^t_0 \Psi \|_{ B^{2, 0}_{t, x} } &\lesssim ( 1 + \delta^\frac{1}{2} ) \| \Phi \|_{ N^{1i}_{t, x} \cap L^{\infty, 2}_{x, t} } \| \Psi \|_{ N^{0 \star}_{t, x} + B^{1, 0}_{t, x} } \text{,}
\end{align*}
which completes the proof of \eqref{eq.est_trace_shp_ex}.
\end{proof}

The following \emph{sharp trace estimate} is also established in \cite{shao:stt} (and is a variation and simplification of similar estimates found in \cite{kl_rod:cg, wang:cg}).

\begin{theorem} \label{thm.sharp_trace}
Assume that $(\mc{N}, \gamma)$ satisfies \ass{F2}{C, N, B}.
Let $\Psi \in \mc{C}^\infty \ul{T}^r_l \mc{N}$, and suppose $\Psi_1, \Psi_2 \in \mc{C}^\infty \ul{T}^r_{l + 1} \mc{N}$ are such that the decomposition
\begin{align}
\label{eq.sharp_trace_decomp} \nabla \Psi = \nabla_t \Psi_1 + \Psi_2
\end{align}
holds.
Then, we have the following estimate:
\begin{align}
\label{eq.sharp_trace} \| \Psi \|_{ L^{\infty, 2}_{x, t} } &\lesssim_{ C, N, B, r, l } ( 1 + \| k \|_{ L^{2, \infty}_{x, t} } ) ( \| \Psi \|_{ N^{1i}_{t, x} } + \| \Psi_1 \|_{ N^{1i}_{t, x} } + \| \Psi_2 \|_{ B^{2, 0}_{t, x} } ) \text{.}
\end{align}
\end{theorem}

\begin{proof}
See \cite[Sect. 5.4]{shao:stt}.
\end{proof}

Combining Theorem \ref{thm.sharp_trace} with our aggregated norms and taking the infimum over all decompositions of the form \eqref{eq.sharp_trace_decomp} results in the following:

\begin{corollary} \label{thm.sharp_trace_ex}
If $(\mc{N}, \gamma)$ satisfies \ass{F2}{C, N, B}, and if $\Psi \in \mc{C}^\infty \ul{T}^r_l \mc{N}$, then
\begin{align}
\label{eq.sharp_trace_ex} \| \Psi \|_{ L^{\infty, 2}_{x, t} } &\lesssim_{ C, N, B, r, l } ( 1 + \| k \|_{ L^{2, \infty}_{x, t} } ) ( \| \Psi \|_{ N^{1i}_{t, x} } + \| \nabla \Psi \|_{ N^{0 \star}_{t, x} + B^{2, 0}_{t, x} } ) \text{.}
\end{align}
\end{corollary}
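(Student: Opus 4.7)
The plan is to reduce this corollary to Theorem \ref{thm.sharp_trace} by optimizing over decompositions of $\nabla \Psi$, exactly as in the proof of Corollary \ref{thm.est_prod_cor}. The sum norm $N^{0\star}_{t,x} + B^{2,0}_{t,x}$ is designed precisely so that this kind of reduction is immediate.

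First, I would fix any decomposition $\nabla \Psi = \Phi_1 + \Phi_2$ with $\Phi_1, \Phi_2 \in \mc{C}^\infty \ul{T}^r_{l+1} \mc{N}$. By the definition of the $N^{0\star}_{t,x}$-norm as an infimum over covariant $t$-antiderivatives, for any $\varepsilon > 0$ I can select $\Psi_1 \in \mc{C}^\infty \ul{T}^r_{l+1} \mc{N}$ with $\nabla_t \Psi_1 = \Phi_1$ and
\[ \| \Psi_1 \|_{ N^{1i}_{t, x} } \leq \| \Phi_1 \|_{ N^{0\star}_{t, x} } + \varepsilon \text{.} \]
Setting $\Psi_2 = \Phi_2$, I now have a decomposition $\nabla \Psi = \nabla_t \Psi_1 + \Psi_2$ of the type required by Theorem \ref{thm.sharp_trace}.

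Applying \eqref{eq.sharp_trace} with these choices yields
\[ \| \Psi \|_{ L^{\infty, 2}_{x, t} } \lesssim_{ C, N, B, r, l } ( 1 + \| k \|_{ L^{2, \infty}_{x, t} } ) \paren{ \| \Psi \|_{ N^{1i}_{t, x} } + \| \Phi_1 \|_{ N^{0\star}_{t, x} } + \varepsilon + \| \Phi_2 \|_{ B^{2, 0}_{t, x} } } \text{.} \]
Since the left-hand side does not depend on $\varepsilon$, $\Psi_1$, or the decomposition, I then send $\varepsilon \searrow 0$ and take the infimum over all admissible decompositions $\nabla \Psi = \Phi_1 + \Phi_2$. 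By the definition of the sum norm in Section \ref{sec.fol_norms}, this infimum is precisely $\| \nabla \Psi \|_{ N^{0\star}_{t, x} + B^{2, 0}_{t, x} }$, which gives \eqref{eq.sharp_trace_ex}.

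There is no real obstacle here: the only thing to verify is that every $\Phi_1$ admits at least one covariant $t$-antiderivative (so that the $N^{0\star}_{t,x}$-norm is finite on smooth sections), and this is immediate from the construction of $\cint^t_0$ in Section \ref{sec.fol_ev}. The content of the corollary is purely that the sum-norm language packages the hypotheses of Theorem \ref{thm.sharp_trace} into a single quantity, which is the form needed in the main bootstrap argument described in Section \ref{sec.intro_outline}.
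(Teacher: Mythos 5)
Your argument is exactly the reduction the authors intend: decompose $\nabla\Psi = \Phi_1 + \Phi_2$, pick a near-optimal covariant $t$-antiderivative $\Psi_1$ of the $N^{0\star}_{t,x}$-part, feed the resulting triple $(\Psi, \Psi_1, \Psi_2)$ into Theorem \ref{thm.sharp_trace}, and then pass to the infimum over $\varepsilon$ and over decompositions, which by definition recovers the sum norm $\| \nabla\Psi \|_{N^{0\star}_{t,x} + B^{2,0}_{t,x}}$. The paper records Corollary \ref{thm.sharp_trace_ex} without a written proof precisely because it follows from Theorem \ref{thm.sharp_trace} by this immediate unwrapping of the sum and infimum norms; your proposal fills in the same steps correctly.
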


\subsection{Besov-Elliptic Estimates} \label{sec.fol_curv}

In Section \ref{sec.geom_curv}, we cited $L^2$-elliptic estimates on a surface $(\mc{S}, h)$, given a weak regularity condition for the associated Gauss curvature.
Here, we state analogous estimates in geometric Besov spaces.

First, we port the \ass{k}{} condition in Section \ref{sec.geom_curv} to the foliation setting.

\begin{definition} \label{def.ass_K}
$(\mc{N}, \gamma)$ satisfies \ass{K}{C, D}, with data $( f, W, V )$, iff:
\begin{itemize}
\item $f \in \mc{C}^\infty \mc{N}$ satisfies the uniform estimate
\[ C^{-1} \leq f |_{(\tau, x)} \leq C \text{,} \qquad (\tau, x) \in \mc{N} \text{.} \]

\item $V \in \mc{C}^\infty \ul{T}^0_1 \mc{N}$ and $W \in \mc{C}^\infty \mc{N}$ satisfy
\[ \| V \|_{ H^{\infty, 1/2}_{t, x} } \leq D \text{,} \qquad \| W \|_{ L^{\infty, 2}_{t, x} } \leq D \text{.} \]

\item $\mc{K}$ can be decomposed in the form
\[ \mc{K} - f = \gamma^{ab} \nabla_a V_b + W \text{.} \]
\end{itemize}
\end{definition}

Note that \ass{K}{C, D} implies that every $( \mc{S}, \gamma [\tau] )$ satisfies \ass{k}{C, D^\prime}, for some constant $D^\prime$ depending on $C$ and $D$.
Moreover, if $D$ is very small, then so is $D^\prime$.

Given the \ass{K}{} condition, we can prove integrated Besov-elliptic estimates involving operators of the form $\nabla \mc{D}^{-1}$, where $\mc{D}$ is any of the Hodge operators.

\begin{theorem} \label{thm.besov_impr}
Assume $(\mc{N}, \gamma)$ satisfies \ass{F1}{C, N} and \ass{K}{C, D}, with $D \ll 1$ sufficiently small.
In addition, suppose $p \in [1, \infty]$.
\begin{itemize}
\item If $\Psi \in \mc{C}^\infty \ul{T}^r_l \mc{N}$, then
\begin{equation} \label{eq.besov_impr_sh} \| \Psi \|_{ L^{p, \infty}_{t, x} } \lesssim_{ C, N, r, l } \| \nabla \Psi \|_{ B^{p, 0}_{t, x} } + \| \Psi \|_{ L^{p, 2}_{t, x} } \text{.} \end{equation}

\item If $a \in [1, \infty]$, if $\mc{D}$ is any one of the operators $\mc{D}_1$, $\mc{D}_2$, $\mc{D}_1^\ast$, and if $\xi$ is a smooth section of the appropriate Hodge bundle on $\mc{N}$, then
\begin{equation} \label{eq.besov_impr_bdd} \| \nabla \mc{D}^{-1} \xi \|_{ B^{a, p, 0}_{\ell, t, x} } \lesssim_{C, N} \| \xi \|_{ B^{a, p, 0}_{\ell, t, x} } \text{.} \end{equation}
\end{itemize}
\end{theorem}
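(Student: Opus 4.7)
The plan is to reduce each claim to its slicewise analogue on a single level sphere $(\mc S,\gamma[\tau])$ and then integrate in $\tau$. Throughout, Proposition \ref{thm.reg_prop} provides uniform \ass{r1}{C',N} data on every slice, while \ass{K}{C,D} induces the slicewise condition \ass{k}{C,D'} with $D'\lesssim D$, so the elliptic estimates of Propositions \ref{thm.div_curl_est}, \ref{thm.hodge_est}, and \ref{thm.hodge_inv_est} hold on each $(\mc S,\gamma[\tau])$ with constants independent of $\tau$.

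For \eqref{eq.besov_impr_sh}, I would first establish the slicewise estimate. The spectral construction of the LP operators yields a Bernstein-type bound $\|P_k F\|_{L^\infty_x}\lesssim 2^k\|P_k F\|_{L^2_x}$ on each $(\mc S,\gamma[\tau])$. Combining this with the spectral equivalence $2^k\|P_k F\|_{L^2_x}\simeq\|(-\lapl)^{1/2}P_k F\|_{L^2_x}$ and the elliptic estimates of Propositions \ref{thm.div_curl_est} and \ref{thm.hodge_est}, which compare $\nabla$ with $(-\lapl)^{1/2}$ up to Gauss-curvature commutators absorbed by the smallness of $D$, one obtains $\|P_k F\|_{L^\infty_x}\lesssim\|P_k\nabla F\|_{L^2_x}$. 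Summing dyadically, and bounding the finite-dimensional projection $P_{<0}$ by $\|F\|_{L^2_x}$,
$$\|\Psi[\tau]\|_{L^\infty_x}\lesssim \sum_{k\ge 0}\|P_k\nabla\Psi[\tau]\|_{L^2_x}+\|P_{<0}\nabla\Psi[\tau]\|_{L^2_x}+\|\Psi[\tau]\|_{L^2_x}.$$
Raising to the $p$-th power, integrating in $\tau$, and applying Minkowski's inequality in $L^p_t$ to exchange the time integration with the dyadic sum yields \eqref{eq.besov_impr_sh}.

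For \eqref{eq.besov_impr_bdd}, I would reduce to the slicewise almost-diagonal bound $\|P_k\nabla\mc D^{-1}\xi[\tau]\|_{L^2_x}\lesssim\|P_k\xi[\tau]\|_{L^2_x}$ (and the analogous bound on $P_{<0}$), which on integrating in $\tau$ and summing in $\ell^a_k$ yields the full Besov estimate. Following the strategy sketched in Section \ref{sec.intro_tech}, the key step is to pass to a conformally rescaled metric. On each slice, choose $u=u[\tau]\in\mc C^\infty\mc S$ solving $-\lapl u=\tfrac12\gamma^{ab}\nabla_a V_b$, so that the rescaled Gauss curvature $\bar{\mc K}=e^{-2u}(\mc K-\lapl u)$ is fully $L^2$-controlled with small norm. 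Elliptic regularity and the Sobolev inequality \eqref{eq.gns_2} give $\|u\|_{L^\infty_x}\lesssim D$, so $e^{\pm u}\simeq 1$. In $\mbar$, the Hodge identities \eqref{eq.hodge_sq} involve only the small $\bar{\mc K}$ as a curvature correction, so $\bar{\mc D}\bar{\mc D}^\ast$ differs from $-\bar{\lapl}$ only by a small multiplication, and a standard spectral-LP analysis based on $-\bar{\lapl}$ gives the slicewise almost-diagonal bound for $\bar{\nabla}\bar{\mc D}^{-1}$. The conformal identities $\bar{\nabla}=\nabla$, $\bar{\mc D}_j=e^{-2u}\mc D_j$, $\bar{\mc D}_1^\ast=\mc D_1^\ast$ from Section \ref{sec.geom_riem} then transfer the estimate back to $\gamma$ up to the bounded factor $e^{\pm u}$.

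The main obstacle is reconciling the two systems of spectral LP operators, namely those built from $-\lapl$ (which appear in the statement) versus those built from $-\bar{\lapl}$ (where the clean estimates live). One must show that these frequency localizations are essentially equivalent on $L^2 T^r_l \mc S$, which reduces to controlling commutators of the form $[\varsigma(-2^{-2k}\lapl),e^{\pm 2u}]$ and analogous multiplier differences by quantities proportional to $\|u\|_{L^\infty_x}\lesssim D$. This is precisely where the smallness hypothesis $D\ll 1$ is essential, as it lets these transfer errors be absorbed into the main estimate.
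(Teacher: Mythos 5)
Your high-level plan---slicewise reduction with constants made uniform in $\tau$ via Proposition \ref{thm.reg_prop}, conformal regularization of the Gauss curvature, and conformal covariance of the Hodge operators---is indeed the route of \cite{shao:stt}, to which the paper defers for this theorem. But the steps you label ``standard'' are exactly where the content lies, and two of your specific claims fail. For \eqref{eq.besov_impr_sh}, your chain runs through a sharp tensorial Bernstein bound $\| P_k F \|_{L^\infty_x} \lesssim 2^k \| P_k F \|_{L^2_x}$ and then an exchange of $P_k$ with $\nabla$. Neither is available at this regularity: Section \ref{sec.geom_glp} deliberately claims the spectral L-P operators behave like the Euclidean ones only with respect to the $L^2$-norm, and an $L^2 \rightarrow L^\infty$ Bernstein estimate for tensors would require second-derivative (Bochner) control in which the Gauss curvature---here only $L^2$ plus a divergence of an $H^{1/2}$ one-form---enters uncontrollably. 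More seriously, $P_k \nabla F$ and $\nabla P_k F$ involve projections acting on \emph{different} bundles, and the commutator $[ \nabla, \varsigma( -2^{-2k} \lapl ) ]$ is not controlled by Propositions \ref{thm.div_curl_est} or \ref{thm.hodge_est}, which are first-order Hodge-elliptic estimates, not commutator estimates; such L-P/derivative commutators were precisely the technical heart of \cite{kl_rod:glp, kl_rod:stt} and the thing the framework of \cite{shao:stt} was built to avoid.

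For \eqref{eq.besov_impr_bdd} there are three gaps. First, the conformal factor $u$ that absorbs $\gamma^{ab} \nabla_a V_b$ is \emph{not} constant on the slices, so $\bar{\nabla} = \nabla$ is false on tensors of positive rank (Section \ref{sec.geom_riem} states it only for constant rescalings); transferring the estimate back produces errors of schematic form $\nabla u \otimes \mc{D}^{-1} \xi$, with $\nabla u$ controlled only in $H^{1/2}$-type norms rather than $L^\infty$, so one needs genuine Besov product estimates here, not a harmless factor $e^{\pm u}$. (The operators $\mc{D}_1$, $\mc{D}_2$, $\mc{D}_1^\ast$ are conformally covariant even for nonconstant $u$, which is why the method works, but that fact must replace, not accompany, $\bar{\nabla} = \nabla$.) Second, the ``standard spectral-LP analysis'' for $\bar{\nabla} \bar{\mc{D}}^{-1}$ \emph{is} the theorem: Besov boundedness of a zeroth-order nonlocal operator on a surface with only this curvature regularity is an almost-orthogonality statement that needs proof, not citation. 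Third, reconciling the two L-P calculi for $\lapl$ and $\bar{\lapl}$ is not achieved by noting $\| [ \varsigma( -2^{-2k} \lapl ), e^{\pm 2u} ] \|_{L^2 \rightarrow L^2} \lesssim \| u \|_{L^\infty_x} \lesssim D$: small operator norm of such commutators gives no frequency-localized, summably off-diagonal comparison of $\varsigma( -2^{-2k} \lapl )$ with $\varsigma( -2^{-2k'} \bar{\lapl} )$; that requires comparing resolvents or heat kernels, and on tensors $\bar{\lapl}$ differs from $e^{-2u} \lapl$ by first-order terms in $\nabla u$, which again is not pointwise bounded. Note also that the smallness $D \ll 1$ is what makes the Hodge systems invertible by absorbing curvature errors; it is not a mechanism that makes two Littlewood--Paley decompositions equivalent, so the final paragraph of your proposal does not close the main obstacle it correctly identifies.
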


\begin{proof}
See \cite[Sect. 6.5]{shao:stt}.
\end{proof}

Finally, the \ass{K}{} condition implies weak control for the Gauss curvatures.

\begin{proposition} \label{thm.curv_sob}
Assume $(\mc{N}, \gamma)$ satisfies \ass{R1}{C, N} and \ass{K}{C, D}, the latter with data $(f, W, V)$, and with $D \ll 1$ sufficiently small.
Then,
\begin{equation} \label{eq.curv_sob} \| \mc{K} - f \|_{ H^{\infty, -1/2}_{t, x} } \lesssim D \text{.} \end{equation}
\end{proposition}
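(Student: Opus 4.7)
The plan is to exploit the decomposition $\mc{K} - f = \gamma^{ab}\nabla_a V_b + W$ provided by \ass{K}{C,D}, and to control the two terms $W$ and $\gamma^{ab}\nabla_a V_b$ separately in the $H^{\infty,-1/2}_{t,x} = B^{2,\infty,-1/2}_{\ell,t,x}$ norm. By \ass{F1}{C,N,B} (which is what \ass{R1}{} should be read as, cf.~Proposition~\ref{thm.reg_prop}), every slice $(\mc{S},\gamma[\tau])$ satisfies \ass{r1}{C',N} with a uniform constant, so all norm equivalences and the spectral L-P theory on slices carry a single constant depending only on $C,N,B$.

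For the term $W$, observe that the L-P projectors $P_k$ and $P_{<0}$ are bounded uniformly on $L^2_x$ on each slice, hence also on $L^{\infty,2}_{t,x}$. Therefore $\|P_k W\|_{L^{\infty,2}_{t,x}} \leq \|W\|_{L^{\infty,2}_{t,x}} \leq D$ for every $k$, so summing the weights $2^{-k}$ (a convergent geometric series) gives
\[
\|W\|_{H^{\infty,-1/2}_{t,x}}^{2} = \sum_{k \geq 0} 2^{-k}\|P_k W\|_{L^{\infty,2}_{t,x}}^{2} + \|P_{<0}W\|_{L^{\infty,2}_{t,x}}^{2} \lesssim \|W\|_{L^{\infty,2}_{t,x}}^{2} \leq D^{2}.
\]

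For the divergence term, the goal is to establish the operator bound $\|\gamma^{ab}\nabla_{a}(\cdot)_{b}\|_{H^{\infty,1/2}_{t,x} \to H^{\infty,-1/2}_{t,x}} \lesssim_{C,N,B} 1$; combined with $\|V\|_{H^{\infty,1/2}_{t,x}} \leq D$ this delivers the claim. I would reduce this to a slice-wise statement: since $\|\cdot\|_{H^s_x} \simeq \|\cdot\|_{B^{2,s}_{\ell,x}}$ uniformly on every $(\mc{S},\gamma[\tau])$ by Proposition~\ref{thm.sobolev} and Proposition~\ref{thm.reg_prop}, it suffices to prove
\[
\|\gamma^{ab}\nabla_{a} V_{b}[\tau]\|_{H^{-1/2}_{x}} \lesssim_{C,N,B} \|V[\tau]\|_{H^{1/2}_{x}}, \qquad \tau \in [0,\delta],
\]
and then take the supremum in $\tau$. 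On a fixed slice, this is a standard duality argument: for scalar $\phi \in \mc{C}^\infty \mc{S}$,
\[
\langle \gamma^{ab}\nabla_{a} V_{b}, \phi \rangle_{L^{2}_{x}} = -\langle V, \nabla \phi\rangle_{L^{2}_{x}},
\]
(no boundary term, since $\mc{S}$ is closed), so pairing $H^{1/2}_{x}$ against its dual $H^{-1/2}_{x}$ reduces matters to $\|\nabla \phi\|_{H^{-1/2}_{x}} \lesssim \|\phi\|_{H^{1/2}_{x}}$. The latter is an order-$1$ mapping estimate which, using the spectral characterization $\|\cdot\|_{H^{s}_{x}} \simeq \|\Lambda^{s}\cdot\|_{L^{2}_{x}}$ with $\Lambda = (I-\Delta)^{1/2}$, amounts to $L^{2}_{x}$-boundedness of $\Lambda^{-1/2}\nabla\Lambda^{-1/2}$, a standard Riesz-transform-type statement provable from the spectral calculus of $-\Delta$.

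The main technical point is ensuring every implicit constant in the slice-wise Sobolev characterizations and the Riesz-type $L^{2}$ boundedness is uniform in $\tau$; this is precisely the purpose of the propagation result Proposition~\ref{thm.reg_prop}, together with the uniform control of the area measure under \eqref{eqr.sff}. Everything else is routine, and no further geometric hypothesis is needed beyond \ass{F1}{C,N,B} (which, as written, the proposition refers to as \ass{R1}{C,N}).
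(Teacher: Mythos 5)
Your handling of the $W$-term is fine: the slice-wise L-P projections are uniformly bounded on $L^2_x$, so $\| P_k W \|_{ L^{\infty,2}_{t,x} } \lesssim \| W \|_{ L^{\infty,2}_{t,x} } \leq D$ and the weights $2^{-k}$ sum. The gap is in the divergence term, and it is a gap in the norm bookkeeping, not a removable technicality. The norm $H^{\infty,-1/2}_{t,x} = B^{2,\infty,-1/2}_{\ell,t,x}$ places the frequency sum \emph{outside} the time supremum,
\[
\| \Psi \|_{ H^{\infty,-1/2}_{t,x} }^2 = \sum_{k \geq 0} 2^{-k} \sup_{\tau} \| P_k \Psi [\tau] \|_{ L^2_x }^2 + \sup_{\tau} \| P_{<0} \Psi [\tau] \|_{ L^2_x }^2 \text{,}
\]
so your reduction "prove $\| (\gamma^{ab} \nabla_a V_b)[\tau] \|_{ H^{-1/2}_x } \lesssim \| V[\tau] \|_{ H^{1/2}_x }$ on each slice, then take the supremum in $\tau$" only controls $\sup_\tau \| (\mc{K} - f)[\tau] \|_{ H^{-1/2}_x }$, i.e. the quantity with $\sup_\tau$ outside the $k$-sum, which is strictly weaker: the suprema may be attained at different times for different frequencies. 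Symmetrically, you use only the weaker consequence $\sup_\tau \| V[\tau] \|_{ H^{1/2}_x } \leq \| V \|_{ H^{\infty,1/2}_{t,x} } \leq D$ of the hypothesis, discarding precisely the frequency-localized, uniform-in-$\tau$ information that the conclusion asks you to propagate. To close the argument one needs estimates of the schematic form $\| P_k ( \gamma^{ab} \nabla_a V_b ) \|_{ L^{\infty,2}_{t,x} } \lesssim \sum_j 2^{j} 2^{-\epsilon |j-k|} \| P_j V \|_{ L^{\infty,2}_{t,x} } + \cdots$ with enough off-diagonal decay to run Schur's test in $\ell^2$, i.e. almost-orthogonality and commutation estimates between $\nabla$ and the spectral L-P projections, with constants uniform over all slices. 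That is the actual content of the proof in \cite[Sect. 6.5]{shao:stt}, to which the paper delegates this proposition, and it is where the \ass{F1}{} regularity (via Proposition \ref{thm.reg_prop}) enters in an essential way, not merely to uniformize slice-wise Sobolev constants.

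A smaller point on the slice-wise step itself: the bound $\| \nabla \phi \|_{ H^{-1/2}_x } \lesssim \| \phi \|_{ H^{1/2}_x }$ does not follow from "spectral calculus of $-\lapl$" alone, because $\nabla$ does not commute with $\Lambda$, so you cannot split $\Lambda^{-1/2} \nabla \Lambda^{-1/2}$ by functional calculus. The clean route is to interpolate $\nabla : H^1_x \rightarrow L^2_x$ against $\nabla : L^2_x \rightarrow H^{-1}_x$ (both endpoints follow from integration by parts and self-adjointness, with universal constants), using that the spectrally defined $H^s_x$ scale interpolates. This is fixable, but as written it is asserted rather than proved; the genuine obstruction to your proposal remains the interchange of $\sum_k$ and $\sup_\tau$ described above.
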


\begin{proof}
See \cite[Sect. 6.5]{shao:stt}.
\end{proof}

\section{Null Cones to Infinity} \label{sec.nc}

In this section, we describe in detail the setting that we will consider: a geodesically foliated smooth null cone extending ``toward infinity".
In particular, we define the associated connection and curvature quantities, and we list the null structure equations which relate these quantities.
Finally, we apply a renormalization in order to transform our physical system into another which is more sensible to analyze.

Throughout, we will let $(M, g)$ denote a four-dimensional orientable Lorentzian manifold which satisfies the \emph{Einstein-vacuum equations},
\[ \operatorname{Ric}_g \equiv 0 \text{.} \]

\subsection{Geodesically Foliated Null Cones} \label{sec.nc_gf}

Although one can foliate a (truncated) null cone in many different ways---for example, foliations using a time or an optical function have been used in some applications, cf. \cite{chr:gr_bh, chr_kl:stb_mink, kl_nic:stb_mink, kl_rod:bdc, parl:bdc, shao:bdc_nv}---the geodesic foliation is the simplest algebraically.
Indeed, the geodesically foliated setting contains the least number of connection coefficients to work with, and it does not depend on quantities external to the geometry of the spacetime $(M, g)$.

First, let $\mc{S}$ be a compact $2$-dimensional spacelike (i.e., Riemannian) submanifold of $M$ that is diffeomorphic to $\Sph^2$.
This will serve as the initial sphere, or base, of our null cone.
To construct our cone, we define the following:
\begin{itemize}
\item Let $s_0 \in (0, \infty)$, and suppose $\mc{S}$ has area $4 \pi s_0^2$.
\footnote{In other words, $s_0$ is the ``radius" of the initial sphere $\mc{S}$.}

\item For each point $p \in \mc{S}$, we let $\ell_p$ denote a future-directed null tangent vector at $p$ that is orthogonal to $\mc{S}$.
Furthermore, we choose these vectors such that these $\ell_p$'s vary smoothly with respect to $p$.

\item For each $p \in \mc{S}$, we let $\lambda_p$ denote the future-directed null geodesic
\[ \lambda_p : [s_0, \infty) \rightarrow M \text{,} \qquad \lambda_p (s_0) = p \text{,} \qquad \lambda_p^\prime (s_0) = \ell_p \text{.} \]
\end{itemize}

Suppose every $\lambda_p$, $p \in \mc{S}$, is well-defined on the entire half-line $[s_0, \infty)$.
Then, we can define the future null cone $\mc{N}$ beginning from $\mc{S}$ as the set
\[ \{ \lambda_p (v) \mid p \in \mc{S} \text{, } v \in [s_0, \infty) \} \]
traced out by the $\lambda_p$'s.
In addition, we assume the following for $\mc{N}$:
\begin{itemize}
\item No two distinct $\lambda_p$'s intersect.

\item This family $\{ \lambda_p \mid p \in \mc{S} \}$ of null geodesics has no null conjugate points.
\end{itemize}
Under these assumptions, $\mc{N}$ forms a smooth null hypersurface of $M$; see \cite{haw_el:gr}.
In other words, $\mc{N}$ is a regular outgoing future null cone extending to infinity.

Let $s: \mc{N} \rightarrow \R$ denote the \emph{affine parameter}, satisfying
\[ s ( \lambda_p (v) ) = v \text{,} \qquad p \in \mc{S} \text{,} \quad v \in [s_0, \infty) \text{.} \]
In particular, $s$ is a smooth function, and its level sets
\[ \mc{S}_v = \{ q \in \mc{N} \mid s(q) = v \} \]
are diffeomorphic to $\mc{S}$.
As a result, we can reformulate $\mc{N}$ as
\begin{equation} \label{eq.nc_fol} \mc{N} = \bigcup_{s_0 \leq v < \infty} \mc{S}_v \simeq [s_0, \infty) \times \mc{S} \simeq [s_0, \infty) \times \Sph^2 \text{,} \end{equation}
known as a \emph{geodesic foliation} of $\mc{N}$.
We will consider $\mc{N}$ both as a smooth null hypersurface of $M$ and as a spherical foliation, depending on context.
In the latter characterization, we retrieve a special case of the abstract setting of Section \ref{sec.fol}.

We also define the following vector fields on $\mc{N}$:
\begin{itemize}
\item We define the \emph{tangent null vector field} $L$ on $\mc{N}$ as
\[ L |_{ \lambda_p (v) } = \lambda_p^\prime (v) \text{,} \qquad p \in \mc{S} \text{,} \quad v \in [s_0, \infty) \text{.} \]
By definition, $L$ is geodesic, and $L s \equiv 1$ everywhere.
In particular, $L$ is transverse to the $\mc{S}_v$'s, so the $\mc{S}_v$'s are Riemannian submanifolds of $\mc{N}$.

\item Let $\ul{L}$ denote the \emph{conjugate null vector field} on $\mc{N}$, which is orthogonal to every $\mc{S}_v$ and satisfies $g (L, \ul{L}) \equiv -2$.
Note that $\ul{L}$ is transverse to $\mc{N}$, and that $\ul{L}$ is uniquely defined from the geodesic foliation.
\end{itemize}
By combining the null vector fields $L$ and $\ul{L}$ with local ($g$-)orthonormal frames tangent to the $\mc{S}_v$'s, one obtains the usual \emph{local null frames} on $\mc{N}$.

Considering $\mc{N}$ now as the foliation \eqref{eq.nc_fol}, we can define the horizontal metric $\mind \in \mc{C}^\infty \ul{T}^0_2 \mc{N}$ to correspond to the metrics on the $\mc{S}_v$'s induced by $g$.
Furthermore, by fixing an orientation for $\mc{S}$, we define a natural orientation for each $\mc{S}_v$.
These orientations define the volume form $\vind \in \mc{C}^\infty \ul{T}^0_2 \mc{N}$ associated with $\mind$.

From now on, objects defined with respect to $\mind$ and $\vind$ will be denoted with a ``slash".
For example, the covariant derivative with respect to $\mind$ is denoted $\nasla$.

\begin{remark}
In this setting, the evolutionary derivative operator is $\nasla_s$, with respect to $\mind$ and the $s$-foliation of $\mc{N}$.
From the definition, one can show that $\nasla_s$, as defined in Section \ref{sec.fol_ev}, coincides with the operator $\nasla_L$, defined as the projection of the \emph{spacetime} covariant derivative $D_L$ to the $\mc{S}_v$'s.
This derivative $\nasla_L$ is the operator that was utilized in previous works, e.g., \cite{kl_rod:cg, kl_rod:stt, parl:bdc, shao:bdc_nv, wang:cg, wang:cgp}.
\end{remark}

\subsection{Connection and Curvature} \label{sec.nc_rc}

Next, we define the \emph{Ricci coefficients} on $\mc{N}$.
These are connection quantities, expressed as horizontal tensor fields, that describe the derivatives of $L$ and $\ul{L}$ in directions tangent to $\mc{N}$.
Throughout, we let $D$ denote the restriction of the spacetime ($g$-)Levi-Civita connection to $\mc{N}$.

In the geodesic foliation, the Ricci coefficients are the following:
\begin{itemize}
\item Define the \emph{null second fundamental forms} $\chi, \ul{\chi} \in \mc{C}^\infty \ul{T}^0_2 \mc{N}$ by
\[ \chi (X, Y) = g ( D_X L, Y ) \text{,} \qquad \ul{\chi} (X, Y) = g ( D_X \ul{L}, Y ) \text{,} \qquad X, Y \in \mc{C}^\infty \ul{T}^1_0 \mc{N} \text{.} \]
Since $L$ and $\ul{L}$ are orthogonal to the $\mc{S}_v$'s, both $\chi$ and $\ul{\chi}$ are symmetric.
In particular, the trace and traceless parts of $\chi$ (with respect to $\mind$),
\[ \trase \chi = \mind^{ab} \chi_{ab} \text{,} \qquad \hat{\chi} = \chi - \frac{1}{2} (\trase \chi) \mind \text{,} \]
are often called the \emph{expansion} and \emph{shear} of $\mc{N}$, respectively.
The same trace-traceless decomposition can also be done for $\ul{\chi}$.

\item Define the \emph{torsion} $\zeta \in \mc{C}^\infty \ul{T}^0_1 \mc{N}$ by
\[ \zeta (X) = \frac{1}{2} g ( D_X L, \ul{L} ) \text{,} \qquad X \in \mc{C}^\infty \ul{T}^1_0 \mc{N} \text{.} \]
\end{itemize}
Furthermore, one can explicitly compute the associated $\mind$-second fundamental form:
\[ /\mspace{-9mu}k = \chi \text{.} \]
For details, see, e.g., the proof of \cite[Lemma 2.26]{kl_rod:cg}.

Now, let $R$ denote the spacetime Riemann curvature tensor associated with $g$.
We can then define the following null curvature components,
\begin{alignat*}{5}
\alpha, \ul{\alpha} &\in \mc{C}^\infty \ul{T}^0_2 \mc{N} \text{,} &\qquad \alpha (X, Y) &= R (L, X, L, Y) \text{,} &\qquad \ul{\alpha} (X, Y) &= R (\ul{L}, X, \ul{L}, Y) \text{,} \\
\beta, \ul{\beta} &\in \mc{C}^\infty \ul{T}^0_1 \mc{N} \text{,} &\qquad \beta (X) &= \frac{1}{2} R (L, X, L, \ul{L}) \text{,} &\qquad \ul{\beta} (X) &= \frac{1}{2} R (\ul{L}, X, \ul{L}, L) \text{,} \\
\rho, \sigma &\in \mc{C}^\infty \mc{N} \text{,} &\qquad \rho &= \frac{1}{4} R (L, \ul{L}, L, \ul{L}) \text{,} &\qquad \sigma &= \frac{1}{4} {}^\star R (L, \ul{L}, L, \ul{L}) \text{,}
\end{alignat*}
where ${}^\star R$ denotes the left (spacetime) Hodge dual of $R$ (with respect to a fixed orientation of $M$).
In the Einstein-vacuum setting, these curvature components comprise all the independent components of $R$.

Finally, we define the \emph{mass aspect function} on $\mc{N}$ by
\[ \mu \in \mc{C}^\infty \mc{N} \text{,} \qquad \mu = - \mind^{ab} \nasla_a \zeta_b - \rho + \frac{1}{2} \mind^{ac} \mind^{bd} \hat{\chi}_{ab} \ul{\hat{\chi}}_{cd} \text{.} \]
This quantity plays a essential role in the proof of our main theorem and is related to the Hawking masses of the $\mc{S}_v$'s; see \cite{chr_kl:stb_mink}.

The Ricci and curvature coefficients are related to each other via a family of geometric differential equations, known as the \emph{null structure equations}.
We now state these identites in terms of the induced metrics $\mind$ and volume forms $\vind$.
For details and derivations, see, for example, \cite{chr_kl:stb_mink, kl_rod:cg}.

\begin{proposition} \label{thm.structure_eq}
Assume that all geometric quantities are defined with respect to $\mind$, $\vind$, and $s$.
Then, the following structure equations hold on $\mc{N}$.
\begin{itemize}
\item Evolution equations:
\begin{align}
\label{eq.structure_ev} \nasla_s \chi_{ab} &= - \mind^{cd} \chi_{ac} \chi_{bd} - \alpha_{ab} \text{,} \\
\notag \nasla_s \zeta_a &= - 2 \mind^{bc} \chi_{ab} \zeta_c - \beta_a \text{,} \\
\notag \nasla_s \ul{\chi}_{ab} &= - ( \nasla_a \zeta_b + \nasla_b \zeta_a ) - \frac{1}{2} \mind^{cd} ( \chi_{ac} \ul{\chi}_{bd} + \chi_{bc} \ul{\chi}_{ad} ) + 2 \zeta_a \zeta_b + \rho \mind_{ab} \text{.}
\end{align}

\item Elliptic equations:
\begin{align}
\label{eq.structure_ell} \sodge_2 \hat{\chi}_a &= - \beta_a + \frac{1}{2} \nasla_a ( \trase \chi ) + \frac{1}{2} ( \trase \chi ) \zeta_a - \mind^{bc} \hat{\chi}_{ab} \zeta_c \text{,} \\
\notag \sodge_1 \zeta &= - ( \rho + i \sigma ) - \mu + \frac{1}{2} ( \mind^{ac} + i \vind^{ac} ) \mind^{bd} \hat{\chi}_{ab} \ul{\hat{\chi}}_{cd} \text{.}
\end{align}

\item Gauss-Codazzi equations:
\begin{align}
\label{eq.structure_gc} \nasla_b \chi_{ac} - \nasla_c \chi_{ab} &= - \vind_{bc} \vind_a{}^d \beta_d + \chi_{ab} \zeta_c - \chi_{ac} \zeta_b \text{,} \\
\notag \sauss &= - \rho + \frac{1}{2} ( \mind^{ac} \mind^{bd} - \mind^{ab} \mind^{cd} ) \chi_{ab} \ul{\chi}_{cd} \text{.}
\end{align}

\item Derivative evolution equations:
\begin{align}
\label{eq.structure_evd} \nasla_s \nasla_a ( \trase \chi ) &= - \mind^{bc} \chi_{ab} \nasla_c ( \trase \chi ) - 2 \mind^{bc} \mind^{de} \chi_{bd} \nasla_a \chi_{ce} \text{,} \\
\notag \nasla_s \mu &= - \frac{3}{2} ( \trase \chi ) \mu - 2 \mind^{ab} \zeta_a \beta_b + 2 \mind^{ab} \nasla_a ( \trase \chi ) \zeta_b \\
\notag &\qquad + 2 \mind^{ab} \mind^{cd} \hat{\chi}_{ac} \nasla_b \zeta_d - 2 \mind^{ab} \mind^{cd} \hat{\chi}_{ac} \zeta_b \zeta_d \\
\notag &\qquad + \frac{3}{2} \mind^{ab} ( \trase \chi ) \zeta_a \zeta_b - \frac{1}{4} \mind^{ab} \mind^{cd} ( \trase \ul{\chi} ) \hat{\chi}_{ac} \hat{\chi}_{bd} \text{.}
\end{align}

\item Null Bianchi equations:
\begin{align}
\label{eq.structure_nb} \nasla_s \beta_a &= \sodge_2 \alpha_a - 2 ( \trase \chi ) \beta_a + \mind^{bc} \zeta_b \alpha_{ac} \text{,} \\
\notag \nasla_s ( \rho + i \sigma ) &= \sodge_1 \beta - \frac{3}{2} ( \trase \chi ) ( \rho + i \sigma ) - ( \mind^{ab} - i \vind^{ab} ) ( \zeta_a \beta_b + \frac{1}{2} \mind^{cd} \ul{\chi}_{ac} \alpha_{bd} ) \text{,} \\
\notag \nasla_s \ul{\beta}_a &= \sodge_1^\ast ( \rho - i \sigma ) - ( \trase \chi ) \ul{\beta}_a + 3 \zeta_a \rho - 3 \vind_a{}^b \zeta_b \sigma + 2 \mind^{bc} \ul{\hat{\chi}}_{ab} \beta_c \text{.}
\end{align}
\end{itemize}
\end{proposition}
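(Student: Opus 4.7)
The plan is to verify each of the five families of identities by direct computation from the definitions in Section \ref{sec.nc_rc}, using as primary inputs: (i) the geodesic property $D_L L \equiv 0$; (ii) the normalization $g(L,\ul{L}) \equiv -2$; (iii) the identity $\nasla_s = \nasla_L$ noted in the remark following \eqref{eq.nc_fol}, where $\nasla_L$ is the horizontal projection of $D_L$; (iv) the formulas $[L,\ul{L}] = 2\zeta^\sharp - (\trase\chi)\ul{L}/2 - \ldots$ type relations obtained by decomposing commutators along the null frame; and (v) the Ricci-flat hypothesis $\operatorname{Ric}_g \equiv 0$, which forces the usual algebraic relations between traces of $R$ and the null components $\alpha,\beta,\rho,\sigma,\ul{\beta},\ul{\alpha}$.

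For the evolution equations \eqref{eq.structure_ev}, I would begin with $\chi$: starting from $\chi_{ab} = g(D_a L, e_b)$ in a horizontal frame parallel-transported along $L$, differentiate along $L$ and use the definition of $R(L,\cdot,L,\cdot) = \alpha$ together with the identity $D_L D_a L - D_a D_L L = R(L,e_a)L + D_{[L,e_a]}L$; since $D_L L = 0$ and $[L,e_a]$ is horizontal up to the standard frame corrections, a clean computation produces the $-\chi \cdot \chi - \alpha$ structure. The equations for $\nasla_s\zeta$ and $\nasla_s\ul{\chi}$ follow the same pattern, the latter requiring care because $\ul{L}$ is not parallel along $L$; one invokes $g(D_L \ul{L}, e_a) = 2\zeta_a$ and $g(D_L \ul{L}, L) = 0$ to resolve the transverse piece. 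The Gauss-Codazzi identities \eqref{eq.structure_gc} come from projecting the intrinsic vs extrinsic Gauss-Codazzi formulas of the $2$-surface $\mc{S}_v$ inside $M$; the Ricci-flat condition converts the tangential $R$-components into $\beta$ and $\rho$. The elliptic equations \eqref{eq.structure_ell} are then immediate consequences of \eqref{eq.structure_gc} upon decomposing into trace and curl parts via the symmetric Hodge operators $\sodge_1,\sodge_2$ from Section \ref{sec.geom_riem}.

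The derivative evolution equations \eqref{eq.structure_evd} arise by commuting $\nasla_s$ and $\nasla$ using the commutator identity analogous to Proposition \ref{thm.comm} (with $k = \chi$), then substituting \eqref{eq.structure_ev}. The equation for $\nasla_s\mu$ is the most laborious: starting from the definition $\mu = -\nasla^a\zeta_a - \rho + \tfrac{1}{2}\hat\chi\cdot\ul{\hat\chi}$, one applies $\nasla_s$, uses the evolution equation for $\zeta$, the evolution equation for $\ul{\chi}$ (for the $\hat\chi \cdot \ul{\hat\chi}$ piece), the null Bianchi equation for $\rho$, and carefully tracks cancellations, including one commutator $[\nasla_s,\nasla^a]\zeta_a$. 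The Ricci-flat relation $\gamma^{ab}\alpha_{ab} = 0$ removes one potential obstruction term.

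Finally, the null Bianchi equations \eqref{eq.structure_nb} are obtained from the second Bianchi identity $D_{[\mu}R_{\nu\lambda]\alpha\beta} = 0$, contracted appropriately against $L$ and $\ul{L}$ in the null frame. For instance, $\nasla_s\beta$ comes from contracting the Bianchi identity with $L^\mu L^\lambda \ul{L}^\nu$ and a horizontal $\alpha$-index; Ricci-flatness is essential here to convert $R$-derivatives into derivatives of the null components. The main obstacle is purely bookkeeping: each equation expands into a dozen or more terms coming from Christoffel-type corrections in the non-coordinate null frame, and one must consistently identify the connection coefficients appearing in these corrections with $\chi$, $\ul{\chi}$, and $\zeta$ via the definitions above. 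Since these equations are classical and derivations appear in \cite{chr_kl:stb_mink, kl_rod:cg}, I would present the proof as a sequence of frame computations, suppressing the standard algebra and citing these references for the bookkeeping details.
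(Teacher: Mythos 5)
The paper offers no proof of Proposition \ref{thm.structure_eq}: it simply records the null structure equations and refers the reader to \cite{chr_kl:stb_mink, kl_rod:cg} for derivations, as these are classical identities. Your outline is therefore \emph{more} than the paper itself supplies, and its overall strategy (frame computations from the definitions of $\chi$, $\ul\chi$, $\zeta$ and the null curvature components, using $D_L L = 0$, the second Bianchi identity for \eqref{eq.structure_nb}, the commutator identity for \eqref{eq.structure_evd}, and Ricci-flatness to close the algebra) is the standard one found in those references and is sound.

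One small imprecision worth flagging: you assert that both elliptic equations \eqref{eq.structure_ell} are ``immediate consequences of \eqref{eq.structure_gc}.'' This is accurate for the $\sodge_2\hat\chi$ equation, which is the symmetric-Hodge reformulation of the Codazzi relation. But the $\sodge_1\zeta$ equation is of a different character: its real part, $\mind^{ab}\nasla_a\zeta_b = -\rho - \mu + \tfrac12\hat\chi\cdot\ul{\hat\chi}$, is simply the definition of the mass aspect function $\mu$ rearranged, and its imaginary part, $\vind^{ab}\nasla_a\zeta_b = \sigma - \tfrac12\vind^{ac}\mind^{bd}\hat\chi_{ab}\ul{\hat\chi}_{cd}$, is the torsion-curl identity, which is a separate spacetime curvature relation not contained in \eqref{eq.structure_gc}. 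If you want a proof that is faithful to how these equations are actually assembled, the $\sodge_1\zeta$ equation should be presented as ``definition of $\mu$ plus the curl-of-$\zeta$ identity,'' not as a corollary of Gauss--Codazzi. Otherwise the plan is correct, and deferring the bookkeeping to \cite{chr_kl:stb_mink, kl_rod:cg} is exactly what the paper does.
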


\subsection{Minkowski and Schwarzschild Null Cones} \label{sec.nc_ms}

Recall that the Minkowski spacetime can be represented as $(M, g) = (\R^{1+3}, \eta)$, where $\eta$ is the Minkowski metric, which in polar coordinates can be written as
\[ \eta = -dt^2 + dr^2 + r^2 d \Omega \text{,} \]
where $d \Omega$ is the standard Euclidean metric on $\Sph^2$.
The standard future outgoing (truncated) null cones in Minkowski spacetime are given by
\[ \mc{N} = \{ t - r = c \text{,} \quad r \geq r_0 \} \text{,} \qquad c \in \R \text{,} \quad r_0 > 0 \text{.} \]
An affine parameter for $\mc{N}$ that is compatible with the general setting of Section \ref{sec.nc_gf} is $s = r$, i.e., the radial function.
The associated null vector fields $L$ and $\ul{L}$ are
\[ L = \partial_t + \partial_r \text{,} \qquad \ul{L} = \partial_t - \partial_r \text{.} \]

On $\mc{N}$, the induced horizontal metric is given by $\mind [v] = v^2 d \Omega$.
The values of the Ricci coefficients for this foliation of $\mc{N}$ are well-known:
\[ \chi = r^{-1} \mind \text{,} \qquad \ul{\chi} = - r^{-1} \mind \text{,} \qquad \zeta \equiv 0 \text{.} \]
Moreover, since Minkowski spacetime has zero curvature, all the curvature components $\alpha$, $\beta$, $\rho$, $\sigma$, $\ul{\alpha}$ (with respect to this foliation) vanish.

These constructions can be generalized to Schwarzschild spacetimes.
We summarize the relevant computations for this case below; for further details, see \cite{hol:ult_schw}.
Fix a mass value $m \geq 0$ (the case $m = 0$ reduces to the above Minkowski setting).
The outer region, in which our null cone will lie, can be expressed as
\[ M = \R \times (2 m, \infty) \times \Sph^2 \text{,} \]
with the Schwarzschild metric $g$, given in standard coordinates by
\[ g = - \paren{ 1 - \frac{2m}{r} } dt^2 + \paren{ 1 - \frac{2m}{r} }^{-1} dr^2 + r^2 d \Omega \text{.} \]

To describe the canonical null cones, we first recall the ``tortoise" coordinate
\[ r^\ast = r + 2 m \log \paren{ \frac{r}{2m} - 1 } \text{.} \]
Then, the standard null cones can be expressed as
\[ \mc{N} = \{ t - r^\ast = c \text{,} \quad r \geq r_0 \} \text{,} \qquad c \in \R \text{,} \quad r_0 > 2 m \text{.} \]
Like in the Minkowski setting, one can take the radial function $s = r$ (not $r^\ast$) as an affine parameter for $\mc{N}$, which is compatible with the development in Section \ref{sec.nc_gf}.
Moreover, the associated null vector fields are
\[ L = \paren{ 1 - \frac{2m}{r} }^{-1} \partial_t + \partial_r \text{,} \qquad \ul{L} = \partial_t - \paren{ 1 - \frac{2m}{r} } \partial_r \text{.} \]

The induced metric is once again $\mind [v] = v^2 d \Omega^2$.
Next, one can compute the Ricci and curvature coefficients corresponding to the above affine foliation of $\mc{N}$.
For the Ricci coefficients, one obtains the following values:
\[ \chi = r^{-1} \mind \text{,} \qquad \ul{\chi} = - r^{-1} \paren{ 1 - \frac{2m}{r} } \mind \text{,} \qquad \zeta \equiv 0 \text{.} \]
The only nonvanishing curvature coefficient is $\rho$, whose value is
\[ \rho \equiv - \frac{2m}{r^3} \text{.} \]
Combining the above with the definition of $\mu$, we see that
\[ \mu \equiv \frac{2m}{r^3} \text{.} \]

In the remainder of this section, we will focus exclusively on near-Minkowski and near-Schwarzschild null cones.
More specifically, we will consider geodesically foliated null cones whose associated Ricci and curvature coefficients deviate very little, in some sense, from the above known Minkowski and Schwarzschild values.
The main theorems will state, roughly, that if the curvature components of an infinite truncated null cone $\mc{N}$ are close to their Schwarzschild values (for some fixed mass $m \geq 0$), and if the Ricci coefficients are also initially close to their Schwarzschild values (with the same $m$), then the Ricci coefficients will remain close to the corresponding Schwarzschild values on all of $\mc{N}$.
\footnote{This closeness will, of course, be at the level of curvature flux.}

\subsection{The Renormalized System} \label{sec.nc_renorm}

We return now to the abstract setting of Sections \ref{sec.nc_gf} and \ref{sec.nc_rc}.
To develop our main results, we must accomplish the following:
\begin{itemize}
\item Quantify the deviation of $\mc{N}$ from a Schwarzschild null cone.

\item Perform analysis on these deviations, which are expected to be small.
\end{itemize}
For this purpose, we wish to renormalize our system on $\mc{N}$ into an equivalent system.
In particular, we would like for our renormalized system to satisfy the following:
\begin{itemize}
\item The level spheres of the foliation have nearly identical area.

\item The interval of the foliation is finite rather than infinite.
\end{itemize}
Throughout, we fix a mass value $m \geq 0$, and we suppose that $s_0 > 2 m$.

We will accomplish this renormalization in two steps.
For the first step, we define
\[ \gamma = s^{-2} \mind \in \mc{C}^\infty \ul{T}^0_2 \mc{N} \text{,} \]
i.e., the rescaled horizontal metric.
In particular, this leads to the identities
\[ \gamma_{ab} = s^{-2} \mind_{ab} \text{,} \qquad \gamma^{ab} = s^2 \mind^{ab} \text{,} \qquad \epsilon_{ab} = s^{-2} \vind_{ab} \text{,} \qquad \epsilon^{ab} = s^2 \vind^{ab} \text{.} \]

\begin{remark}
Note that the above is a special case of an $s$-dependent rescaling of $\mind$.
Thus, other geometric quantities, such as $\nabla$, $\lapl$, $\mc{K}$, $\nabla_s$, etc., transform according to the general formulas presented in Sections \ref{sec.geom_riem} and \ref{sec.fol_conf}.
\end{remark}

In the second step, we apply a change of variables to $s$ by defining
\[ t: \mc{N} \rightarrow \R \text{,} \qquad t = 1 - \frac{s_0}{s} \text{.} \]
Note the level set $s = s_0$ corresponds to $t = 0$, while the limit $s \nearrow \infty$ corresponds to $t \nearrow 1$.
We can now consider $\mc{N}$ as a foliation in terms of $t$:
\[ \mc{N} \simeq [0, 1) \times \mc{S} \simeq [0, 1) \times \Sph^2 \text{.} \]
In particular, this reduces the problem of infinite null cones to a finite cylinder.
Note that $s$ and $t$ are related to each other as follows:
\[ s = \frac{s_0}{1 - t} \text{,} \qquad \frac{dt}{ds} = \frac{s_0}{s^2} = \frac{ (1 - t)^2 }{ s_0 } \text{,} \qquad \frac{ds}{dt} = \frac{ s_0 }{ (1 - t)^2 } = \frac{ s^2 }{ s_0 } \text{.} \]
To simplify matters, we let $S_\tau$ denote the corresponding level set of $t$:
\[ S_\tau = \{ z \in \mc{N} \mid t (z) = \tau \} = \mc{S}_{ \frac{s_0}{1 - \tau} } \text{.} \]

Note that this change from $s$ to $t$ leaves $\gamma$, $\epsilon$, $\nabla$, and $\mc{K}$ (with respect to $\gamma$) unchanged.
On the other hand, the vertical Lie and covariant derivatives with respect to $t$ differ from those with respect to $s$:
\[ \mc{L}_t = \frac{ds}{dt} \mc{L}_s = \frac{s^2}{s_0} \mc{L}_s \text{,} \qquad \nabla_t = \frac{ds}{dt} \nabla_s = \frac{s^2}{s_0} \nabla_s \text{.} \]
Here, the covariant operators $\nabla_t$ and $\nabla_s$ are with respect to $\gamma$.

Finally, we renormalize the remaining givens and unknowns of our system.
In other words, we make corresponding renormalizations for the Ricci and curvature coefficients by defining the following quantities:
\begin{itemize}
\item \emph{Renormalized Ricci coefficients:}
\[ H = s_0^{-1} ( \chi - s^{-1} \mind ) \text{,} \qquad Z = s_0^{-1} s \zeta \text{,} \qquad \ul{H} = s^{-1} \ul{\chi} + s^{-2} \paren{ 1 - \frac{2m}{s} } \mind \text{.} \]

\item \emph{Renormalized curvature coefficients:}
\[ A = s_0^{-2} s^2 \alpha \text{,} \qquad B = s_0^{-2} s^3 \beta \text{,} \qquad R = s_0^{-1} [ s^3 ( \rho + i \sigma ) + 2 m ] \text{,} \qquad \ul{B} = s \ul{\beta} \text{.} \]

\item \emph{Renormalized mass aspect function:}
\[ M = s_0^{-1} ( s^3 \mu - 2 m ) \text{.} \]
\end{itemize}
Note these quantities measure the deviation from Schwarzschildean values.

From the above definitions, we can compute how norms change when we switch from the physical ($\mind$, $s$)-system to the renormalized ($\gamma$, $t$)-system.
In particular, given $p, q \in [1, \infty]$ and $\Psi \in \mc{C}^\infty \ul{T}^r_l \mc{N}$, we have
\begin{equation} \label{eq.norm_renorm} \| \Psi \|_{ L^{p, q}_{t, x} } = s_0^\frac{1}{p} \| s^{l - r - \frac{2}{q} - \frac{2}{p}} \Psi \|_{ \sorm{L}^{p, q}_{s, x} } \text{,} \qquad \| \Psi \|_{ L^{q, p}_{x, s} } = s_0^\frac{1}{p} \| s^{l - r - \frac{2}{q} - \frac{2}{p}} \Psi \|_{ \sorm{L}^{q, p}_{x, s} } \text{.} \end{equation}
Other norm relations can be similarly computed.

We can also compute the second fundamental form $k$ associated with our foliation, with respect to the metric $\gamma$ and with respect to $t$.
This yields
\begin{equation} \label{eq.sff_renorm} k = \frac{1}{2} \mf{L}_t \gamma = \frac{1}{2} \frac{s^2}{s_0} \mf{L}_s ( s^{-2} \mind ) = \frac{1}{s_0} ( \chi - s^{-1} \mind ) = H \text{.} \end{equation}

\subsection{The Renormalized Structure Equations} \label{sec.nc_rstr}

We can now reformulate the structure equations in terms of $\gamma$, $t$, and our renormalized quantities.

\begin{proposition} \label{thm.structure_renorm}
Assume all geometric quantities are defined with respect to $\gamma$, $\epsilon$, and $t$.
Then, the following structure equations hold on $\mc{N}$.
\begin{itemize}
\item Evolution equations:
\begin{align}
\label{eq.structure_renorm_ev} \nabla_t H_{ab} &= - \gamma^{cd} H_{ac} H_{bd} - A_{ab} \text{,} \\
\notag \nabla_t Z_a &= -2 \gamma^{bc} H_{ab} Z_c - B_a \text{,} \\
\notag \nabla_t \ul{H}_{ab} &= - ( \nabla_a Z_b + \nabla_b Z_a ) - \frac{1}{2} \gamma^{cd} ( H_{ac} \ul{H}_{bd} + H_{bc} \ul{H}_{ad} ) \\
\notag &\qquad + \paren{ 1 - \frac{2m}{s} } H_{ab} + 2 (1 - t) Z_a Z_b + ( \real R ) \gamma_{ab} \text{.}
\end{align}

\item Elliptic equations:
\begin{align}
\label{eq.structure_renorm_ell} \mc{D}_2 \hat{H}_a &= - (1 - t) B_a + Z_a + \frac{1}{2} \nabla_a ( \trace H ) + \frac{1}{2} (1 - t) ( \trace H ) Z_a \\
\notag &\qquad - (1 - t) \gamma^{bc} \hat{H}_{ab} Z_c \text{,} \\
\notag \mc{D}_1 Z &= - R - M + \frac{1}{2} ( \gamma^{ac} + i \epsilon^{ac} ) \gamma^{bd} \hat{H}_{ab} \ul{\hat{H}}_{cd} \text{.}
\end{align}

\item Gauss-Codazzi equations:
\begin{align}
\label{eq.structure_renorm_gc} \nabla_b H_{ac} - \nabla_c H_{ab} &= - \epsilon_{bc} \epsilon_a{}^d [ (1 - t) B_d - Z_d ] + (1 - t) ( H_{ab} Z_c - H_{ac} Z_b ) \text{,} \\
\notag \mc{K} - 1 &= - (1 - t) (\real R) + \frac{1 - t}{2} \paren{ 1 - \frac{2m}{s} } \trace H - \frac{1}{2} \trace \ul{H} \\
\notag &\qquad + \frac{1 - t}{2} ( \gamma^{ac} \gamma^{bd} - \gamma^{ab} \gamma^{cd} ) H_{ab} \ul{H}_{cd} \text{.}
\end{align}

\item Derivative evolution equations:
\begin{align}
\label{eq.structure_renorm_evd} \nabla_t \nabla_a ( \trace H ) &= - \gamma^{bc} H_{ab} \nabla_c ( \trace H ) - 2 \gamma^{bc} \gamma^{de} H_{bd} \nabla_a H_{ce} \text{,} \\
\notag \nabla_t M &= - \frac{3}{2} ( \trace H ) \paren{ M + \frac{2 m}{s_0} } - 2 (1 - t) \gamma^{ab} Z_a B_b \\
\notag &\qquad + 2 \gamma^{ab} \gamma^{cd} \hat{H}_{ac} \nabla_b Z_d - 2 (1 - t) \gamma^{ab} \gamma^{cd} \hat{H}_{ac} Z_b Z_d \\
\notag &\qquad + 2 \gamma^{ab} Z_b \nabla_a ( \trace H ) + \frac{3}{2} \gamma^{ab} [ (1 - t) \trace H + 2 ] Z_a Z_b \\
\notag &\qquad - \frac{1}{4} \gamma^{ab} \gamma^{cd} \brak{ \trace \ul{H} - 2 \paren{ 1 - \frac{2m}{s} } } \hat{H}_{ac} \hat{H}_{bd} \text{.}
\end{align}

\item Null Bianchi equations:
\begin{align}
\label{eq.structure_renorm_nb} \nabla_t B_a &= (1 - t)^{-1} \mc{D}_2 A_a - 2 ( \trace H ) B_a + \gamma^{bc} Z_b A_{ac} \text{,} \\
\notag \nabla_t R &= \mc{D}_1 B - \frac{3}{2} ( \trace H ) \paren{ R - \frac{2m}{s_0} } \\
\notag &\qquad - ( \gamma^{ab} - i \epsilon^{ab} ) \brak{ (1 - t) Z_a B_b + \frac{1}{2} \gamma^{cd} \ul{H}_{ac} A_{bd} } \text{,} \\
\notag \nabla_t \ul{B}_a &= \mc{D}_1^\ast \bar{R} - ( \trace H ) \ul{B}_a + 3 (1 - t) Z_a \paren{ \real R - \frac{2m}{s_0} } \\
\notag &\qquad - 3 (1 - t) \epsilon_a{}^b Z_b ( \imag R ) + 2 (1 - t) \gamma^{bc} \ul{\hat{H}}_{ab} B_c \text{.}
\end{align}
\end{itemize}
\end{proposition}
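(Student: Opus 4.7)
The plan is to derive each renormalized identity in Proposition \ref{thm.structure_renorm} by starting from its physical counterpart in Proposition \ref{thm.structure_eq} and applying three transformations in sequence: the conformal rescaling $\gamma = s^{-2}\mind$, the reparametrization $t = 1 - s_0/s$, and the substitutions defining $H$, $Z$, $\ul{H}$, $A$, $B$, $R$, $\ul{B}$, $M$. The conformal rescaling is handled by the formulas in Section \ref{sec.geom_riem} (the Levi-Civita connection $\nabla$ is invariant, $\mc{K}$ picks up a factor $s^2$, the Hodge operators satisfy $\bar{\mc{D}}_1 = s^2 \mc{D}_1$, $\bar{\mc{D}}_2 = s^2 \mc{D}_2$, and $\bar{\mc{D}}_1^\ast = \mc{D}_1^\ast$), while the reparametrization is handled by $\nabla_t = (s^2/s_0)\nabla_s$ combined with the covariant rescaling formula of Proposition \ref{thm.conf_ev}, which tells us how $\nabla_t$ on a rank $(r,l)$ tensor differs from $\nasla_s$ after the conformal change (each raised index picks up an $s^{r}$-type weight, each lowered index an $s^{-l}$-type weight).

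First I would handle the evolution equations \eqref{eq.structure_ev}. Since $\chi = s_0 H + s^{-1}\mind$, one computes $\nasla_s \chi = s_0 \nasla_s H - s^{-2}\mind$. Using \eqref{eq.vderiv_cov} to relate $\nasla_s$ to $\mf{L}_s$ on horizontal tensors, then converting to $\nabla_t$ with respect to $\gamma$ via Proposition \ref{thm.conf_ev}, and finally substituting $\mind^{ab} = s^2 \gamma^{ab}$ and $\alpha = s_0^2 s^{-2} A$, everything collapses to the first line of \eqref{eq.structure_renorm_ev}. The $\ul{\chi}$ evolution equation is the most delicate, since the Schwarzschild background value of $\ul{\chi}$ contains the $(1 - 2m/s)$ factor and differentiating this in $s$ produces the $(1-2m/s)H_{ab}$ term on the right-hand side; the $2\zeta_a\zeta_b = 2 s_0^2 s^{-2} Z_a Z_b = 2(1-t) \cdot s_0 s^{-1} \cdot Z_aZ_b$ weight gives the $2(1-t)$ coefficient. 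The elliptic equations \eqref{eq.structure_ell} follow similarly: one uses $\sodge_i = s^{-2}\mc{D}_i$ (in its $i=2$ case) and collects the weights of $\hat{\chi}$, $\zeta$, $\beta$, and $\nasla (\trase\chi) = s_0 \nabla(\trace H) - s^{-2}\nabla s \otimes \mind + \ldots$. Similarly the two halves of the Codazzi identity \eqref{eq.structure_gc} transform straightforwardly, and the Gauss identity becomes \eqref{eq.structure_renorm_gc} after using $\bar{\mc{K}} = s^2 \mc{K}$.

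The derivative evolution equation for $\mu$ requires the most bookkeeping: one substitutes $\mu = s_0 s^{-3}(M + 2m/s_0 \cdot s_0/s_0) = s_0 s^{-3}M + 2m s^{-3}$, computes $\nasla_s \mu$, matches with $\nabla_t M$ via the chain rule and Proposition \ref{thm.conf_ev}, and then carefully replaces each $\chi$, $\ul\chi$, $\zeta$, $\beta$ factor with its renormalized version. The Schwarzschild background values $\chi^S = s^{-1}\mind$ and $\ul\chi^S = -s^{-1}(1-2m/s)\mind$ produce the constant ``$2m/s$'' shifts visible in \eqref{eq.structure_renorm_evd}. Finally, the Bianchi identities in \eqref{eq.structure_renorm_nb} are treated the same way; the factor $(1-t)^{-1}$ in front of $\mc{D}_2 A$ arises because, after substituting $\beta = s_0^2 s^{-3} B$ and $\alpha = s_0^2 s^{-2} A$, the Hodge operator converts as $\sodge_2 = s^{-2}\mc{D}_2$, but the factor $s_0 s^{-1} = 1-t$ also enters from the weight mismatch between $\alpha$ and $\beta$.

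The main obstacle will be pure bookkeeping: tracking the interaction of three independent weight sources (the conformal weight $e^{(l-r)u}$ of Proposition \ref{thm.conf_ev}, the Jacobian $ds/dt = s^2/s_0$, and the renormalization weights on each of $\chi$, $\ul\chi$, $\zeta$, and the curvature components) across the full list of equations, while simultaneously splitting off the explicit Schwarzschild background contributions that create the ``$(1-t)$'', ``$2m/s$'', and ``$2m/s_0$'' terms on the right. There is no real conceptual difficulty since each individual substitution is algebraic, but the $\ul\chi$-evolution and the $\mu$-evolution equations are long enough that one must be careful to group terms by their origin (kinematic from the conformal change versus geometric from the original structure equation versus Schwarzschildean subtraction) to avoid sign errors.
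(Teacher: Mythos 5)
Your proposal follows essentially the same approach as the paper: subtract the Schwarzschild background to form the deviation, multiply by the appropriate weight in $s$, apply the conformal-transformation formula of Proposition \ref{thm.conf_ev} to convert $\nasla_s$ to $\nabla_s$, change variables via $\nabla_t = (s^2/s_0)\nabla_s$, and substitute the renormalized quantities --- this is exactly what the paper carries out for the first evolution equation before asserting that the rest are similar. Two small bookkeeping slips in your explanatory remarks do not affect the method: the chain of factors $2\zeta_a\zeta_b = 2 s_0^2 s^{-2} Z_a Z_b$ combined with the weight $\frac{s}{s_0}$ coming from $\nabla_t \ul{H} = \frac{1}{s_0}\nasla_s(s\ul{J})$ yields exactly $2(1-t)Z_aZ_b$ (one power of $1-t$, not two), and the term ``$-s^{-2}\nabla s\otimes\mind$'' you wrote for $\nasla(\trase\chi)$ should not appear since $s$ is constant on each leaf and hence $\nabla s \equiv 0$.
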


\begin{proof}
We show the first equation in \eqref{eq.structure_renorm_ev}; the remaining equations are derived similarly.
Letting $J = \chi - s^{-1} \mind$, the first equation in \eqref{eq.structure_ev} implies
\begin{align*}
\nasla_s J_{ab} &= - \mind^{cd} \chi_{ac} \chi_{bd} - \alpha_{ab} + s^{-2} \mind = - \mind^{cd} J_{ac} J_{bd} - 2 s^{-1} J_{ab} - \alpha_{ab} \text{.}
\end{align*}
The above can be rewritten as
\begin{align*}
\nasla_s ( s^2 J )_{ab} &= - s^2 \mind^{cd} J_{ac} J_{bd} - s^2 \alpha_{ab} = - \gamma^{cd} J_{ac} J_{bd} - s^2 \alpha_{ab} \text{.}
\end{align*}
From the relation \eqref{eq.conf_ev} between $\nasla_s$ and $\nabla_s$, we obtain
\[ \nabla_s J_{ab} = s^{-2} \nasla_s ( s^2 J )_{ab} = s^{-2} \gamma^{cd} J_{ac} J_{bd} - \alpha_{ab} \text{.} \]
Since $H = s_0^{-1} J$ and $\nabla_t = s_0^{-1} s^2 \nabla_s$, we obtain, as desired,
\begin{align*}
\nabla_t H_{ab} &= s_0^{-2} \gamma^{cd} J_{ac} J_{bd} - s_0^{-2} s^2 \alpha_{ab} = \gamma^{cd} H_{ac} H_{bd} - A_{ab} \text{.} \qedhere
\end{align*}
\end{proof}

\begin{remark}
Note in particular that the quantities
\[ \frac{2m}{s_0} \text{,} \qquad 1 - \frac{2m}{s} \text{,} \]
which are constant on any $S_\tau$, always lie between $0$ and $1$.
\end{remark}

\begin{remark}
We also remark that the trace and traceless parts of $H$ and $\ul{H}$ are always taken with respect to the renormalized metric $\gamma$.
\end{remark}

\section{The Main Results} \label{sec.main}

With the definitions and relations in place, we can now state the main results of this paper.
First, we state the main theorem in terms of the \emph{renormalized} setting.
In fact, this \emph{renormalized} result is the one we will directly deal with in the final section of this paper.
Finally, we reverse the renormalization procedure in order to obtain the precise ``physical" version of the main theorem of this paper, i.e., in terms of the spacetime metric and geometry.

Again, we assume the definitions and constructions of Section \ref{sec.nc}.
We also assume $m \geq 0$, and we suppose the initial radius $s_0$ of $\mc{S}$ satisfies $s_0 > 2 m$.

\subsection{The Renormalized Main Theorem} \label{sec.main_rthm}

The first main theorem we state is in the renormalized setting, where all the analysis will take place.

\begin{theorem} \label{thm.nc_renorm}
Assume the constructions of Sections \ref{sec.nc_gf}, \ref{sec.nc_rc}, and \ref{sec.nc_renorm}, and assume all quantities are defined with respect to $\gamma$ and $t$.
Assume also the following:
\begin{itemize}
\item $0 \leq m < s_0 / 2$.

\item $\mc{S} = S_0$, along with the Riemannian metric $\gamma [0]$, satisfies \ass{r2}{C, N}.

\item The following curvature flux bounds hold on $\mc{N}$:
\begin{equation} \label{eq.renorm_ass_flux} \| A \|_{ L^{2, 2}_{t, x} } + \| B \|_{ L^{2, 2}_{t, x} } + \| R \|_{ L^{2, 2}_{t, x} } + \| \ul{B} \|_{ L^{2, 2}_{t, x} } \leq \Gamma \text{,} \end{equation}

\item The following initial value bounds hold on $S_0$:
\begin{align}
\label{eq.renorm_ass_init} \| ( \trace H ) [0] \|_{ L^\infty_x } + \| H [0] \|_{ H^{1/2}_x } + \| Z [0] \|_{ H^{1/2}_x } &\leq \Gamma \text{,} \\
\notag \| \ul{H} [0] \|_{ B^0_x } + \| \nabla ( \trace H ) [0] \|_{ B^0_x } + \| M [0] \|_{ B^0_x } &\leq \Gamma \text{.}
\end{align}
\end{itemize}
If $\Gamma \ll 1$ is sufficiently small, with respect to $C$ and $N$, then:
\begin{itemize}
\item $\mc{N}$, with respect to the $t$-foliation and the renormalized metric $\gamma$, satisfies the conditions \ass{F2}{C, N, D} and \ass{K}{1, D}, where $D \simeq \Gamma \ll 1$.

\item The following bounds hold for the connection coefficients on $\mc{N}$:
\begin{align}
\label{eq.renorm_est} \| \trace H \|_{ L^{\infty, \infty}_{t, x} } &\lesssim \Gamma \text{,} \\
\notag \| H \|_{ N^1_{t, x} \cap L^{\infty, 2}_{x, t} \cap H^{\infty, 1/2}_{t, x} } + \| Z \|_{ N^1_{t, x} \cap L^{\infty, 2}_{x, t} \cap H^{\infty, 1/2}_{t, x} } &\lesssim \Gamma \text{,} \\
\notag \| \nabla_t \nabla ( \trace H ) \|_{ L^{2, 1}_{x, t} } + \| \nabla_t M \|_{ L^{2, 1}_{x, t} } &\lesssim \Gamma \text{,} \\
\notag \| \nabla ( \trace H ) \|_{ L^{2, \infty}_{x, t} \cap B^{\infty, 0}_{t, x} } + \| M \|_{ L^{2, \infty}_{x, t} \cap B^{\infty, 0}_{t, x} } + \| \ul{H} \|_{ L^{2, \infty}_{x, t} \cap B^{\infty, 0}_{t, x} } &\lesssim \Gamma \text{,} \\
\notag \| \nabla H \|_{ N^{0 \star}_{t, x} + B^{2, 0}_{t, x} } + \| \nabla Z \|_{ N^{0 \star}_{t, x} + B^{2, 0}_{t, x} } + \| \nabla_t \ul{H} \|_{ N^{0 \star}_{t, x} + B^{2, 0}_{t, x} } &\lesssim \Gamma \text{,} \\
\notag \| \mc{K} - 1 \|_{ H^{\infty, -1/2}_{t, x} } &\lesssim \Gamma \text{.}
\end{align}

\item Furthermore, for each $0 \leq \tau < 1$, we have the refined curvature estimate
\begin{equation} \label{eq.renorm_est_curv} \| \mc{K} [\tau] - 1 \|_{ H^{-1/2}_x } \lesssim \| \trace \ul{H} [\tau] \|_{ L^2_x } + (1 - \tau) \Gamma \text{.} \end{equation}\
\end{itemize}
\end{theorem}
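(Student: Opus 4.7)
The plan is a large-scale bootstrap argument in the spirit of \cite{kl_rod:cg, wang:cg}, but executed entirely in the renormalized finite-cylinder picture of Section \ref{sec.nc_renorm}. The renormalization was engineered precisely so that the foliation regularity hypotheses \ass{F0}{}--\ass{F2}{} and \ass{K}{} from Sections \ref{sec.fol_reg}--\ref{sec.fol_curv} become accessible, and so the full toolkit of bilinear, sharp-trace, and Besov-elliptic estimates from \cite{shao:stt} can be invoked uniformly on the cylinder $[0, 1) \times \mc{S}$.

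First I would make bootstrap assumptions of the schematic form $\|H\|_{N^{1i}_{t,x} \cap L^{\infty,2}_{x,t}} + \|Z\|_{N^{1i}_{t,x} \cap L^{\infty,2}_{x,t}} + \|\ul{H}\|_{L^{2,\infty}_{x,t} \cap B^{\infty,0}_{t,x}} + \|\trace H\|_{L^{\infty,\infty}_{t,x}} \leq K\Gamma$ together with $\|\nabla H\|_{N^{0\star}_{t,x} + B^{2,0}_{t,x}} + \|\nabla Z\|_{N^{0\star}_{t,x} + B^{2,0}_{t,x}} \leq K\Gamma$ for a suitably large universal $K$. Since \eqref{eq.sff_renorm} identifies the second fundamental form with $H$, the Codazzi identity in \eqref{eq.structure_renorm_gc} computes the curl of $k$ as a renormalized curvature piece plus a quadratic correction, and \eqref{eq.structure_renorm_evd} supplies a $t$-antiderivative for $\mf{C}$; together these verify \ass{F2}{C, N, D} with $D \simeq K\Gamma$. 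The Gauss equation in \eqref{eq.structure_renorm_gc} then produces the decomposition $\mc{K} - 1 = \nabla^a V_a + W$ demanded by \ass{K}{1, D}: the only irregular piece is $-\tfrac{1}{2} \trace \ul{H}$, which is exposed as a divergence through the elliptic equation for $\ul{H}$ combined with the inverse Hodge bounds of Proposition \ref{thm.hodge_inv_est}.

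I would then close the bootstrap quantity by quantity. The transport equation \eqref{eq.structure_renorm_ev} for $H$, coupled with \eqref{eq.int_ineq} and the initial bounds \eqref{eq.renorm_ass_init}, yields the $L^{\infty,2}_{x,t}$-estimate on $H$ purely from the initial data and the curvature flux on $A$. The Codazzi identity together with \eqref{eq.hodge_est_D2} controls $\nabla \hat{H}$, while \eqref{eq.structure_renorm_evd} handles $\nabla(\trace H)$. Parallel arguments using the first elliptic equation in \eqref{eq.structure_renorm_ell} and the transport equation for $Z$ close the estimate for the torsion. The mass aspect function is treated through its own transport equation in \eqref{eq.structure_renorm_evd}, whose source is entirely bilinear and lies in $L^{2,1}_{x,t}$ by Theorem \ref{thm.est_prod}; the Besov embedding \eqref{eq.besov_impr_sh} upgrades this to the $B^{\infty,0}_{t,x}$-bound on $M$. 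The $L^{\infty,2}_{x,t}$ and $H^{\infty,1/2}_{t,x}$ control of $H$ and $Z$ then follows from the sharp trace inequality \eqref{eq.sharp_trace_ex} together with Proposition \ref{thm.nsob_trace}, applied once the Codazzi and Hodge equations supply the $\nabla H = \nabla_t(\cdot) + (\cdot)$ decomposition needed as input.

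The main obstacle is closing the bootstrap on the sum-norm bounds for $\nabla H$, $\nabla Z$, and for $\ul{H}$. Schematically, the Codazzi identity expresses $\nabla H$ as a $t$-primitive of curvature (placed in $N^{0\star}_{t,x}$) plus lower-order quadratic debris controlled via \eqref{eq.besov_impr_bdd} in $B^{2,0}_{t,x}$, and similarly for $\nabla Z$; the sum norm records precisely the existence of such a decomposition and entirely bypasses the infinite iteration of \cite{kl_rod:cg, wang:cg}. For $\ul{H}$ the delicate point is the $\nabla Z$ term in its transport equation \eqref{eq.structure_renorm_ev}: the decomposition of $\nabla Z$ in $N^{0\star}_{t,x} + B^{2,0}_{t,x}$ feeds through Corollary \ref{thm.est_prod_cor} to produce the required Besov bound on $\ul{H}$, and the whole system closes once $K$ is chosen large enough that the strictly improved estimates beat the loss coming from the products. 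Finally, the refined curvature estimate \eqref{eq.renorm_est_curv} is read off directly at fixed $\tau$ from the Gauss equation in \eqref{eq.structure_renorm_gc}: the term $-\tfrac{1}{2}\trace \ul{H}[\tau]$ is retained as the leading $L^2_x$ contribution, while every remaining term carries an explicit prefactor of $(1 - \tau)$, so that the $H^{-1/2}_x$-error is dominated by $(1 - \tau) \Gamma$ after using the bounds \eqref{eq.renorm_est} together with Proposition \ref{thm.sobolev}.
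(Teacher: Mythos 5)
Your overall plan matches the paper's: a bootstrap argument on the renormalized cylinder with bootstrap assumptions on $H$, $Z$, and the sum-norm bounds of $\nabla H$, $\nabla Z$, validating \ass{F2}{} and \ass{K}{}, then closing each estimate via the renormalized structure equations and the tools of \cite{shao:stt}. However, your argument for the \ass{K}{1, D} condition contains a genuine error. You write that ``the only irregular piece is $-\tfrac{1}{2}\trace\ul{H}$, which is exposed as a divergence through the elliptic equation for $\ul{H}$.'' This is wrong on both counts. First, $\trace\ul{H}$ is in fact $L^2$-controlled (one shows $\|\ul{H}\|_{L^{2,\infty}_{x,t}} \lesssim \Delta$ from its transport equation), so it belongs in the ``good'' part $W$, not in the divergence part $V$. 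Second, there is no elliptic equation for $\ul{H}$ among \eqref{eq.structure_renorm_ell}; the two Hodge equations there are for $\hat{H}$ (via $\mc{D}_2$) and for $Z$ (via $\mc{D}_1$). The piece of $\mc{K} - 1$ that is genuinely not $L^2$-controlled along the cone is the curvature term $-(1-t)\real R$ in the Gauss equation \eqref{eq.structure_renorm_gc}, since $R$ is only bounded in $L^{2,2}_{t,x}$, not pointwise in $t$. The key mechanism is to substitute from the $\mc{D}_1 Z$ elliptic equation, which expresses $R$ in terms of $\operatorname{div} Z$, $M$, and a quadratic term; this converts the bad $R$ contribution into the divergence of $V = (1-t)Z$, which is what the \ass{K}{} condition demands. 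This observation — that the bad part of the Gauss curvature is $\rho$, expressible as a divergence of $\zeta$ — is the crux of the \ass{K}{}-condition argument and is stated explicitly in the introduction.

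Two minor remarks that do not by themselves break the proof. First, for \ass{F2}{}, the $t$-antiderivative of $\mf{C}$ is built by substituting the evolution equation for $Z$ from \eqref{eq.structure_renorm_ev} (which replaces the $B$ appearing in Codazzi by $\nabla_t Z$ plus a quadratic term), not from \eqref{eq.structure_renorm_evd} as you cite; the latter governs $\nabla(\trace H)$ and $M$, not $Z$. Second, you include $\ul{H}$ and $\trace H$ in the bootstrap assumptions, whereas the paper's assumption \ass{BA}{\delta, \Delta} involves only $H$, $Z$ and their gradients, with the $\ul{H}$ and $\trace H$ estimates derived as consequences; this makes your setup a bit heavier but is not incorrect. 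The refined curvature estimate \eqref{eq.renorm_est_curv} you read off from the Gauss equation is handled essentially as you describe.
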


\begin{remark}
In particular, the conclusions of Theorem \ref{thm.nc_renorm} imply that each $t$-level set $(\Sph^2, \gamma [\tau])$ satisfies the \ass{r2}{} and \ass{k}{} conditions uniformly.
\end{remark}

Roughly speaking, Theorem \ref{thm.nc_renorm} controls the geometry of $\mc{N}$, in the renormalized setting, all the way up to infinity, i.e., $0 \leq t < 1$.
Furthermore, we can use the results of Theorem \ref{thm.nc_renorm} in order to obtain similar control \emph{at infinity}, that is, at $t = 1$.
This is a fairly standard but technical process that involves generating the appropriate limits of our renormalized quantities, though the argument here is a bit further inconvenienced by the fact that the geometries of the $S_\tau$'s, and hence the spaces and norms under consideration, vary with respect to $\tau$.

\begin{corollary} \label{thm.nc_renorm_lim}
Assume the same hypotheses as in Theorem \ref{thm.nc_renorm}.
\begin{itemize}
\item As $\tau \nearrow 1$, the metrics $\gamma [\tau]$ on $\mc{S}$ converge uniformly to a continuous metric $\gamma [1]$ on $\mc{S}$.
Furthermore, the Jacobians $\mc{J} [\tau]$, and hence the volume forms $\epsilon [\tau]$, also converge as $\tau \nearrow 1$ to continuous limits $\mc{J} [1]$ and $\epsilon [1]$.

\item Let $\nabla^\tau$ denote the Levi-Civita connection associated with $\gamma [\tau]$.
Given any $F \in \mc{C}^\infty T^r_l \mc{S}$, the differential $\nabla^\tau F$ has an $L^2$-limit as $\tau \nearrow 1$.
\footnote{As all the $\mc{J} [\tau]$'s, where $0 \leq \tau \leq 1$, are comparable (see \cite[Sect. 4.3]{shao:stt}), then this convergence is in fact independent of the volume form $\epsilon [\tau]$ chosen for the $L^2$-norm.}
In other words, the convergence of $\gamma [\tau]$ to $\gamma [1]$ as $\tau \nearrow 1$ is ``in $H^1$".
\end{itemize}
In addition, we have the following limits for scalar quantities:
\begin{itemize}
\item $\trace H [\tau]$ has a continuous uniform limit, $\trace H [1]$, as $\tau \nearrow 1$.

\item $H$, $Z$, $\ul{H}$, $\nabla (\trace H)$, and $M$ have $L^2$-limits as $\tau \nearrow 1$.
\end{itemize}
\end{corollary}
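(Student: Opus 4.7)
The plan is to derive each limit by a Cauchy argument in the appropriate space as $\tau\nearrow 1$, integrating an evolution identity and controlling the integrand by the bounds of Theorem~\ref{thm.nc_renorm}.

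For the metric, identity \eqref{eq.sff_renorm} gives $k = H$, hence $\mf{L}_t\gamma = 2H$, and in \ass{F2}{}-coordinate patches
\[ \gamma_{ab}[\tau_2](x) - \gamma_{ab}[\tau_1](x) = 2\int_{\tau_1}^{\tau_2} H_{ab}(\sigma,x)\,d\sigma\text{.} \]
The bound $\|H\|_{L^{\infty,2}_{x,t}}\lesssim\Gamma$ from \eqref{eq.renorm_est}, combined with Cauchy-Schwarz in $\sigma$, gives $O((\tau_2-\tau_1)^{1/2}\Gamma)$ uniformly in $x$, so $\gamma[\tau]$ converges uniformly to a continuous $\gamma[1]$; positive-definiteness survives in the limit by the uniform \ass{r2}{} data of Proposition~\ref{thm.reg_prop}. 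Combining \eqref{eq.jacobian} with $\|\trace H\|_{L^{\infty,\infty}_{t,x}}\lesssim\Gamma$ yields uniform convergence of $\mc{J}[\tau]$, and hence of $\epsilon[\tau] = \mc{J}[\tau]\,\epsilon[0]$. For the $L^2$-convergence of $\nabla^\tau F$, write $\nabla^\tau F = \partial F + \Gamma^\tau\cdot F$ in coordinates; the identity $\mf{L}_t(\partial\gamma) = 2\partial H$ together with $\nabla H \in L^{2,2}_{t,x}$ (from the $N^1_{t,x}$-bound) yields, by Minkowski's inequality,
\[ \|\partial\gamma[\tau_2] - \partial\gamma[\tau_1]\|_{L^2_x} \lesssim (\tau_2-\tau_1)^{1/2}\,\Gamma\text{,} \]
so $\Gamma^\tau\to\Gamma^1$ in $L^2_x$ (using the uniform limit of $\gamma^{-1}[\tau]$), giving the $L^2$-convergence of $\nabla^\tau F$ for every smooth $F$.

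The uniform convergence of $\trace H$ exploits the vacuum condition crucially. Since $\operatorname{Ric}(L,L) = \mind^{ab}\alpha_{ab} = 0$, the trace $\trace A = s_0^{-2}s^4\mind^{ab}\alpha_{ab}$ vanishes identically. Tracing \eqref{eq.structure_renorm_ev} and reconciling $\mf{L}_t$ with $\nabla_t$ via \eqref{eq.vderiv_cov} then produces the clean scalar evolution
\[ \mf{L}_t(\trace H) = -|H|^2\text{,} \]
whose right-hand side lies in $L^{\infty,1}_{x,t}$ by $\|H\|_{L^{\infty,2}_{x,t}}\lesssim\Gamma$ and Cauchy-Schwarz. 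Integrating then gives uniform Cauchy-ness on $[\tau_1,\tau_2]\subset[0,1)$ and hence a continuous uniform limit $\trace H[1]$.

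Finally, for the $L^2$-limits: the scalars $M$ and $\nabla(\trace H)$ inherit the $L^2_x$-Cauchy property directly from $\|\nabla_t M\|_{L^{2,1}_{x,t}}+\|\nabla_t\nabla(\trace H)\|_{L^{2,1}_{x,t}}\lesssim\Gamma$ in \eqref{eq.renorm_est}, via Minkowski together with dominated convergence applied to the $L^2_x$-dominant function $x\mapsto\int_0^1|\nabla_t(\cdot)|(t,x)\,dt$. For $H$ and $Z$, I use \eqref{eq.structure_renorm_ev} and \eqref{eq.vderiv_cov} to express $\mf{L}_t\Psi = \nabla_t\Psi + (H\cdot\Psi\text{-terms})$; the curvature flux \eqref{eq.renorm_ass_flux}, the $N^1_{t,x}$-bound, and the Sobolev-trace embedding $H^{\infty,1/2}_{t,x}\hookrightarrow L^{\infty,4}_{t,x}$ (via Propositions~\ref{thm.sob_frac_sh} and \ref{thm.nsob_trace}) together place $\mf{L}_t\Psi$ in $L^{2,2}_{t,x}$, yielding $\|\Psi[\tau_2]-\Psi[\tau_1]\|_{L^2_x}\lesssim(\tau_2-\tau_1)^{1/2}\Gamma$. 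The genuinely subtle case is $\ul{H}$, since its $\nabla_t$-derivative is only controlled in the sum space $N^{0\star}_{t,x}+B^{2,0}_{t,x}$, not directly in $L^{2,2}_{t,x}$; here I invoke the $\ul{H}$-evolution equation in \eqref{eq.structure_renorm_ev}, noting that $\real R\in L^{2,2}_{t,x}$ from \eqref{eq.renorm_ass_flux} and $\nabla Z\in L^{2,2}_{t,x}$ from the $N^1_{t,x}$-bound, which together with quadratic-in-$H$ terms handled as above show that $\nabla_t\ul{H}$, and hence $\mf{L}_t\ul{H}$, is in fact in $L^{2,2}_{t,x}$ after all. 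This last reduction is the main technical point of the argument.
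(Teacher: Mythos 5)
Your approach is essentially identical to the paper's: one integrates the appropriate evolution identity from $\tau$ to $\tau'$, bounds the integrand via the estimates of Theorem \ref{thm.nc_renorm} (Cauchy--Schwarz in $t$), and concludes by a Cauchy argument in the target space, with tensor limits handled by testing against equivariant ($t$-parallel) transports. For the connection limit, the paper works coordinate-free: it applies the commutator identity \eqref{eq.comm} to an equivariant $F$ to get $|\mf{L}_t\nabla F|\lesssim|\nabla k||F|$, then tests against $G\in\mc{C}^\infty T^{l+1}_r\mc{S}$. Your coordinate formulation $\nabla^\tau F=\partial F+\Gamma^\tau\cdot F$ is in spirit the same, but beware the step ``$\mf{L}_t(\partial\gamma)=2\partial H$ together with $\nabla H\in L^{2,2}_{t,x}$ yields $\|\partial\gamma[\tau_2]-\partial\gamma[\tau_1]\|_{L^2_x}\lesssim(\tau_2-\tau_1)^{1/2}\Gamma$'': what controls $\partial\gamma$ is $\partial H$, not $\nabla H$, and $\partial H=\nabla H+\Gamma^\tau\cdot H$. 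One can close this by bounding the Christoffel correction (using the \ass{r2}{} bound on $\nabla\partial^i_a$ and $H\in L^{\infty,4}_{t,x}$), but then the final constant is no longer $\lesssim\Gamma$ -- which is fine, since only Cauchy-ness is needed, not smallness. The paper's commutator argument sidesteps this entirely. One other small point: the ``genuinely subtle'' reduction you flag for $\ul{H}$ -- namely that $\nabla_t\ul{H}\in L^{2,2}_{t,x}$ despite not appearing in \eqref{eq.renorm_est} -- is in fact already established inside the paper's proof of the main theorem (Lemma \ref{thmp.transport}, estimate \eqref{eqp.transport}), and the corollary implicitly relies on it; your re-derivation from the $\ul{H}$-evolution equation is correct and fills exactly this gap from a reader who only has the theorem statement.
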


The proof is given in Section \ref{sec.main_lim}.
Below, we list a few related remarks:
\begin{itemize}
\item With more careful arguments, one can show more regularity for the limits in Corollary \ref{thm.nc_renorm_lim}.
In general, the limit of a quantity at $t = 1$ can have the same regularity as its initial value.
For example, as one assumes $B^0$-control for $\ul{H} [0]$, then $\ul{H} [1]$ should also be a $B^0$-type limit for the $\ul{H} [\tau]$'s.
\footnote{This can shown rigorously using the coordinate-based Besov-type norms from \cite{shao:stt}.}
Similarly, one expects $H^{1/2}$-type limits for $H [\tau]$ and $Z [\tau]$ as $\tau \nearrow 1$.

\item It is also possible to show that $(\mc{S}, \gamma [1])$ satisfies \ass{r2}{}.
This is a consequence of the fact that the $(\mc{S}, \gamma [\tau])$'s satisfy \ass{r2}{} uniformly.

\item From the second equation in \eqref{eq.structure_renorm_gc}, one sees that the Gauss curvatures $\mc{K} [\tau]$ converge, as $\tau \nearrow 1$, weakly to $1 - (\trace \ul{H}) / 2$; see also \eqref{eq.renorm_est_curv}.
\end{itemize}

Some of the limits from Corollary \ref{thm.nc_renorm_lim}, in principle, have physical significance.
For instance, for certain asymptotically spherical foliations, the limits for $M$ and $Z$ can be connected to the Bondi mass and the angular momentum associated with $\mc{N}$.
This will be discussed in further detail in \cite{alex_shao:bondi}.

\subsection{Proof of Corollary \ref{thm.nc_renorm_lim}} \label{sec.main_lim}

Fix $X, Y \in \mc{C}^\infty T^1_0 \mc{S}$.
For convenience, we also denote by $X$ and $Y$ their \emph{equivariant transports}, i.e., the fields
\[ X, Y \in \mc{C}^\infty \ul{T}^1_0 \mc{N} \text{,} \qquad X [\tau] = X \text{.} \]
In addition, fix $0 \leq \tau < \tau^\prime < 1$ and $x \in \mc{S}$.
First, we estimate the difference
\begin{align*}
| \gamma [\tau^\prime] (X, Y) - \gamma [\tau] (X, Y) | |_x &\lesssim \int_\tau^{\tau^\prime} | ( \mf{L}_t \gamma ) [w] (X, Y) | |_x d w \\
&\lesssim \int_\tau^{\tau^\prime} | H [w] | | X | | Y | |_x dw \text{,}
\end{align*}
where the $|\cdot|$'s in the integrand on the right-hand side is with respect to $\gamma [w]$.
Moreover, by the \ass{r0}{} condition uniformly satisfied by the $(\mc{S}, \gamma [w])$'s,
\[ \| X \|_{ L^{\infty, \infty}_{t, x} } + \| Y \|_{ L^{\infty, \infty}_{t, x} } \lesssim 1 \text{,} \]
independently of $w$.
Thus,
\[ | \gamma [\tau^\prime] (X, Y) - \gamma [\tau] (X, Y) | |_x \lesssim ( \tau^\prime - \tau )^\frac{1}{2} \| H \|_{ L^{\infty, 2}_{x, t} } \text{,} \]
and it follows that $\gamma [\tau] (X, Y)$ converges uniformly as $\tau \nearrow 1$ to a continuous limit.
Since this limit clearly depends $\mc{C}^\infty \mc{S}$-linearly on both $X$ and $Y$, it follows that this limit defines a continuous (Riemannian) metric $\gamma [1]$ on $\mc{S}$.

Next, since $\mf{L}_t \mc{J} = \trace H \cdot \mc{J}$ by \eqref{eq.jacobian}, and since $\epsilon [\tau] = \mc{J} [\tau] \epsilon [0]$, by an argument similar to the above, we also obtain analogous continuous limits for $\mc{J}$ and $\epsilon$.

For the limiting connection, we again use $F$ to also denote the equivariant transport of $F$ as in the hypothesis.
Applying the first identity in \eqref{eq.comm}, we obtain
\[ | \mf{L}_t \nabla F | \lesssim | \nabla \mf{L}_t F | + | \nabla k | | F | \lesssim | \nabla k | | F | \text{.} \]
As a result, for any $G \in \mc{C}^\infty T^{l+1}_r \mc{S}$, we can estimate
\begin{align*}
\| \nabla^{\tau^\prime} F (G) - \nabla^\tau F (G) \|_{ L^2_x } &\lesssim \norm{ \int_\tau^{\tau^\prime} | \mf{L}_t \nabla F [w] | | G | dw }_{ L^2_x } \\
&\lesssim \norm{ \int_\tau^{\tau^\prime} | \nabla k [w] | dw }_{ L^2_x } \| F \|_{ L^{\infty, \infty}_{t, x} } \| G \|_{ L^{\infty, \infty}_{t, x} } \\
&\lesssim ( \tau^\prime - \tau )^\frac{1}{2} \| \nabla H \|_{ L^{2, 2}_{t, x} } \text{.}
\end{align*}
Note that since the $\mc{J} [w]$'s are all mutually comparable, the specific choice of $\gamma [w]$ with which to define the above $L^2$-norms does not matter.
From the estimate, it follows that $\nabla^\tau F (G)$ has an $L^2$-limit as $\tau \nearrow 1$.
As before, it is clear that this limit is $\mc{C}^\infty \mc{S}$-linear with respect to $G$, hence we obtain an $L^2$-tensor field $\nabla^1 F$, which represents the ($L^2$-)limit of $\nabla^\tau F$ as $\tau \nearrow 1$.
Furthermore, from distributional considerations, one can show that this operator $\nabla^1$ corresponds to the (weak) Levi-Civita connection associated with $\gamma [1]$; see, for instance, \cite{lefl_mar:dist_curv}.

It remains only to derive limits for the renormalized Ricci coefficients.
The arguments for all the quantities are rather similar.
For example,
\begin{align*}
\| M [\tau^\prime] - M [\tau] \|_{ L^2_x } \lesssim \norm{ \int_\tau^{\tau^\prime} \nabla_t M [w] dw }_{ L^2_x } \lesssim \| \nabla_t M \|_{ L^{2, 1}_{x, t} [\tau, \tau^\prime] } \text{,}
\end{align*}
where the norm on the right-hand is over the segment of $\mc{N}$ corresponding to the region $\tau \leq t \leq \tau^\prime$.
By the estimate for $\nabla_t M$ in \eqref{eq.renorm_est}, the right-hand side becomes arbitrarily small as $\tau^\prime - \tau \rightarrow 0$.
It follows that $M [\tau]$ has an $L^2$-limit as $\tau \nearrow 1$.

The remaining $L^2$-estimates for $H$, $Z$, $\ul{H}$, and $\nabla (\trace H)$ are obtained similarly.
The only complication is the tensorial nature of the limits, which can be handled in a similar manner as in the arguments for $\gamma [1]$ and $\nabla^1$.
Finally, to see that $\trace H$ has a continuous limit, one needs only note that by \eqref{eq.structure_renorm_ev},
\[ \| \nabla_t (\trace H) \|_{ L^{\infty, 1}_{x, t} } \lesssim \| H \|_{ L^{\infty, 2}_{x, t} }^2 < \infty \text{.} \]

\subsection{The Physical Main Theorem} \label{sec.main_pthm}

The final task, besides the proof of Theorem \ref{thm.nc_renorm}, is to translate the contents of Theorem \ref{thm.nc_renorm} back into the physical setting.
In other words, we want to state its hypotheses and conclusions in terms of $\mind$, $s$, the Ricci coefficients ($\chi$, $\ul{\chi}$, $\zeta$), and the curvature components ($\alpha$, $\beta$, $\rho$, $\sigma$, $\ul{\beta}$).

Since the level spheres $\mc{S}_v$, with respect to $\mind$, do not generally have near-unit area, we must, for technical reasons, define our Sobolev and Besov norms in the physical setting slightly differently.
To be more specific, we wish for our norms here to be natural rescalings of the corresponding norms defined in Sections \ref{sec.geom_glp}.

First, given $b \in \R$, $\Psi \in \mc{C}^\infty \ul{T}^r_l \mc{N}$, and $v \geq s_0$, we define the Sobolev norm
\begin{equation} \label{eq.int_sob_ex} \| \Psi [v] \|_{ \sorm{\mc{H}}^b_x } = \| ( v^{-2} I - \lasl )^\frac{b}{2} \Psi [v] \|_{ \sorm{L}^2_x } \text{,} \end{equation}
Similarly, we define the Besov norm
\begin{equation} \label{eq.int_besov_ex} \| \Psi [v] \|_{ \sorm{\mc{B}}^b_x } = v^{-b} \sum_{k \geq 0} 2^{bk} \| P_k \Psi [v] \|_{ \sorm{L}^2_x } + v^{-b} \| P_{< 0} \Psi [v] \|_{ \sorm{L}^2_x } \text{,} \end{equation}
where the $P_k$'s and $P_{< 0}$ are the L-P operators, \emph{with respect to the renormalized metric $\gamma$}.
\footnote{More specifically, we consider the spectral decomposition of $\lapl$, not $\lasl$.}
One can compute directly the following scaling relations:
\begin{equation} \label{eq.norm_renorm_ex} \| \Psi [v] \|_{ H^b_x } = v^{l - r - 1 + b} \| \Psi [v] \|_{ \sorm{\mc{H}}^b_x } \text{,} \qquad \| \Psi [v] \|_{ B^b_x } = v^{l - r - 1 + b} \| \Psi [v] \|_{ \sorm{\mc{B}}^b_x } \text{.} \end{equation}
The left hand sides in \eqref{eq.norm_renorm_ex} are stated with respect to $\gamma$ and $s$.

We can now state fully the physical version of our main theorem.

\begin{theorem} \label{thm.nc_phys}
Assume the constructions of Sections \ref{sec.nc_gf}, \ref{sec.nc_rc}, and \ref{sec.nc_renorm}; and assume all quantities are defined with respect to $\mind$ and $s$.
Assume also the following:
\begin{itemize}
\item $0 \leq m < s_0 / 2$.

\item $\mc{S}$, with the normalized metric $s_0^{-2} \mind [s_0]$, satisfies \ass{r2}{C, N}.

\item The following curvature flux bounds hold:
\begin{align}
\label{eq.phys_ass_flux} s_0^{-\frac{3}{2}} \| s^2 \alpha \|_{ \sorm{L}^{2, 2}_{s, x} } + s_0^{-\frac{3}{2}} \| s^2 \beta \|_{ \sorm{L}^{2, 2}_{s, x} } + s_0^\frac{1}{2} \| \ul{\beta} \|_{ \sorm{L}^{2, 2}_{s, x} } &\leq \Gamma \text{,} \\
\notag s_0^{-\frac{1}{2}} \norm{ s \paren{ \rho + \frac{2 m}{s^3} } }_{ \sorm{L}^{2, 2}_{s, x} } + s_0^{-\frac{1}{2}} \| s \sigma \|_{ \sorm{L}^{2, 2}_{s, x} } &\leq \Gamma \text{,}
\end{align}

\item The following initial value bounds hold on $\mc{S} = \mc{S}_{s_0}$:
\begin{align}
\label{eq.phys_ass_init} s_0 \norm{ \paren{ \trase \chi - \frac{2}{s_0} } [s_0] }_{ \sorm{L}^\infty_x } + s_0^\frac{1}{2} \| ( \chi - s_0^{-1} \mind ) [s_0] \|_{ \sorm{\mc{H}}^{1/2}_x } &\leq \Gamma \text{,} \\
\notag s_0^\frac{1}{2} \| \zeta [s_0] \|_{ \sorm{\mc{H}}^{1/2}_x } + \norm{ \brak{ \ul{\chi} + s_0^{-1} \paren{ 1 - \frac{2m}{s_0} } \mind } [s_0] }_{ \sorm{\mc{B}}^0_x } &\leq \Gamma \text{,} \\
\notag s_0 \| \nasla ( \trase \chi ) [s_0] \|_{ \sorm{\mc{B}}^0_x } + s_0 \norm{ \paren{ \mu - \frac{2m}{s_0^3} } [s_0] }_{ \sorm{\mc{B}}^0_x } &\leq \Gamma \text{.}
\end{align}
\end{itemize}
If $\Gamma$ is sufficiently small, with respect to $C$ and $N$, then
\begin{align}
\label{eq.phys_est} s_0^{-1} \norm{ s^2 \paren{ \trase \chi - \frac{2}{s} } }_{ \sorm{L}^{\infty, \infty}_{s, x} } &\lesssim \Gamma \text{,} \\
\notag s_0^{-\frac{1}{2}} \| s ( \chi - s^{-1} \mind ) \|_{ \sorm{L}^{\infty, 2}_{x, s} } + s_0^{-\frac{1}{2}} \| s \zeta \|_{ \sorm{L}^{\infty, 2}_{x, s} } &\lesssim \Gamma \text{,} \\
\notag s_0^{-1} \| s^\frac{3}{2} ( \chi - s^{-1} \mind ) \|_{ \sorm{L}^{4, \infty}_{x, s} } + s_0^{-1} \| s^\frac{3}{2} \zeta \|_{ \sorm{L}^{4, \infty}_{x, s} } &\lesssim \Gamma \text{,} \\
\notag s_0^{-\frac{3}{2}} \| \nasla_s [ s^2 ( \chi - s^{-1} \mind ) ] \|_{ \sorm{L}^{2, 2}_{s, x} } + s_0^{-\frac{1}{2}} \| s \nasla \chi \|_{ \sorm{L}^{2, 2}_{s, x} } + s_0^{-\frac{1}{2}} \| \chi - s^{-1} \mind \|_{ \sorm{L}^{2, 2}_{s, x} } &\lesssim \Gamma \text{,} \\
\notag s_0^{-\frac{3}{2}} \| \nasla_s ( s^2 \zeta ) \|_{ \sorm{L}^{2, 2}_{s, x} } + s_0^{-\frac{1}{2}} \| s \nasla \zeta \|_{ \sorm{L}^{2, 2}_{s, x} } + s_0^{-\frac{1}{2}} \| \zeta \|_{ \sorm{L}^{2, 2}_{s, x} } &\lesssim \Gamma \text{,} \\
\notag s_0^{-1} \| s^2 \nasla ( \trase \chi ) \|_{ \sorm{L}^{2, \infty}_{x, s} } + s_0^{-1} \norm{ s^2 \paren{ \mu - \frac{2 m}{s^3} } }_{ \sorm{L}^{2, \infty}_{x, s} } &\lesssim \Gamma \text{,} \\
\notag \norm{ \ul{\chi} + s^{-1} \paren{ 1 - \frac{2 m}{s} } \mind }_{ \sorm{L}^{2, \infty}_{x, s} } &\lesssim \Gamma \text{.}
\end{align}
\end{theorem}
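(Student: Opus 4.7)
The plan is to deduce Theorem \ref{thm.nc_phys} directly from Theorem \ref{thm.nc_renorm} by inverting the renormalization procedure of Section \ref{sec.nc_renorm}. The definitions $\gamma = s^{-2} \mind$, $t = 1 - s_0/s$, the renormalized Ricci and curvature coefficients $H, Z, \ul{H}, A, B, R, \ul{B}, M$, and the rescaled physical norms \eqref{eq.int_sob_ex}--\eqref{eq.int_besov_ex} together with the scaling identities \eqref{eq.norm_renorm} and \eqref{eq.norm_renorm_ex} were designed precisely so that each weighted physical norm of a Schwarzschild deviation matches, up to a universal constant, an unweighted renormalized norm of the corresponding renormalized quantity on the finite cylinder $[0, 1) \times \Sph^2$ of near-unit-area spheres. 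Thus the proof is essentially a matter of careful bookkeeping.

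First I would verify that the hypotheses of Theorem \ref{thm.nc_phys} imply those of Theorem \ref{thm.nc_renorm}. The regularity condition is immediate, since by construction $\gamma [0] = s_0^{-2} \mind [s_0]$ is exactly the normalized metric under which \ass{r2}{C, N} is assumed. For \eqref{eq.renorm_ass_flux}, one applies \eqref{eq.norm_renorm} to each of $A$, $B$, $R$, $\ul{B}$; for instance, since $A = s_0^{-2} s^2 \alpha$ has rank $(0, 2)$, \eqref{eq.norm_renorm} with $p = q = 2$ yields
\[ \| A \|_{ L^{2, 2}_{t, x} } = s_0^{1/2} \| A \|_{ \sorm{L}^{2, 2}_{s, x} } = s_0^{-3/2} \| s^2 \alpha \|_{ \sorm{L}^{2, 2}_{s, x} } \text{,} \]
and analogous computations reproduce each summand in \eqref{eq.phys_ass_flux}. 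The initial-value bound \eqref{eq.renorm_ass_init} follows similarly from \eqref{eq.norm_renorm_ex}, using the spectral identification $\lapl_\gamma = s^2 \lasl$ on $S_0$ (scalar case, with obvious extension to tensors of rank $(r,l)$ via \eqref{eq.conf_ev}) to see that the rescaled norms $\sorm{\mc{H}}^{1/2}_x$ and $\sorm{\mc{B}}^0_x$ agree with the standard $H^{1/2}_x$ and $B^0_x$ norms on $(\mc{S}, \gamma[0])$.

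Having verified the hypotheses, I then apply Theorem \ref{thm.nc_renorm} to obtain the renormalized estimates \eqref{eq.renorm_est}, and invert the scalings to recover each bound in \eqref{eq.phys_est}. Most translations are direct: $\| \trace H \|_{ L^{\infty, \infty}_{t, x} } \lesssim \Gamma$ combined with $\trace H = s_0^{-1} s^2 ( \trase \chi - 2 s^{-1} )$ produces the first estimate in \eqref{eq.phys_est}, while the $L^{\infty, 2}_{x, t}$-bounds on $H$ and $Z$ translate by \eqref{eq.norm_renorm} into the weighted $\sorm{L}^{\infty, 2}_{x, s}$-estimates for $\chi - s^{-1} \mind$ and $\zeta$. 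The $\nabla_t$-estimates on $H$, $Z$, $\nabla (\trace H)$, $\ul{H}$, and $M$ convert to $\nasla_s$-estimates on the physical counterparts via $\nabla_t = s_0^{-1} s^2 \nabla_s$ together with \eqref{eq.conf_ev}. The $\sorm{L}^{4, \infty}_{x, s}$-estimates for $\chi - s^{-1} \mind$ and $\zeta$ are not literal entries in \eqref{eq.renorm_est}, but are obtained by invoking the second inequality in \eqref{eq.nsob_ineq} on $H$ and $Z$ (using the $N^1_{t, x}$- and $L^{\infty, 2}_{x, t}$-control already supplied by the renormalized theorem) before rescaling.

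There is no substantive analytic obstacle beyond the combinatorial care needed to match weights, tensor ranks $l - r$, and the factors $s_0^{1/p}$ in \eqref{eq.norm_renorm}; the entire analytic difficulty of the paper is concentrated in the proof of Theorem \ref{thm.nc_renorm}. The only mildly delicate points are (i) confirming that the spectrally defined norms $\sorm{\mc{H}}^b_x$ and $\sorm{\mc{B}}^b_x$ correspond, via $\lapl_\gamma = s^2 \lasl$ at each fixed $s = v$, to the standard Sobolev and Besov norms of Section \ref{sec.geom_glp} on every renormalized sphere $(\mc{S}, \gamma [\tau])$, and (ii) handling the mixed-integrability $L^{4, \infty}_{x, s}$-estimates, which require one application of the Sobolev embedding \eqref{eq.nsob_ineq} across the foliation before the final rescaling step.
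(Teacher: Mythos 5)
Your proposal is correct and takes precisely the same route as the paper, which likewise derives Theorem~\ref{thm.nc_phys} from Theorem~\ref{thm.nc_renorm} by invoking the scaling identities \eqref{eq.norm_renorm} and \eqref{eq.norm_renorm_ex} to translate hypotheses and conclusions between the physical and renormalized settings. Your treatment is in fact slightly more detailed than the paper's own brief remark, correctly noting that the $\sorm{L}^{4,\infty}_{x,s}$ conclusions require one extra application of \eqref{eq.nsob_ineq} (together with $\| H[0] \|_{L^4_x} \lesssim \| H[0] \|_{H^{1/2}_x}$ via \eqref{eq.sob_frac_sh}) since the $L^{4,\infty}_{x,t}$-norm is not a literal entry in \eqref{eq.renorm_est}, and correctly identifying $\lapl_\gamma = s^2 \lasl$ as the spectral fact underlying \eqref{eq.norm_renorm_ex}.
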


Thanks to \eqref{eq.norm_renorm} and \eqref{eq.norm_renorm_ex}, Theorem \ref{thm.nc_phys} follows immediately from Theorem \ref{thm.nc_renorm}.
Indeed, all the assumptions from Theorem \ref{thm.nc_renorm} are the same as those in Theorem \ref{thm.nc_phys}, after applying the aforementioned renormalizations and changes of variables.
Moreover, the conclusions from Theorem \ref{thm.nc_renorm} include those in Theorem \ref{thm.nc_phys}.

\begin{remark}
One can also define variants of the $\sorm{\mc{H}}$- and $\sorm{\mc{B}}$-norms, involving integrals over both $s$ and the spheres.
Then, the Sobolev and Besov estimates in \eqref{eq.renorm_est} will also translate directly to corresponding Sobolev and Besov estimates in \eqref{eq.phys_est}.
\end{remark}

\subsection{Some Extensions and Consequences} \label{sec.main_ext}

Recall that throughout this section, we have assumed that \emph{$\mc{N}$ remains a smooth null hypersurface of $(M, g)$ extending to infinity}.
With a more carefully constructed setup, however, we can actually assume a bit less.
In particular, we can show that null conjugate points cannot form along the null geodesics of $\mc{N}$.
A more precise statement is as follows:

\begin{proposition} \label{thm.nc_conj}
Suppose the assumptions of Theorem \ref{thm.nc_phys}---in particular the bounds \eqref{eq.phys_ass_flux} and \eqref{eq.phys_ass_init}---hold, but only on a \emph{finite} segment of $\mc{N}$,
\begin{align*}
\mc{N}_\delta = \{ p \in \mc{N} \mid t (p) < \delta \} \text{,} \qquad 0 \leq \delta < 1 \text{.}
\end{align*}
In particular, we assume smoothness only for $\mc{N}_\delta$.
\footnote{Note the connection and curvature coefficients need not be well-defined beyond $\mc{N}_\delta$.}
Then, \eqref{eq.phys_est} also holds on $\mc{N}_\delta$.
Furthermore, no null conjugate points can form on the upper boundary $t = \delta$.
\end{proposition}

The proof of Theorem \ref{thm.nc_conj} is analogous to that of Theorem \ref{thm.nc_renorm} (see Section \ref{sec.proof}), except all the estimates are localized to the finite region $t < \delta$.
This results in the estimates \eqref{eq.phys_est} on $\mc{N}_\delta$, and in particular the conclusion that $\trase \chi$ is uniformly bounded on $\mc{N}_\delta$.
That $\trase \chi$ does not blow up implies no null conjugate points can form on the upper boundary $t = \delta$; see \cite{haw_el:gr}.

By combining Theorem \ref{thm.nc_conj} and its proof with a more elaborate bootstrap argument, one can actually prove that $\mc{N}$ remains smooth.
For example, in finite settings, this type of result was essential to the main theorems in \cite{kl_rod:bdc, shao:bdc_nv}.
However, such statements are difficult with formulate cleanly, since one requires smoothness in order to even define the connection and curvature coefficients.

Throughout this section, we have used the affine parameter $s$ as a substitute for the radii $r$ of the level spheres of $s$, where $r$ is defined here as the function
\footnote{Recall that $s$ and $r$ coincide for the Schwarzschild null cones in Section \ref{sec.nc_ms}.}
\[ r: [s_0, \infty) \rightarrow \R \text{,} \qquad r (v) = \paren{ \frac{1}{4 \pi} \int_{ \mc{S}_v } d \vind }^\frac{1}{2} \text{.} \]
In other words, $4 \pi [ r (v) ]^2$ is precisely the area of $\mc{S}_v$, with respect to $\mind$.
Although $s$ is simpler to handle, $r$ is often more relevant for physical considerations (e.g., in the definition of the Hawking mass below).
Consequently, it is important to justify this heuristic that $s$ and $r$ are nearly the same.

To see this, we express $r / s$ in terms of $\gamma$ and $t$:
\[ \frac{r (v)}{ v } = \paren{ \frac{1}{ 4 \pi } \int_{ S_{ 1 - \frac{s_0}{v} } } d \epsilon }^\frac{1}{2} = \paren{ \frac{1}{4 \pi} \int_{ \Sph^2 } \mc{J} [ 1 - s_0 v^{-1} ] \cdot d \epsilon [0] }^\frac{1}{2} \text{.} \]
Recall $\mc{J}$ is the Jacobian factor from \eqref{eq.jacobian}.
Since $\mc{J}$ is uniformly close to $1$ on $\mc{N}$ (see \cite[Prop. 4.5]{shao:stt} or the proof of Corollary \ref{thm.nc_renorm_lim}), it follows that $r / s$ also remains uniformly near $1$.
Furthermore, since $\mc{J}$ has a continuous limit at $t \nearrow 1$ by Corollary \ref{thm.nc_renorm_lim}, it follows that $r / s$ has a limit, which is itself close to $1$, as $s \nearrow \infty$.

Finally, recall that the Hawking mass of the level sphere $\mc{S}_v$ is
\[ \mf{m} (v) = \frac{r (v)}{2} \paren{ 1 + \frac{1}{16 \pi} \int_{ \mc{S}_v } \trase \chi \cdot \trase \ul{\chi} \cdot d \vind } = \frac{r (v)}{8 \pi} \int_{ \mc{S}_v } \mu d \vind \text{,} \]
where $r$ is as before; see \cite{chr_kl:stb_mink}.
Since $M$ has an $L^2$-limit as $t \nearrow 1$ by Corollary \ref{thm.nc_renorm_lim}, it follows that the Hawking masses $\mf{m} (v)$ also have a limit as $v \nearrow \infty$.

In the case that the $\mc{S}_v$'s become asymptotically round, in the sense that the \emph{renormalized} Gauss curvatures $\mc{K}$ converge to $1$ as $v \nearrow \infty$, the limit of the Hawking masses should correspond to the \emph{Bondi energy} associated with $\mc{N}$.
\footnote{In the case that a viable future null infinity $\mc{I}^+$ is defined, the \emph{Bondi energy} represents the energy remaining in the ambient spacetime at the cut where $\mc{N}$ intersects $\mc{I}^+$.}
In our setting, however, there is no reason to expect that these $\mc{S}_v$'s in general actually satisfy this asymptotic roundness property.
In the sequel \cite{alex_shao:bondi} to this paper, we undertake the challenge of \emph{constructing} such an asymptotically round family along $\mc{N}$, given the same assumptions as in Theorem \ref{thm.nc_phys}.
Furthermore, we control the resulting Bondi energy by the weighted flux of curvature and the initial data of $\mc{N}$.

\section{Proof of Theorem \ref{thm.nc_renorm}} \label{sec.proof}

This section is dedicated solely to the proof of Theorem \ref{thm.nc_renorm} (and hence Theorem \ref{thm.nc_phys}).
From now on, we assume the hypotheses of Theorem \ref{thm.nc_renorm}.
Unless otherwise stated, all geometric objects are defined with respect to $\gamma$ and $t$.

In addition, given any $\delta \in (0, 1)$, we define $\mc{N}_\delta$ to be the initial subsegment $[0, \delta] \times \Sph^2$ of $\mc{N}$, i.e., the segment of $\mc{N}$ with $0 \leq t \leq \delta$.

\subsection{Outline} \label{sec.proof_outline}

As in previous works (e.g., \cite{kl_rod:cg, wang:cg}), the foundation of the proof is a grand bootstrap, or continuity, argument.
In this process, we impose a set of \emph{bootstrap assumptions}, stated as estimates, on our system.
We then use these assumptions in order to prove strictly better versions of these estimates.
Below, we give a more precise description of this bootstrap argument.

We say that \ass{BA}{\delta, \Delta} holds iff the following estimate holds on $\mc{N}_\delta$:
\begin{align*}
\| H \|_{ N^1_{t, x} \cap L^{\infty, 2}_{x, t} } + \| Z \|_{ N^1_{t, x} \cap L^{\infty, 2}_{x, t} } + \| \nabla H \|_{ N^{0 \star}_{t, x} + B^{2, 0}_{t, x} } + \| \nabla Z \|_{ N^{0 \star}_{t, x} + B^{2, 0}_{t, x} } &\leq \Delta \text{.}
\end{align*}
\emph{In the above inequality, all the norms are evaluated only over the sub-cone $\mc{N}_\delta$.}
The proof of Theorem \ref{thm.nc_renorm} can now be outlined as follows:
\begin{enumerate}
\item First, note that \ass{BA}{\delta, \Delta} holds trivially when $\delta \searrow 0$.

\item Fix an arbitrary $\delta \in (0, 1)$, and assume \ass{BA}{\delta, \Delta} holds, with $\Gamma \ll \Delta \ll 1$.

\item Using the structure equations, i.e., Proposition \ref{thm.structure_renorm}, we show that the quantities on the left-hand side of the \ass{BA}{\delta, \Delta} condition can in fact be bounded by a constant (depending on $C$ and $N$) times $\Gamma + \Delta^2$.

\item With sufficiently small $\Gamma$ and $\Delta$, the above implies that \ass{BA}{\delta, \Delta/2} holds.

\item The implication \ass{BA}{\delta, \Delta} $\Rightarrow$ \ass{BA}{\delta, \Delta/2} established above for arbitrary $\delta \in (0, 1)$, combined with a standard continuity argument, implies that \ass{BA}{\delta, \Delta/2} holds \emph{unconditionally} for every $\delta \in (0, 1)$.
\end{enumerate}
The above will suffice to complete the proof of Theorem \ref{thm.nc_renorm}.

The points (1), (4), (5) are standard features of continuity arguments.
Thus, in the remainder of this section, we will focus entirely on the main steps (2) and (3) of the bootstrap argument: establishing improved estimates from the bootstrap assumptions.
We fix throughout $\delta \in (0, 1)$, and we work entirely on the segment $\mc{N}_\delta$\emph{---in particular, all integral norms over both $t$ and $x$ are assumed to be over $\mc{N}_\delta$.}
Moreover, we assume that \ass{BA}{\delta, \Delta} holds on $\mc{N}_\delta$.
The objective, then, is to show that \ass{BA}{\delta, \Delta^\prime} holds, with $\Delta^\prime \lesssim_{C, N} \Gamma + \Delta^2$.

Next, we present a brief outline of the process behind establishing (2) and (3).
The first task is to establish the regularity conditions \ass{F2}{} and \ass{K}, as defined in Sections \ref{sec.fol_reg} and \ref{sec.fol_curv}.
This step is essential, as it validates all the technical tools described throughout Sections \ref{sec.geom} and \ref{sec.fol}.
These regularity conditions are consequences of the bootstrap assumptions, and are a main reason why these bootstrap assumptions are fundamental to the overall argument.

The \ass{F0}{} condition follows immediately from the \ass{BA}{} assumption, namely, from the smallness of the second fundamental form $k = H$.
For the full \ass{F2}{} condition, we have to work a bit harder; in particular, we need control for the curl $\mf{C}$ of $H$.
Fortunately, $\mf{C}$ has special structure, given by the Codazzi equations in \eqref{eq.structure_renorm_gc}, which, along with \ass{BA}{}, grants it the required control.
\footnote{In particular, the structure equation for $\mf{C}$ contains neither $\nabla (\trace H)$ nor $M$ (both of which would require $L^2$-Besov control) on its right-hand side.}
Finally, for the \ass{K}{} condition, we combine the Gauss equation in \eqref{eq.structure_renorm_gc} along with the second equation in \eqref{eq.structure_renorm_ell}.
This, along with \ass{BA}{}, suffices to decompose the Gauss curvature $\mc{K}$ as in the \ass{K}{} condition.
In particular, the only term in the equation for $\mc{K}$ which is not $L^2$-controlled is the divergence of $Z$.

With the regularity conditions established, we next focus on obtaining the required improved estimates.
Many of the quantities, e.g., $\trace H$, $\ul{H}$, $M$, can be controlled by exploiting the evolution equations that they satisfy; see \eqref{eq.structure_renorm_ev} and \eqref{eq.structure_renorm_evd}.
For $H$ and $Z$, however, one also requires the elliptic estimates of \eqref{eq.structure_renorm_ell}.
The main idea is essentially the same as in \cite{kl_rod:cg, wang:cg}: the structure equations \eqref{eq.structure_renorm_ev}, \eqref{eq.structure_renorm_ell}, \eqref{eq.structure_renorm_evd} for the connection coefficients generally have the schematic form
\begin{align}
\label{eq.structure_generic_form} \mc{A} = \mc{R} + \mc{A}^2 \text{,}
\end{align}
where $\mc{A}$ and $\mc{R}$ represent any of the connection and curvature coefficients, respectively.
\footnote{There are of course exceptions to this, in which the right-hand side of the equation contains terms linear in $\mc{A}$. These must be addressed throughout the proof by being more careful with the order in which the various structure equations are used.}
Assuming that the appropriate norms can be used, then \eqref{eq.structure_generic_form}, along with \eqref{eq.renorm_ass_flux} and the bootstrap assumption \ass{BA}{\delta, \Delta}, yields the improved estimates
\begin{align*}
\| \mc{A} \| \lesssim \Gamma + \Delta^2 \text{.}
\end{align*}

Next, to control $\nabla H$ and $\nabla Z$ in the sum norm (see the \ass{BA}{} condition), we resort again to the elliptic equations \eqref{eq.structure_renorm_ell}.
The main idea is similar to that in \cite{kl_rod:cg, wang:cg}, except that we no longer require an infinite decomposition.
Roughly, looking at the right-hand sides in \eqref{eq.structure_renorm_ell}, the curvature quantities $B$ and $R$ must be controlled in the $N^{0 \star}$-norm using \eqref{eq.structure_renorm_nb}, while $M$ and $\nabla (\trace H)$ must be controlled in Besov norms using \eqref{eq.structure_renorm_evd}.
Our use of a sum norm in the \ass{BA}{} assumption allows us to take only one iteration of this decomposition and then simply reapply the \ass{BA}{} assumption.
\footnote{In contrast, in \cite{kl_rod:cg, shao:bdc_nv, wang:cg}, one must iterate this decomposition infinitely many times.}
This significantly simplifies the decomposition process.

In the above, we have obtained improved estimates for
\[ \| H \|_{ N^1_{t, x} } \text{,} \qquad \| \nabla H \|_{ N^{0\star}_{t, x} + B^{2, 0}_{t, x} } \text{,} \qquad \| Z \|_{ N^1_{t, x} } \text{,} \qquad \| \nabla Z \|_{ N^{0\star}_{t, x} + B^{2, 0}_{t, x} } \text{.} \]
The remaining improved estimates for $H$ and $Z$ in the sharp trace norm can now be obtained from Corollary \ref{thm.sharp_trace_ex}.
This completes the proof of steps (2) and (3), and hence concludes the bootstrap argument.
Moreover, all the desired estimates in \eqref{eq.renorm_est} will have been established within this bootstrap argument.

In the remainder of this section, we accomplish steps (2) and (3) in full detail.

\subsection{Regularity Conditions} \label{sec.proof_reg}

As mentioned before, the first goal is the \ass{F2}{} condition (see Section \ref{sec.fol_reg}).
We begin by establishing some trivial properties.

\begin{lemma} \label{thmp.rf0}
Suppose \ass{BA}{\delta, \Delta} holds.
Then:
\begin{itemize}
\item $(\mc{N}_\delta, \gamma)$ satisfies \ass{F0}{C, N, \Delta}.

\item The following estimates hold:
\begin{equation} \label{eqp.hz_sob} \| H \|_{ L^{4, \infty}_{x, t} } + \| Z \|_{ L^{4, \infty}_{x, t} } \lesssim_{C, N} \Delta \text{.} \end{equation}
\end{itemize}
\end{lemma}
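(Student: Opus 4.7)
The plan is to unpack both conclusions and reduce each to a direct consequence of the bootstrap assumption \ass{BA}{\delta,\Delta}, combined with the initial-data hypotheses and the ambient results from Sections \ref{sec.fol_reg} and \ref{sec.geom_reg}. The key bookkeeping point is the identity $k = H$ from \eqref{eq.sff_renorm}, which lets the bootstrap control of $H$ translate directly into control of the second fundamental form.

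For the first bullet, note that \ass{r0}{C,N} on $(\mc{S},\gamma[0])$ is part of the hypothesis of Theorem \ref{thm.nc_renorm} (it is contained in \ass{r2}{C,N}). It remains to verify \eqref{eqr.sff}. Since $k=H$ and $\delta \leq 1$, Cauchy--Schwarz in $\tau$ over $[0,\delta]$ gives
\[
\| H \|_{L^{\infty,1}_{x,t}} \;\leq\; \delta^{1/2} \| H \|_{L^{\infty,2}_{x,t}} \;\leq\; \Delta,
\]
using that $\|H\|_{L^{\infty,2}_{x,t}} \leq \Delta$ is part of \ass{BA}{\delta,\Delta}. This is precisely \eqref{eqr.sff} with $B = \Delta$, completing the verification of \ass{F0}{C,N,\Delta}.

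For the second bullet, I would apply Proposition \ref{thm.nsob_ineq} (now legitimate, since \ass{F0}{C,N,\Delta} has just been established) to each $\Psi \in \{H,Z\}$, obtaining
\[
\| \Psi \|_{ L^{4, \infty}_{x, t} } \;\lesssim_{C, N}\; \| \Psi [0] \|_{ L^4_x } + \| \nabla_t \Psi \|_{ L^{2, 2}_{t, x} }^{1/2} \bigl( \| \nabla \Psi \|_{ L^{2, 2}_{t, x} } + \| \Psi \|_{ L^{2, 2}_{t, x} } \bigr)^{1/2}.
\]
The three $L^{2,2}_{t,x}$-norms on the right are simultaneously bounded by $\Delta$ since they are all contained in $\|\Psi\|_{N^1_{t,x}} \leq \Delta$. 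For the initial-data term, since $(\mc{S},\gamma[0])$ satisfies \ass{r1}{C,N}, the Sobolev embedding of Proposition \ref{thm.sob_frac_sh} gives $\|\Psi[0]\|_{L^4_x} \lesssim_{C,N} \|\Psi[0]\|_{H^{1/2}_x} \leq \Gamma \leq \Delta$ by the initial-data bounds \eqref{eq.renorm_ass_init}. Adding the contributions from $\Psi = H$ and $\Psi = Z$ yields the desired estimate \eqref{eqp.hz_sob}.

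There is no genuine obstacle here: the entire lemma is a consistency check setting up the analytic framework for the rest of Section \ref{sec.proof}. The only step worth being careful about is making sure the order of deductions is correct, namely that one verifies \ass{F0}{C,N,\Delta} \emph{before} invoking Proposition \ref{thm.nsob_ineq}, since that proposition lists \ass{F0}{} among its hypotheses and its implicit constants depend on $B$ (here $B=\Delta \lesssim 1$).
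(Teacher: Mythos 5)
Your proposal is correct and follows essentially the same route as the paper's proof: establish \ass{F0}{} from $k=H$ and the bootstrap bound on $H$, then invoke the $L^{4,\infty}_{x,t}$-Sobolev estimate from Proposition~\ref{thm.nsob_ineq} together with the embedding \eqref{eq.sob_frac_sh} to handle the initial data term. The only difference is that you make explicit the Cauchy--Schwarz step $\|H\|_{L^{\infty,1}_{x,t}} \leq \delta^{1/2}\|H\|_{L^{\infty,2}_{x,t}}$ bridging \ass{BA}{} to \eqref{eqr.sff}, which the paper treats as immediate.
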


\begin{proof}
Since $k = H$, then \ass{BA}{\delta, \Delta} combined with the fact that $( \Sph^2, \gamma [0] )$ satisfies \ass{r0}{C, N} implies the \ass{F0}{C, N, \Delta} condition on $(\mc{N}_\delta, \gamma)$.
Using this \ass{F0}{} condition, along with \eqref{eq.nsob_ineq} and \ass{BA}{\delta, \Delta}, we obtain the bounds
\[ \| H \|_{ L^{4, \infty}_{x, t} } \lesssim \| H [0] \|_{ L^4_x } + \Delta \text{,} \qquad \| Z \|_{ L^{4, \infty}_{x, t} } \lesssim \| Z [0] \|_{ L^4_x } + \Delta \text{.} \]
This implies \eqref{eqp.hz_sob}, since by \eqref{eq.sob_frac_sh},
\[ \| H [0] \|_{ L^4_x } \lesssim \| H [0] \|_{ H^{1/2}_x } \lesssim \Gamma \text{,} \qquad \| Z [0] \|_{ L^4_x } \lesssim \| Z [0] \|_{ H^{1/2}_x } \lesssim \Gamma \text{.} \qedhere \]
\end{proof}

The next goal is the full \ass{F2}{} condition.
For this, though, we require first some commutator estimates between $\nabla$ and the integral $\cint_0^t$.

\begin{lemma} \label{thmp.rf2}
Suppose \ass{BA}{\delta, \Delta} holds.
\begin{itemize}
\item There exists $\mf{D} \in \mc{C}^\infty \ul{T}^0_3 \mc{N}$ such that
\begin{equation} \label{eqp.sff_curl} \nabla_t \mf{D} = \mf{C} \text{,} \qquad \| \mf{D} \|_{ N^1_{t, x} \cap L^{\infty, 2}_{x, t} \cap L^{4, \infty}_{x, t} } \lesssim_{C, N} \Delta \text{.} \end{equation}

\item If $\Psi \in \mc{C}^\infty \ul{T}^r_l \mc{N}$, then the following commutator estimates hold:
\begin{align}
\label{eqp.comm_est} \| \nabla \cint_0^t \Psi \|_{ L^{2, 2}_{t, x} } &\lesssim_{C, N, r, l} \| \cint_0^t \nabla \Psi \|_{ L^{2, 2}_{t, x} } + \Delta \| \Psi \|_{ L^{2, 2}_{t, x} } \text{,} \\
\notag \| \cint_0^t \nabla \Psi \|_{ L^{2, 2}_{t, x} } &\lesssim_{C, N, r, l} \| \nabla \cint_0^t \Psi \|_{ L^{2, 2}_{t, x} } + \Delta \| \Psi \|_{ L^{2, 2}_{t, x} } \text{.}
\end{align}

\item $(\mc{N}_\delta, \gamma)$ satisfies \ass{F2}{C, N, \Delta}.
\end{itemize}
\end{lemma}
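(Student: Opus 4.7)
The plan is to build the lemma on a single structural identity: the Codazzi equation in \eqref{eq.structure_renorm_gc} gives $\mf{C}_{abc} = -\epsilon_{bc}\epsilon_a{}^d[(1-t)B_d - Z_d] + (1-t)(H_{ab}Z_c - H_{ac}Z_b)$, so $\mf{C}$ is assembled from a curvature piece $(1-t)B$, a bare $Z$, and a quadratic $H\otimes Z$. The naive choice $\mf{D} = \cint^t_0 \mf{C}$ fails to produce the required $L^{4,\infty}_{x,t}$-bound, because the curvature flux supplies only $\|B\|_{L^{2,2}_{t,x}} \lesssim \Gamma$, and one cannot upgrade this to an $L^{4,\infty}_{x,t}$-bound on $\cint^t_0 B$ via \eqref{eq.int_ineq} alone.

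The key trick is to eliminate $B$ using the torsion evolution equation $\nabla_t Z_a = -B_a - 2\gamma^{bc} H_{ab}Z_c$ from \eqref{eq.structure_renorm_ev}. Rearranging gives the pointwise identity $(1-t) B_a = -\nabla_t[(1-t)Z_a] - Z_a - 2(1-t)\gamma^{bc}H_{ab}Z_c$, and substituting into Codazzi exhibits $\mf{C}$ as a $\nabla_t$-exact piece plus connection-quadratic lower-order terms. I would then set
\begin{align*}
\mf{D}_{abc} &= \epsilon_{bc}\epsilon_a{}^d(1-t)Z_d + \cint^t_0\bigl[\,2\epsilon_{bc}\epsilon_a{}^d Z_d \\
&\qquad\qquad + 2(1-t)\epsilon_{bc}\epsilon_a{}^d\gamma^{ef}H_{de}Z_f + (1-t)(H_{ab}Z_c - H_{ac}Z_b)\bigr] \text{,}
\end{align*}
which satisfies $\nabla_t \mf{D} = \mf{C}$ since $\epsilon$ and $\gamma$ are $\nabla_t$-parallel. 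The three norms in \eqref{eqp.sff_curl} would be established in the order $L^{4,\infty}_{x,t}$, $L^{\infty,2}_{x,t}$, and finally $N^1_{t,x}$: the exact piece $(1-t)Z$ is absorbed by \eqref{eqp.hz_sob} and the bootstrap bound $\|Z\|_{N^1_{t,x}\cap L^{\infty,2}_{x,t}} \lesssim \Delta$; the $\cint^t_0$-terms are controlled by \eqref{eq.int_ineq} combined with H\"older in $t$ (picking up small powers of $\delta$), and the $HZ$-products are handled by pairing the $L^{\infty,2}_{x,t}$-bounds with the $L^{\infty,4}_{t,x}$-bounds obtained via Proposition \ref{thm.nsob_trace} applied to both $H$ and $Z$.

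For the commutator estimates \eqref{eqp.comm_est}, I would apply the third identity in Proposition \ref{thm.comm}, which splits $[\cint^t_0,\nabla]$ into a $k$-commutator and an $\mf{C}$-commutator. The $k$-term is absorbed via the smallness $\|k\|_{L^{4,\infty}_{x,t}} = \|H\|_{L^{4,\infty}_{x,t}} \lesssim \Delta$ supplied by \eqref{eqp.hz_sob}, in a Gronwall-type fashion. The $\mf{C}$-term is precisely why $\mf{D}$ was built: writing $\mf{C} = \nabla_t \mf{D}$ and integrating by parts in $t$ yields the identity $\cint^t_0(\nabla_t \mf{D}\cdot \cint^t_0 \Psi) = \mf{D}\cdot \cint^t_0\Psi - \cint^t_0(\mf{D}\cdot \Psi)$, and both pieces admit $L^{2,2}_{t,x}$-control by H\"older against the $L^{4,\infty}_{x,t}$- and $L^{\infty,2}_{x,t}$-bounds on $\mf{D}$, which were established in the first stage of the argument and so do not depend on the commutator estimate itself. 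The same integration-by-parts step is what closes the $\|\nabla \mf{D}\|_{L^{2,2}_{t,x}}$-part of \eqref{eqp.sff_curl}.

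Finally, \ass{F2}{C, N, \Delta} follows by assembling pieces: the hypothesis supplies \ass{r2}{C, N} on $(\mc{S}, \gamma[0])$; \eqref{eqr.sff} was already obtained in Lemma \ref{thmp.rf0}; \eqref{eqr.sffd_tr} and \eqref{eqr.sffd} follow from the $N^1_{t,x}$-part of the bootstrap assumption after H\"older in $t$ introduces a $\delta^{1/2}$-factor; and \eqref{eqr.sffcurl} is exactly the $L^{4,\infty}_{x,t}$-bound in \eqref{eqp.sff_curl}. The main obstacle throughout is this $L^{4,\infty}_{x,t}$-bound on $\mf{D}$: without the $Z$-evolution trick converting $(1-t)B$ into $\nabla_t$-exact plus lower-order form, the bound cannot be closed with only flux-level $L^{2,2}_{t,x}$-control on $B$, and the rest of the argument (and indeed the validity of the sharp trace and bilinear estimates from Section \ref{sec.fol_prod} via the \ass{F2}{} condition) would collapse.
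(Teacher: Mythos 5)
Your proposal is, in substance, the paper's own proof: the antiderivative $\mf{D}$ you write down is exactly the one constructed there (Codazzi plus the $Z$-evolution equation, i.e.\ $\mf{D} = I_1 + \cint^t_0 I_2$ with $I_1 = (1-t)\epsilon_{bc}\epsilon_a{}^d Z_d$), the $\mf{C}$-part of the commutator is handled by the same integration by parts against $\mf{D}$, the $k$-part by the same absorption using $\Delta \ll 1$, and \ass{F2}{} is assembled in the same way. The only real reservation concerns how you propose to get the $L^{4,\infty}_{x,t}$-bound on $\mf{D}$: invoking Proposition \ref{thm.nsob_trace} for $L^{\infty,4}_{t,x}$-control of $H$ and $Z$ is circular at this stage, since that proposition requires \ass{F1}{}, which in turn requires precisely the curl bound \eqref{eqr.sffcurl} you are in the middle of proving. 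The fix is immediate and already implicit in your own writeup: the $L^{4,\infty}_{x,t}$-bounds of \eqref{eqp.hz_sob} (which need only \ass{F0}{}, available from Lemma \ref{thmp.rf0}) together with the $L^{\infty,2}_{x,t}$-bounds in \ass{BA}{\delta,\Delta} suffice to estimate the $H\otimes Z$ terms inside $\cint^t_0$ via \eqref{eq.int_ineq}; alternatively one can, as the paper does, obtain the $L^{4,\infty}_{x,t}$-bound on $\mf{D}$ last, from \eqref{eq.nsob_ineq}, once $\nabla_t\mf{D}$, $\nabla\mf{D}$, and $\mf{D}$ are controlled in $L^{2,2}_{t,x}$. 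A second, smaller slip: in absorbing the $k$-commutator you should pair $\|H\|_{L^{\infty,2}_{x,t}}$ (from \ass{BA}{}) against $\|\nabla\cint^t_0\Psi\|_{L^{2,2}_{t,x}}$, rather than use $\|k\|_{L^{4,\infty}_{x,t}}$, since the latter would require $L^4_x$-control of $\nabla\cint^t_0\Psi$ which is not available; with that pairing the absorption is direct (no Gronwall needed). With these two adjustments your ordering of the estimates ($L^{4,\infty}$ and $L^{\infty,2}$ first, then the commutator, then $\nabla\mf{D}$) closes correctly.
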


\begin{proof}
Recalling that $k = H$, the Codazzi equations in \eqref{eq.structure_renorm_gc} imply
\begin{align*}
\mf{C}_{abc} &= - \epsilon_{bc} \epsilon_a{}^d [ (1 - t) B_d - Z_d ] + (1 - t) ( H_{ab} Z_c - H_{ac} Z_b ) \text{.}
\end{align*}
Combining this with the evolution equation for $Z$ in \eqref{eq.structure_renorm_ev} yields
\begin{align*}
\mf{C}_{abc} &= \nabla_t [ (1 - t) \epsilon_{bc} \epsilon_a{}^d Z_d ] + (1 - t) ( H_{ab} Z_c - H_{ac} Z_b + 2 \epsilon_{bc} \epsilon_a{}^d \gamma^{ef} H_{de} Z_f ) \\
&\qquad + 2 \epsilon_{bc} \epsilon_a{}^d Z_d \\
&= \nabla_t I_1 + I_2 \text{,}
\end{align*}
where the terms $I_1$ and $I_2$ are defined from the preceding formula.
If we define $\mf{D} = I_1 + \cint_0^t I_2$, then $\nabla_t \mf{D} = \mf{C}$, as desired.
Moreover, by \ass{BA}{\delta, \Delta},
\begin{equation} \label{eqpl.sff_curl_1} \| \nabla_t \mf{D} \|_{ L^{2, 2}_{t, x} } = \| \mf{C} \|_{ L^{2, 2}_{t, x} } \lesssim \| \nabla H \|_{ L^{2, 2}_{t, x} } \lesssim \Delta \text{.} \end{equation}

Similarly, for the $L^{\infty, 2}_{x, t}$-bound for $\mf{D}$, we first control
\[ \| I_1 \|_{ L^{\infty, 2}_{x, t} } \lesssim \| Z \|_{ L^{\infty, 2}_{x, t} } \lesssim \Delta \text{.} \]
Next, each term in $I_2$, except for the last term ($\sim Z$), can be written as a product $J_1 \cdot J_2$, where by \eqref{eq.int_ineq} and \ass{BA}{\delta, \Delta}, the factors can be bounded
\[ \| J_1 \|_{ L^{\infty, 2}_{x, t} } + \| J_2 \|_{ L^{\infty, 2}_{x, t} } \lesssim \Delta \text{,} \qquad \| \cint_0^t ( J_1 \cdot J_2 ) \|_{ L^{\infty, 2}_{x, t} } \lesssim \| J_1 \|_{ L^{\infty, 2}_{x, t} } \| J_2 \|_{ L^{\infty, 2}_{x, t} } \lesssim \Delta^2 \text{.} \]
It follows that
\[ \| \cint_0^t I_2 \|_{ L^{\infty, 2}_{x, t} } \lesssim \| \cint_0^t Z \|_{ L^{\infty, 2}_{x, t} } + \Delta^2 \lesssim \Delta \text{.} \]
As a result, recalling also the \ass{F0}{C, N, \Delta} condition, we obtain
\begin{equation} \label{eqpl.sff_curl_2} \| \mf{D} \|_{ L^{2, 2}_{t, x} } \lesssim \| \mf{D} \|_{ L^{\infty, 2}_{x, t} } \lesssim \| I_1 \|_{ L^{\infty, 2}_{x, t} } + \| \cint^t_0 I_2 \|_{ L^{\infty, 2}_{x, t} } \lesssim \Delta \text{.} \end{equation}

To control $\nabla \mf{D}$, though, we must first prove the commutator estimate \eqref{eqp.comm_est}.
Recalling the commutation identity \eqref{eq.comm} along with \eqref{eq.int_ineq}, we obtain
\begin{align*}
\| \nabla \cint_0^t \Psi - \cint_0^t \nabla \Psi \|_{ L^{2, 2}_{t, x} } &\lesssim \| \cint_0^t ( k \otimes \nabla \cint_0^t \Psi ) \|_{ L^{2, 2}_{t, x} } + \| \cint_0^t ( \mf{C} \otimes \cint_0^t \Psi ) \|_{ L^{2, 2}_{t, x} } \\
&\lesssim \| H \|_{ L^{\infty, 2}_{x, t} } \| \nabla \cint_0^t \Psi \|_{ L^{2, 2}_{t, x} } + \| \cint_0^t ( \nabla_t \mf{D} \otimes \cint_0^t \Psi ) \|_{ L^{2, 2}_{t, x} } \text{.}
\end{align*}
Applying \ass{F0}{C, N, \Delta}, integrating by parts, and recalling \eqref{eq.int_ineq}, then
\begin{align*}
\| \nabla \cint_0^t \Psi - \cint_0^t \nabla \Psi \|_{ L^{2, 2}_{t, x} } &\lesssim \Delta \| \nabla \cint_0^t \Psi \|_{ L^{2, 2}_{t, x} } + \| \mf{D} \otimes \cint_0^t \Psi \|_{ L^{2, 2}_{t, x} } + \| \cint_0^t ( \mf{D} \otimes \Psi ) \|_{ L^{2, 2}_{t, x} } \\
&\lesssim \Delta \| \nabla \cint_0^t \Psi \|_{ L^{2, 2}_{t, x} } + \| \mf{D} \|_{ L^{\infty, 2}_{x, t} } \| \Psi \|_{ L^{2, 2}_{t, x} } \\
&\lesssim \Delta \| \nabla \cint_0^t \Psi \|_{ L^{2, 2}_{t, x} } + \Delta \| \Psi \|_{ L^{2, 2}_{t, x} } \text{.}
\end{align*}
The estimates in \eqref{eqp.comm_est} follow immediately from this and the assumption $\Delta \ll 1$.

We can now control $\nabla \mf{D}$.
First of all, from \ass{BA}{\delta, \Delta},
\[ \| \nabla I_1 \|_{ L^{2, 2}_{t, x} } \lesssim \| \nabla Z \|_{ L^{2, 2}_{t, x} } \lesssim \Delta \text{.} \]
For $\nabla I_2$, note that $J_1$ and $J_2$, defined as before, satisfy
\[ \| J_1 \|_{ L^{\infty, 2}_{x, t} \cap L^{4, \infty}_{x, t} } + \| \nabla J_1 \|_{ L^{2, 2}_{t, x} } + \| J_2 \|_{ L^{\infty, 2}_{x, t} \cap L^{4, \infty}_{x, t} } + \| \nabla J_2 \|_{ L^{2, 2}_{t, x} } \lesssim \Delta \text{,} \]
where we used \ass{BA}{\delta, \Delta} and \eqref{eqp.hz_sob}.
Thus, applying \eqref{eqp.comm_est} and \eqref{eq.int_ineq}, we obtain
\begin{align*}
\| \nabla \cint_0^t ( J_1 \cdot J_2 ) \|_{ L^{2, 2}_{t, x} } &\lesssim \| \cint_0^t \nabla ( J_1 \cdot J_2 ) \|_{ L^{2, 2}_{t, x} } + \| J_1 \cdot J_2 \|_{ L^{2, 2}_{t, x} } \\
&\lesssim \| \nabla J_1 \|_{ L^{2, 2}_{t, x} } \| J_2 \|_{ L^{\infty, 2}_{x, t} } + \| J_1 \|_{ L^{\infty, 2}_{x, t} } \| \nabla J_2 \|_{ L^{2, 2}_{t, x} } + \| J_1 \|_{ L^{\infty, 2}_{x, t} } \| J_2 \|_{ L^{2, \infty}_{x, t} } \\
&\lesssim \Delta^2 \text{.}
\end{align*}
Therefore, applying \eqref{eqp.comm_est} again,
\[ \| \nabla \cint^t_0 I_2 \|_{ L^{2, 2}_{t, x} } \lesssim \| \nabla Z \|_{ L^{2, 2}_{t, x} } + \| Z \|_{ L^{2, 2}_{t, x} } + \Delta^2 \lesssim \Delta \text{.} \]
Combining the above, we obtain
\begin{equation} \label{eqpl.sff_curl_3} \| \nabla \mf{D} \|_{ L^{2, 2}_{t, x} } \lesssim \Delta \text{.} \end{equation}

From the above estimates \eqref{eqpl.sff_curl_1}-\eqref{eqpl.sff_curl_3}, along with \eqref{eq.nsob_ineq}, we obtain \eqref{eqp.sff_curl}.
The \ass{F2}{C, N, \Delta} condition now follows from \eqref{eqp.sff_curl} and \ass{BA}{\delta, \Delta}.
\end{proof}

\subsection{The Curvature Condition}

For the necessary elliptic estimates, we will also need the \ass{K}{} condition.
The first step is a number of transport equation estimates.

\begin{lemma} \label{thmp.transport}
If \ass{BA}{\delta, \Delta} holds, then the following estimates hold:
\begin{align}
\label{eqp.transport} \| \trace H \|_{ L^{\infty, \infty}_{t, x} } &\lesssim_{C, N} \Gamma + \Delta^2 \text{,} \\
\notag \| H \|_{ H^{\infty, 1/2}_{t, x} } + \| Z \|_{ H^{\infty, 1/2}_{t, x} } &\lesssim_{C, N} \Delta \text{,} \\
\notag \| \nabla_t \ul{H} \|_{ L^{2, 2}_{t, x} } + \| \ul{H} \|_{ L^{2, \infty}_{x, t} } &\lesssim_{C, N} \Delta \text{,} \\
\notag \| \nabla_t \nabla ( \trace H ) \|_{ L^{2, 1}_{x, t} } + \| \nabla ( \trace H ) \|_{ L^{2, \infty}_{x, t} } &\lesssim_{C, N} \Gamma + \Delta^2 \text{,} \\
\notag \| \nabla_t M \|_{ L^{2, 1}_{x, t} } + \| M \|_{ L^{2, \infty}_{x, t} } &\lesssim_{C, N} \Gamma + \Delta^2 \text{.}
\end{align}
\end{lemma}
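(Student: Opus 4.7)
The plan is to handle the five estimates in order, each by applying the relevant evolution equation from Proposition \ref{thm.structure_renorm} together with the bootstrap assumption \ass{BA}{\delta, \Delta}, Lemma \ref{thmp.rf0}, and the initial/flux hypotheses \eqref{eq.renorm_ass_flux}--\eqref{eq.renorm_ass_init}. The common scheme is to bound $\nabla_t X$ pointwise via H\"older/Cauchy--Schwarz applied to the right-hand side of its evolution equation, then recover $X$ from $X = \mf{p}(X[0]) + \cint^t_0 \nabla_t X$ using \eqref{eq.int_ineq} and \eqref{eq.norm_comp_int}. As a warm-up, taking the $\gamma$-trace of the first equation of \eqref{eq.structure_renorm_ev}, and using that $A$ is $\gamma$-traceless in vacuum, gives $\nabla_t (\trace H) = -|H|^2$; pointwise integration in $t$ combined with $\|(\trace H)[0]\|_{L^\infty_x} \leq \Gamma$ and $\|H\|_{L^{\infty,2}_{x,t}} \lesssim \Delta$ yields the $\Gamma + \Delta^2$ bound.

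For the $H^{\infty,1/2}_{t,x}$-bounds on $H$ and $Z$, I would invoke Proposition \ref{thm.nsob_trace}, whose \ass{F1}{} hypothesis is licensed by Lemma \ref{thmp.rf2}: the initial $H^{1/2}_x$-norms are $\leq \Gamma$ by \eqref{eq.renorm_ass_init}; $\|\nabla_t H\|_{L^{2,2}_{t,x}}$ and $\|\nabla_t Z\|_{L^{2,2}_{t,x}}$ are controlled by the quadratic and curvature terms on the right-hand sides of the $H$- and $Z$-equations of \eqref{eq.structure_renorm_ev}, where the quadratic terms are handled by H\"older between $\|H\|_{L^{4,\infty}_{x,t}} \lesssim \Delta$ (from \eqref{eqp.hz_sob}) and $\|H\|_{L^{\infty,2}_{x,t}} \lesssim \Delta$; and the remaining factor $\|\nabla H\|_{L^{2,2}_{t,x}} + \|H\|_{L^{2,2}_{t,x}}$ (resp.\ for $Z$) is $\lesssim \Delta$ directly from \ass{BA}{}. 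For $\ul H$ I would integrate the third equation of \eqref{eq.structure_renorm_ev}: every term on the right-hand side other than $H \otimes \ul H$ is bounded in $L^{2,2}_{t,x}$ using \ass{BA}{}, \eqref{eqp.hz_sob}, and the flux hypothesis, while the $H \otimes \ul H$ term is absorbed by a short Gr\"onwall argument exploiting the smallness of $\|H\|_{L^{\infty,2}_{x,t}}$. Passing from $\nabla_t \ul H \in L^{2,2}_{t,x}$ to $\ul H \in L^{2,\infty}_{x,t}$ uses Cauchy--Schwarz in $t$ (with $\delta \leq 1$) to reach $L^{2,1}_{x,t}$, then \eqref{eq.int_ineq} combined with $\|\ul H[0]\|_{L^2_x} \lesssim \|\ul H[0]\|_{B^0_x} \leq \Gamma$.

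The last two pairs of estimates come from the derivative evolution equations \eqref{eq.structure_renorm_evd}, which express $\nabla_t \nabla(\trace H)$ and $\nabla_t M$ as sums of bilinear and trilinear products of horizontal tensors, all of whose factors are either curvature fluxes or quantities bounded by the preceding estimates, \ass{BA}{}, or \eqref{eqp.hz_sob}. Each product is then bounded in $L^{2,1}_{x,t}$ by H\"older in $t$ followed by $L^2_x$ --- for instance $\|H \cdot \nabla H\|_{L^{2,1}_{x,t}} \leq \|H\|_{L^{\infty,2}_{x,t}} \|\nabla H\|_{L^{2,2}_{t,x}} \lesssim \Delta^2$ --- with the self-coupling terms $H \cdot \nabla(\trace H)$ and $(\trace H) \cdot M$ absorbed to the left via Gr\"onwall; the $L^{2,\infty}_{x,t}$-bounds for $\nabla(\trace H)$ and $M$ themselves then follow from \eqref{eq.int_ineq} and the initial data $\|\nabla(\trace H)[0]\|_{B^0_x}, \|M[0]\|_{B^0_x} \leq \Gamma$. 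The main technical obstacle I anticipate is the careful bookkeeping for the seven-term right-hand side of the $M$-equation, in particular the trilinear terms $\hat H \cdot Z \cdot Z$ and $\trace \ul H \cdot \hat H \cdot \hat H$: the latter requires the just-obtained $L^{2,\infty}_{x,t}$-bound on $\ul H$ as an input, so the five estimates must be proved in the order stated in the lemma, and each trilinear product must be split so that no factor is charged with a norm stronger than what has already been established.
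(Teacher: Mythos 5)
Your proposal is correct and follows essentially the same route as the paper's proof: each estimate is obtained by applying the appropriate evolution equation from Proposition \ref{thm.structure_renorm}, using H\"older plus \eqref{eq.int_ineq}, \eqref{eq.norm_comp_int}, \eqref{eq.nsob_trace}, the flux/initial-data hypotheses, and the bootstrap assumption. One minor redundancy: for the $H^{\infty,1/2}_{t,x}$-bounds on $H$ and $Z$ you propose to re-derive $\|\nabla_t H\|_{L^{2,2}_{t,x}}$ and $\|\nabla_t Z\|_{L^{2,2}_{t,x}}$ from the evolution equations, but these are already directly contained in the $N^1_{t,x}$ part of \ass{BA}{$\delta,\Delta$}, so Proposition \ref{thm.nsob_trace} can be invoked immediately without that intermediate step --- this is what the paper does. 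Beyond that, your explicit flagging of the Gr\"onwall-style absorption for the self-coupling terms $H \otimes \ul H$ in the $\ul H$-equation and $(\trace H) M$ in the $M$-equation, and your observation that the five estimates must be established in order (the $M$-estimate relying on the $\ul H$-estimate through the $\trace\ul H\cdot\hat H\cdot\hat H$ term), are both correct and in fact slightly more careful than the paper's exposition, which glosses over the $(\trace H)M$ self-coupling when listing the bilinear/trilinear decomposition of $\nabla_t M$.
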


\begin{proof}
First, taking the trace of the first equation in \eqref{eq.structure_renorm_ev} yields
\[ \nabla_t ( \trace H ) = - \gamma^{ab} \gamma^{cd} H_{ac} H_{bd} \text{.} \]
Applying $\cint_0^t$ to the above and recalling \eqref{eq.norm_comp_int} and \ass{BA}{\delta, \Delta}, we obtain
\[ \| \trace H \|_{ L^{\infty, \infty}_{t, x} } \lesssim \| \trace H [0] \|_{ L^\infty_x } + \| H \|_{ L^{\infty, 2}_{x, t} }^2 \lesssim \Gamma + \Delta^2 \text{.} \]
This proves the first estimate in \eqref{eqp.transport}.
Moreover, from the \ass{F2}{C, N, \Delta} condition (see Lemma \ref{thmp.rf2}) and \eqref{eq.nsob_trace}, we obtain the second estimate in \eqref{eqp.transport}:
\[ \| H \|_{ H^{\infty, 1/2}_{t, x} } + \| Z \|_{ H^{\infty, 1/2}_{t, x} } \lesssim \| H [0] \|_{ H^{1/2}_x } + \| Z [0] \|_{ H^{1/2}_x } + \Delta \lesssim \Delta \text{.} \]

Applying \eqref{eq.renorm_ass_flux}, \ass{BA}{\delta, \Delta}, and \eqref{eqp.hz_sob} to the last equation in \eqref{eq.structure_renorm_ev} yields
\begin{align*}
\| \nabla_t \ul{H} \|_{ L^{2, 2}_{t, x} } &\lesssim \| R \|_{ L^{2, 2}_{t, x} } + \| \nabla Z \|_{ L^{2, 2}_{t, x} } + \| H \|_{ L^{\infty, 2}_{x, t} } ( 1 + \| \ul{H} \|_{ L^{2, \infty}_{x, t} } ) + \| Z \|_{ L^{\infty, 2}_{x, t} } \| Z \|_{ L^{2, \infty}_{x, t} } \\
&\lesssim \Delta ( 1 + \| \ul{H} \|_{ L^{2, \infty}_{x, t} } ) \text{.}
\end{align*}
Moreover, recalling \eqref{eq.norm_comp_int} and the fact that $\delta \leq 1$, we have
\[ \| \ul{H} \|_{ L^{2, \infty}_{x, t} } \lesssim \| \ul{H} [0] \|_{ L^2_x } + \| \nabla_t \ul{H} \|_{ L^{2, 2}_{t, x} } \lesssim \Gamma + \| \nabla_t \ul{H} \|_{ L^{2, 2}_{t, x} } \text{.} \]
Combining the above and recalling that $\Delta \ll 1$ yields the third estimate in \eqref{eqp.transport}.

Next, we take an $L^{2, 1}_{x, t}$-norm of the first equation in \eqref{eq.structure_renorm_evd}:
\begin{align*}
\| \nabla_t \nabla ( \trace H ) \|_{ L^{2, 1}_{x, t} } &\lesssim \| H \otimes \nabla H \|_{ L^{2, 1}_{x, t} } \lesssim \| H \|_{ L^{\infty, 2}_{x, t} } \| \nabla H \|_{ L^{2, 2}_{t, x} } \lesssim \Delta^2 \text{.}
\end{align*}
In particular, we applied \ass{BA}{\delta, \Delta}.
Keeping in mind \eqref{eq.norm_comp_int}, then
\begin{align*}
\| \nabla ( \trace H ) \|_{ L^{2, \infty}_{x, t} } &\lesssim \| \nabla ( \trace H ) [0] \|_{ L^2_x } + \| \cint_0^t \nabla_t \nabla ( \trace H ) \|_{ L^{2, \infty}_{x, t} } \\
&\lesssim \| \nabla ( \trace H ) [0] \|_{ L^2_x } + \| \nabla_t \nabla ( \trace H ) \|_{ L^{2, 1}_{x, t} } \\
&\lesssim \Gamma + \Delta^2 \text{.}
\end{align*}
This proves the fourth part of \eqref{eqp.transport}.

The remaining estimates for $M$ are analogous.
From the second equation in \eqref{eq.structure_renorm_evd}, we see that $\nabla_t M$ can be decomposed as $I_0 + I_1 + I_2$, where:
\begin{itemize}
\item $I_0 = -3 m s_0^{-1} (\trace H)$ satisfies, due to \eqref{eqp.transport},
\[ \| I_0 \|_{ L^{\infty, \infty}_{t, x} } \lesssim \| \trace H \|_{ L^{\infty, \infty}_{t, x} } \lesssim \Gamma + \Delta^2 \text{.} \]

\item $I_1$ can be expressed as sums of terms of the form $J_1 \cdot J_2$, where
\[ \| J_1 \|_{ L^{\infty, 2}_{x, t} } \lesssim \Delta \text{,} \qquad \| J_2 \|_{ L^{2, 2}_{t, x} } \lesssim \Delta \text{,} \]

\item $I_2$ can be written as sums of terms of the form $K_1 \cdot K_2 \cdot K_3$, with
\[ \| K_1 \|_{ L^{\infty, 2}_{x, t} } \lesssim \Delta \text{,} \qquad \| K_2 \|_{ L^{\infty, 2}_{x, t} } \lesssim \Delta \text{,} \qquad \| K_3 \|_{ L^{2, \infty}_{x, t} } \lesssim \Delta \text{.} \]
\end{itemize}
As a result, recalling \eqref{eq.norm_comp_int}, \eqref{eq.renorm_ass_init}, and \ass{BA}{\Delta, \delta}, we obtain
\begin{align*}
\| \nabla_t M \|_{ L^{2, 1}_{x, t} } &\lesssim \| I_0 \|_{ L^{\infty, \infty}_{t, x} } + \| J_1 \|_{ L^{\infty, 2}_{x, t} } \| J_2 \|_{ L^{2, 2}_{t, x} } + \| K_1 \|_{ L^{\infty, 2}_{x, t} } \| K_2 \|_{ L^{\infty, 2}_{x, t} } \| K_3 \|_{ L^{2, \infty}_{x, t} } \\
&\lesssim \Gamma + \Delta^2 \text{,} \\
\| M \|_{ L^{2, \infty}_{x, t} } &\lesssim \| M [0] \|_{ L^2_x } + \| \nabla_t M \|_{ L^{2, 1}_{x, t} } \\
&\lesssim \Gamma + \Delta^2 \text{.}
\end{align*}
This proves the final estimate in \eqref{eqp.transport}.
\end{proof}

With these transport equation estimates, we can now prove the \ass{K}{} condition.

\begin{lemma} \label{thmp.gc}
Suppose \ass{BA}{\delta, \Delta} holds.
Then:
\begin{itemize}
\item $(\mc{N}_\delta, \gamma)$ satisfies \ass{K}{1, D} for some $D \lesssim \Delta$, with $D \ll 1$.

\item The following estimates hold for $\mc{K}$:
\begin{equation} \label{eqp.gc_weak} \| \mc{K} - 1 \|_{ L^{2, 2}_{t, x} } \lesssim \Delta \text{,} \qquad \| \mc{K} - 1 \|_{ H^{\infty, -1/2}_{t, x} } \lesssim \Delta \text{.} \end{equation}
\end{itemize}
\end{lemma}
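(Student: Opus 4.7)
The plan is to read off the decomposition required by the \ass{K}{} condition directly from the two Gauss–Codazzi-type equations in Proposition~\ref{thm.structure_renorm}. The only term on the right-hand side of the second equation of \eqref{eq.structure_renorm_gc} that cannot immediately be placed in $L^{\infty, 2}_{t,x}$ is the curvature component $\real R$, since all remaining terms are either pointwise bounded (powers of $(1-t)$, $m/s$), or fit into the framework of Lemma~\ref{thmp.transport} (such as $\trace H$, $\trase \ul{H}$, and quadratic expressions involving $H$ and $\ul H$). To handle $\real R$, I would take the real part of the elliptic equation for $Z$ in~\eqref{eq.structure_renorm_ell}, which reads $\gamma^{ab}\nabla_a Z_b = -\real R - M + \tfrac{1}{2}\gamma^{ac}\gamma^{bd}\hat H_{ab}\ul{\hat H}_{cd}$, and solve for $\real R$. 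Substituting into the Gauss equation then expresses $\mathcal{K}-1$ as a spatial divergence of a multiple of $Z$ plus error terms that are all $L^{\infty, 2}_{t,x}$-integrable.

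More precisely, I would set $f = 1$, together with
\[
V_a = (1-t)\, Z_a,
\]
\[
W = (1-t) M - \tfrac{1-t}{2}\gamma^{ac}\gamma^{bd}\hat H_{ab}\ul{\hat H}_{cd} + \tfrac{1-t}{2}\Bigl(1-\tfrac{2m}{s}\Bigr)\trace H - \tfrac{1}{2}\trace \ul H + \tfrac{1-t}{2}\bigl(\gamma^{ac}\gamma^{bd} - \gamma^{ab}\gamma^{cd}\bigr) H_{ab}\ul H_{cd}.
\]
Since $(1-t)$ is constant on each $S_\tau$, we have $\gamma^{ab}\nabla_a V_b = (1-t)\gamma^{ab}\nabla_a Z_b$, and a direct substitution verifies the required identity $\mathcal{K}-f = \gamma^{ab}\nabla_a V_b + W$.

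Next I would verify the norm bounds. The $H^{\infty, 1/2}_{t,x}$-bound on $V$ follows at once from $\|Z\|_{H^{\infty, 1/2}_{t,x}}\lesssim \Delta$ in Lemma~\ref{thmp.transport} since $|1-t|\leq 1$. For $W$, each term can be bounded in $L^{\infty, 2}_{t,x}$ using the estimates of Lemma~\ref{thmp.transport} together with \ass{BA}{\delta, \Delta}: the terms linear in $M$, $\trace H$, and $\trase \ul H$ are controlled by the $L^{2,\infty}_{x,t}$ and $L^{\infty,\infty}_{t,x}$ estimates (recall $\|\Psi\|_{L^{\infty,2}_{t,x}}\leq \|\Psi\|_{L^{2,\infty}_{x,t}}$ by Minkowski), and the quadratic terms in $H \otimes \ul H$ are handled by pairing $\|H\|_{L^{\infty,2}_{x,t}}\lesssim \Delta$ with $\|\ul H\|_{L^{2,\infty}_{x,t}}\lesssim \Delta$. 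Taking $\Gamma$ and $\Delta$ sufficiently small then yields $\|V\|_{H^{\infty, 1/2}_{t,x}} + \|W\|_{L^{\infty,2}_{t,x}} \lesssim \Delta$, establishing the \ass{K}{1, D} condition with $D \lesssim \Delta \ll 1$.

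Finally, the curvature estimates \eqref{eqp.gc_weak} come essentially for free. For the $L^{2,2}_{t,x}$-bound I would revert to the Gauss equation~\eqref{eq.structure_renorm_gc} in its original form and bound each term directly using the curvature flux~\eqref{eq.renorm_ass_flux}, the \ass{BA}{} assumption, and Lemma~\ref{thmp.transport}; in particular $\|R\|_{L^{2,2}_{t,x}}\leq \Gamma$ is one of the given flux bounds and $\trace H$, $\trase \ul H$, and the $H\otimes\ul H$ product all have appropriate $L^{2,2}_{t,x}$-control. The second estimate $\|\mathcal{K}-1\|_{H^{\infty,-1/2}_{t,x}}\lesssim \Delta$ is exactly the content of Proposition~\ref{thm.curv_sob} applied to the decomposition just constructed. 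I do not expect any serious obstacle here; the only subtle point is the bookkeeping needed to ensure that the $\real R$ term, which is only curvature-flux-controlled in $L^{2,2}_{t,x}$, is absorbed into the divergence structure rather than into $W$—which is exactly what the elliptic equation for $Z$ enables.
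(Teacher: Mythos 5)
Your approach coincides exactly with the paper's: both take $f\equiv 1$, $V=(1-t)Z$, and obtain $W$ by substituting the real part of the elliptic equation for $\mc{D}_1 Z$ in \eqref{eq.structure_renorm_ell} into the Gauss equation of \eqref{eq.structure_renorm_gc}. The decomposition you write is algebraically correct, and the bounds on the linear trace terms and the application of Proposition~\ref{thm.curv_sob} for the $H^{\infty,-1/2}_{t,x}$-estimate are as in the paper. There is, however, a genuine gap in the estimate you propose for the traceless quadratic pieces of $W$.

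The \ass{K}{} condition requires $\|W\|_{L^{\infty,2}_{t,x}} = \sup_\tau \|W[\tau]\|_{L^2_x} \lesssim \Delta$, i.e.\ an estimate pointwise in $\tau$. For the term $\hat H\otimes\ul{\hat H}$ (and similarly $H\otimes\ul H$) the pairing you propose, $\|H\|_{L^{\infty,2}_{x,t}}\cdot\|\ul H\|_{L^{2,\infty}_{x,t}}$, only controls $\|H\otimes\ul H\|_{L^{2,2}_{t,x}}$; since the $L^\infty$-in-$x$ factor on $H$ is an $L^2$-average in $t$, it does not bound $\|H[\tau]\otimes\ul H[\tau]\|_{L^2_x}$ at fixed $\tau$. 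At each $\tau$ one would need either $L^\infty_x$ control of $\hat H[\tau]$ (not available) or $L^4_x$ control of $\ul{\hat H}[\tau]$ (Lemma~\ref{thmp.transport} only gives $L^2$ for $\ul H$). The step that rescues the proof is an algebraic cancellation you have not invoked: in your $W$, the terms
\[
-\tfrac{1-t}{2}\gamma^{ac}\gamma^{bd}\hat H_{ab}\ul{\hat H}_{cd}
+\tfrac{1-t}{2}\bigl(\gamma^{ac}\gamma^{bd}-\gamma^{ab}\gamma^{cd}\bigr)H_{ab}\ul H_{cd}
\]
combine, upon splitting $H$ and $\ul H$ into trace and traceless parts, into $-\tfrac{1-t}{4}(\trace H)(\trace\ul H)$; the $\hat H\otimes\ul{\hat H}$ contributions cancel identically. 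This is exactly what the paper's displayed identity for $\mc{K}-1$ encodes: only trace quantities remain in $W$. The surviving product $(\trace H)(\trace\ul H)$ can then legitimately be bounded in $L^{2,\infty}_{x,t}$ (which dominates $L^{\infty,2}_{t,x}$) by $\|\trace H\|_{L^{\infty,\infty}_{t,x}}\cdot\|\trace\ul H\|_{L^{2,\infty}_{x,t}}\lesssim \Delta^2$, using the $L^{\infty,\infty}$ control of $\trace H$ from Lemma~\ref{thmp.transport}. Once you carry out this cancellation, the rest of your argument closes; without it, the claimed bound on $W$ does not follow.
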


\begin{proof}
Combining the equations for $\mc{K}$ and $\mc{D}_1 Z$ in Proposition \ref{thm.structure_renorm}, we see that
\[ \mc{K} - 1 = - \frac{1}{2} \trace \ul{H} + (1 - t) \left[ \gamma^{ab} \nabla_a Z_b - M + \frac{1}{2} \paren{ 1 - \frac{2m}{s} } \trace H - \frac{1}{4} \trace H \trace \ul{H} \right] \text{.} \]
In terms of the definition of the \ass{K}{} condition from Section \ref{sec.fol_curv}, we take $f \equiv 1$, $V = (1 - t) Z$, and $W$ the remaining terms of the above equation.
By \eqref{eqp.transport},
\begin{align*}
\| V \|_{ H^{\infty, 1/2}_{t, x} } &\lesssim \| Z \|_{ H^{\infty, 1/2}_{t, x} } \lesssim \Delta \text{,} \\
\| W \|_{ L^{2, \infty}_{x, t} } &\lesssim \| \trace H \|_{ L^{2, \infty}_{x, t} } + \| M \|_{ L^{2, \infty}_{x, t} } + ( 1 + \| \trace H \|_{ L^{\infty, \infty}_{t, x} } ) \| \trace \ul{H} \|_{ L^{2, \infty}_{x, t} } \lesssim \Delta \text{.}
\end{align*}
This yields \ass{K}{1, D}, with $D \lesssim \Delta$.
For \eqref{eqp.gc_weak}, we use the second equation in \eqref{eq.structure_renorm_gc} (along with \eqref{eq.renorm_ass_flux}, \ass{BA}{\delta, \Delta}, and \eqref{eqp.transport}) and then \eqref{eq.curv_sob}.
\end{proof}

\subsection{Sobolev Estimates}

The next step is to obtain $N^1$-estimates for $H$ and $Z$.
This is the first part of the improved versions of the bootstrap assumptions.

\begin{lemma} \label{thmp.hz_sob_imp}
If \ass{BA}{\delta, \Delta} holds, then
\begin{align}
\label{eqp.hz_sob_imp} \| H \|_{ N^1_{t, x} } + \| Z \|_{ N^1_{t, x} } &\lesssim_{C, N} \Gamma + \Delta^2 \text{.}
\end{align}
\end{lemma}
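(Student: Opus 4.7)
The plan is to bound separately each component of the $N^1_{t,x}$-norm, namely $\nabla_t H$, $\nabla_t Z$, $\nabla H$, $\nabla Z$, together with the undifferentiated $L^{2,2}_{t,x}$-norms of $H$ and $Z$. The foundation for all of this is that Lemmas \ref{thmp.rf0}--\ref{thmp.gc} give us the full \ass{F2}{C,N,\Delta} and \ass{K}{1,D} conditions on $\mc{N}_\delta$, so every estimate from Sections \ref{sec.fol_reg}--\ref{sec.fol_curv} is available with constants depending only on $C$ and $N$.

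For the $t$-derivatives I would simply feed the evolution equations in \eqref{eq.structure_renorm_ev} into an $L^{2,2}_{t,x}$-norm. For $H$, the identity $\nabla_t H = -H\cdot H - A$ yields
\[
\|\nabla_t H\|_{L^{2,2}_{t,x}} \lesssim \|A\|_{L^{2,2}_{t,x}} + \|H\|_{L^{\infty,2}_{x,t}}\|H\|_{L^{2,\infty}_{x,t}} \lesssim \Gamma + \Delta^2,
\]
where the curvature flux bound \eqref{eq.renorm_ass_flux} handles $A$ and \ass{BA}{\delta,\Delta} together with \eqref{eqp.hz_sob} handles the quadratic term. The same template applied to $\nabla_t Z = -2H\cdot Z - B$ gives the analogous bound for $\nabla_t Z$. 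The undifferentiated $L^{2,2}_{t,x}$-norms of $H$ and $Z$ follow from the initial data \eqref{eq.renorm_ass_init}, \eqref{eq.norm_comp_int}, and the $\nabla_t$ bounds just obtained, absorbing an extra $\delta^{1/2}\leq 1$.

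The harder part is $\nabla H$ and $\nabla Z$, which must come from the elliptic equations \eqref{eq.structure_renorm_ell}. Writing $H = \hat{H} + \tfrac{1}{2}(\trace H)\gamma$, the $\mc{D}_2$-estimate \eqref{eq.hodge_est_D2} yields, pointwise in $t$,
\[
\|\nabla \hat{H}[\tau]\|_{L^2_x} + \|\hat H[\tau]\|_{L^2_x} \lesssim \|\mc{D}_2\hat H[\tau]\|_{L^2_x},
\]
and the RHS is bounded by $\|B[\tau]\|_{L^2_x}+\|Z[\tau]\|_{L^2_x}+\|\nabla(\trace H)[\tau]\|_{L^2_x}$ plus quadratic terms controlled via $\|\trace H\|_{L^{\infty,\infty}_{t,x}}$ and $\|\hat H\,Z\|_{L^2_x}$. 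Integrating in $t$ and bounding each term by \eqref{eq.renorm_ass_flux}, \ass{BA}{\delta,\Delta}, and the transport estimates in \eqref{eqp.transport} (noting in particular that $\|\nabla(\trace H)\|_{L^{2,\infty}_{x,t}}\lesssim \Gamma+\Delta^2$ implies $\|\nabla(\trace H)\|_{L^{2,2}_{t,x}}\lesssim \Gamma+\Delta^2$), we obtain $\|\nabla \hat H\|_{L^{2,2}_{t,x}} \lesssim \Gamma + \Delta^2$. Combining this with the $\nabla(\trace H)$-bound just mentioned gives $\|\nabla H\|_{L^{2,2}_{t,x}} \lesssim \Gamma + \Delta^2$. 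For $Z$, the $\mc{D}_1$-estimate \eqref{eq.hodge_est_D1} together with the equation $\mc{D}_1 Z = -R - M + \tfrac{1}{2}(\gamma+i\epsilon)\hat H \otimes \ul{\hat H}$ controls $\|\nabla Z[\tau]\|_{L^2_x}+\|Z[\tau]\|_{L^2_x}$ by $\|R[\tau]\|_{L^2_x}+\|M[\tau]\|_{L^2_x}+\|\hat H\,\ul{\hat H}\|_{L^2_x}$; integrating in $t$ and using the flux bound for $R$, the $M$-bound from \eqref{eqp.transport}, and the product estimate $\|\hat H\,\ul{\hat H}\|_{L^{2,2}_{t,x}}\lesssim \|\hat H\|_{L^{\infty,2}_{x,t}}\|\ul H\|_{L^{2,\infty}_{x,t}}\lesssim \Delta^2$ closes the bound.

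The only real obstacle is bookkeeping: making sure that each nonlinear error term appearing on the right of the structure equations is genuinely quadratic and factors into one norm controlled by \ass{BA}{\delta,\Delta} (giving $\Delta$) paired with a second norm controlled either by the transport estimates or by \eqref{eqp.hz_sob} (giving another $\Delta$), so that the total contribution is $\Delta^2$. No extra bootstrap hypotheses beyond the stated ones are needed, since the \ass{F2}{} and \ass{K}{} conditions have already been verified, and this is where the elliptic estimates \eqref{eq.hodge_est_D1}, \eqref{eq.hodge_est_D2} become available with constants depending only on $C$, $N$.
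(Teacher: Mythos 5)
Your proposal is correct and follows essentially the same approach as the paper: evolution equations for the $\nabla_t$-bounds, then the Hodge-elliptic estimates \eqref{eq.hodge_est_D1}, \eqref{eq.hodge_est_D2} applied to \eqref{eq.structure_renorm_ell} for the $\nabla$-bounds, with error terms split as bootstrap quantity times transport-controlled quantity. The only cosmetic difference is that you obtain the improved undifferentiated $L^{2,2}_{t,x}$-bounds for $H$ and $Z$ by integrating $\nabla_t H,\nabla_t Z$ from initial data, whereas the paper reads them off as a byproduct of the elliptic estimates and therefore must handle $Z$ before $H$ (since the lone $Z_a$ term in the $\mc{D}_2\hat{H}$-equation needs the improved rather than bootstrap bound for $Z$); your transport-first route supplies that same improved $\|Z\|_{L^{2,2}_{t,x}}$-bound in advance, so the ordering works out.
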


\begin{proof}
From the first equation in \eqref{eq.structure_renorm_ev}, \eqref{eq.renorm_ass_flux}, \ass{BA}{\delta, \Delta}, and \eqref{eqp.hz_sob}, we have
\begin{align*}
\| \nabla_t H \|_{ L^{2, 2}_{t, x} } &\lesssim \| H \|_{ L^{\infty, 2}_{x, t} } \| H \|_{ L^{2, \infty}_{x, t} } + \Gamma \lesssim \Gamma + \Delta^2 \text{.}
\end{align*}
A similar process using the second equation in \eqref{eq.structure_renorm_ev} yields
\[ \| \nabla_t Z \|_{ L^{2, 2}_{t, x} } \lesssim \Gamma + \Delta^2 \text{.} \]

Next, applying \ass{K}{1, D} and \eqref{eq.hodge_est_D1} to the second equation in \eqref{eq.structure_renorm_ell}, we obtain
\[ \| \nabla Z \|_{ L^{2, 2}_{t, x} } + \| Z \|_{ L^{2, 2}_{t, x} } \lesssim \| R \|_{ L^{2, 2}_{t, x} } + \| M \|_{ L^{2, 2}_{t, x} } + \| H \|_{ L^{\infty, 2}_{x, t} } \| \ul{H} \|_{ L^{2, \infty}_{x, t} } \text{.} \]
By \ass{BA}{\delta, \Delta}, \eqref{eq.renorm_ass_flux}, and \eqref{eqp.transport}, we can bound
\begin{equation} \label{eqpl.hz_sob_imp_1} \| \nabla Z \|_{ L^{2, 2}_{t, x} } + \| Z \|_{ L^{2, 2}_{t, x} } \lesssim \Gamma + \Delta^2 \text{.} \end{equation}
Similarly, using \eqref{eq.hodge_est_D2} on the first equation in \eqref{eq.structure_renorm_ell} yields
\begin{align*}
\| \nabla H \|_{ L^{2, 2}_{t, x} } + \| H \|_{ L^{2, 2}_{t, x} } &\lesssim \| \nabla \hat{H} \|_{ L^{2, 2}_{t, x} } + \| \hat{H} \|_{ L^{2, 2}_{t, x} } + \| \nabla ( \trace H ) \|_{ L^{2, \infty}_{x, t} } + \| \trace H \|_{ L^{\infty, \infty}_{t, x} } \\
&\lesssim \Gamma + \Delta^2 + \| B \|_{ L^{2, 2}_{t, x} } + \| Z \|_{ L^{2, 2}_{t, x} } + \| H \|_{ L^{\infty, 2}_{x, t} } \| Z \|_{ L^{2, \infty}_{x, t} } \text{,}
\end{align*}
where we also applied \eqref{eqp.transport}.
Applying \ass{BA}{\delta, \Delta}, \eqref{eq.renorm_ass_flux}, \eqref{eqp.hz_sob}, and \eqref{eqpl.hz_sob_imp_1} yields
\[ \| \nabla H \|_{ L^{2, 2}_{t, x} } + \| H \|_{ L^{2, 2}_{t, x} } \lesssim \Gamma + \Delta^2 \text{.} \]
By the definition of the $N^1_{t, x}$-norm, the proof is complete.
\end{proof}

Next, we obtain some additional estimates for the curvature coefficients.
Recall the special covariant integral operator $\cint^t_\star$, which was defined in \eqref{eq.cint_ex}.

\begin{lemma} \label{thmp.curv_int}
If \ass{BA}{\delta, \Delta} holds, then
\begin{align}
\label{eqp.curv_int} \| \cint^t_\star A \|_{ L^{\infty, 2}_{t, x} } + \| \cint^t_\star B \|_{ L^{\infty, 2}_{t, x} } &\lesssim_{C, N} \Delta \text{,} \\
\notag \| \cint^t_\star B \|_{ N^{1i}_{t, x} } + \| \cint^t_\star R \|_{ N^{1i}_{t, x} } &\lesssim_{C, N} \Gamma + \Delta^2 \text{,} \\
\notag \| B \|_{ N^{0 \star}_{t, x} } + \| R \|_{ N^{0 \star}_{t, x} } &\lesssim_{C, N} \Gamma + \Delta^2 \text{.}
\end{align}
\end{lemma}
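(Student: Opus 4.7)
The plan is to establish the three displayed estimates in order, treating the first as a warm-up, spending the bulk of the work on the second, and deducing the third at once from the second.

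For the first bound, I split $\cint^t_\star\Psi = \cint^t_0(\eta_+\Psi) - \cint^t_\delta(\eta_-\Psi)$ and estimate each piece. Since $0 \leq \eta_\pm \leq 1$ and $\mc{J} \simeq 1$ on $\mc{N}_\delta$ by the \ass{F0}{C,N,\Delta} condition from Lemma \ref{thmp.rf0}, I have $\|\Phi\|_{L^{\infty,2}_{t,x}} \leq \|\Phi\|_{L^{2,\infty}_{x,t}}$ for any horizontal tensor field $\Phi$. Applying Proposition \ref{thm.int_ineq} with $q=2$ gives $\|\cint^t_0(\eta_+\Psi)\|_{L^{2,\infty}_{x,t}} \lesssim \|\Psi\|_{L^{2,1}_{x,t}}$, and Cauchy--Schwarz in $t$ yields $\|\Psi\|_{L^{2,1}_{x,t}} \leq \delta^{1/2}\|\Psi\|_{L^{2,2}_{t,x}} \leq \Gamma$ by the curvature flux bound \eqref{eq.renorm_ass_flux}; the treatment of $\cint^t_\delta(\eta_-\Psi)$ is symmetric. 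This proves the first bound (with room to spare, $\lesssim \Gamma \leq \Delta$).

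For the second bound I focus on $\|\cint^t_\star B\|_{N^{1i}_{t,x}}$, the treatment of $\cint^t_\star R$ being entirely analogous (starting from the Bianchi equation for $R$, whose principal spatial term is $\mc{D}_1 B$, with the just-established bound on $\cint^t_\star B$ as input). Three of the four pieces of the $N^{1i}$-norm are immediate: $\nabla_t\cint^t_\star B = B$ gives $\|\nabla_t\cint^t_\star B\|_{L^{2,2}_{t,x}} \leq \Gamma$ via \eqref{eq.renorm_ass_flux}; the $L^{2,2}_{t,x}$-piece follows from the first bound together with $\delta \leq 1$; and the $H^{1/2}_x$-bound on $\cint^t_\star B[0] = -\cint^0_\delta(\eta_- B)$ is obtained by extending $t$-parallelly and invoking \eqref{eq.nsob_trace}. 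The crux is the spatial derivative $\|\nabla\cint^t_\star B\|_{L^{2,2}_{t,x}}$. For this I exploit the null Bianchi equation \eqref{eq.structure_renorm_nb}, rewritten in the form
\[
B = \mc{D}_2 A + (1-t) E - \nabla_t[(1-t) B], \qquad E = -2(\trace H) B + \gamma^{bc} Z_b A_{ac},
\]
where the identity $\nabla_t[(1-t)B] = -B + (1-t)\nabla_t B$ has been used to absorb the singular factor $(1-t)^{-1}$ in the Bianchi equation. Applying $\cint^t_\star$ and the identity \eqref{eq.cint_ex_id}, and then the $\nabla$--$\cint^t_\star$ commutator estimate (the extension of \eqref{eqp.comm_est} from $\cint^t_0$ to $\cint^t_\star$), the control of $\|\nabla\cint^t_\star B\|_{L^{2,2}_{t,x}}$ reduces to bounding $\|\cint^t_\star(\mc{D}_2 A)\|_{N^{1}_{t,x}}$ together with the nonlinear contributions $\cint^t_\star(Z \cdot A)$ and $\cint^t_\star((\trace H)\cdot B)$. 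These bilinears are handled by Corollary \ref{thm.est_prod_cor} together with the $N^1_{t,x}$-bounds on $H, Z$ from Lemma \ref{thmp.hz_sob_imp} and the curvature flux bounds on $A, B$, producing the claimed $\Gamma + \Delta^2$.

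The third bound is immediate: since $\nabla_t\cint^t_\star B = B$ and $\nabla_t\cint^t_\star R = R$, the definition of the $N^{0\star}_{t,x}$-norm gives $\|B\|_{N^{0\star}_{t,x}} \leq \|\cint^t_\star B\|_{N^{1i}_{t,x}}$ and similarly for $R$. The principal obstacle in the argument is managing the $(1-t)^{-1}$ factor on the right-hand side of the Bianchi equation for $B$ uniformly in $\delta \in (0,1)$; the rearrangement using $\nabla_t[(1-t)B]$ is what neutralizes this singularity and constitutes the technical heart of the proof. A secondary nuisance is the combinatorial bookkeeping for commutators between $\cint^t_\star$, $\nabla$, and $\mc{D}_2$, each of which must be estimated using the \ass{F2}{C,N,\Delta} and \ass{K}{1,D} regularity from Lemmas \ref{thmp.rf2} and \ref{thmp.gc} to ensure every error contributes at most $O(\Delta(\Gamma+\Delta))$, hence fits within the target $\Gamma + \Delta^2$.
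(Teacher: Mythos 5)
Your treatment of the crux --- the bound on $\| \nabla \cint^t_\star B \|_{L^{2,2}_{t,x}}$ --- does not close. After applying $\cint^t_\star$ to your rearrangement $B = \mc{D}_2 A + (1-t)E - \nabla_t[(1-t)B]$ and using \eqref{eq.cint_ex_id}, the last term leaves behind $(1-t)B$ plus cutoff integrals such as $\cint^t_0(\eta_+'(1-t)B)$, so taking $\nabla$ produces $(1-t)\nabla B$ outright; nothing in \eqref{eq.renorm_ass_flux} or \ass{BA}{\delta,\Delta} controls a spatial derivative of a curvature component, so this term is unbounded in your scheme. The term $\|\cint^t_\star(\mc{D}_2 A)\|_{N^1_{t,x}}$ is worse still, since its gradient part carries two spatial derivatives on an antiderivative of $A$. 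The mechanism in the paper is different: one reduces $\|\nabla\cint^t_0(\eta_+ B)\|_{L^{2,2}_{t,x}}$ to $\|\mc{D}_1\cint^t_0(\eta_+ B)\|_{L^{2,2}_{t,x}}$ via the Hodge estimate \eqref{eq.hodge_est_D1} (legitimate thanks to the \ass{K}{} condition), commutes $\mc{D}_1$ past $\cint^t_0$ via \eqref{eqp.comm_est}, and then invokes the Bianchi equation for $R$, which exhibits $\mc{D}_1 B$ as $\nabla_t R$ plus controllable bilinears; an integration by parts in $t$ then lands on the flux-controlled $R$ itself. For the same reason your plan for $\cint^t_\star R$ is off: the required input there is the $\ul{B}$ equation, which exhibits $\mc{D}_1^\ast\bar{R}$ as $\nabla_t\ul{B}$ plus lower order, not the $R$ equation (its principal term $\mc{D}_1 B$ is not a spatial derivative of $R$, so it cannot produce the $\nabla\cint^t_\star R$ bound). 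Note also that the $(1-t)^{-1}$ factor you single out as the principal obstacle never enters this lemma: the Bianchi equation for $B$ is not used in its proof at all.

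Two further points. The first estimate is needed, inside this very proof, to handle the term $\cint^t_0(\eta_+\,\ul{H}\otimes A)$ arising from the $R$-Bianchi equation, and for that one needs the sharp-trace-type norm $\|\cint^t_\star A\|_{L^{\infty,2}_{x,t}}$ ($\sup_x$ of $L^2_t$ along generators); the subscript order in the statement is evidently a typo. Your soft argument from Proposition \ref{thm.int_ineq} plus Cauchy--Schwarz yields only $\sup_t L^2_x$-type control and cannot reach $\sup_x L^2_t$ from an $L^{2,2}$ flux; the paper instead substitutes $A = -\nabla_t H - H\cdot H$ and $B = -\nabla_t Z - 2H\cdot Z$ from \eqref{eq.structure_renorm_ev} and integrates by parts, which is exactly why the bound is $\Delta$ (it consumes the bootstrap sharp-trace bounds on $H$ and $Z$) rather than $\Gamma$. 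Finally, the $H^{1/2}_x$ bound on $(\cint^t_\star B)[0]$ is not ``immediate'': it comes from \eqref{eq.nsob_trace} run backwards from $t=\delta$, which requires the full $N^1_{t,x}$ bound on $\cint^t_\delta(\eta_- B)$ --- precisely the gradient estimate above --- so it must be done last, not first.
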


\begin{proof}
It suffices to prove the first two estimates in \eqref{eqp.curv_int} with $\cint^t_\star \Psi$ replaced by $\cint^t_0 ( \eta_+ \Psi )$, since the remaining estimate for $\cint^t_\delta ( \eta_- \Psi )$ can be obtained analogously.

First, using the first equation in \eqref{eq.structure_renorm_ev}, we can write
\begin{align*}
\| \cint^t_0 ( \eta_+ A ) \|_{ L^{\infty, 2}_{x, t} } &\lesssim \| \cint^t_0 ( \eta_+ \nabla_t H ) \|_{ L^{\infty, 2}_{x, t} } + \| \cint^t_0 ( \eta_+ | H | | H | ) \|_{ L^{\infty, 2}_{x, t} } = X_0 + X_1 \text{.}
\end{align*}
Using H\"older's inequality, we have
\[ X_1 \lesssim \| H \|_{ L^{\infty, 2}_{x, t} }^2 \lesssim \Delta^2 \text{.} \]
Moreover, integrating by parts and recalling that $\eta_+$ vanishes at $t = 0$ yields
\begin{align*}
X_0 &\lesssim \| \eta_+ H \|_{ L^{\infty, 2}_{x, t} } + \| \cint^t_0 ( \eta_+^\prime H ) \|_{ L^{\infty, 2}_{x, t} } \lesssim \| H \|_{ L^{\infty, 2}_{x, t} } + \delta^{-1} \| \cint^t_0 | H | \|_{ L^{\infty, 2}_{x, t} } \lesssim \Delta \text{.}
\end{align*}
Combining the above steps with a completely analogous computation using the second equation in \eqref{eq.structure_renorm_ev}, we obtain the estimate
\[ \| \cint^t_0 ( \eta_+ A ) \|_{ L^{\infty, 2}_{x, t} } + \| \cint^t_0 ( \eta_+ B ) \|_{ L^{\infty, 2}_{x, t} } \lesssim \Delta \text{.} \]
From the preceding remarks, the first estimate of \eqref{eqp.curv_int} follows.

We now consider the second part of \eqref{eqp.curv_int}.
By \eqref{eq.renorm_ass_flux}, we immediately obtain
\[ \| \nabla_t \cint^t_0 ( \eta_+ B ) \|_{ L^{2, 2}_{t, x} } + \| \cint^t_0 ( \eta_+ B ) \|_{ L^{2, 2}_{t, x} } \lesssim \| B \|_{ L^{2, 2}_{t, x} } \lesssim \Gamma \text{.} \]
Next, applying \eqref{eq.hodge_est_D1}, \eqref{eq.renorm_ass_flux}, and \eqref{eqp.comm_est}, we obtain
\footnote{Since $\nabla_t$ annihilates both $\gamma$ and $\epsilon$, then $\cint^t_0$ commutes with any of the symmetric Hodge operators in the same way as it does with $\nabla$.  As a result, \eqref{eqp.comm_est} applies here.}
\[ \| \nabla \cint^t_0 ( \eta_+ B ) \|_{ L^{2, 2}_{t, x} } \lesssim \| \mc{D}_1 \cint^t_0 ( \eta_+ B ) \|_{ L^{2, 2}_{t, x} } \lesssim \| \cint^t_0 ( \eta_+ \mc{D}_1 B ) \|_{ L^{2, 2}_{t, x} } + \Delta^2 \text{.} \]
From the second equation in \eqref{eq.structure_renorm_nb},
\begin{align*}
\| \cint^t_0 ( \eta_+ \mc{D}_1 B ) \|_{ L^{2, 2}_{t, x} } &\lesssim \| \cint^t_0 ( \eta_+ \nabla_t R ) \|_{ L^{2, 2}_{t, x} } + \| \cint^t_0 ( \eta_+ | H | ) \|_{ L^{2, 2}_{t, x} } + \| \cint^t_0 ( \eta_+ | H | | R | ) \|_{ L^{2, 2}_{t, x} } \\
&\qquad + \| \cint^t_0 ( \eta_+ | Z | | B | ) \|_{ L^{2, 2}_{t, x} } + \| \cint^t_0 ( \eta_+ \ul{H} \otimes A ) \|_{ L^{2, 2}_{t, x} } \\
&= I_0 + I_1 + I_2 + I_3 + I_4 \text{.}
\end{align*}
By \eqref{eqp.hz_sob_imp}, we obtain $I_1 \lesssim \Gamma + \Delta^2$.
Also, $I_2$ and $I_3$ can be controlled using \eqref{eq.int_ineq}:
\[ I_2 \lesssim \| H \|_{ L^{\infty, 2}_{x, t} } \| R \|_{ L^{2, 2}_{t, x} } \lesssim \Delta^2 \text{,} \qquad I_3 \lesssim \| Z \|_{ L^{\infty, 2}_{x, t} } \| B \|_{ L^{2, 2}_{t, x} } \lesssim \Delta^2 \text{.} \]
For $I_4$, we first integrate by parts:
\begin{align*}
I_4 &= \| \cint^t_0 ( \eta_+ \ul{H} \otimes \nabla_t \cint^t_\star A ) \|_{ L^{2, 2}_{t, x} } \\
&\lesssim \| \ul{H} \otimes \cint^t_\star A \|_{ L^{2, 2}_{t, x} } + \| \delta^{-1} \cint_0^t ( | \ul{H} | | \cint^t_\star A | ) \|_{ L^{2, 2}_{t, x} } + \| \cint^t_0 ( | \nabla_t \ul{H} | | \cint^t_\star A | ) \|_{ L^{2, 2}_{t, x} } \text{.}
\end{align*}
Applying \eqref{eq.int_ineq}, H\"older's inequality, \eqref{eqp.transport}, and the first part of \eqref{eqp.curv_int}, we obtain
\[ I_4 \lesssim \| \ul{H} \|_{ L^{2, \infty}_{x, t} } \| \cint^t_\star A \|_{ L^{\infty, 2}_{x, t} } + \| \nabla_t \ul{H} \|_{ L^{2, 2}_{t, x} } \| \cint^t_\star A \|_{ L^{\infty, 2}_{x, t} } \lesssim \Delta^2 \text{.} \]
Finally, for $I_0$, we integrate by parts and apply \eqref{eq.int_ineq} and \eqref{eq.renorm_ass_flux}:
\begin{align*}
I_0 &\lesssim \| R \|_{ L^{2, 2}_{t, x} } + \| \delta^{-1} \cint^t_0 R \|_{ L^{2, 2}_{t, x} } \lesssim \Gamma \text{.}
\end{align*}

It follows from the above that
\[ \| \cint^t_\star B \|_{ N^1_{t, x} } \lesssim \Gamma + \Delta^2 \text{.} \]
Finally, for the initial value bound, we apply \eqref{eq.nsob_trace} to obtain
\[ \| ( \cint^t_\star B ) [0] \|_{ H^{1/2}_x } \lesssim \| \cint^t_\delta ( \eta_- B ) [0] \|_{ H^{1/2}_x } \lesssim \| \cint^t_\delta ( \eta_- B ) [\delta] \|_{ H^{1/2}_x } + \| \cint^t_\delta ( \eta_- B ) \|_{ N^1_{t, x} } \text{.} \]
The first term on the right-hand side vanishes by definition, while the second term can be bounded as in the preceding argument.
As a result, we have
\[ \| ( \cint^t_\star B ) [0] \|_{ H^{1/2}_x } \lesssim \| \cint^t_1 ( \eta_- B ) \|_{ N^1_{t, x} } \lesssim \Gamma + \Delta^2 \text{.} \]

The above completes the Sobolev estimate for $\cint^t_\star B$ in \eqref{eqp.curv_int}.
The corresponding estimate for $\cint^t_\star R$ can be established using completely analogous methods.
This proves the second estimate in \eqref{eqp.curv_int}.
The remaining estimate in \eqref{eqp.curv_int} follows immediately from the second estimate, since $\cint^t_\star \Psi$ is a $\nabla_t$-antiderivative of $\Psi$.
\end{proof}

\subsection{Transport-Besov Estimates}

The next task is to obtain some basic Besov and decomposition estimates from evolution relations.

\begin{lemma} \label{thmp.main_decomp_ev}
If \ass{BA}{\delta, \Delta} holds, then
\begin{align}
\label{eqp.main_decomp_ev} \| \nabla ( \trace H ) \|_{ B^{\infty, 0}_{t, x} } + \| M \|_{ B^{\infty, 0}_{t, x} } &\lesssim_{C, N} \Gamma + \Delta^2 \text{,} \\
\notag \| H \otimes \ul{H} \|_{ B^{2, 0}_{t, x} } &\lesssim_{C, N} \Delta^2 \text{,} \\
\notag \| \nabla_t \ul{H} \|_{ N^{0 \star}_{t, x} + B^{2, 0}_{t, x} } + \| \ul{H} \|_{ B^{\infty, 0}_{t, x} } &\lesssim_{C, N} \Delta \text{.}
\end{align}
\end{lemma}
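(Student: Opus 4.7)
The plan is to integrate the evolution equations of Proposition \ref{thm.structure_renorm} from $t = 0$ and apply the sharp-trace product estimates \eqref{eq.est_trace_sh_ex} and \eqref{eq.est_trace_shp_ex} of Corollary \ref{thm.est_prod_cor}, which are tailored to the sum and intersection norms appearing in \ass{BA}{\delta, \Delta}. The template throughout is: for each quantity $\Psi$, use $\Psi = \mf{p}(\Psi[0]) + \cint^t_0 \nabla_t \Psi$; bound the parallel-transport piece via \eqref{eq.norm_comp_besov} and the Besov initial data in \eqref{eq.renorm_ass_init}; and, in the integral piece, pair each bilinear contribution to $\nabla_t \Psi$ by placing one factor in $N^{0\star}_{t, x} + B^{2, 0}_{t, x}$ (the rough factor, controlled by \ass{BA}{\delta, \Delta}, Lemma \ref{thmp.curv_int}, or the $Z \otimes Z$-type product estimate \eqref{eq.est_prod_ex}) and the other in $N^{1i}_{t, x} \cap L^{\infty, 2}_{x, t}$ (the smooth factor, controlled via Lemma \ref{thmp.hz_sob_imp}).

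I would first establish the bound on $\| \nabla ( \trace H ) \|_{B^{\infty, 0}_{t, x}}$, since it is decoupled: integrating the first equation of \eqref{eq.structure_renorm_evd} and applying \eqref{eq.est_trace_sh_ex} with $\Phi = \nabla H$ (respectively, its contraction $\nabla(\trace H) = \gamma^{ab} \nabla H_{ab}$) and $\Psi = H$ yields $\Gamma + \Delta^2$. The bound on $\| M \|_{B^{\infty, 0}_{t, x}}$ proceeds identically from the second equation of \eqref{eq.structure_renorm_evd}; each bilinear or trilinear term there pairs naturally into the dichotomy above, and the only real novelty is the cubic contribution $\trace \ul{H} \cdot \hat{H} \cdot \hat{H}$, handled by grouping $H \otimes \ul{H}$ as the rough factor (bounded in $B^{2, 0}_{t, x}$ by estimate (2) below) and the remaining $\hat{H}$ as the smooth factor.

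Estimates (2) and the first half of (3) are coupled through the $\ul{H}$-evolution: the natural bound on $\| H \otimes \ul{H} \|_{B^{2, 0}_{t, x}}$ uses $\nabla_t \ul{H}$, while $\nabla_t \ul{H}$ itself contains a $H \otimes \ul{H}$ contribution. I would close this loop in a single step by setting
\[
\mc{X} = \| H \otimes \ul{H} \|_{B^{2, 0}_{t, x}}, \qquad \mc{Y} = \| \nabla_t \ul{H} \|_{N^{0\star}_{t, x} + B^{2, 0}_{t, x}}.
\]
From the third equation of \eqref{eq.structure_renorm_ev}, the $-2 \mc{D}_2^\ast Z$ piece of $\nabla_t \ul{H}$ lies in $N^{0\star}_{t, x} + B^{2, 0}_{t, x}$ with norm $\lesssim \Delta$ by \ass{BA}{\delta, \Delta}; the linear $(1 - 2m/s) H$ piece lies in $N^{0\star}_{t, x}$ with norm $\lesssim \Delta$ via the covariant antiderivative $\cint^t_0 H$ (controlled in $N^{1i}_{t, x}$ by \eqref{eqp.comm_est}); the $(\real R) \gamma$ piece lies in $N^{0\star}_{t, x}$ via Lemma \ref{thmp.curv_int}; the $Z \otimes Z$ piece lies in $B^{2, 0}_{t, x}$ with norm $\lesssim \Delta^2$ via \eqref{eq.est_prod_ex} and the embedding $B^{\infty, 0}_{t, x} \hookrightarrow B^{2, 0}_{t, x}$ valid for $\delta \leq 1$; and the $H \otimes \ul{H}$ piece contributes exactly $\mc{X}$. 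Thus $\mc{Y} \lesssim \Delta + \mc{X}$. For $\mc{X}$, decomposing $\ul{H} = \mf{p}(\ul{H}[0]) + \cint^t_0 \nabla_t \ul{H}$, the transported initial datum contributes $\lesssim \Delta \cdot \Gamma$ via \eqref{eq.est_prod_imp}, while the correction contributes $\lesssim \Delta \cdot \mc{Y}$ via \eqref{eq.est_trace_shp_ex}; hence $\mc{X} \lesssim \Delta^2 + \Delta \mc{Y}$. Substituting and using $\Delta \ll 1$ to absorb yields $\mc{X} \lesssim \Delta^2$ and $\mc{Y} \lesssim \Delta$. The remaining bound $\| \ul{H} \|_{B^{\infty, 0}_{t, x}} \lesssim \Delta$ then follows from the same decomposition of $\ul{H}$, using \eqref{eq.norm_comp_besov} for the parallel-transport piece and applying \eqref{eq.est_trace_sh_ex} termwise to $\cint^t_0 \nabla_t \ul{H}$ (with $\gamma$ as the smooth factor for the linear contributions).

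The main obstacle is precisely this circular coupling of $H \otimes \ul{H}$ with $\nabla_t \ul{H}$. In previous works such as \cite{kl_rod:cg, wang:cg} the analogous dependence was resolved by an infinite iterative ``good''/``bad'' decomposition of $\ul{H}$; here the sum norm $N^{0\star}_{t, x} + B^{2, 0}_{t, x}$, whose very definition asserts only the \emph{existence} of such a decomposition, allows the loop to close in a single absorption step through \eqref{eq.est_trace_shp_ex}, exactly as advertised in the introduction.
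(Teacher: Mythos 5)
Your proposal follows essentially the same route as the paper's proof: you integrate the renormalized evolution equations from $t=0$, split each right-hand-side term into a ``rough'' factor (placed in $N^{0\star}_{t,x}+B^{2,0}_{t,x}$) and a ``smooth'' factor (placed in $N^{1i}_{t,x}\cap L^{\infty,2}_{x,t}$), invoke the product estimates of Corollary \ref{thm.est_prod_cor}, and close the circular dependency between $\|H\otimes\ul{H}\|_{B^{2,0}_{t,x}}$ and $\|\nabla_t\ul{H}\|_{N^{0\star}_{t,x}+B^{2,0}_{t,x}}$ by a single absorption using $\Delta\ll1$ — exactly the two coupled inequalities $\mc{Y}\lesssim\Delta+\mc{X}$ and $\mc{X}\lesssim\Delta^2+\Delta\mc{Y}$ that appear in the paper. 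The only visible deviations are cosmetic: you place the linear $(1-2m/s)\,H$ contribution to $\nabla_t\ul{H}$ in the $N^{0\star}_{t,x}$ component via the antiderivative $\cint^t_0 H$ and the commutator estimate \eqref{eqp.comm_est}, whereas the paper simply observes $\|H\|_{B^{2,0}_{t,x}}\lesssim\|H\|_{L^{2,2}_{t,x}}\lesssim\Delta$ via \eqref{eqp.hz_sob_imp} and lands it in the $B^{2,0}_{t,x}$ component directly, which is both shorter and spares you the headache of pushing the $s$-dependent coefficient through the covariant integral; and you present the $M$-bound before the coupled $\ul{H}$ loop while explicitly flagging the forward reference, whereas the paper defers $M$ until after that loop is closed — logically equivalent, just a different narrative order. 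Both choices are valid, so this is the same proof with minor presentational differences.
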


\begin{proof}
From the first equation in \eqref{eq.structure_renorm_evd}, along with \eqref{eq.norm_comp_besov} and \eqref{eq.est_trace_sh_ex}, we obtain
\begin{align*}
\| \nabla ( \trace H ) \|_{ B^{\infty, 0}_{t, x} } &\lesssim \| \nabla ( \trace H ) [0] \|_{ B^0_x } + \| \cint_0^t ( H \otimes \nabla H ) \|_{ B^{\infty, 0}_{t, x} } \\
&\lesssim \Gamma + \| H \|_{ N^{1 i}_{t, x} \cap L^{2, \infty}_{x, t} } \| \nabla H \|_{ N^{0 \star}_{t, x} + B^{2, 0}_{t, x} } \\
&\lesssim \Gamma + \Delta^2 \text{,}
\end{align*}
where we also applied \eqref{eq.renorm_ass_init} and \ass{BA}{\delta, \Delta}.
The corresponding estimate for $M$ is similar, but requires first the remaining estimates for $\ul{H}$.

Applying the fundamental theorem of calculus to $\ul{H}$, we can write
\[ \| H \otimes \ul{H} \|_{ B^{2, 0}_{t, x} } \lesssim \| H \otimes \mf{p} ( \ul{H} [0] ) \|_{ B^{2, 0}_{t, x} } + \| H \otimes \cint^t_0 \nabla_t \ul{H} \|_{ B^{2, 0}_{t, x} } = I_1 + I_2 \text{,} \]
where $\mf{p}$ denotes the parallel transport operator defined in Section \ref{sec.fol_ev}.
The term $I_1$ can be controlled using \eqref{eq.est_prod_imp}, while $I_2$ is bounded using \eqref{eq.est_trace_shp_ex}:
\begin{align}
\label{eqpl.main_decomp_ev_1} \| H \otimes \ul{H} \|_{ B^{2, 0}_{t, x} } &\lesssim \| H \|_{ N^{1i}_{t, x} \cap L^{\infty, 2}_{x, t} } ( \| \ul{H} [0] \|_{ B^0_x } + \| \nabla_t \ul{H} \|_{ N^{0 \star}_{t, x} + B^{2, 0}_{t, x} } ) \\
\notag &\lesssim \Delta^2 + \Delta \| \nabla_t \ul{H} \|_{ N^{0 \star}_{t, x} + B^{2, 0}_{t, x} } \text{.}
\end{align}
In the last step, we applied \ass{BA}{\delta, \Delta}.

Next, we use the last equation in \eqref{eq.structure_renorm_ev}:
\begin{align*}
\| \nabla_t \ul{H} \|_{ N^{0 \star}_{t, x} + B^{2, 0}_{t, x} } &\lesssim \| \nabla Z \|_{ N^{0 \star}_{t, x} + B^{2, 0}_{t, x} } + \| H \|_{ B^{2, 0}_{t, x} } + \| H \otimes \ul{H} \|_{ B^{2, 0}_{t, x} } \\
&\qquad + \| Z \otimes Z \|_{ B^{2, 0}_{t, x} } + \| R \|_{ N^{0 \star}_{t, x} } \\
&= I_1 + I_2 + I_3 + I_4 + I_5 \text{.}
\end{align*}
Using \eqref{eq.est_prod_ex}, \eqref{eq.renorm_ass_init}, and \eqref{eqp.hz_sob_imp}, we can bound
\[ I_2 \lesssim \Delta \text{,} \qquad I_4 \lesssim \Delta^2 \text{,} \]
while \eqref{eqp.curv_int} and \ass{BA}{\delta, \Delta} yield
\[ I_5 \lesssim \Gamma + \Delta^2 \text{,} \qquad I_1 \lesssim \Delta \text{.} \]
As a result, we obtain
\begin{equation} \label{eqpl.main_decomp_ev_2} \| \nabla_t \ul{H} \|_{ N^{0 \star}_{t, x} + B^{2, 0}_{t, x} } \lesssim \Delta + \| H \otimes \ul{H} \|_{ B^{2, 0}_{t, x} } \text{.} \end{equation}

Combining \eqref{eqpl.main_decomp_ev_1} and \eqref{eqpl.main_decomp_ev_2} yields
\[ \| H \otimes \ul{H} \|_{ B^{2, 0}_{t, x} } \lesssim \Delta^2 \text{,} \qquad \| \nabla_t \ul{H} \|_{ N^{0 \star}_{t, x} + B^{2, 0}_{t, x} } \lesssim \Delta \text{.} \]
Moreover, applying \eqref{eq.est_trace_ex} and \eqref{eq.renorm_ass_init}, we have
\begin{align*}
\| \ul{H} \|_{ B^{\infty, 0}_{t, x} } &\lesssim \| \ul{H} [0] \|_{ B^0_x } + \| \cint^t_0 \nabla_t \ul{H} \|_{ B^{\infty, 0}_{t, x} } \lesssim \Gamma + \| \nabla_t \ul{H} \|_{ N^{0 \star}_{t, x} + B^{2, 0}_{t, x} } \lesssim \Delta \text{.}
\end{align*}
This proves the second and third inequalities of \eqref{eqp.main_decomp_ev}.

It remains to prove the Besov bound for $M$.
As in the proof of \eqref{eqp.transport}, we decompose $\nabla_t M$ as $I_0 + I_1 + I_2$; again, by the second equation in \eqref{eq.structure_renorm_evd}:
\begin{itemize}
\item $I_0 = -3 m s_0^{-1} (\trace H)$ satisfies
\begin{align*}
\| I_0 \|_{ B^{\infty, 0}_{t, x} } &\lesssim \| \nabla ( \trace H ) \|_{ L^{\infty, 2}_{t, x} } + \| \trace H \|_{ L^{\infty, 2}_{t, x} } \lesssim \Gamma + \Delta^2 \text{,}
\end{align*}
where we applied \eqref{eqp.transport} along with \eqref{eq.sobolev}.

\item $I_1$ can be expressed as sums of terms of the form $J_1 \cdot J_2$, where
\footnote{In particular, for the term of the form $Z \cdot B$, we can take $J_2$ to be $B$ by applying \eqref{eqp.curv_int}.}
\[ \| J_1 \|_{ N^{1 i}_{t, x} \cap L^{\infty, 2}_{x, t} } \lesssim \Delta \text{,} \qquad \| J_2 \|_{ N^{0 \star}_{t, x} + B^{2, 0}_{t, x} } \lesssim \Delta \text{,} \]

\item $I_2$ can be written as sums of terms of the form $K_1 \cdot K_2 \cdot K_3$, with
\[ \| K_1 \|_{ N^{1 i}_{t, x} \cap L^{\infty, 2}_{x, t} } \lesssim \Delta \text{,} \qquad \| K_2 \cdot K_3 \|_{ N^{0 \star}_{t, x} + B^{2, 0}_{t, x} } \lesssim \Delta \text{.} \]
More specifically, in most cases, we can bound, using \eqref{eq.est_prod_ex} and \ass{BA}{\delta, \Delta},
\[ \| K_2 \cdot K_3 \|_{ B^{2, 0}_{t, x} } \lesssim \| K_2 \|_{ N^{1 i}_{t, x} } \| K_3 \|_{ N^{1 i}_{t, x} } \lesssim \Delta^2 \text{.} \]
The exception is when $K_2 \cdot K_3 \simeq H \cdot \ul{H}$, for which we apply the second part of \eqref{eqp.main_decomp_ev}, which was established in the preceding paragraph.
\end{itemize}
As a result, applying \eqref{eq.norm_comp_besov} and \eqref{eq.est_trace_sh_ex} yields
\begin{align*}
\| M \|_{ B^{\infty, 0}_{t, x} } &\lesssim \| M [0] \|_{ B^0_x } + \| I_0 \|_{ B^{\infty, 0}_{t, x} } + \| J_1 \|_{ N^{1 i}_{t, x} \cap L^{\infty, 2}_{x, t} } \| J_2 \|_{ N^{0 \star}_{t, x} + B^{2, 0}_{t, x} } \\
&\qquad + \| K_1 \|_{ N^{1 i}_{t, x} \cap L^{\infty, 2}_{x, t} } \| K_2 \cdot K_3 \|_{ N^{0 \star}_{t, x} + B^{2, 0}_{t, x} } \\
&\lesssim \Gamma + \Delta^2 \text{.}
\end{align*}
This completes the proof of the first inequality of \eqref{eqp.transport}.
\end{proof}

\subsection{Advanced Curvature Estimates}

Using Lemma \ref{thmp.main_decomp_ev}, we can now prove additional estimates for the renormalized curvature components.
We begin with the following preliminary trace-type estimates.

\begin{lemma} \label{thmp.curv_int_tr}
If \ass{BA}{\delta, \Delta} holds, then
\begin{equation} \label{eqp.curv_int_tr} \| (1 - t) \mc{D}_2^{-1} B \|_{ L^{\infty, 2}_{x, t} } + \| \mc{D}_1^{-1} R \|_{ L^{\infty, 2}_{x, t} } \lesssim_{C, N} \Delta \text{.} \end{equation}
\end{lemma}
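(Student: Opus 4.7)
The plan is to use the elliptic structure equations \eqref{eq.structure_renorm_ell} to convert these curvature quantities into connection quantities plus $\mc{D}_i^{-1}$ applied to already-controlled objects, and then to apply the sharp trace estimate of Corollary \ref{thm.sharp_trace_ex} to the residual terms. Applying $\mc{D}_1^{-1}$ to the second equation of \eqref{eq.structure_renorm_ell} and $\mc{D}_2^{-1}$ to the first, and using the identity $\mc{D}_i^{-1}\mc{D}_i = I$ from Section \ref{sec.geom_curv}, one obtains the algebraic identities
\begin{align*}
\mc{D}_1^{-1} R &= -Z - \mc{D}_1^{-1} M + \mc{D}_1^{-1}(E_1), \\
(1-t)\mc{D}_2^{-1} B &= -\hat H + \mc{D}_2^{-1} Z + \tfrac{1}{2}\mc{D}_2^{-1}\nabla(\trace H) + (1-t)\mc{D}_2^{-1}(E_2),
\end{align*}
where $E_1, E_2$ collect the quadratic terms in $H, Z, \ul H$. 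The leading contributions $Z$ and $\hat H$ are already bounded by $\Delta$ in $L^{\infty,2}_{x,t}$ directly from \ass{BA}{\delta,\Delta}, so the remaining task is to control $\mc{D}_i^{-1} F$ in $L^{\infty,2}_{x,t}$ for $F \in \{M, Z, \nabla(\trace H), E_1, E_2\}$.

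For each such $F$, apply Corollary \ref{thm.sharp_trace_ex}, reducing the $L^{\infty,2}_{x,t}$-bound on $\mc{D}_i^{-1} F$ to a bound on $\|\mc{D}_i^{-1} F\|_{N^{1i}_{t,x}} + \|\nabla \mc{D}_i^{-1} F\|_{N^{0\star}_{t,x} + B^{2,0}_{t,x}}$. The spatial pieces of $\|\mc{D}_i^{-1} F\|_{N^{1i}_{t,x}}$ follow from Proposition \ref{thm.hodge_inv_est}, bounded by $\|F\|_{L^{2,2}_{t,x}} \lesssim \Gamma + \Delta^2$ by virtue of \ass{BA}{\delta,\Delta} together with Lemmas \ref{thmp.hz_sob_imp} and \ref{thmp.transport}. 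The time-derivative piece $\|\nabla_t \mc{D}_i^{-1} F\|_{L^{2,2}_{t,x}}$ is handled by commuting $\nabla_t$ past $\mc{D}_i^{-1}$ (the commutator involves $k = H$ and $\mf{C}$, controlled by Lemma \ref{thmp.rf2}) and then substituting the evolution equations — the second of \eqref{eq.structure_renorm_ev} for $\nabla_t Z$, the two equations in \eqref{eq.structure_renorm_evd} for $\nabla_t\nabla(\trace H)$ and $\nabla_t M$, and \eqref{eq.structure_renorm_ev} for $\nabla_t(H\otimes \ul H)$ — producing expressions that are bilinear in already-controlled quantities. The initial-value contribution $\|\mc{D}_i^{-1} F[0]\|_{H^{1/2}_x}$ is equivalent to $\|F[0]\|_{H^{-1/2}_x}$ by spectral duality, and the latter is bounded by $\|F[0]\|_{L^2_x} \lesssim \|F[0]\|_{B^0_x}$ (via the inclusion $B^0 \hookrightarrow L^2 \hookrightarrow H^{-1/2}$ on $\Sph^2$) using \eqref{eq.renorm_ass_init} and Sobolev product estimates for the quadratic $E_i[0]$.

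The remaining norm $\|\nabla\mc{D}_i^{-1} F\|_{N^{0\star}_{t,x} + B^{2,0}_{t,x}}$ is controlled by splitting $F = F_1 + F_2$ with $F_1 \in N^{0\star}_{t,x}$ and $F_2 \in B^{2,0}_{t,x}$, where $F_2$ is supplied by Lemma \ref{thmp.main_decomp_ev} together with \eqref{eq.est_prod_ex} for the quadratic pieces, and $F_1$ arises as an explicit $\nabla_t$-antiderivative $\nabla_t F_1^\sharp$ with $F_1^\sharp \in N^{1i}_{t,x}$. On the $B^{2,0}$-part, \eqref{eq.besov_impr_bdd} gives $\|\nabla\mc{D}_i^{-1} F_2\|_{B^{2,0}_{t,x}} \lesssim \|F_2\|_{B^{2,0}_{t,x}}$; on the $N^{0\star}$-part, we commute to write $\nabla\mc{D}_i^{-1} F_1 = \nabla_t(\nabla\mc{D}_i^{-1} F_1^\sharp)$ modulo commutators, placing it in $N^{0\star}_{t,x}$ with the required bound.

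The main obstacle is algebraic bookkeeping: one must systematically track the commutators $[\nabla_t, \mc{D}_i^{-1}]$ and $[\nabla_t, \nabla]$ through the reduction, identify into which of $N^{0\star}_{t,x}$ or $B^{2,0}_{t,x}$ each error term naturally falls, and verify via the already-established Lemmas \ref{thmp.rf0}--\ref{thmp.main_decomp_ev} that every such piece carries a factor of size $O(\Gamma + \Delta^2)$, so that the final sum closes at $\lesssim \Delta$.
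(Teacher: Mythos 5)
Your algebraic reduction via the elliptic equations \eqref{eq.structure_renorm_ell} and the identity $\mc{D}_i^{-1}\mc{D}_i = I$ is exactly what the paper does, and you correctly isolate $\hat H$, $Z$ as the leading terms handled directly by \ass{BA}{\delta,\Delta}. The problem is the route you take for the residual terms $\mc{D}_i^{-1} F$: you propose to apply Corollary \ref{thm.sharp_trace_ex}, which requires an $N^{1i}_{t,x}$-bound on $\mc{D}_i^{-1} F$, in particular $\|\nabla_t \mc{D}_i^{-1} F\|_{L^{2,2}_{t,x}}$. Your plan to obtain this by ``commuting $\nabla_t$ past $\mc{D}_i^{-1}$, the commutator involving $k$ and $\mf{C}$'' is the gap. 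The operator $\mc{D}_i^{-1}$ is \emph{nonlocal}---it is defined by composing the actual inverse of $\mc{D}_i$ with the $L^2$-orthogonal projection $\mc{P}_i$ onto the range of $\mc{D}_i$---and this projection depends on $\gamma[\tau]$. The commutator $[\nabla_t, \mc{D}_i^{-1}]$ therefore formally reads $\mc{D}_i^{-1}\bigl(\nabla_t \mc{P}_i - [\nabla_t, \mc{D}_i]\mc{D}_i^{-1}\bigr)$, which involves both a second copy of the nonlocal inverse and the time-derivative of the range projection. No estimate for either ingredient exists anywhere in the paper; the commutation result \eqref{eqp.comm_est} in Lemma \ref{thmp.rf2} is for $[\nabla, \cint^t_0]$ and extends to the local first-order operators $\mc{D}_i$, but not to their inverses. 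Similarly, your claim that $\|\mc{D}_i^{-1}F[0]\|_{H^{1/2}_x} \simeq \|F[0]\|_{H^{-1/2}_x}$ ``by spectral duality'' would require a fractional-Sobolev boundedness statement for $\mc{D}_i^{-1}$ that is also not established (Proposition \ref{thm.hodge_inv_est} is an $L^2 \to H^1$ statement only).

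The paper sidesteps all of this with a cheaper trick: it does not try to bound $\|\mc{D}_i^{-1}F\|_{L^{\infty,2}_{x,t}}$ via sharp trace at all. Instead it passes to the \emph{stronger} norm $L^{2,\infty}_{t,x}$ (which dominates $L^{\infty,2}_{x,t}$ by Minkowski's integral inequality), and then invokes the Besov-elliptic embedding \eqref{eq.besov_impr_sh} together with the $\mc{D}^{-1}$-boundedness \eqref{eq.besov_impr_bdd} from Theorem \ref{thm.besov_impr} to reduce $\|\mc{D}_i^{-1}F\|_{L^{2,\infty}_{t,x}}$ to $\|F\|_{B^{2,0}_{t,x}}$. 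The inputs $F \in \{Z, \nabla(\trace H), M, H\otimes Z, H\otimes\ul H\}$ are then all controlled in $B^{2,0}_{t,x}$ by the bootstrap assumption, \eqref{eq.est_prod_ex}, and Lemma \ref{thmp.main_decomp_ev}. This route is purely elliptic (no $\nabla_t$ of $\mc{D}_i^{-1}$ appears), takes about half a page, and never needs the sharp trace machinery. You should replace your use of Corollary \ref{thm.sharp_trace_ex} with this elementary observation plus Theorem \ref{thm.besov_impr}.
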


\begin{proof}
We apply the first equation in \eqref{eq.structure_renorm_ell} to write
\begin{align*}
\| (1 - t) \mc{D}_2^{-1} B \|_{ L^{\infty, 2}_{x, t} } &\lesssim \| \hat{H} \|_{ L^{\infty, 2}_{x, t} } + \| \mc{D}_2^{-1} Z \|_{ L^{2, \infty}_{t, x} } + \| \mc{D}_2^{-1} \nabla ( \trace H ) \|_{ L^{2, \infty}_{t, x} } \\
&\qquad + \| \mc{D}_2^{-1} ( H \cdot Z ) \|_{ L^{2, \infty}_{t, x} } \text{.}
\end{align*}
The first term on the right-hand side is bounded by $\Delta$ using \ass{BA}{\delta, \Delta}.
The remaining terms can be controlled using \eqref{eq.besov_impr_sh} and \eqref{eq.besov_impr_bdd}:
\begin{align*}
\| (1 - t) \mc{D}_2^{-1} B \|_{ L^{\infty, 2}_{x, t} } &\lesssim \Delta + \| Z \|_{ B^{2, 0}_{t, x} } + \| \nabla ( \trace H ) \|_{ B^{2, 0}_{t, x} } + \| H \otimes Z \|_{ B^{2, 0}_{t, x} } \text{.}
\end{align*}
The second term on the right-hand side is trivially bounded, while the third and fourth terms are bounded using \eqref{eqp.main_decomp_ev} and \eqref{eq.est_prod_ex}, respectively.
This yields
\begin{align*}
\| (1 - t) \mc{D}_2^{-1} B \|_{ L^{\infty, 2}_{x, t} } &\lesssim \Delta + \Gamma + \Delta^2 \lesssim \Delta \text{,}
\end{align*}
as desired.
By a similar argument using the second equation in \eqref{eq.structure_renorm_ell}, we obtain
\begin{align*}
\| \mc{D}_1^{-1} R \|_{ L^{\infty, 2}_{x, t} } &\lesssim \| Z \|_{ L^{\infty, 2}_{x, t} } + \| \mc{D}_1^{-1} M \|_{ L^{2, \infty}_{t, x} } + \| \mc{D}_1^{-1} ( \hat{H} \cdot \ul{\hat{H}} ) \|_{ L^{2, \infty}_{t, x} } \\
&\lesssim \Delta + \| M \|_{ B^{2, 0}_{t, x} } + \| H \otimes \ul{H} \|_{ B^{2, 0}_{t, x} } \text{.}
\end{align*}
Using \eqref{eqp.main_decomp_ev} on the above completes the proof of \eqref{eqp.curv_int_tr}.
\end{proof}

Our goal is the subsequent estimate, for which Lemma \ref{thmp.curv_int_tr} is one part of its proof.

\begin{lemma} \label{thmp.curv_int_ex}
If \ass{BA}{\delta, \Delta} holds, then
\begin{align}
\label{eqp.curv_int_ex} \| \cint^t_\star [ (1 - t) \nabla \mc{D}_2^{-1} B ] \|_{ N^{1i}_{t, x} } + \| \cint^t_\star \nabla \mc{D}_1^{-1} R \|_{ N^{1i}_{t, x} } &\lesssim_{C, N} \Gamma + \Delta^2 \text{,} \\
\notag \| (1 - t) \nabla \mc{D}_2^{-1} B \|_{ N^{0 \star}_{t, x} } + \| \nabla \mc{D}_1^{-1} R \|_{ N^{0 \star}_{t, x} } &\lesssim_{C, N} \Gamma + \Delta^2 \text{.}
\end{align}
\end{lemma}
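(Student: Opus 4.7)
\textbf{Proof plan for Lemma \ref{thmp.curv_int_ex}.}
The second estimate in \eqref{eqp.curv_int_ex} is an immediate consequence of the first: by \eqref{eq.cint_ex_id}, $\cint^t_\star\Psi$ is a specific $\nabla_t$-antiderivative of $\Psi$, and the $N^{0\star}_{t,x}$ norm is by definition the infimum of the $N^{1i}_{t,x}$ norms over all $\nabla_t$-antiderivatives. Hence I focus entirely on bounding $\cint^t_\star[(1-t)\nabla\mc{D}_2^{-1}B]$ and $\cint^t_\star\nabla\mc{D}_1^{-1}R$ in $N^{1i}_{t,x}$.

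The core idea is to invert the elliptic structure equations \eqref{eq.structure_renorm_ell} rather than attempting to estimate the Hodge inverses of $B$ and $R$ directly. Applying $\mc{D}_2^{-1}$ to the first equation of \eqref{eq.structure_renorm_ell}, and using that $\mc{D}_2$ is injective (so $\mc{D}_2^{-1}\mc{D}_2\hat{H} = \hat{H}$), gives
\[
(1-t)\mc{D}_2^{-1} B = -\hat{H} + \mc{D}_2^{-1} Z + \tfrac{1}{2}\mc{D}_2^{-1}\nabla(\trace H) + \mc{D}_2^{-1}[\mathcal{Q}_2],
\]
where $\mathcal{Q}_2$ collects the quadratic $H\cdot Z$ terms on the right-hand side of \eqref{eq.structure_renorm_ell}; applying $\mc{D}_1^{-1}$ to the second equation of \eqref{eq.structure_renorm_ell} similarly yields
\[
\mc{D}_1^{-1} R = -Z - \mc{D}_1^{-1} M + \tfrac{1}{2}\mc{D}_1^{-1}\bigl[(\gamma+i\epsilon)\hat{H}\cdot\ul{\hat{H}}\bigr].
\]
Taking $\nabla$ of these identities (noting $\nabla(1-t)=0$ since $\nabla$ is purely spatial) and then applying $\cint^t_\star$ reduces the problem to bounding the $N^{1i}_{t,x}$ norm of $\cint^t_\star$ applied to each piece.

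The resulting terms split into three classes. First, the leading pieces $\cint^t_\star\nabla\hat{H}$ and $\cint^t_\star\nabla Z$ are bounded using \ass{BA}{\delta,\Delta}: the $N^{0\star}_{t,x}+B^{2,0}_{t,x}$ control on $\nabla H$ and $\nabla Z$ provides near-optimal decompositions $\nabla H = \nabla_t\Theta_1 + \Phi_1$ and $\nabla Z = \nabla_t\Theta_2 + \Phi_2$ with $\|\Theta_i\|_{N^{1i}_{t,x}} + \|\Phi_i\|_{B^{2,0}_{t,x}} \lesssim \Delta$; applying $\cint^t_\star$ and integrating by parts against the cutoffs $\eta_\pm$ of \eqref{eq.cint_ex} returns $\Theta_i$ modulo correction terms of the form $\delta^{-1}\cint^t_0(\eta_\pm^\prime\Theta_i)$, which are handled by \eqref{eq.int_ineq} exactly as in the proof of Lemma \ref{thmp.curv_int}. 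Second, the pieces $\cint^t_\star\nabla\mc{D}_2^{-1}Z$, $\cint^t_\star\nabla\mc{D}_2^{-1}\nabla(\trace H)$, and $\cint^t_\star\nabla\mc{D}_1^{-1}M$ are controlled via the Besov-elliptic estimate \eqref{eq.besov_impr_bdd}, combined with the $B^{\infty,0}_{t,x}$ bounds on $\nabla(\trace H)$ and $M$ from \eqref{eqp.main_decomp_ev}, and the $N^{1}_{t,x}$ bound on $Z$ from \eqref{eqp.hz_sob_imp}. Third, the Hodge inverses of the quadratic terms $\mc{D}_j^{-1}[\mathcal{Q}_j]$ (and in particular $\mc{D}_1^{-1}[(\gamma+i\epsilon)\hat{H}\cdot\ul{\hat{H}}]$) are handled by applying \eqref{eq.besov_impr_bdd} and then the bilinear bounds of Theorem \ref{thm.est_prod} and Corollary \ref{thm.est_prod_cor}; the crucial input is the bound $\|H\otimes\ul{H}\|_{B^{2,0}_{t,x}} \lesssim \Delta^2$ from \eqref{eqp.main_decomp_ev}, which ensures these contributions close at $\Delta^2$.

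To complete the $N^{1i}_{t,x}$ bound on $\cint^t_\star$ of any constituent, its four components---$\nabla_t$, $\nabla$, $L^{2,2}_{t,x}$, and $H^{1/2}_x$ initial trace---must each be controlled. The $\nabla_t$ and $L^{2,2}_{t,x}$ parts are essentially immediate from the $L^{2,2}_{t,x}$ estimate on the integrand, as applied in Lemma \ref{thmp.curv_int}. The initial-trace piece is controlled by Proposition \ref{thm.nsob_trace} and the observation that $\cint^t_0(\eta_+\cdot)$ vanishes at $t=0$, so only the $\cint^t_\delta(\eta_-\cdot)$ half contributes and can be estimated on the slice $t=\delta$ via \eqref{eq.nsob_trace}. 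The main technical obstacle is the spatial-derivative component $\nabla\cint^t_\star$: this requires commuting $\nabla$ past $\cint^t_\star$ via \eqref{eqp.comm_est}, yielding additional terms involving $k=H$ and the curl $\mf{C}$ which are absorbed using the \ass{F2}{C,N,\Delta} condition from Lemma \ref{thmp.rf2}, as well as using \eqref{eq.besov_impr_bdd} on the \emph{second} spatial derivatives of the Hodge inverses. It is precisely at this step that the sum-norm structure of $N^{0\star}_{t,x}+B^{2,0}_{t,x}$ in \ass{BA}{\delta,\Delta} pays off, allowing us to avoid the iterated infinite decomposition of the derivatives of the Ricci coefficients used in \cite{kl_rod:cg, wang:cg} and instead close in a single pass.
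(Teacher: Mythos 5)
Your approach takes a genuinely different route from the paper, but it contains a gap that prevents it from reaching the stated bound.

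The crux of the difficulty: when you invert \eqref{eq.structure_renorm_ell} and solve for $(1-t)\mc{D}_2^{-1}B$ and $\mc{D}_1^{-1}R$, the resulting exact identities
\[
(1-t)\nabla\mc{D}_2^{-1}B = -\nabla\hat{H} + \nabla\mc{D}_2^{-1}Z + \tfrac{1}{2}\nabla\mc{D}_2^{-1}\nabla(\trace H) + \nabla\mc{D}_2^{-1}\mathcal{Q}_2 \text{,}\qquad
\nabla\mc{D}_1^{-1}R = -\nabla Z - \nabla\mc{D}_1^{-1}M + \cdots
\]
have as their leading terms $\nabla\hat{H}$ and $\nabla Z$. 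You acknowledge that the bootstrap assumption \ass{BA}{\delta,\Delta} bounds these only by $\Delta$ (via the decompositions $\nabla H = \nabla_t\Theta_1 + \Phi_1$, etc., with $\|\Theta_i\|_{N^{1i}_{t,x}} + \|\Phi_i\|_{B^{2,0}_{t,x}} \lesssim \Delta$). But the target in \eqref{eqp.curv_int_ex} is $\Gamma + \Delta^2$, which is strictly smaller than $\Delta$ in the relevant regime $\Gamma \ll \Delta \ll 1$. There is no cancellation available --- $\nabla\hat{H}$ and $\nabla Z$ are $O(\Delta)$ by the bootstrap and cannot be made $O(\Delta^2)$ --- so your decomposition cannot achieve the claimed bound. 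This matters because Lemma \ref{thm.decomp_imp} needs exactly the $\Gamma + \Delta^2$ bound on $\|(1-t)\nabla\mc{D}_2^{-1}B\|_{N^{0\star}_{t,x}}$ to strictly improve \ass{BA}{}: that lemma reinserts the elliptic equation in the \emph{opposite} direction, $\nabla\hat{H} = \nabla\mc{D}_2^{-1}(\mc{D}_2\hat{H})$, so using your identity to prove Lemma \ref{thmp.curv_int_ex} would actually make the overall bootstrap circular (or simply non-improving).

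It is instructive to note that the paper \emph{does} use your decomposition of $\mc{D}_2^{-1}B$ and $\mc{D}_1^{-1}R$ via the elliptic structure equations --- but only in Lemma \ref{thmp.curv_int_tr}, where the target is the \emph{non-improved} bound $\Delta$ in the sharp-trace norm $L^{\infty,2}_{x,t}$. There the approach is natural and correct. For the improved $\Gamma + \Delta^2$ estimate of Lemma \ref{thmp.curv_int_ex}, the paper instead proceeds directly: it applies the div-curl estimate \eqref{eq.div_curl_est} to $\nabla\cint^t_0[(1-t)\eta_+\nabla\mc{D}_2^{-1}B]$, commutes $\nabla$ past $\cint^t_0$ via \eqref{eqp.comm_est}, converts $\lapl\mc{D}_2^{-1}B$ into $\mc{D}_2^\ast B$ plus Gauss-curvature errors via the Hodge identities \eqref{eq.hodge_sq}, and then the resulting $\nabla\cint^t_0(\eta_+ B)$ is controlled through the Bianchi equations \eqref{eq.structure_renorm_nb} and integration by parts in $t$, exactly as in Lemma \ref{thmp.curv_int}. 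The decisive structural input is that the Bianchi equations express $\mc{D}_1 B$ (and $\mc{D}_2 A$, $\mc{D}_1^\ast R$) as $\nabla_t$ of other curvature components plus quadratic corrections, so the angular derivative can be traded for a $t$-derivative and absorbed directly into the flux $\Gamma$ --- bypassing the connection coefficients entirely. This is the mechanism that achieves $\Gamma + \Delta^2$ rather than $\Delta$.
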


\begin{proof}
Again, we need only prove the first part of \eqref{eqp.curv_int_ex} with $\cint^t_\star \Psi$ replaced by $\cint^t_0 ( \eta_+ \Psi )$, as the corresponding estimates for $\cint^t_\delta ( \eta_- \Psi)$ are completely analogous.
As in the proof of Lemma \ref{thmp.curv_int}, we can apply \eqref{eq.hodge_inv_est} and \eqref{eq.renorm_ass_flux} in order to obtain
\begin{align*}
\| \nabla_t \cint^t_0 [ (1 - t) \eta_+ \nabla \mc{D}_2^{-1} B ] \|_{ L^{2, 2}_{t, x} } + \| \cint^t_0 [ (1 - t) \eta_+ \nabla \mc{D}_2^{-1} B ] \|_{ L^{2, 2}_{t, x} } &\lesssim \| \nabla \mc{D}_2^{-1} B \|_{ L^{2, 2}_{t, x} } \lesssim \Gamma \text{.}
\end{align*}
An analogous estimate also yields
\begin{align*}
\| \nabla_t \cint^t_0 ( \eta_+ \nabla \mc{D}_1^{-1} R ) \|_{ L^{2, 2}_{t, x} } + \| \cint^t_0 ( \eta_+ \nabla \mc{D}_1^{-1} R ) \|_{ L^{2, 2}_{t, x} } &\lesssim \| \nabla \mc{D}_1^{-1} R \|_{ L^{2, 2}_{t, x} } \lesssim \Gamma \text{.}
\end{align*}

For the spatial gradient, we let
\[ I_B = \| \nabla \cint^t_0 [ (1 - t) \eta_+ \nabla \mc{D}_2^{-1} B ] \|_{ L^{2, 2}_{t, x} } \text{,} \qquad I_R = \| \nabla \cint^t_0 ( \eta_+ \nabla \mc{D}_1^{-1} R ) \|_{ L^{2, 2}_{t, x} } \text{.} \]
Applying \eqref{eq.div_curl_est} and commuting $\nabla$ and $\cint^t_0$ using \eqref{eqp.comm_est} yields
\begin{align*}
I_B &\lesssim \| \gamma^{ab} \nabla_a \cint^t_0 [ (1 - t) \eta_+ \nabla_b \mc{D}_2^{-1} B ] \|_{ L^{2, 2}_{t, x} } + \| \epsilon^{ab} \nabla_a \cint^t_0 [ (1 - t) \eta_+ \nabla_b \mc{D}_2^{-1} B ] \|_{ L^{2, 2}_{t, x} } \\
&\qquad + \| \cint^t_0 [ (1 - t) \eta_+ \nabla_b \mc{D}_2^{-1} B ] \|_{ L^{2, 2}_{t, x} } \\
&\lesssim \| \cint^t_0 [ (1 - t) \eta_+ \lapl \mc{D}_2^{-1} B ] \|_{ L^{2, 2}_{t, x} } + \| \cint^t_0 [ (1 - t) | \mc{K} | | \mc{D}_2^{-1} B | ] \|_{ L^{2, 2}_{t, x} } + \| \nabla \mc{D}_2^{-1} B \|_{ L^{2, 2}_{t, x} } \\
&\lesssim \| \cint^t_0 [ (1 - t) \eta_+ \lapl \mc{D}_2^{-1} B ] \|_{ L^{2, 2}_{t, x} } + \| \cint^t_0 [ (1 - t) | \mc{K} - 1 | | \mc{D}_2^{-1} B | ] \|_{ L^{2, 2}_{t, x} } \\
&\qquad + \| \cint^t_0 (1 - t) | \mc{D}_2^{-1} B | \|_{ L^{2, 2}_{t, x} } + \| \nabla \mc{D}_2^{-1} B \|_{ L^{2, 2}_{t, x} } \text{.}
\end{align*}
Recalling the identities \eqref{eq.hodge_sq} and applying \eqref{eq.hodge_inv_est}, \eqref{eq.renorm_ass_flux}, and \eqref{eqp.gc_weak}, then
\begin{align*}
I_B &\lesssim \| \cint^t_0 [ (1 - t) \eta_+ \mc{D}_2^\ast B ] \|_{ L^{2, 2}_{t, x} } + \| \mc{K} - 1 \|_{ L^{2, 2}_{t, x} } \| (1 - t) \mc{D}_2^{-1} B \|_{ L^{\infty, 2}_{x, t} } + \Gamma \\
&\lesssim \| \cint^t_0 [ (1 - t) \nabla_t \cint^t_0 \eta_+ \mc{D}_2^\ast B ] \|_{ L^{2, 2}_{t, x} } + \Delta \| (1 - t) \mc{D}_2^{-1} B \|_{ L^{\infty, 2}_{x, t} } + \Gamma \\
&\lesssim \| \cint^t_0 ( \eta_+ \mc{D}_2^\ast B ) \|_{ L^{2, 2}_{t, x} } + \Delta \| (1 - t) \mc{D}_2^{-1} B \|_{ L^{\infty, 2}_{x, t} } + \Gamma \text{.}
\end{align*}
An analogous argument also produces the estimate
\begin{align*}
I_R &\lesssim \| \cint^t_0 ( \eta_+ \mc{D}_1^\ast R ) \|_{ L^{2, 2}_{t, x} } + \Delta \| \mc{D}_1^{-1} R \|_{ L^{\infty, 2}_{x, t} } + \Gamma \text{.}
\end{align*}

Commuting once again using \eqref{eqp.comm_est} and recalling \eqref{eq.renorm_ass_flux} and \eqref{eqp.curv_int_tr} yields
\begin{align*}
I_B &\lesssim \| \nabla \cint^t_0 ( \eta_+ B ) \|_{ L^{2, 2}_{t, x} } + \Gamma + \Delta^2 \text{,} \\
I_R &\lesssim \| \nabla \cint^t_0 ( \eta_+ R ) \|_{ L^{2, 2}_{t, x} } + \Gamma + \Delta^2 \text{.}
\end{align*}
The first terms on the right-hand sides can be bounded in the same manner as in the proof of \eqref{eqp.curv_int}.
Consequently, we obtain, as desired,
\[ \| \cint^t_\star [ (1 - t) \nabla \mc{D}_2^{-1} B ] \|_{ N^1_{t, x} } + \| \cint^t_\star \nabla \mc{D}_1^{-1} R \|_{ N^1_{t, x} } \lesssim \Gamma + \Delta^2 \text{.} \]

For the initial value bounds, we apply \eqref{eq.nsob_trace} like in the proof of \eqref{eqp.curv_int},
\begin{align*}
\| \cint^t_\star [ (1 - t) \nabla \mc{D}_2^{-1} B ] [0] \|_{ H^{1/2}_x } &\lesssim \| \cint^t_\delta [ (1 - t) \eta_- \nabla \mc{D}_2^{-1} B ] \|_{ N^1_{t, x} } \lesssim \Gamma + \Delta^2 \text{,} \\
\| \cint^t_\star \nabla \mc{D}_1^{-1} R [0] \|_{ H^{1/2}_x } &\lesssim \| \cint^t_\delta ( \eta_- \nabla \mc{D}_1^{-1} R ) \|_{ N^1_{t, x} } \lesssim \Gamma + \Delta^2 \text{,}
\end{align*}
where in the last steps, we applied the $\cint^t_\delta$-analogue of the preceding argument.
Combining all the above estimates completes the proof of the first estimate in \eqref{eqp.curv_int_ex}.
The second estimate follows immediately from the first, since $\nabla_t \cint^t_\star \Psi = \Psi$.
\end{proof}

\subsection{Completion of the Proof}

We can now complete the proof of Theorem \ref{thm.nc_renorm} by proving the strictly improved version of the bootstrap assumptions, \ass{BA}{\delta, \Delta/2}.
A part of this has already been done in \eqref{eqp.hz_sob_imp}.
Here, we will prove the remaining improved estimates associated with the \ass{BA}{\delta, \Delta/2} condition.

\begin{lemma} \label{thm.decomp_imp}
If \ass{BA}{\delta, \Delta} holds, then
\begin{align}
\label{eqp.decomp_imp} \| \nabla H \|_{ N^{0 \star}_{t, x} + B^{2, 0}_{t, x} } + \| \nabla Z \|_{ N^{0 \star}_{t, x} + B^{2, 0}_{t, x} } &\lesssim_{C, N} \Gamma + \Delta^2 \text{,} \\
\notag \| H \|_{ L^{\infty, 2}_{x, t} } + \| Z \|_{ L^{\infty, 2}_{x, t} } &\lesssim_{C, N} \Gamma + \Delta^2 \text{.}
\end{align}
\end{lemma}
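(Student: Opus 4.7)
The plan is to derive the improved sum-norm bounds for $\nabla H$ and $\nabla Z$ by inverting the elliptic equations in \eqref{eq.structure_renorm_ell} and exploiting the decompositions already obtained in Lemmas \ref{thmp.curv_int}, \ref{thmp.main_decomp_ev}, and \ref{thmp.curv_int_ex}, and then to feed the resulting control back into the sharp trace estimate of Corollary \ref{thm.sharp_trace_ex} to conclude the $L^{\infty,2}_{x,t}$-bounds.

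First I would apply $\nabla \mc{D}_2^{-1}$ to the first elliptic equation in \eqref{eq.structure_renorm_ell}, yielding an expression of the form
\[ \nabla \hat{H} = - \nabla \mc{D}_2^{-1}[(1-t) B] + \nabla \mc{D}_2^{-1} Z + \tfrac{1}{2} \nabla \mc{D}_2^{-1} \nabla(\trace H) + \nabla \mc{D}_2^{-1}(\text{quadratic in } H, Z). \]
The leading curvature term $\nabla \mc{D}_2^{-1}[(1-t)B]$ lies directly in $N^{0\star}_{t,x}$ with the desired bound by the second part of \eqref{eqp.curv_int_ex}. The remaining terms I route through $B^{2,0}_{t,x}$: by \eqref{eq.besov_impr_bdd} the operator $\nabla \mc{D}^{-1}$ is bounded on $B^{p,0}_{t,x}$, so I need only control $Z$, $\nabla(\trace H)$, and the bilinear products in $B^{2,0}_{t,x}$. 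For $\nabla(\trace H)$ I use \eqref{eqp.main_decomp_ev} together with $\delta \leq 1$ to pass from $B^{\infty,0}_{t,x}$ to $B^{2,0}_{t,x}$; for $Z$, the trivial bound $\|Z\|_{B^{2,0}_{t,x}} \lesssim \|Z\|_{L^{2,2}_{t,x}} \lesssim \Delta$ suffices; and for the quadratic terms $(\trace H) Z$ and $\hat{H} \cdot Z$ I apply \eqref{eq.est_prod_ex} together with \ass{BA}{\delta,\Delta}. Adding back in the trace part via $\nabla H = \nabla \hat{H} + \tfrac{1}{2} \nabla(\trace H) \otimes \gamma$ (whose Besov bound is again supplied by \eqref{eqp.main_decomp_ev}) completes the estimate for $\nabla H$.

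An entirely analogous argument handles $\nabla Z$ using the second elliptic equation in \eqref{eq.structure_renorm_ell}: the curvature term $\nabla \mc{D}_1^{-1} R$ is placed in $N^{0\star}_{t,x}$ by the second part of \eqref{eqp.curv_int_ex}, the mass aspect $\nabla \mc{D}_1^{-1} M$ is placed in $B^{2,0}_{t,x}$ using \eqref{eq.besov_impr_bdd} and the Besov control of $M$ in \eqref{eqp.main_decomp_ev}, and the bilinear term $\nabla \mc{D}_1^{-1}(\hat{H} \cdot \ul{\hat{H}})$ is placed in $B^{2,0}_{t,x}$ by invoking \eqref{eq.besov_impr_bdd} together with the key $\|H \otimes \ul{H}\|_{B^{2,0}_{t,x}} \lesssim \Delta^2$ bound from \eqref{eqp.main_decomp_ev}. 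Summing the contributions yields the first estimate of \eqref{eqp.decomp_imp}.

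For the second estimate, I would apply Corollary \ref{thm.sharp_trace_ex} to $\Psi = H$ and $\Psi = Z$. Since $k = H$ and \ass{BA}{\delta,\Delta} gives $\|H\|_{L^{2,\infty}_{x,t}} \lesssim \Delta \lesssim 1$, the prefactor $1 + \|k\|_{L^{2,\infty}_{x,t}}$ is bounded. The $N^{1i}_{t,x}$-norms of $H$ and $Z$ are controlled by $\Gamma + \Delta^2$ via \eqref{eqp.hz_sob_imp} together with the initial value bounds in \eqref{eq.renorm_ass_init}, and the $N^{0\star}_{t,x} + B^{2,0}_{t,x}$-norms of $\nabla H$ and $\nabla Z$ are controlled by the first part of \eqref{eqp.decomp_imp} just established. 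Combining these yields the desired $L^{\infty,2}_{x,t}$-bounds. The main obstacle in this argument is bookkeeping in the decomposition stage—specifically, making sure each right-hand term in the inverted elliptic equations is routed into the correct piece of the sum norm, and that all bilinear products are handled by an appropriate estimate from Theorem \ref{thm.est_prod}, Corollary \ref{thm.est_prod_cor}, or \eqref{eq.besov_impr_bdd}—but the sum-norm formulation of \ass{BA}{} is precisely what allows this to be done in a single pass rather than an infinite iteration.
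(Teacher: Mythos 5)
Your overall route is the same as the paper's: invert the Hodge operators in \eqref{eq.structure_renorm_ell}, place the curvature terms $(1-t)\nabla\mc{D}_2^{-1}B$ and $\nabla\mc{D}_1^{-1}R$ in $N^{0\star}_{t,x}$ via Lemma \ref{thmp.curv_int_ex}, route everything else through $B^{2,0}_{t,x}$ using \eqref{eq.besov_impr_bdd}, Lemma \ref{thmp.main_decomp_ev} and the product estimate \eqref{eq.est_prod_ex}, and then close the $L^{\infty,2}_{x,t}$ bounds with Corollary \ref{thm.sharp_trace_ex}; that part of the plan is sound and matches the paper essentially line by line.

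There is, however, one step that fails as written: your treatment of the lone linear term $Z_a$ in the first equation of \eqref{eq.structure_renorm_ell}. You claim the ``trivial bound'' $\| Z \|_{ B^{2,0}_{t,x} } \lesssim \| Z \|_{ L^{2,2}_{t,x} } \lesssim \Delta$. First, the embedding is false: $B^{2,0}_{t,x}$ is the $a=1$ Besov norm at regularity zero, $\sum_{k \geq 0} \| P_k Z \|_{ L^{2,2}_{t,x} } + \| P_{<0} Z \|_{ L^{2,2}_{t,x} }$, and an $\ell^1$ sum of frequency blocks at $s=0$ is strictly stronger than the $L^2$ norm, so it cannot be controlled by $\| Z \|_{ L^{2,2}_{t,x} }$ alone. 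Second, even if it were valid, a bound by $\Delta$ is quantitatively useless here: the conclusion of the lemma must be $\lesssim \Gamma + \Delta^2$, since this is exactly the improvement that closes the bootstrap (\ass{BA}{\delta,\Delta} $\Rightarrow$ \ass{BA}{\delta,\Delta/2}); a term of size $\Delta$ would leave you with $\lesssim \Gamma + \Delta$, which is not an improvement. The correct move, and what the paper does, is to bound $\| Z \|_{ B^{2,0}_{t,x} }$ by the \emph{already improved} Sobolev estimate \eqref{eqp.hz_sob_imp}: since $\| P_k Z \|_{ L^2_x } \lesssim 2^{-k/2} \| Z \|_{ H^{1/2}_x }$, one has $\| Z \|_{ B^{2,0}_{t,x} } \lesssim \| \nabla Z \|_{ L^{2,2}_{t,x} } + \| Z \|_{ L^{2,2}_{t,x} } \lesssim \| Z \|_{ N^1_{t,x} } \lesssim \Gamma + \Delta^2$. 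With that substitution (and noting, as a very minor point, that the prefactor bound $\| H \|_{ L^{2,\infty}_{x,t} } \lesssim \Delta$ in your last step comes from \eqref{eq.nsob_ineq} together with \ass{BA}{\delta,\Delta} rather than from \ass{BA}{} directly), your argument coincides with the paper's proof.
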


\begin{proof}
First of all, since $2 H = 2 \hat{H} + ( \trace H ) \gamma$, we can bound
\begin{align*}
\| \nabla H \|_{ N^{0 \star}_{t, x} + B^{2, 0}_{t, x} } &\lesssim \| \nabla \hat{H} \|_{ N^{0 \star}_{t, x} + B^{2, 0}_{t, x} } + \| \nabla ( \trace H ) \|_{ B^{2, 0}_{t, x} } \\
&\lesssim \| \nabla \hat{H} \|_{ N^{0 \star}_{t, x} + B^{2, 0}_{t, x} } + \Gamma + \Delta^2 \text{,}
\end{align*}
where in the last step, we applied \eqref{eqp.transport}.
For the term with $\hat{H}$, we recall the identity $\mc{D}_2^{-1} \mc{D}_2 = I$ and apply the first equation in \eqref{eq.structure_renorm_ell} along with \eqref{eq.besov_impr_bdd}:
\begin{align*}
\| \nabla \hat{H} \|_{ N^{0\star}_{t, x} + B^{2, 0}_{t, x} } &\lesssim \| \nabla \mc{D}_2^{-1} \mc{D}_2 \hat{H} \|_{ N^{0 \star}_{t, x} + B^{2, 0}_{t, x} } \\
&\lesssim \| (1 - t) \nabla \mc{D}_2^{-1} B \|_{ N^{0 \star}_{t, x} } + \| Z \|_{ B^{2, 0}_{t, x} } + \| \nabla ( \trace H ) \|_{ B^{2, 0}_{t, x} } \\
&\qquad + \| (1 - t) ( H \otimes Z ) \|_{ B^{2, 0}_{t, x} } \text{.}
\end{align*}
The third, second, and first term on the right-hand side are bounded using \eqref{eqp.transport}, \eqref{eqp.hz_sob_imp}, and \eqref{eqp.curv_int_ex}, respectively.
From this, we have
\[ \| \nabla \hat{H} \|_{ N^{0 \star}_{t, x} + B^{2, 0}_{t, x} } \lesssim \Gamma + \Delta^2 + \| H \otimes Z \|_{ B^{\infty, 0}_{t, x} } \text{.} \]
For the last term, we apply \eqref{eq.est_prod_ex} and \eqref{eqp.hz_sob_imp}:
\[ \| H \otimes Z \|_{ B^{\infty, 0}_{t, x} } \lesssim \| H \|_{ N^{1i}_{t, x} } \| Z \|_{ N^{1i}_{t, x} } \lesssim \Delta^2 \text{.} \]
Combining above yields the desired bound for $\nabla H$ in \eqref{eqp.decomp_imp}.

By a similar argument using the second equation in \eqref{eq.structure_renorm_ell}, we obtain
\begin{align*}
\| \nabla Z \|_{ N^{0 \star}_{t, x} + B^{2, 0}_{t, x} } &\lesssim \| \nabla \mc{D}_1^{-1} R \|_{ N^{0 \star}_{t, x} } + \| M \|_{ B^{2, 0}_{t, x} } + \| H \otimes \ul{H} \|_{ B^{2, 0}_{t, x} } \lesssim \Gamma + \Delta^2 \text{,}
\end{align*}
where in the last step, we applied \eqref{eqp.transport}, \eqref{eqp.main_decomp_ev}, and \eqref{eqp.curv_int_ex}.
The above considerations complete the proof of the first estimate in \eqref{eqp.decomp_imp}.

For the second estimate, we apply \eqref{eq.sharp_trace_ex} and recall that $k = H$:
\begin{align*}
\| H \|_{ L^{\infty, 2}_{x, t} } &\lesssim ( 1 + \| H \|_{ L^{2, \infty}_{x, t} } ) ( \| H \|_{ N^{1i}_{t, x} } + \| \nabla H \|_{ N^{0 \star}_{t, x} + B^{2, 0}_{t, x} } ) \text{,} \\
\| Z \|_{ L^{\infty, 2}_{x, t} } &\lesssim ( 1 + \| H \|_{ L^{2, \infty}_{x, t} } ) ( \| Z \|_{ N^{1i}_{t, x} } + \| \nabla Z \|_{ N^{0 \star}_{t, x} + B^{2, 0}_{t, x} } ) \text{.}
\end{align*}
Recalling \eqref{eq.nsob_ineq}, \eqref{eqp.hz_sob_imp}, and the first part of \eqref{eqp.decomp_imp}, then, as desired,
\[ \| H \|_{ L^{\infty, 2}_{x, t} } + \| Z \|_{ L^{\infty, 2}_{x, t} } \lesssim ( 1 + \Gamma + \Delta^2 ) ( \Gamma + \Delta^2 ) \lesssim \Gamma + \Delta^2 \text{.} \qedhere \]
\end{proof}

Applying \eqref{eqp.hz_sob_imp} and \eqref{eqp.decomp_imp} and taking $\Gamma$ (and hence $\Delta$) sufficiently small yields the strictly improved condition \ass{BA}{\delta, \Delta/2}.
Thus, the bootstrap argument implies that \ass{BA}{\delta, \Delta} holds even without the bootstrap assumption.
To obtain the estimates \eqref{eq.renorm_est}, we simply recall the estimates \eqref{eqp.transport}, \eqref{eqp.gc_weak}, \eqref{eqp.hz_sob_imp}, \eqref{eqp.main_decomp_ev}, and \eqref{eqp.decomp_imp}, and we recall that $\Delta$ is simply $\Gamma$ times some (possibly large) constant.
Furthermore, that the \ass{F2}{} and \ass{K}{} conditions hold for $(\mc{N}, \gamma)$ follows from Lemmas \ref{thmp.rf2} and \ref{thmp.gc}.

It remains only to prove the refined curvature estimate \eqref{eq.renorm_est_curv}.
For this, we recall the proof of Lemma \ref{thmp.gc}, in particular, the identity
\[ \mc{K} - 1 = - \frac{1}{2} \trace \ul{H} + (1 - t) \left[ \gamma^{ab} \nabla_a Z_b - M + \frac{1}{2} \paren{ 1 - \frac{2m}{s} } \trace H - \frac{1}{4} \trace H \trace \ul{H} \right] \text{.} \]
Given $0 \leq \tau < 1$, we can restrict our attention to a small segment $\tau \leq t < \tau + \varepsilon$ of the null cone $\mc{N}$.
Again, in terms of the definition of the \ass{K}{} condition from Section \ref{sec.fol_curv}, we take $f \equiv 1$, $V = (1 - t) Z$, and $W$ the remaining terms of the above equation.
From the estimates \eqref{eq.renorm_est}, we see that \ass{K}{1, D^\prime} holds, with
\[ D^\prime \lesssim \sup_{\tau \leq \tau^\prime < \tau + \varepsilon} \| \trace \ul{H} [\tau^\prime] \|_{ L^2_x } + (1 - \tau) \Gamma \text{.} \]
The estimate \eqref{eq.renorm_est_curv} now follows from \eqref{eq.curv_sob} and by taking $\varepsilon \searrow 0$.

This concludes the proof of Theorem \ref{thm.nc_renorm}.

\raggedright
\bibliographystyle{amsplain}
\bibliography{articles,books,misc}

\end{document}